\documentclass[11pt]{amsart}
\usepackage[margin=1in]{geometry}
\usepackage{amssymb}
\usepackage{amsmath}
\usepackage{comment}
\usepackage{latexsym}
\usepackage{amsfonts}
\usepackage{graphicx}
\usepackage{mathrsfs}
\usepackage[normalem]{ulem}
\usepackage[utf8]{inputenc}
\usepackage{xcolor}
\usepackage{hyperref}
\hypersetup{
    colorlinks=true,
    linkcolor={blue!50!black},
    filecolor=red,      
    urlcolor={blue!50!black},
    citecolor={green!50!black}
}
\numberwithin{equation}{section} 

\newtheorem{theorem}{Theorem}[section]

\newtheorem{claim}[theorem]{Claim}

\newtheorem{corollary}[theorem]{Corollary}
\newtheorem{definition}[theorem]{Definition}
\newtheorem{example}[theorem]{Example}
\newtheorem{lemma}[theorem]{Lemma}

\newtheorem{proposition}[theorem]{Proposition}

\newtheorem{remark}[theorem]{Remark}

\newcommand{\Acal}{\mathcal A}
\newcommand{\dfplus}[1]{\delta_{#1}^{+}}
\newcommand{\Gcal}{\mathcal G}
\newcommand{\Jcal}{\mathcal J}
\newcommand{\Nash}{\mathcal N}
\newcommand{\Pcal}{\mathcal P}

\newcommand{\R}{\mathbb{R}}

\newcommand{\tr}{\operatorname{tr}}
\newcommand{\Ric}{\operatorname{Ric}}
\newcommand{\Rm}{\operatorname{Rm}}

\protected\def\vts{
  \ifmmode
    \mskip0.5\thinmuskip
  \else
    \ifhmode
      \kern0.08334em
    \fi
  \fi
}

\begin{document}

\title[The Fisher Metric of the Ricci Flow Heat Kernel]{The Fisher Metric of the Ricci Flow Heat Kernel}

\author{Bennett Chow}
\address{Department of Mathematics, University of California San Diego}
\email{bechow@ucsd.edu}

\author{Robert Koirala}
\address{Department of Mathematics, University of California San Diego}
\email{rkoirala@ucsd.edu}

\begin{abstract}
We introduce and study a Fisher information metric \(g^F_\tau\) associated to
the conjugate heat kernel of a Ricci flow \((M^n,g_t)\). This tensor measures,
at a fixed scale, how the pointed heat-kernel measure changes when the base
point is moved. We prove that \(g^F_\tau\) is monotone in scale and satisfies
\(g^F_\tau\le g_t\).  We relate its trace to the pointed Nash entropy and prove
a matrix square identity for the Fisher defect \(g_t-g^F_\tau.\) This identity gives a rigidity theorem for the equality case; on closed connected flows one has the strict inequalities \(0<g^F_\tau<g_t\) at every positive scale, while in the complete case equality forces a Euclidean splitting.

We also develop several consequences of this point of view.  These include a
sharp reverse Poincar\'e inequality for the heat semigroup, a contraction formula
for \(\varphi\)-divergences along conjugate heat flow, and a canonical
construction of heat-kernel splitting maps from large eigenvalues of the
Fisher endomorphism. As applications, we relate pointed Nash entropy close to \(0\) to small Fisher deficit at comparable scales, and we obtain a codimension-one Fisher-metric criterion for applying Bamler's \(\varepsilon\)-regularity theorem.
\end{abstract}

\maketitle 

\setcounter{tocdepth}{1}
\tableofcontents

\section{Introduction}

Let \((M^n,g_t)\) be a Ricci flow on a closed manifold, and let \(K(x,t;y,s)\) denote its heat kernel. Let 
\begin{equation}
    d\nu_{x,t;s}(y)=K(x,t;y,s)\,dg_s(y)
\end{equation}
denote the conjugate heat-kernel measure based at \((x,t)\) and evaluated at the earlier time \(s\).

We shall view the heat kernel as a \emph{two-point function} reflecting the geometry of \(g_t\); see Brendle \cite{Brendle2014TwoPoint} for a survey of two-point
functions in geometric analysis.  In the Ricci-flow setting, this heat-kernel
viewpoint is central in Bamler's work on singularity formation and the
structure theory of singular flows
\cite{Bamler2020A,MR4623543,Bamler2020C}. It played a crucial role already in Perelman's work \cite[\S 9]{Perelman1}, where he proved a fundamental differential Li--Yau--Hamilton Harnack inequality for the conjugate heat equation. For fixed \((y,s)\), \(K(\cdot,\cdot;y,s)\) solves the forward heat equation in the \((x,t)\)-variables, while for fixed \((x,t)\), the family \(K(x,t;\cdot,\cdot)\) solves the conjugate heat equation in the \((y,s)\)-variables. Thus the variables \((x,t)\) and \((y,s)\) play complementary (dual) roles. We call \((x,t)\) the \emph{source}, or base point, variable and regard \((y,s)\) as the \emph{target}, or integration, variable. This source--target duality is one of the organizing principles of the paper.

In this paper we study the symmetric nonnegative definite \(2\)-tensor at the base point \((x,t)\) defined by
\begin{equation}
    (g^F_\tau)_{(x,t)}
    =
    2\tau
    \int_M
    d_x\log K(x,t;y,t-\tau)\otimes d_x\log K(x,t;y,t-\tau)
    \,d\nu_{x,t;t-\tau}(y).
\end{equation}
Equivalently, for any \(V,W\in T_xM\)
\begin{equation}\label{eq:intro-fisher-metric}
    (g^F_\tau)_{(x,t)}(V,W)
    =
    2\tau
    \int_M
    \nabla_V^x\log K(x,t;y,t-\tau)\,
    \nabla_W^x\log K(x,t;y,t-\tau)
    \,d\nu_{x,t;t-\tau}(y).
\end{equation}
We call this tensor the \emph{Fisher metric}, in analogy with the Fisher information metric in statistics and information geometry; see 
Fisher
\cite{Fisher1925}, Rao \cite{MR15748}, and Amari and Nagaoka \cite{MR1800071}. It is \(1/(2\tau)\) times the covariance tensor, with respect to the target heat-kernel measure \(d\nu_{x,t;t-\tau}\), of the \textit{source score} $y\longmapsto 2\tau\,\nabla_x \log K(x,t;y,t-\tau)$. In the static (model) Euclidean case, \(g^F_\tau\) is equal to the Euclidean metric for every \(\tau>0\).

The source--target duality of the heat kernel is a duality between two canonical infinitesimal motions of the same pointed heat-kernel family. On the source side, fixing \(\tau\) and varying the base point \(x\) gives a map \(x\mapsto \nu_{x,t;t-\tau}\), whose tangent in the direction \(V\in T_xM\) is represented by the score \(2\tau\,\nabla_V^x\log K(x,t;\cdot,t-\tau)\). In the Gaussian model these are the translation modes, since
\[
    2\tau\,\nabla_V^x\log K_{\mathbb R^n}(x,t;y,t-\tau)
    =
    \langle y-x,V\rangle. 
\]
By contrast, varying the target time \(s=t-\tau\) moves the metric-measure
space
\[
    \tau\longmapsto
    \bigl(M,g_{t-\tau},\nu_{x,t;t-\tau}\bigr).
\]
In this paragraph the base point \((x,t)\) is fixed, and we suppress it from the
notation by writing
\[
    K_\tau(y):=K(x,t;y,t-\tau),
    \qquad
    d\nu_\tau(y):=d\nu_{x,t;t-\tau}(y)
    =
    K_\tau(y)\,dg_{t-\tau}(y)
    =
    (4\pi\tau)^{-n/2}e^{-f_\tau(y)}\,dg_{t-\tau}(y).
\]
We call \(f_\tau\) the \emph{heat-kernel potential}. All differential operators below
are taken in the target variable \(y\) with respect to the metric \(g_{t-\tau}\). Since
\[
    \partial_\tau K_\tau
    =
    \Delta_{g_{t-\tau}}K_\tau
    -
    R_{g_{t-\tau}}K_\tau,
    \qquad
    \partial_\tau dg_{t-\tau}
    =
    R_{g_{t-\tau}}\,dg_{t-\tau},
\]
we have the identity of top-dimensional forms, which was also independently observed by Yongjia Zhang,
\begin{align}\label{eq: target time Lie heat ker meas}
    \bigl(\partial_\tau+\mathcal L_{\nabla f_\tau}\bigr)\nu_\tau= \bigl(\Delta_{g_{t-\tau}}K_\tau\bigr)\,dg_{t-\tau}+\operatorname{div}_{g_{t-\tau}}
    \bigl(K_\tau\nabla f_\tau\bigr)\,dg_{t-\tau}=0.
\end{align}
Equivalently, along the flow \(\Phi_\tau\) generated by the time-dependent vector
field \(\nabla f_\tau\), the pullback measure \(\Phi_\tau^*\nu_\tau\) is independent
of \(\tau\). Hence \(\nabla f_\tau\) is the canonical target-side velocity: the gauge
velocity that freezes the conjugate heat-kernel measure.

In this same moving gauge, the target metric evolves by
\begin{align}\label{eq:freezing-of-conjugate-heat-kernel}
    \bigl(\partial_\tau+\mathcal L_{\nabla f_\tau}\bigr)g_{t-\tau}
    =
    2\bigl(\Ric_{g_{t-\tau}}+\nabla^2 f_\tau\bigr).
\end{align}
Therefore
\begin{equation}\label{eq: target time Lie rescaled metric GRS}
    \bigl(\partial_\tau+\mathcal L_{\nabla f_\tau}\bigr)
    \bigl(\tau^{-1}g_{t-\tau}\bigr)
    =
    2\tau^{-1}
    \left(
        \Ric_{g_{t-\tau}}
        +
        \nabla^2 f_\tau
        -
        \frac{1}{2\tau}g_{t-\tau}
    \right).
\end{equation}
Thus the source variation detects translation-type modes of the pointed heat
kernel, whereas the target-time variation detects the shrinker defect of the
rescaled metric in the canonical heat-kernel gauge. Equality on the target side is
therefore precisely dilation invariance, equivalently the \emph{gradient shrinking
soliton} equation.

The first basic facts about the Fisher metric are that
\begin{align}
    g^F_\tau \to g_t \qquad \text{as } \tau\downarrow0
\end{align}
and, for every positive scale,
\begin{align}\label{eq:fisher-less-than-metric}
    g^F_\tau \le g_t.
\end{align}
The latter inequality \eqref{eq:fisher-less-than-metric} is Bamler's matrix gradient estimate for the heat kernel \cite[Proposition 4.2]{Bamler2020A}. We also prove that, for fixed \((x,t)\), the Fisher metric is monotone nonincreasing as a function of the scale \(\tau\). Equivalently, if \(\Jcal_\tau=g_t^{-1}g^F_\tau\) is the \emph{Fisher endomorphism}, then the \emph{Fisher eigenvalues} of \(\Jcal_\tau\) lie in \([0,1]\) and decrease with \(\tau\). The proof of this monotonicity already illustrates the source--target duality: the scale derivative of \(g^F_\tau\) is expressed in terms of a target Dirichlet tensor, and the sign follows from the sharp Hein--Naber Poincar\'e inequality in the target variable \cite[Theorem 1.10]{HeinNaber2014CPAM}.
This reflects a recurring theme in
Ricci flow: sharp analytic inequalities for the heat kernel often encode precise geometric information.

We then consider the \textit{Fisher defect}
\begin{equation}
    D^F_\tau:=g_t-g^F_\tau. 
\end{equation}
This tensor is nonnegative by \eqref{eq:fisher-less-than-metric}. It satisfies a \emph{tensor} heat-operator identity whose right-hand side is a square of the Gaussian Hessian defect of the heat kernel. After taking the trace, this identity is closely related to Bamler's formula for the pointed Nash entropy \(\Box \mathcal{N}_s^*\leq 0\) \cite[Theorem 5.9]{Bamler2020A}. Thus \(D^F_\tau\) may be regarded as a \emph{tensorial} refinement of the corresponding scalar entropy quantity.

The Fisher defect satisfies a matrix square identity (Proposition \ref{prop:matrix-square-refines-Nash} below), which implies a rigidity statement. If the defect vanishes in a nonzero direction at a positive scale, then the associated null directions are parallel and flat. In particular, on a connected closed Ricci flow, the Fisher defect is positive definite for every positive scale. Thus all Fisher eigenvalues are strictly less than \(1\) when \(\tau>0\).

We also record analytic consequences of the same circle of estimates. The matrix Fisher estimate gives a sharp reverse Poincar\'e inequality for the heat semigroup. The \(\varphi\)-divergence viewpoint then gives a density-side interpretation of the Fisher metric: \(\varphi\)-divergences contract under conjugate heat flow, linearize to the Fisher metric along the pointed heat-kernel family, and yield spatial estimates for the map \(x\mapsto \nu_{x,t;t-\tau}\) with respect to relative entropy, squared Hellinger distances, and total variation distance. The relative entropy estimate, together with the heat-kernel Talagrand inequality \cite{MR1392331}, gives alternative and unified proof of the known Wasserstein monotonicity of Bamler \cite[Lemma 2.7]{Bamler2020A}, McCann and Topping \cite{MR2666905} and the \(1\leq p\leq 2\) case of Arnaudon--Coulibaly--Thalmaier \cite[Theorem 4.1]{MR2790368}. These estimates are sharp on the Euclidean Gaussian model.

We then use large (close to $1$) Fisher eigenvalues to produce canonical heat-kernel splitting maps. If the first \(k\) eigenvalues of \(\Jcal_\tau\) are close to \(1\), then at a comparable scale the corresponding heat-score functions are almost linear in a heat-kernel-weighted sense. This gives splitting maps without a self-similarity assumption, which typically requires the pinching of an entropy functional. In the collapsing case, one does not have entropy pinching, but our methods still produce splitting maps in the non-collapsing directions. In the codimension-one case, the same construction gives a Fisher eigenvalue criterion for Bamler's \(\varepsilon\)-regularity theorem \cite[Proposition~16.1]{Bamler2020C}. Finally, we show that, at comparable scales, the smallness of the pointed Nash entropy \(-\Nash\) is quantitatively equivalent to all Fisher eigenvalues being close to \(1\). This gives another precise sense in which the Fisher defect is a tensorial refinement of the pointed Nash entropy.

The paper is organized as follows. Section~\ref{sec2} introduces the Fisher metric and proves its basic monotonicity properties. Section~\ref{sec3} relates the Fisher defect to the pointed Nash entropy. Section~\ref{sec4} proves the equality-rigidity theorem. Section~\ref{sec5} proves the sharp reverse Poincar\'e inequality for the heat semigroup. Section~\ref{sec:phi-divergence-contraction} develops the \(\varphi\)-divergence interpretation of the Fisher metric and derives the spatial divergence and Wasserstein monotonicity along conjugate heat flows. Section~\ref{sec:large-fisher-eigenvalues} uses large Fisher eigenvalues to construct heat-kernel strong splitting maps and proves a Fisher \(\varepsilon\)-regularity criterion from large eigenvalues. Section~\ref{sec:fisher-epsilon-regularity} proves equivalence of smallness of \(-\Nash\) and all Fisher eigenvalues being close to \(1\). Most proofs are deferred to Section~\ref{sec7}.

The present paper develops the foundational Fisher-metric formalism.  We expect
this viewpoint to have further applications to Ricci-flow structure theory, and
we plan to pursue these in subsequent work.

\subsection*{Acknowledgments} 

We are thankful to Richard Bamler, Max Hallgren, Ayush Khaitan, Yi Lai, Yongjia Zhang for helpful feedback. We are especially grateful to Richard Bamler for helpful discussions and encouragement. He informed us that some of the results of this paper were independently known to him in related form.

\subsection*{LLM Usage Disclosure}

The authors acknowledge the use of AI as an interactive writing and checking aid during the preparation of this manuscript. The authors directed this use, selected and substantially revised any generated text, and independently verified the mathematical statements and proofs. The authors take full responsibility for the final content of the paper.

\section{The Fisher metric and its monotonicity}\label{sec2}

\subsection{Definition of the Fisher metric and endomorphism}

The basic object of our study is given by the following: 

\begin{definition}
    Given a Ricci flow, the associated \emph{Fisher metric} is defined by
    \begin{equation}
        g^F_\tau = 2\tau \int_M \nabla \log K \otimes \nabla \log K d\nu = 2\tau \int_M \frac{\nabla K \otimes \nabla K}{K} dg_s \geq 0. 
    \end{equation}
    More explicitly, the Fisher metric $g^F_\tau(t)$ at time $t$ and scale $\tau$ is given by
    \begin{align}\label{eq:FisherMetric1}
    \begin{split}
        (g^F_\tau)_{(x,t)} (V,W) 
        & = 2\tau \int_M \nabla_V^x \log K(x,t;y,s) \nabla_W^x \log K(x,t;y,s) \,  d\nu_{x,t;s}(y)  \\
        &  = 8\tau \int_M \nabla_V^x \sqrt{K} (x,t;y,s) \nabla_W^x \sqrt{K} (x,t;y,s) \,  dg_{s}(y) 
    \end{split}
    \end{align}
    for $V,W \in T_xM$, where $s = t-\tau$. Evidently, $g^F_\tau$ is a positive semi-definite symmetric 2-tensor.
    
    The \emph{Fisher endomorphism} $\Jcal_\tau$ is the corresponding $g_{t}$-self-adjoint endomorphism of $T_{x}M$, namely
    \begin{equation}
        g_{t}(\Jcal_\tau V,W)=g_\tau^F(V,W). 
    \end{equation}
    In other words, $\Jcal_\tau:=g_t^{-1}g^F_\tau$.
\end{definition}
Thus \(g^F_\tau\) is a \emph{base-point}, or \emph{source-side}, tensor: its integrand is the \(L^2(d\nu_{x,t;s})\)-pairing of the logarithmic first variations \(\nabla^x\log K\) of the conjugate heat-kernel measure as the base point \((x,t)\) is varied, while the \emph{target} variable \((y,s)\) is integrated out.

\begin{remark}
    Implicitly, we have assumed that the Ricci flow is defined on the time interval $[t-\tau,t]$. We will make this assumption, usually without mention, throughout this paper. 
\end{remark}

\begin{example}
    For static Euclidean space $(\mathbb{R}^n,g_{\mathbb{E}})$, the Fisher metric $g^F_\tau \equiv  g_{\mathbb{E}}$, independent of $\tau>0$.
\end{example}

\subsubsection{The Fisher metric as \(\tau\to0\)}

\begin{lemma}\label{lem:FisherMetricAsTauToZero}
    We have
    \begin{equation}\label{eq:Fisher-metric-asymptotic}
        g^F_\tau (t) \longrightarrow g_t \qquad\text{as }\tau\downarrow0.
    \end{equation}
\end{lemma}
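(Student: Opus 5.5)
The plan is to compare the heat kernel at small scales with the Euclidean Gaussian, using short-time asymptotics on a closed manifold (or a complete flow with bounded geometry near $(x,t)$). Fix $(x,t)$, work in $g_t$-normal coordinates at $x$, and set $s=t-\tau$. The first step is to invoke the standard parametrix expansion of the Ricci flow heat kernel, valid uniformly for $y$ in a small $g_t$-ball $B(x,r_0)$ and $\tau\in(0,\tau_0]$:
\[
K(x,t;y,s)=(4\pi\tau)^{-n/2}e^{-d_t^2(x,y)/4\tau}\bigl(u_0(x,y)+O(\tau)\bigr),
\]
where $u_0(x,x)=1$. The expansion may be differentiated once in $x$, which gives
\[
2\tau\,\nabla^x\log K(x,t;y,s)=-\nabla^x d_t^2(x,y)/2+O(\tau)+O(\tau\, d_t(x,y))
=\exp_x^{-1}(y)+O(\tau+d_t^2(x,y)).
\]
The time dependence of $g_s$ for $s\in[t-\tau,t]$ only contributes $O(\tau)$ relative corrections, since the metrics are uniformly equivalent with $|g_s-g_t|=O(\tau)$.

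The second step is the computation inside the ball. Substitute $y=\exp_x(\sqrt{\tau}\,z)$. Then $d\nu_{x,t;s}$ restricted to $B(x,r_0)$ converges to the standard Gaussian $(4\pi)^{-n/2}e^{-|z|^2/4}dz$, and the integrand becomes
\[
\frac{1}{2\tau}\bigl(2\tau\nabla_V\log K\bigr)\bigl(2\tau\nabla_W\log K\bigr)=\tfrac12\langle z,V\rangle\langle z,W\rangle+O\bigl(\sqrt{\tau}(1+|z|^3)\bigr).
\]
Integrating against the Gaussian gives $\tfrac12\cdot 2\langle V,W\rangle=g_t(V,W)$, and the error is $O(\sqrt\tau)$ by Gaussian moment bounds. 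This matches the Euclidean example, where $g^F_\tau\equiv g_{\mathbb E}$.

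The main obstacle is the third step, controlling the tail outside $B(x,r_0)$. Far from the diagonal the parametrix is not available pointwise, so I would instead use Gaussian upper bounds $K\le C\tau^{-n/2}e^{-d^2/(5\tau)}$ together with a gradient estimate for positive solutions of the heat equation in $x$. A Hamilton-type estimate $\tau|\nabla\log K|^2\le C(1+\log(C\tau^{-n/2}/K))$ is available, but it requires a lower bound on $K$. To avoid this, I would work with the $\sqrt K$ form of the definition,
\[
8\tau\int_M|\nabla^x\sqrt K|^2\,dg_s .
\]
Interior parabolic estimates for $\sqrt{K}$ in $(x,t)$ then bound $|\nabla^x\sqrt K|^2\le C\tau^{-1-n/2}e^{-d^2/(6\tau)}$ uniformly for $d_t(x,y)\ge r_0$, so the tail contribution is $O(e^{-c/\tau})$.

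Alternatively, and more cleanly, the tail can be handled using only the bound $g^F_\tau\le g_t$ (Bamler's matrix estimate \eqref{eq:fisher-less-than-metric}). The inner integral already tends to $g_t$, and the full integral is at most $g_t$. Since the integrand is nonnegative, the tail must therefore tend to $0$. This sandwich argument avoids pointwise tail estimates entirely, so I would use it.

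Combining the steps gives $g^F_\tau\to g_t$ as $\tau\downarrow0$.
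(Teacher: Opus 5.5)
Your proof is correct, but the tail argument you settle on differs from the paper's in a way that affects the paper's internal logic.

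The paper proves the lemma without using $g^F_\tau\le g_t$. In Lemma~\ref{lem:short-time-source-score-moments} it first shows $\|z_V^\rho\|_{L^\infty(M)}=O(1)$. Near $x$ this comes from the parametrix. Away from $x$ it comes from Hamilton's estimate $(q-q_0)|\nabla\log u|^2\le\log(A/u)$, with $A\sim\rho^{-n/2}$ supplied by the Gaussian upper bound and a lower bound on $K$ supplied by the chaining argument of Lemma~\ref{lem:short-time-sharp-lower-bound}. The second moment is then computed by splitting $M$ into $B(x,R\sqrt\rho)$, the annulus out to $r_0$, and the complement, each controlled by the Gaussian upper bound, and letting $R\to\infty$.

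Your sandwich replaces all of this off-diagonal work by Theorem~\ref{thm:BamProp4.2}. For $V=W$ the integrand is nonnegative, so the ball integral is at most $g^F_\tau(V,V)\le g_t(V,V)$. This settles the diagonal entries; you should say explicitly that the off-diagonal entries then follow by polarization. With the sandwich you need even less than your Step~2: the uniform asymptotics on $B(x,R\sqrt\tau)$, followed by $R\to\infty$, already give $\liminf_{\tau\downarrow0}g^F_\tau(V,V)\ge g_t(V,V)$. So Gaussian domination of the rescaled density on all of $B(x,r_0)$ is unnecessary.

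The price has two parts.
\begin{itemize}
\item The lemma is no longer independent of Bamler's estimate. Remark~\ref{rem:AltProofBamlerIneq} and Remark~\ref{rem:FisherDefectNonnegative}, which rederive $g^F_\tau\le g_t$ from this lemma, would become circular.
\item You do not obtain the uniform bounds \eqref{eq:small-score-linfty} and \eqref{eq:small-score-lp}, which the paper reuses in Lemma~\ref{lem:lambda-rho-small} and Claim~\ref{claim:dual-small-scale-endpoint}.
\end{itemize}

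If you want an independent proof via your first tail option, note that $\sqrt K$ satisfies no linear parabolic equation, so ``interior estimates for $\sqrt K$'' needs an interpretation. A clean one is as follows. Apply interior Hessian estimates to $u=K(\cdot,\cdot\,;y,s)$ on a parabolic cylinder of size $\sqrt\tau$ about $(x,t)$. Then use the elementary bound $|\nabla u|^2/u\le C\bigl(\sup|\nabla^2u|+\tau^{-1}\sup u\bigr)$ for nonnegative $u$ on a ball of radius $\sqrt\tau$. Together with the Gaussian upper bound, this yields your estimate $|\nabla^x\sqrt K|^2\le C\tau^{-1-n/2}e^{-c\,d_t^2(x,y)/\tau}$ without any lower bound on $K$. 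It still would not give the global $L^\infty$ bound on $z_V^\rho$.
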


\begin{proof}
    This follows from the expansion for the heat kernel and its first derivatives. See \S \ref{subsec:FisherMetTauToZero}. 
\end{proof}

\subsection{The matrix gradient estimate for the Fisher metric}

\begin{theorem}\label{thm:BamProp4.2}
    The matrix gradient estimate says that
    \begin{equation}\label{eqn: Fisher met leq g met}
        g^F_\tau \leq g_t, 
    \end{equation}
    that is, $(g^F_\tau)(t) \leq g_t$ for all $\tau>0$.
\end{theorem}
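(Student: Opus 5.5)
Following Bamler's argument for \cite[Proposition 4.2]{Bamler2020A}, I will reduce the matrix inequality to a scalar statement about heat solutions and then prove that statement by a semigroup (Bakry--\'Emery type) monotonicity argument along the forward heat flow.

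\textbf{Reduction.} By duality, for any $V\in T_xM$,
\[
\Bigl(\int_M \nabla_V^x K\,\phi\,dg_s\Bigr)^2
\le \int_M \frac{(\nabla_V^x K)^2}{K}\,dg_s\cdot \int_M \phi^2\,K\,dg_s,
\]
with equality when $\phi=\nabla_V^x\log K$. Hence $g^F_\tau(V,V)\le |V|^2_{g_t}$ is equivalent to the following: for every bounded function $\phi$ on $M\times\{s\}$ with $u(\cdot,t)=\int K(\cdot,t;y,s)\phi(y)\,dg_s(y)$, we have
\[
2\tau\,|\nabla u|^2(x,t)\le \int_M \phi^2\,d\nu_{x,t;s}-u(x,t)^2 .
\]
The $-u^2$ term comes for free. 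Since $\int \nabla^x_V K\,dg_s=0$, we may replace $\phi$ by $\phi-u(x,t)$ in the pairing, so Cauchy--Schwarz in fact yields the variance form. This is the sharp reverse Poincar\'e inequality for the heat semigroup.

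\textbf{Proof of the scalar inequality.} Let $u$ solve the forward heat equation $\Box u=(\partial_t-\Delta)u=0$ on $[s,t]$ with $u(\cdot,s)=\phi$, smooth by approximation. Two standard Ricci flow identities hold:
\[
\Box u^2=-2|\nabla u|^2,\qquad \Box |\nabla u|^2=-2|\nabla^2u|^2\le 0 .
\]
The second is the key point: under Ricci flow the Ricci term from the Bochner formula cancels exactly with the time derivative of $g^{-1}$. Now set
\[
Q(t')=(t'-s)\,|\nabla u|^2+u^2
\]
on $[s,t]$. Then
\[
\Box Q=|\nabla u|^2-2(t'-s)|\nabla^2u|^2-2|\nabla u|^2\le -|\nabla u|^2\le 0 .
\]
This produces only a factor $1$ in front of $\tau$, whereas we need $2\tau$. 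So I will instead consider
\[
Q=2(t'-s)\,|\nabla u|^2+u^2,\qquad \Box Q=-4(t'-s)|\nabla^2u|^2\le 0 .
\]
By the maximum principle against the conjugate heat kernel, equivalently the fact that $t'\mapsto\int Q(\cdot,t')\,d\nu_{x,t;t'}$ is nonincreasing since $\frac{d}{dt'}\int Q\,d\nu=\int \Box Q\,d\nu$, we get
\[
Q(x,t)\le \int_M Q(\cdot,s)\,d\nu_{x,t;s}=\int_M \phi^2\,d\nu_{x,t;s}.
\]
Combined with the reduction, this gives $g^F_\tau\le g_t$.

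\textbf{Main obstacle.} The delicate step is justifying the identity $\Box|\nabla u|^2=-2|\nabla^2u|^2$ under Ricci flow, and justifying the integration and monotonicity argument. On closed manifolds this is immediate. In the complete case with bounded curvature, one needs Gaussian bounds on $K$ and $\nabla K$ to control the integrals and the boundary terms, so that the derivative of $\int Q\,d\nu$ can be computed as above. I would first carry out the argument on closed $M$ and then extend it to the complete case using these standard heat kernel bounds.
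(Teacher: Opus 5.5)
Your proof is correct and complete on a closed flow. It differs from both routes the paper writes out; for the estimate itself the paper only cites \cite{Bamler2020A}.

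\textbf{The paper's two routes.} Remark~\ref{rem:AltProofBamlerIneq} uses the forward, target-side Hein--Naber Poincar\'e inequality to get $g^F_\tau\le G_\tau$. The scale equation then gives $\partial_\tau g^F_\tau\le0$, and the argument closes with the short-time limit of Lemma~\ref{lem:FisherMetricAsTauToZero}. Remark~\ref{rem:FisherDefectNonnegative} applies Hamilton's tensor maximum principle in the source variables to \eqref{eq:deficit-square-refines-Nash}, again seeded by the $\tau\downarrow0$ asymptotics.

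\textbf{Your route.} You prove the reverse local Poincar\'e inequality for every heat solution and then dualize with $\phi=\nabla^x_V\log K$. This reverses the paper's order, in which Corollary~\ref{cor:sharp-reverse-poincare-hellinger} is deduced from the matrix estimate. Like the Hein--Naber route, yours is a Bakry--\'Emery argument resting on $\Box|\nabla u|^2=-2|\nabla^2u|^2$, but you use the reverse local Poincar\'e inequality where the paper uses the forward one.

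\textbf{What your route buys.} It works at one fixed $\tau$ with no limiting procedure. You only need $\nu_{x,t;t'}\rightharpoonup\delta_x$ tested against the continuous function $Q$, not the parametrix and Gaussian-bound asymptotics behind Lemma~\ref{lem:FisherMetricAsTauToZero}. It also gives Corollary~\ref{cor:sharp-reverse-poincare-hellinger} at no extra cost.

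You can get more by keeping the Hessian term. Take $\phi=z^\tau_V$, so that $u$ is the canonical heat-score $v_V$, which is centered and satisfies $\nabla v_V(x,t)=\Jcal_\tau V$ by Lemmas~\ref{lem:sec6-forward-heat-score-centering} and~\ref{lem:sec6-terminal-value-gradient}. Your computation then yields the exact identity
\[
g_t\bigl((\Jcal_\tau-\Jcal_\tau^2)V,V\bigr)=\frac{2}{\tau}\int_s^t(\ell-s)\int_M|\nabla^2v_V|_{g_\ell}^2\,d\nu_\ell\,d\ell ,
\]
a weighted companion of Corollary~\ref{cor:sec6-operator-affineness-defect}. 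It gives $0\le\Jcal_\tau\le I$ at once. It also gives the strict bound $\lambda_i<1$ of Corollary~\ref{cor:strict-closed-fisher} almost for free. An eigenvalue $1$ would force $\nabla^2v_V\equiv0$ on the closed manifold, hence $\nabla v_V\equiv0$ (its gradient vanishes at a maximum point and has constant length), contradicting $\nabla v_V(x,t)=V\neq0$.

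\textbf{What the paper's routes buy.} Your fixed-scale argument does not give monotonicity of $g^F_\tau$ in $\tau$ (Proposition~\ref{prop:FisherMetMonot}), which the paper uses for good-scale selection in Section~\ref{sec:large-fisher-eigenvalues}. Nor does it give the matrix square identity in the source variables, whose null-direction analysis drives the splitting statement of Theorem~\ref{thm:fisher-equality-rigidity} on complete flows.
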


\begin{proof}
    See \cite[Proposition 4.2]{Bamler2020A}. Below we also give alternative proofs, in Remark \ref{rem:AltProofBamlerIneq} using Fisher metric monotonicity from the Hein--Naber Poincaré inequality, and in Remark \ref{rem:FisherDefectNonnegative} using the tensor maximum principle. These two alternative proofs are dual to each other in the sense that the Poincaré inequality is in the target variable while the tensor maximum principle is in the source variable.
\end{proof}

\subsection{Scale equation for the Fisher metric}

Fix \((x,t)\in M\times I\), let \(\tau>0\), and set \(s:=t-\tau. \) Let
\[
    K_\tau(y):=K(x,t;y,s),
    \qquad
    d\nu_\tau(y):=K_\tau(y)\,dg_{s}(y).
\]
For \(V\in T_xM\), define the \emph{source score}
\begin{equation}
    z_V^\tau:=2\tau \, \nabla_V^x\log K(x,t;y,s)  \,. 
\end{equation} 
Note that, in the Euclidean model, the source scores are exactly the linear coordinate functions centered at \(x\).

By differentiating
\[
    \int_M K(x,t;y,s)\,dg_{s}(y)=1
\]
in the \(V^x\)-direction, we obtain the \emph{source-score centering identity}
\begin{equation}\label{eq:score-coordinate-mean-summary}
    \int_M z_V^\tau\,d\nu_\tau=0. 
\end{equation}
We have for \(V,W\in T_xM\),
\begin{equation}\label{eq:score-coordinate-cov-summary}
    g^F_\tau (V,W) = \frac{1}{2\tau} \int_M z_V^\tau z_W^\tau \,d\nu_\tau \,.
\end{equation}

Define the \emph{target Dirichlet tensor}
\begin{equation}\label{eq:target_Dirichlet_tensor}
    G_\tau(V,W)
    :=
    \int_M
    \left\langle
    \nabla_y z_V^\tau,\nabla_y z_W^\tau
    \right\rangle_{g_{s}}
    d\nu_\tau,
\end{equation} 
which is a positive semi-definite symmetric 2-tensor. In particular, $G_\tau(V,V)$ is the Dirichlet energy of $z_V^\tau$ with respect to the heat-kernel measure. Let $\mathcal G_\tau$ be the corresponding $g_{t}$-self-adjoint endomorphism of $T_{x}M$, namely
\begin{equation}\label{eq:target_Dirichlet_11}
    g_{t}(\mathcal G_\tau V,W)=G_\tau (V,W).
\end{equation}

\begin{proposition}[Scale equation for the Fisher metric]\label{prop:ScaleEqFisherMetric}
    The variation in the time-scale of the Fisher metric is given by
    \begin{equation}\label{eq:gF-scale-equation-summary}
        \tau\,\partial_\tau g^F_\tau
        =
        g^F_\tau-G_\tau. 
    \end{equation}
\end{proposition}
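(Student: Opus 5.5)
The plan is to prove \eqref{eq:gF-scale-equation-summary} by polarization: it suffices to verify it on the diagonal \(V=W\). Fix \((x,t)\) and \(V\in T_xM\), and write
\[
    u:=\nabla^x_V\log K(x,t;\cdot,s),\qquad w:=\nabla^x_V K(x,t;\cdot,s)=uK_\tau,
\]
so that \(z^\tau_V=2\tau u\). Then
\[
    g^F_\tau(V,V)=2\tau\int_M \frac{w^2}{K_\tau}\,dg_s .
\]
Differentiating the prefactor \(2\tau\) gives \(\tau\partial_\tau g^F_\tau(V,V)=g^F_\tau(V,V)+2\tau^2\,\partial_\tau I(\tau)\), where \(I(\tau):=\int_M w^2K_\tau^{-1}\,dg_s\). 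Since \(G_\tau(V,V)=4\tau^2\int_M|\nabla u|^2\,d\nu_\tau\), the claim reduces to the Fisher-information dissipation identity
\[
    \partial_\tau I(\tau)=-2\int_M K_\tau\,\bigl|\nabla_y (w/K_\tau)\bigr|^2\,dg_s .
\]

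Next, I will set up the evolution equations in the target variables. As recalled in the introduction, \(K_\tau\) satisfies \(\partial_\tau K_\tau=\Delta K_\tau-RK_\tau\) and \(\partial_\tau dg_s=R\,dg_s\), with all operators taken at time \(s=t-\tau\). Because \(K\) is smooth jointly in all its variables (closed manifold), differentiating the conjugate heat equation in the source direction \(V\) shows that \(w\) solves the same equation, \(\partial_\tau w=\Delta w-Rw\). The factor \(\nabla^x_V\) commutes with \(\partial_\tau\) and with \(\Delta_y\), since these act on different variables and the flow metric at time \(t\) is fixed. Smoothness and compactness also justify differentiating under the integral sign.

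The core computation is as follows. Set \(\varphi=w/K_\tau\), so that \(w^2/K_\tau=\varphi^2K_\tau\). Since \(K_\tau\) and \(w=\varphi K_\tau\) solve the same linear equation, \(\varphi\) satisfies the drift equation
\[
    \partial_\tau\varphi=\Delta\varphi+2\langle\nabla\log K_\tau,\nabla\varphi\rangle .
\]
Then
\[
    \partial_\tau\bigl(\varphi^2K_\tau\,dg_s\bigr)
    =\bigl[2\varphi K_\tau\partial_\tau\varphi+\varphi^2(\Delta K_\tau-RK_\tau)+R\varphi^2K_\tau\bigr]dg_s .
\]
The scalar curvature terms cancel. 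I then integrate by parts:
\[
    \int_M \varphi^2\Delta K_\tau=-2\int_M\varphi\langle\nabla\varphi,\nabla K_\tau\rangle,
    \qquad
    \int_M 2\varphi K_\tau\Delta\varphi=-2\int_M K_\tau|\nabla\varphi|^2-2\int_M\varphi\langle\nabla K_\tau,\nabla\varphi\rangle .
\]
Combining these with the drift term \(4\int_M\varphi\langle\nabla K_\tau,\nabla\varphi\rangle\), the cross terms cancel exactly and leave \(-2\int_M K_\tau|\nabla\varphi|^2\), which is the dissipation identity. Since \(\varphi=u\), substituting back gives
\[
    \tau\partial_\tau g^F_\tau(V,V)=g^F_\tau(V,V)-4\tau^2\int_M|\nabla u|^2\,d\nu_\tau=g^F_\tau(V,V)-G_\tau(V,V).
\]
Polarization then yields \eqref{eq:gF-scale-equation-summary}.

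I expect the main obstacle to be the bookkeeping in this dissipation computation: the signs in the \(\tau\)-direction, the exact cancellation of the \(R\)-terms between the equation and the volume form, and the cancellation of the cross terms. I will also check the result against the Euclidean model. There \(u=\langle y-x,V\rangle/(2\tau)\), so \(G_\tau=g^F_\tau=g_{\mathbb E}\), and the right-hand side vanishes, consistent with \(g^F_\tau\) being constant in \(\tau\).
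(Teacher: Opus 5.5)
Your proposal is correct and is essentially the paper's argument. The paper likewise uses that $K$ and $\nabla^x_V K$ solve the same conjugate heat equation, so that $\int_M K_VK_W/K\,dg_s$ dissipates at rate $-2\int_M K\langle\nabla(K_V/K),\nabla(K_W/K)\rangle\,dg_s$. It differs only in packaging: it states this as the pointwise identity $\Box^*(pq/u)=-2u\langle\nabla(p/u),\nabla(q/u)\rangle$, integrates it via $\int_M\Box^*F\,dg_s=-\frac{d}{ds}\int_M F\,dg_s$, and treats the bilinear form directly rather than polarizing. Your integration-by-parts bookkeeping is sound, including the cancellation of the $R$-terms and the cross terms.
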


\begin{proof} 
    This is a straightforward calculation. See \S \ref{subsec:ScaleEqFisherMetric}.
\end{proof}

\subsection{Fisher metric monotonicity}

A basic fact, which follows directly from the sharp Hein--Naber Poincar\'e inequality \cite[Theorem 1.10]{HeinNaber2014CPAM}, is that the Fisher metric is a monotone function of its scale:

\begin{proposition}[Fisher metric monotonicity]\label{prop:FisherMetMonot}
    For \(t\) fixed,
    \begin{equation}\label{eq:FisherMetricMonotonicity}
        \partial_\tau g^F_\tau(t)\le 0
    \end{equation}
    as an inequality of symmetric bilinear forms.
\end{proposition}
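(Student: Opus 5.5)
By Proposition~\ref{prop:ScaleEqFisherMetric}, \(\tau\,\partial_\tau g^F_\tau=g^F_\tau-G_\tau\). Since \(\tau>0\), the claim \(\partial_\tau g^F_\tau\le 0\) is equivalent to the tensor inequality \(g^F_\tau\le G_\tau\). So the plan is to show, for every \(V\in T_xM\),
\[
    \frac{1}{2\tau}\int_M (z_V^\tau)^2\,d\nu_\tau
    \;\le\;
    \int_M |\nabla_y z_V^\tau|^2_{g_s}\,d\nu_\tau .
\]
Quadratic forms determine symmetric bilinear forms, so it suffices to take \(V=W\); no simultaneous diagonalization is needed.

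This is exactly a Poincaré inequality for the function \(z_V^\tau\) with respect to the conjugate heat kernel measure \(\nu_{x,t;s}\). The key input is the sharp Hein--Naber Poincaré inequality \cite[Theorem 1.10]{HeinNaber2014CPAM}. It states that for any (suitably integrable) \(u\) on \(M\),
\[
    \int_M \Bigl(u-\int_M u\,d\nu_{x,t;s}\Bigr)^2 d\nu_{x,t;s}
    \le 2\tau\int_M |\nabla u|^2_{g_s}\,d\nu_{x,t;s},
\]
with \(\tau=t-s\). Apply this with \(u=z_V^\tau\). The centering identity \eqref{eq:score-coordinate-mean-summary} says the mean of \(z_V^\tau\) vanishes, so the left side is \(\int (z_V^\tau)^2\,d\nu_\tau=2\tau\, g^F_\tau(V,V)\) by \eqref{eq:score-coordinate-cov-summary}. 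Dividing by \(2\tau\) gives \(g^F_\tau(V,V)\le G_\tau(V,V)\), which is the desired inequality.

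The only step needing care is justifying that \(z_V^\tau\) is an admissible test function. We also need the differentiation under the integral sign used both in the centering identity and in Proposition~\ref{prop:ScaleEqFisherMetric}. On a closed manifold this is immediate, since \(K\) is smooth and positive for \(s<t\). For complete flows with bounded curvature, I would use the standard Gaussian bounds on \(K\) and on \(|\nabla^x K|/K\) (e.g.\ Bamler's gradient estimate, which is essentially Theorem~\ref{thm:BamProp4.2}) to guarantee the integrability needed. I expect this justification, rather than the inequality itself, to be the main technical point; the algebraic core is the one-line combination of the scale equation with Hein--Naber.
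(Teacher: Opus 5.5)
Your proposal is correct and follows the same route as the paper. The paper also combines the scale equation $\tau\,\partial_\tau g^F_\tau=g^F_\tau-G_\tau$ with the Hein--Naber Poincar\'e inequality applied to the centered source score $z_V^\tau$, which gives $g^F_\tau\le G_\tau$; it works on closed manifolds, where the integrability issues you flag are immediate.
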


\begin{proof}
    This follows from the Hein--Naber Poincar\'e inequality in the target and \eqref{eq:gF-scale-equation-summary}. See \S \ref{subsec:PfFisherMetricMonot}.
\end{proof}

\begin{remark}\label{rem:AltProofBamlerIneq}
    Theorem \ref{thm:BamProp4.2} now follows from Lemma \ref{lem:FisherMetricAsTauToZero} and Proposition \ref{prop:FisherMetMonot}.
\end{remark}

\subsection{The heat operator of the Fisher defect tensor}

The \emph{Fisher defect tensor} is defined by
\begin{equation}
    D^F_\tau:=g_t-g^F_\tau. 
\end{equation}
From Theorem \ref{thm:BamProp4.2}, $0 \leq D^F_\tau \leq g_t$.\smallskip 

We define the source-side \emph{Gaussian Hessian defect}:
\begin{equation}\label{eq:source-H-def-Nash-section}
    \mathsf H^{(x)}
    :=
    \nabla_x^2\log K+\frac1{2\tau}g_t.
\end{equation}

\begin{lemma}[Mean source hessian identity] 
\label{lem:mean-Hessian-identity}
    For all source vectors $V,W\in T_xM$,
    \begin{equation}\label{eq:mean-Hessian-identity}
        \int_M \mathsf H^{(x)} (V,W)\,d\nu_\tau
        =\frac1{2\tau}D^F_\tau(V,W).
    \end{equation}
\end{lemma}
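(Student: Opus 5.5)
The identity should follow from differentiating the normalization $\int_M K(x,t;y,s)\,dg_s(y)=1$ twice in the source variable $x$, with $s=t-\tau$ held fixed. Differentiating once gives the centering identity \eqref{eq:score-coordinate-mean-summary}, $\int_M \nabla^x_W K\,dg_s=0$. I will differentiate this once more, covariantly with respect to $g_t$ at the base point. Since the measure $dg_s(y)$ does not depend on $x$ and $M$ is closed, differentiation passes under the integral, and I obtain
\[
    \int_M \nabla^2_x K(x,t;y,s)(V,W)\,dg_s(y)=0 .
\]
Here $\nabla^2_x$ is the $g_t$-Hessian in $x$. Using a covariant Hessian, rather than a second directional derivative, removes the term involving $\nabla_V W$, because that term integrates to zero by the first-order identity.

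Next I rewrite this in logarithmic form. The elementary identity
\[
    \frac{\nabla^2_x K}{K}=\nabla^2_x\log K+\nabla_x\log K\otimes\nabla_x\log K
\]
gives, after integrating against $d\nu_\tau=K\,dg_s$,
\[
    \int_M \nabla^2_x\log K(V,W)\,d\nu_\tau
    =-\int_M \nabla^x_V\log K\,\nabla^x_W\log K\,d\nu_\tau
    =-\frac{1}{2\tau}\,g^F_\tau(V,W),
\]
where the last equality is the definition \eqref{eq:FisherMetric1}.

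Finally, since $\nu_\tau$ is a probability measure, $\int_M \frac{1}{2\tau}g_t(V,W)\,d\nu_\tau=\frac{1}{2\tau}g_t(V,W)$. Adding this to the previous display gives
\[
    \int_M \mathsf H^{(x)}(V,W)\,d\nu_\tau=\frac{1}{2\tau}\bigl(g_t-g^F_\tau\bigr)(V,W)=\frac{1}{2\tau}D^F_\tau(V,W),
\]
which is the claimed identity.

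The only step needing care is justifying differentiation under the integral twice. On a closed manifold the heat kernel is smooth in $(x,y)$ for $s<t$, so the integrand and its $x$-derivatives are continuous on the compact set $M$; the interchange is therefore immediate. In the complete (noncompact) setting I would instead invoke standard Gaussian bounds on $K$ and its first two $x$-derivatives, of the kind underlying Bamler's estimates, to get an integrable dominating function. I expect this domination to be the only potential obstacle; the algebra itself is routine.
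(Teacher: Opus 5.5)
Your proposal is correct and follows essentially the same route as the paper: both differentiate the unit-mass normalization $\int_M K\,dg_s=1$ twice in the source variable and identify the resulting square term with $\frac{1}{2\tau}g^F_\tau$. The only difference is the order of steps. The paper differentiates the first-order identity $\int_M \nabla^x_W\log K\,d\nu_\tau=0$ against the $x$-dependent measure and removes the $\nabla_VW$ term explicitly. You instead take the covariant Hessian of $\int_M K\,dg_s$ first and then convert using $\nabla^2K/K=\nabla^2\log K+\nabla\log K\otimes\nabla\log K$.
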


\begin{proof}
    See \S \ref{subsec:mean-Hessian-identity}.
\end{proof}

A fundamental tensor Bochner formula is:

\begin{proposition}[Matrix square identity]
\label{prop:matrix-square-refines-Nash}
    The Fisher defect tensor satisfies
    \begin{align}\label{eq:deficit-square-refines-Nash}
    \begin{split}
        \Box_{x,t}
        \left(
        \frac{\tau}{2}D^F_{\tau,ab}
        \right)
        &=
        2\tau^2
        \int_M
        \mathsf H^{(x)}_{ca}\mathsf H^{(x)}_{cb}
        \,d\nu_{x,t;s}\\
        &= \frac{1}{2}(D^F_{\tau})^2_{ab}+2\tau^2\int_M \left(\mathsf H^{(x)}-\frac1{2\tau}D^F_\tau\right)_{ca}\left(\mathsf H^{(x)}-\frac1{2\tau}D^F_\tau\right)_{cb}\,d\nu_{x,t;s}.
    \end{split}
    \end{align}
\end{proposition}

\begin{proof}
    The first identity is a straightforward calculation while the second identity is an application of Lemma \ref{lem:mean-Hessian-identity}. See \S \ref{subsec:matrix-square-refines-Nash}.
\end{proof}

\begin{remark}[Fisher heat logistic equation] \label{rem:fisher-heat-logistic-equation}
In the Uhlenbeck-frame convention for \(\Box_{x,t}\) with \(s\) fixed and
\(\tau=t-s\), equation
\eqref{eq:deficit-square-refines-Nash} is equivalent to
\begin{equation}
\label{eq:fisher-heat-logistic-equation}
    \Box_{x,t}g^F_\tau
    =
    \frac1{\tau}
    \left(
        g^F_\tau-(g^F_\tau)^2
    \right)
    -
    4\tau
    \int_M
    \left(
        \mathsf H^{(x)}-\frac1{2\tau}D^F_\tau
    \right)^2
    d\nu_{x,t;s}.
\end{equation}
Here, products of symmetric \(2\)-tensors are taken using \(g_t\).
\end{remark}

\begin{remark}
    The tensor \(\mathsf H^{(x)}\) should be viewed as a source-side splitting defect: it vanishes on the Euclidean Gaussian model, where the source scores are affine translation modes, and its squared \(d\nu\)-average measures the failure of those heat-kernel translation modes to remain parallel, in contrast to the target-side shrinker defect introduced below.
\end{remark}

\begin{remark}\label{rem:FisherDefectNonnegative}
    Directly from \eqref{eq:deficit-square-refines-Nash}, Hamilton's tensor maximum principle \cite[Theorem 9.1]{Hamilton1982} and \cite[\S 4]{Hamilton1986}, and the asymptotics of the heat kernel, we obtain an alternate proof that $D^F_{\tau,ab} \geq 0$; that is, $g^F_\tau \leq g_t$. 
\end{remark}

\subsection{Second scale variation of the Fisher metric}

Throughout this subsection the source point \((x,t)\) is fixed and suppressed from
the notation:
\[
    K_\tau(y):=K(x,t;y,t-\tau),
    \qquad
    d\nu_\tau(y):=K_\tau(y)\,dg_{t-\tau}(y)
    =
    (4\pi\tau)^{-n/2}e^{-f_\tau(y)}\,dg_{t-\tau}(y).
\]
All target gradients, Hessians, Laplacians, divergences, traces, norms, and inner
products are taken in the \(y\)-variable with respect to the metric \(g_{t-\tau}\).

Define the target \emph{shrinker defect tensor}
\begin{equation}\label{eq:target-shrinker-defect}
    S_\tau
    :=
    \Ric_{g_{t-\tau}}
    +
    \nabla_y^2 f_\tau
    -
    \frac{1}{2\tau}g_{t-\tau},
\end{equation}
and \emph{Perelman's Harnack quantity}
\[
    w_\tau
    :=
    \tau
    \left(
        2\Delta_{g_{t-\tau}}f_\tau
        -
        |\nabla_y f_\tau|_{g_{t-\tau}}^2
        +
        R_{g_{t-\tau}}
    \right)
    +
    f_\tau-n.
\]
We also write
\[
    \Delta_{f_\tau}u
    :=
    \Delta_{g_{t-\tau}}u
    -
    \left\langle
        \nabla_y f_\tau,\nabla_yu
    \right\rangle_{g_{t-\tau}}
\]
for the weighted target Laplacian.

We record two identities relating the source scores to the shrinker defect
tensor and Perelman's Harnack quantity.

\begin{lemma}[Source-target derivative identities]
\label{lem:target-hessian-source-score}
For every \(V\in T_xM\),
\begin{subequations}
\begin{align}
    \nabla_y^2 z_V^\tau
    &=
    -2\tau\,\nabla_V^x S_\tau,
    \label{eq:hessian-score-source-shrinker}\\
    \Delta_{f_\tau}z_V^\tau
    +
    \frac1{2\tau}z_V^\tau
    &=
    -\nabla_V^x w_\tau.
    \label{eq:drift-laplace-score-source-perelman-harnack}
\end{align}
\end{subequations}
\end{lemma}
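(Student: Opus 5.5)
The plan is to note that the source derivative \(\nabla^x_V\) commutes with every target-side operation. The target time \(s=t-\tau\) is held fixed, together with the metric \(g_s\) and its curvature. Writing \(\log K=-\tfrac n2\log(4\pi\tau)-f_\tau\), we get
\[
z_V^\tau=2\tau\,\nabla^x_V\log K=-2\tau\,\nabla^x_V f_\tau .
\]
I would fix a curve or local coordinates in \(x\) with \(t\) fixed. Then \(f_\tau(y)\) is a smooth function of \((x,y)\) for fixed \(t,s\), because the heat kernel is smooth in both variables. Mixed partial derivatives therefore commute.

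For \eqref{eq:hessian-score-source-shrinker}, apply \(\nabla_y^2\), which depends only on \(g_s\) and not on \(x\):
\[
\nabla_y^2 z_V^\tau=-2\tau\,\nabla_V^x\nabla_y^2 f_\tau .
\]
The terms \(\Ric_{g_{s}}\) and \(\frac1{2\tau}g_s\) in \(S_\tau\) do not depend on \(x\), so
\[
\nabla^x_V S_\tau=\nabla^x_V\nabla^2_y f_\tau ,
\]
and the first identity follows at once.

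For \eqref{eq:drift-laplace-score-source-perelman-harnack}, differentiate \(w_\tau\) in \(x\). The terms \(\tau R\) and \(-n\) drop out, leaving
\[
\nabla^x_V w_\tau=\tau\bigl(2\Delta\nabla^x_Vf-2\langle\nabla f,\nabla\nabla^x_Vf\rangle\bigr)+\nabla^x_Vf
=2\tau\,\Delta_{f_\tau}(\nabla^x_Vf)+\nabla^x_V f .
\]
Substituting \(\nabla^x_V f=-z_V^\tau/(2\tau)\) gives
\[
\nabla^x_V w_\tau=-\Delta_{f_\tau}z_V^\tau-\tfrac1{2\tau}z_V^\tau ,
\]
which is the claim.

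The main point to check is the commutation step: a source derivative acting on a target object at fixed \((t,s)\). This is legitimate because \(\tau=t-s\) is independent of \(x\) and the target metric \(g_s\) is independent of \(x\). Sign conventions need care, specifically \(K=(4\pi\tau)^{-n/2}e^{-f}\). No heat equation, Harnack inequality, or earlier result is actually needed, only smoothness of \(K\) jointly in \((x,y)\) on a closed manifold, or locally in the complete case.
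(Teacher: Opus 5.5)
Your proof is correct and is essentially the paper's argument. Like the paper, you write \(z_V^\tau=-2\tau\,\nabla_V^x f_\tau\), commute the source derivative with the \(x\)-independent target operators (which depend only on \(g_{t-\tau}\)), note that \(\nabla_y^2 f_\tau\) is the only \(x\)-dependent term in \(S_\tau\), and differentiate \(w_\tau\) directly to get \(2\tau\,\Delta_{f_\tau}(\nabla_V^x f_\tau)+\nabla_V^x f_\tau\).
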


\begin{proof}
    See \S \ref{subsec:target-hessian-source-score}.
\end{proof}

We have the following scale almost-convexity formula for the Fisher metric.

\begin{proposition}
\label{prop:second-scale-variation-of-the-fisher-metric}
For every \(V,W\in T_xM\),
\begin{equation}\label{eq:second-scale-variation-fisher}
\begin{aligned}
    \frac{d^2}{d\tau^2}g^F_\tau(V,W)
    &=
    \frac{2}{\tau}
    \int_M
    \left\langle
        \nabla_y^2 z_V^\tau,
        \nabla_y^2 z_W^\tau
    \right\rangle_{g_{t-\tau}}
    \,d\nu_\tau
    +
    \frac{4}{\tau}
    \int_M
    S_\tau
    \bigl(
        \nabla_y z_V^\tau,
        \nabla_y z_W^\tau
    \bigr)
    \,d\nu_\tau                                      \\
    &=
    8\tau
    \int_M
    \left\langle
        \nabla_V^x S_\tau,
        \nabla_W^x S_\tau
    \right\rangle_{g_{t-\tau}}
    \,d\nu_\tau
    +
    \frac{4}{\tau}
    \int_M
    S_\tau
    \bigl(
        \nabla_y z_V^\tau,
        \nabla_y z_W^\tau
    \bigr)
    \,d\nu_\tau .
\end{aligned}
\end{equation}
\end{proposition}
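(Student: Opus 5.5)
The plan is to differentiate the scale equation of Proposition \ref{prop:ScaleEqFisherMetric} once more, which reduces everything to computing \(\partial_\tau G_\tau\). Writing \(\partial_\tau g^F_\tau=\tau^{-1}(g^F_\tau-G_\tau)\) and differentiating gives
\[
\partial_\tau^2 g^F_\tau=\tau^{-1}\bigl(\partial_\tau g^F_\tau-\partial_\tau G_\tau\bigr)-\tau^{-2}(g^F_\tau-G_\tau)=-\tau^{-1}\partial_\tau G_\tau .
\]
So it suffices to prove, for each \(V\) (and then polarize),
\[
\frac{d}{d\tau}G_\tau(V,V)=-2\int_M|\nabla_y^2 z_V^\tau|^2\,d\nu_\tau-4\int_M S_\tau(\nabla_y z_V^\tau,\nabla_y z_V^\tau)\,d\nu_\tau .
\]
The second line of \eqref{eq:second-scale-variation-fisher} then follows from the first by substituting \eqref{eq:hessian-score-source-shrinker}, since \(|\nabla_y^2 z_V^\tau|^2=4\tau^2|\nabla_V^xS_\tau|^2\).

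To compute \(\partial_\tau G_\tau\), I will work in the canonical moving gauge \(\partial_\tau+\mathcal L_{\nabla f_\tau}\). In this gauge the measure is frozen by \eqref{eq: target time Lie heat ker meas}, and the metric evolves by \eqref{eq:freezing-of-conjugate-heat-kernel}.

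\emph{Evolution of the score.} Since \(\log K_\tau=-f_\tau-\frac n2\log(4\pi\tau)\), we have \(z_V^\tau=-2\tau\,\nabla_V^xf_\tau\). The conjugate heat equation gives
\[
\partial_\tau f=\Delta f-|\nabla f|^2+R-\tfrac{n}{2\tau}.
\]
Applying the source derivative \(\nabla_V^x\), which commutes with target operators, yields \(\partial_\tau(\nabla^x_Vf)=\Delta\nabla^x_Vf-2\langle\nabla f,\nabla\nabla^x_Vf\rangle\). Hence
\[
(\partial_\tau+\nabla f\cdot\nabla)z_V=\tau^{-1}z_V+\Delta_{f}z_V .
\]

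\emph{Evolution of the energy.} Differentiating \(\int|\nabla z_V|^2_{g_{t-\tau}}d\nu_\tau\) in the moving gauge, the measure contributes nothing. The inverse metric contributes \(-2\int(\Ric+\nabla^2f)(\nabla z_V,\nabla z_V)\,d\nu\). The material derivative of \(z_V\) contributes
\[
\tfrac2\tau\int|\nabla z_V|^2\,d\nu+2\int\langle\nabla z_V,\nabla\Delta_fz_V\rangle\,d\nu .
\]

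\emph{Weighted Bochner step.} The weighted Bochner formula reads
\[
\tfrac12\Delta_f|\nabla z|^2=|\nabla^2z|^2+\langle\nabla z,\nabla\Delta_fz\rangle+(\Ric+\nabla^2f)(\nabla z,\nabla z).
\]
Since \(\int\Delta_f(\cdot)\,d\nu=0\), this gives \(\int\langle\nabla z,\nabla\Delta_fz\rangle\,d\nu=-\int|\nabla^2z|^2\,d\nu-\int(\Ric+\nabla^2f)(\nabla z,\nabla z)\,d\nu\). Collecting terms, the \(\Ric+\nabla^2f\) terms appear with total coefficient \(-4\). The term \(\tfrac2\tau\int|\nabla z|^2\) combines with these exactly to replace \(\Ric+\nabla^2f\) by \(S_\tau=\Ric+\nabla^2f-\tfrac1{2\tau}g\), because \(-4\cdot(-\tfrac1{2\tau})=\tfrac2\tau\). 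This is the claimed formula.

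The main obstacle is bookkeeping rather than conceptual: making sure the Lie-derivative gauge is applied consistently to \(z_V\), the metric and the measure, and checking the commutation of \(\nabla^x_V\) with the target operators. On a closed manifold, differentiation under the integral and the vanishing of \(\int\Delta_f(\cdot)\,d\nu\) are justified by smoothness of \(K\) for \(\tau>0\).
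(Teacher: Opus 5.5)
Your proposal is correct and follows essentially the same route as the paper: reduce to \(\partial_\tau G_\tau\) via \(\tau\,\partial_\tau^2 g^F_\tau=-\partial_\tau G_\tau\), compute \(\partial_\tau G_\tau\) in the heat-kernel gauge where \(\nu_\tau\) is frozen using \((\partial_\tau+\mathcal L_{\nabla f_\tau})z_V^\tau=\Delta_{f_\tau}z_V^\tau+\tau^{-1}z_V^\tau\) and the weighted Bochner formula, and absorb the \(\frac{2}{\tau}G_\tau\) term into \(S_\tau\). The differences are cosmetic. You derive the score evolution explicitly, whereas the paper only quotes it. You also integrate the pointwise Bochner identity directly rather than passing through \(\int_M(\Delta_{f_\tau}z_V^\tau)(\Delta_{f_\tau}z_W^\tau)\,d\nu_\tau\).
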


\begin{proof}
    See \S\ref{subsec:second-scale-variation-of-the-fisher-metric}.
\end{proof}

\begin{remark}[Convexity and rigidity]
    Assume that, for the fixed source \((x,t)\), \(S_\tau\equiv0\) for every
    \(\tau\in I\), where \(I\subset(0,\infty)\) is an interval. Then
    \eqref{eq:second-scale-variation-fisher} gives
    \[
        \frac{d^2}{d\tau^2}g^F_\tau(V,V)
        =
        \frac2\tau
        \int_M
        |\nabla_y^2z_V^\tau|_{g_{t-\tau}}^2
        \,d\nu_\tau
        \ge0.
    \]
    Thus \(\tau\longmapsto g^F_\tau(V,V)\) is convex on \(I\), and it is affine
    if and only if \(\nabla_y^2z_V^\tau\equiv0\) for every \(\tau\in I\).
    By \eqref{eq:hessian-score-source-shrinker}, this is equivalent to
    \(\nabla_V^xS_\tau\equiv0\) for every \(\tau\in I\).

    If \(\tau\mapsto g^F_\tau(V,V)\) is affine on \(I\) and
    \(g^F_{\tau_0}(V,V)>0\) for some \(\tau_0\in I\), then
    \(z_V^{\tau_0}\) is nonconstant because
    \(\int_M z_V^{\tau_0}\,d\nu_{\tau_0}=0\). Since
    \(\nabla_y^2z_V^{\tau_0}\equiv0\), the gradient
    \(\nabla_yz_V^{\tau_0}\) is a nonzero parallel vector field. Thus, if
    \((M,g_{t-\tau_0})\) is complete, its universal cover splits isometrically
    off an \(\mathbb R\)-factor.

    Finally, if \(I=(0,\infty)\), then every affine function
    \(\tau\longmapsto g^F_\tau(V,V)\) is constant: Fisher monotonicity makes
    its slope nonpositive, while \(g^F_\tau(V,V)\ge0\) for all \(\tau>0\)
    rules out a negative slope. Whenever the short-time limit in
    Lemma~\ref{lem:FisherMetricAsTauToZero} applies, this constant is
    \(g_t(V,V)\), so that \(g^F_\tau(V,V)\equiv g_t(V,V)\).
\end{remark}

The same computation used in the proof of
Proposition~\ref{prop:second-scale-variation-of-the-fisher-metric} gives the
following logarithmic-scale almost-convexity formula. For every
\(V,W\in T_xM\),
\begin{align}
\begin{split}
    \frac{d^2}{d(\log\tau)^2}g_\tau^F(V,W)
    &=
    2\tau
    \int_M
    \left(
        \Delta_{f_\tau}z_V^\tau+\frac{1}{2\tau}z_V^\tau
    \right)
    \left(
        \Delta_{f_\tau}z_W^\tau+\frac{1}{2\tau}z_W^\tau
    \right)
    d\nu_\tau                                      \\
    &\qquad+
    2\tau
    \int_M
    S_\tau(\nabla_yz_V^\tau,\nabla_yz_W^\tau)
    \,d\nu_\tau                                   \\
    &=
    2\tau
    \int_M
    \bigl(\nabla_V^x w_\tau\bigr)
    \bigl(\nabla_W^x w_\tau\bigr)
    \,d\nu_\tau
    +
    2\tau
    \int_M
    S_\tau
    \bigl(
        \nabla_y z_V^\tau,
        \nabla_y z_W^\tau
    \bigr)
    \,d\nu_\tau .
\end{split}
\end{align}

\section{Relation with pointed Nash entropy}\label{sec3}

We now explain how the Fisher defect tensor recovers, after taking its trace, the heat-operator formula for the pointed Nash entropy as calculated by Bamler \cite{Bamler2020A}. 
Thus the Fisher defect is a tensorial refinement of the corresponding scalar entropy quantity.

\subsection{The pointed Nash entropy}

Recall that the heat-kernel potential \(f=f_{x,t;s}\) is defined by
\begin{equation}
    K(x,t;y,s)
    =
    (4\pi\tau)^{-n/2}e^{-f(y)},
    \qquad
    \tau=t-s. 
\end{equation}
The \emph{pointed Nash entropy} based at \((x,t)\) and scale \(\tau\) is
\begin{equation}\label{eq:pointed-nash-def}
    \mathcal N_{x,t}(\tau)
    :=
    \int_M f(y)\,d\nu_{x,t;t-\tau}(y)-\frac n2. 
\end{equation}
Equivalently,
\begin{equation}\label{eq:pointed-nash-entropy-form}
    \mathcal N_{x,t}(\tau)
    =
    -\int_M K(x,t;y,t-\tau)\log K(x,t;y,t-\tau)\,dg_{t-\tau}(y)
    -\frac n2\log(4\pi\tau)-\frac n2. 
\end{equation}
In Euclidean space one has \(\mathcal N_{x,t}(\tau)\equiv 0.\) In general \(\mathcal N_{x,t}(\tau)\le0\), so \(\mathcal N_{x,t}\) measures the entropy defect of the conjugate heat kernel relative to the Euclidean Gaussian.

\subsection{The trace of the Fisher defect}

For fixed \(s<t\), define
\begin{equation}\label{eq:E-Nash-heat-operator-def}
    E(x,t;s)
    :=
    \Box_{x,t}\mathcal N_{x,t}(t-s).
\end{equation}
By \cite[Theorem 5.9]{Bamler2020A},
\begin{equation}\label{eq:E-Fisher-information}
    -\frac{n}{2\tau}\leq E(x,t;s)
    =
    \int_M |\nabla_x\log K(x,t;y,s)|^2\,d\nu_{x,t;s}(y)
    -
    \frac n{2\tau}\leq 0.
\end{equation}
We relate how the second inequality is related to the defect tensor. Since
\[
    \operatorname{tr}_{g_t}g^F_\tau
    =
    2\tau
    \int_M |\nabla_x\log K(x,t;y,s)|^2\,d\nu_{x,t;s}(y),
\]
we obtain
\begin{equation}\label{eq:trace-J-Nash}
    \operatorname{tr}\Jcal_\tau
    =
    \operatorname{tr}_{g_t}g^F_\tau
    =
    n+2\tau E(x,t;s)
    =
    n+2\tau\,\Box_{x,t}\mathcal N_{x,t}(\tau).
\end{equation}
Equivalently,
\begin{equation}\label{eq:trace-deficit-Nash}
    \operatorname{tr}_{g_t}D^F_\tau
    =
    -2\tau E(x,t;s)
    =
    -2\tau\,\Box_{x,t}\mathcal N_{x,t}(\tau).
\end{equation}
Thus the scalar Nash heat-operator quantity is exactly the normalized trace of the Fisher defect. In particular, the matrix gradient estimate \(D^F_\tau\ge0\) implies \(E= \Box_{x,t}\mathcal N_{x,t}(\tau)\le0\) which is the second inequality in \eqref{eq:E-Fisher-information}.

\subsection{The scalar square identity}

Tracing the matrix square identity \eqref{eq:deficit-square-refines-Nash} gives the following scalar identity.

\begin{corollary}[Scalar square identity for pointed Nash entropy]
\label{cor:scalar-Nash-square-identity}
    We have
    \begin{equation}\label{eq:BoxE-raw-Nash-section}
        \Box_{x,t}E
        =
        \frac n{2\tau^2}
        -
        2\int_M|\nabla_x^2\log K|^2\,d\nu_{x,t;s}.
    \end{equation}
    Equivalently,
    \begin{equation}\label{eq:tau2E-square}
        \Box_{x,t}\big(\tau^2E\big)
        =
        -2\tau^2
        \int_M|\mathsf H^{(x)}|^2\,d\nu_{x,t;s}
        \le -\frac{2}{n}\tau^2E^2 \le0,
    \end{equation}
    where $\mathsf H^{(x)}$ is defined by \eqref{eq:source-H-def-Nash-section}.
\end{corollary}

\begin{proof}
    This follows by tracing Proposition~\ref{prop:matrix-square-refines-Nash}. See \S\ref{subsec:scalar-Nash-square-identity}.
\end{proof}

\begin{remark}
    Together with the short-time asymptotic \(\tau^2E = \frac{\tau}{2}\bigl(\operatorname{tr}\Jcal_\tau-n\bigr) \rightarrow0\) as \(\tau\downarrow0,\) \eqref{eq:tau2E-square} gives another maximum-principle proof of \(E=\Box_{x,t}\mathcal N_{x,t}(\tau)\le0,\) recovering the scalar inequality in \cite[(5.10)]{Bamler2020A}.
\end{remark}

\section{Fisher equality rigidity}\label{sec4}

We have the following Fisher-metric-detected splitting/rigidity theorem for Ricci flows. This is a consequence of the Bochner formula \eqref{eq:deficit-square-refines-Nash} and Hamilton's strong maximum principle.

\begin{theorem}[Fisher equality rigidity]\label{thm:fisher-equality-rigidity}
    Let \((M^n,g(t))\) be a connected Ricci flow, without boundary, on a time interval containing \((s,t_0]\), where \(s<t_0\). Set \(\tau:=t-s.\) Suppose that the Fisher deficit tensor satisfies \(D^F_\tau\ge 0\) for \(s<t\le t_0\) (we make this assumption because we are not assuming that \(M\) is closed). Suppose that for some \(x_0\in M\) and some nonzero vector \(V_0\in T_{x_0}M\) one has
    \[
        D^F_{\tau_0}(V_0,V_0)=0,
        \qquad
        \tau_0:=t_0-s. 
    \]
    Then the nullspaces \(\mathcal K_x := \ker D^F_{\tau_0}(x,t_0;s)\) form a nontrivial parallel distribution \(\mathcal{K}\) on \((M,g(t_0))\). Moreover, for every \(x\in M\), every \(V\in\mathcal K_x\), and every \(y\in M\),
    \begin{equation}\label{eq:null-H-vanishing}
        \mathsf H^{(x)}(\cdot,V)(x,t_0;y,s)=0,
    \end{equation}
    where \(\mathsf H^{(x)} := \nabla_x^2\log K(x,t_0;y,s) + \frac{1}{2\tau_0}g_{t_0}\). Consequently,
    \begin{equation}\label{eq:curvature-annihilates-K}
        \Rm_{g(t_0)}(W,Z)V=0
    \end{equation}
    for all \(W,Z\in T_xM\) and all \(V\in\mathcal K_x\).
    
    If, in addition, \((M,g(t_0))\) is complete, then the universal cover splits isometrically as
    \[
        (\widetilde M,\widetilde g(t_0))
        \cong
        (\mathbb R^k,g_{\mathrm{Euc}})\times (N,h(t_0)),
    \]
    where \(k=\dim\mathcal K,\) and the lifted distribution \(\widetilde{\mathcal K}\) is tangent to the Euclidean factor.
    
    Finally, the local Hamilton strong maximum principle also gives space-time invariance of the null distribution on \((s,t_0]\). Hence, if the metrics \(g(t)\) are complete for all \(s<t\le t_0\), then the lifted Ricci flow splits on \((s,t_0]\) as
    \[
        (\widetilde M,\widetilde g(t))
        \cong
        (\mathbb R^k,g_{\mathrm{Euc}})\times (N,h(t)),
        \qquad
        s<t\le t_0,
    \]
    where the Euclidean factor is static and \(h(t)\) evolves by Ricci flow.
\end{theorem}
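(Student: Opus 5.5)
The plan is to treat \(\frac{\tau}{2}D^F_\tau\) as a nonnegative symmetric \(2\)-tensor on \(M\times(s,t_0]\), in the source variables \((x,t)\), satisfying the heat-type equation of Proposition~\ref{prop:matrix-square-refines-Nash}. Its reaction term \(2\tau^2\int_M \mathsf H^{(x)}\mathsf H^{(x)}\,d\nu\) is nonnegative definite. Since the reaction is only \(\ge 0\), I will apply Hamilton's strong maximum principle for tensors in its local form, using Uhlenbeck frames so that \(\Box_{x,t}\) acts on a time-dependent bundle. The hypothesis \(D^F_\tau\ge 0\) together with a null vector \(V_0\) at the interior point \((x_0,t_0)\) then gives two facts. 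First, the kernel \(\mathcal K_x=\ker D^F_{\tau_0}(x,t_0;s)\) has constant rank on the connected set \(M\times\{t_0\}\) and is invariant under parallel transport for \(g(t_0)\); the same holds on each earlier slice, and the kernels are invariant in time as well. Second, the reaction term annihilates the null directions: for \(V\in\mathcal K_x\),
\[
\int_M \sum_c \bigl(\mathsf H^{(x)}(e_c,V)\bigr)^2\,d\nu_{x,t_0;s}=0.
\]
Because \(K>0\) everywhere, this forces \(\mathsf H^{(x)}(\cdot,V)(x,t_0;y,s)=0\) for every \(y\), which is \eqref{eq:null-H-vanishing}.

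Next I will derive the curvature identity \eqref{eq:curvature-annihilates-K}. Take a parallel local section \(V\) of \(\mathcal K\). For each \(y\), the function \(\ell=\log K(\cdot,t_0;y,s)\) satisfies \(\nabla^2\ell(\cdot,V)=-\frac1{2\tau_0}g(\cdot,V)\). Since \(V\) is parallel, differentiating once more gives \(\nabla^3\ell(W,Z,V)=0\) for all \(W,Z\). Commuting covariant derivatives then yields
\[
\Rm(W,Z,V,\nabla\ell)=\nabla^3\ell(W,Z,V)-\nabla^3\ell(Z,W,V)=0.
\]
The remaining task is to show that the vectors \(\nabla_x\log K(x,t_0;y,s)\), as \(y\) ranges over \(M\), span \(T_xM\). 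I expect this to be the main obstacle. My first approach is by contradiction. If some covector \(\omega\) annihilated all of them, then \(g^F_{\tau_0}(\omega^\sharp,\omega^\sharp)=0\), so \(\omega^\sharp\) would lie in \(\ker g^F\). I then need \(g^F_{\tau_0}>0\). To get it, I will use the monotonicity of Proposition~\ref{prop:FisherMetMonot} together with positivity of \(g^F\) for small \(\tau\) from Lemma~\ref{lem:FisherMetricAsTauToZero}. That alone is not enough, so alternatively I will argue analytically: a vanishing score \(z_V\equiv 0\) in \(y\) would make \(K(\cdot,t_0;y,s)\) constant along \(V\) for all \(y\). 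The forward heat equation in \(x\) and the delta-function initial data then give a contradiction via unique continuation. Once spanning is established, \(\Rm(W,Z)V=0\) follows, using the pair symmetry of \(\Rm\).

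For the splitting, a parallel distribution on a complete manifold splits the universal cover by de Rham's theorem as \(\widetilde{\mathcal K}\)-leaves times the orthogonal factor. The curvature identity shows that the \(\mathcal K\)-factor is flat, and a complete, simply connected, flat manifold is \(\mathbb R^k\). For the time statement, the strong maximum principle gives invariance of \(\mathcal K\) on \((s,t_0]\), with \(\Rm(\cdot,\cdot)V=0\) at each time. Hence \(\Ric(V,\cdot)=0\), so \(\partial_t g(V,\cdot)=0\) and \(\mathcal K\) stays parallel with a static metric. Applying de Rham at each time and using uniqueness of the flow on the lifted product gives the static Euclidean factor times \((N,h(t))\) evolving by Ricci flow.
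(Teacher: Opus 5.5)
Your outline follows the paper through the strong maximum principle, the vanishing \eqref{eq:null-H-vanishing}, and the de Rham and time-slice steps. The spanning step for \eqref{eq:curvature-annihilates-K}, however, has a genuine gap. You reduce to showing that the vectors $\nabla_x\log K(x,t_0;y,s)$, $y\in M$, span all of $T_xM$, i.e.\ that $g^F_{\tau_0}(x,t_0)$ is positive definite, and you do not establish this. Your analytic sketch fails as written. The condition $z_V\equiv 0$ in $y$ is information at the single source point $x$, so it does not make $K(\cdot,t_0;y,s)$ constant along $V$ on any open set, and there is nothing for unique continuation of the forward heat equation in $x$ to act on. The argument that does give $g^F>0$ is the paper's proof of Corollary~\ref{cor:strict-closed-fisher}. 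It works in the target variables: differentiate the reproduction formula, use density of the range of $P_{s,\ell}$ (backward uniqueness) to get $\nabla^x_VK(x,t_0;\cdot,\ell)\equiv0$ for $s<\ell<t_0$, and contradict the short-time asymptotics. That relies on $M$ being closed, which Theorem~\ref{thm:fisher-equality-rigidity} deliberately does not assume (it allows noncompact, even incomplete, flows). You would therefore be importing an unproved analytic statement.

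The missing observation is that full spanning is unnecessary. Because $\mathcal K$ is parallel, $\Rm(W,Z)$ maps $\mathcal K$ into itself. Let $P$ be the orthogonal projection onto $\mathcal K$ and $\ell_y:=\log K(\cdot,t_0;y,s)$. Your identity then reads $0=\langle\Rm(W,Z)V,\nabla\ell_y\rangle=\langle\Rm(W,Z)V,P\nabla\ell_y\rangle$, so it suffices that the vectors $P\nabla\ell_y(x)$ span $\mathcal K_x$. This is automatic. If $U\in\mathcal K_x$ is orthogonal to all of them, then $\nabla^x_U\log K(x,t_0;y,s)=0$ for every $y$, hence $g^F_{\tau_0}(U,U)=0$. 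But $U\in\ker D^F_{\tau_0}$ forces $g^F_{\tau_0}(U,U)=|U|^2_{g(t_0)}$, so $U=0$. This is how the paper closes the argument; it phrases it via $X_y=P\nabla_x\log K$, $\nabla_WX_y=-\frac{1}{2\tau_0}PW$, hence $\Rm(W,Z)X_y=0$. The same fix is needed on each earlier slice for your time-dependent statement.

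A smaller point: you cannot in general take a \emph{parallel} local section of $\mathcal K$, since its existence already presupposes $\Rm(\cdot,\cdot)V=0$. Any local section of $\mathcal K$ suffices, though, because $\nabla_WV\in\mathcal K$ and $\nabla^2\ell_y(\cdot,U)=-\frac1{2\tau_0}g(\cdot,U)$ for all $U\in\mathcal K$. This still yields $\nabla^3\ell_y(W,Z,V)=0$.
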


\begin{proof}
    See \S \ref{subsec:PfFisherEqualityRigidity}.
\end{proof}

The Fisher splitting theorem has the following consequence for Ricci flows on closed manifolds.

\begin{corollary}[Strict Fisher eigenvalue bounds on closed flows]\label{cor:strict-closed-fisher}
    Let \((M^n,g(t))\) be a connected closed Ricci flow. Then for every \(s<t\),
    \[
        0<g^F_{t-s}(x,t)<g_t(x) 
    \]
    as quadratic forms at every \(x\in M\). Equivalently, every Fisher eigenvalue satisfies
    \[
        0 <  \lambda_i(x,t;t-s)<1.
    \]
    Thus neither degeneracy of the Fisher metric nor exact equality in the matrix gradient estimate can occur on a connected closed Ricci flow at positive scale.
\end{corollary}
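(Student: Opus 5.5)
We prove the two strict inequalities separately.

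For the upper bound \(g^F_{t-s}<g_t\), argue by contradiction. On a closed flow, Theorem~\ref{thm:BamProp4.2} gives \(D^F_\tau\ge0\) for all \(s<t'\le t\), so the hypotheses of Theorem~\ref{thm:fisher-equality-rigidity} hold with \(t_0=t\). Suppose \(D^F_{t-s}(V_0,V_0)=0\) for some nonzero \(V_0\in T_xM\). Then the null spaces form a nontrivial parallel distribution \(\mathcal K\) of rank \(k\ge1\). Since \(M\) is closed, \((M,g(t))\) is complete, so the universal cover splits as \(\mathbb R^k\times N\) with \(\widetilde{\mathcal K}\) tangent to the Euclidean factor. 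Because the flow is complete on all of \((s,t]\), the last part of the theorem shows that the lifted flow splits with a static \(\mathbb R^k\) factor.

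To contradict this, use \eqref{eq:null-H-vanishing}: for \(V\in\mathcal K_x\) we have \(\nabla^x_V\nabla^x\log K=-\frac1{2\tau}V^\flat\), identically in \(y\). Take a unit parallel field \(V\in\mathcal K\) and a geodesic \(\gamma\) in \(M\) with \(\gamma'=V\) (parallel, so \(\gamma\) is a geodesic). Along \(\gamma\),
\[
    \frac{d^2}{d\sigma^2}\log K(\gamma(\sigma),t;y,s)=-\frac1{2\tau}
\]
for every fixed \(y\). The geodesic is defined for all \(\sigma\in\mathbb R\) by completeness. Hence \(\sigma\mapsto\log K(\gamma(\sigma),t;y,s)\) is a concave quadratic and tends to \(-\infty\) as \(|\sigma|\to\infty\). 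But \(M\) is compact and \(K\) is a smooth positive function on the compact set \(M\times M\) at the fixed times \(t,s\), so \(\log K\) is bounded below. This is the contradiction, and it shows \(D^F_\tau>0\). I expect this to be the key step. An alternative route would use compactness of a flat parallel factor to force periodicity of \(\gamma\), but the boundedness argument is cleaner and needs neither the universal cover nor the time-splitting. Only \eqref{eq:null-H-vanishing} at the single time \(t\) is used.

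For the lower bound \(g^F_\tau>0\), suppose \(g^F_\tau(V,V)=0\) for some nonzero \(V\in T_xM\). By the covariance formula \eqref{eq:score-coordinate-cov-summary}, \(z^\tau_V=0\) \(\nu_\tau\)-almost everywhere. Since \(K>0\) and everything is smooth, this means \(\nabla^x_V K(x,t;y,s)=0\) for all \(y\). Now \(V\) is a fixed vector at \(x\), while \(y\mapsto\nabla^x_VK(x,t;y,s)\) solves the conjugate heat equation in \((y,s')\) for \(s'<t\), as a derivative of solutions with respect to the base point. It vanishes at time \(s\). By backward uniqueness for the conjugate heat equation (equivalently, uniqueness for the forward heat equation in reversed time), or more simply by the semigroup property \(K(x,t;z,s)=\int K(x,t;y,s')K(y,s';z,s)\,dg_{s'}(y)\) for \(s<s'<t\), we can transfer the vanishing. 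Indeed, one clean route is to apply monotonicity, Proposition~\ref{prop:FisherMetMonot}: \(g^F\) is nonincreasing in \(\tau\), so \(g^F_{\tau'}(V,V)=0\) for all \(\tau'\ge\tau\). Hence it suffices to show that degeneracy at one scale forces degeneracy at smaller scales. For that, differentiate the semigroup identity in \(V^x\): \(\nabla_V^xK(x,t;\cdot,s)=P^*_{s',s}\bigl(\nabla_V^xK(x,t;\cdot,s')\bigr)\), where \(P^*_{s',s}\) is the conjugate heat propagator. Vanishing on the left, together with injectivity of the heat propagator on a closed manifold (backward uniqueness, e.g. analyticity in time or log-convexity of \(\|u\|_{L^2}\)), gives vanishing at every \(s'\in(s,t)\). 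This means \(g^F_{\tau'}(V,V)=0\) for all small \(\tau'\), which contradicts Lemma~\ref{lem:FisherMetricAsTauToZero}, since \(g^F_{\tau'}\to g_t\). The backward uniqueness citation is the only delicate input here.

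Finally, the eigenvalue statement is simply the restatement \(0<\Jcal_\tau<\mathrm{Id}\).
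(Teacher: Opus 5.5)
Your proof is correct and follows the paper's argument in both halves. For the lower bound, you propagate degeneracy to all intermediate times via the differentiated reproduction formula plus backward uniqueness; the paper phrases this as density of the range of $P_{s,\ell}$, which is the adjoint form of your injectivity of the conjugate propagator. For the upper bound, Theorem~\ref{thm:fisher-equality-rigidity} makes $\log K(\cdot,t;y,s)$ a concave quadratic along a complete geodesic, contradicting boundedness.

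The differences are cosmetic. You close the lower bound with Lemma~\ref{lem:FisherMetricAsTauToZero} rather than the explicit short-time asymptotics of $\nabla^x_V\log K$, and the monotonicity aside plays no role. In the upper bound, a global unit parallel field in $\mathcal K$ need not exist on $M$; instead take a geodesic with $\gamma'(0)\in\mathcal K_x$, whose velocity stays in $\mathcal K$ because the distribution is parallel.
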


\begin{proof}
    See \S \ref{subsec:PfStrictFisherDeficitClosedFlows}.
\end{proof}

\section{Sharp reverse Poincar\'e inequality}\label{sec5}

In this section, we use the matrix gradient estimate \(g_\tau^F\le g_t\) to prove a sharp reverse Poincar\'e inequality for the heat semigroup.

For \(\phi\in C^\infty(M)\), define its heat average based at \((x,t)\) by
\begin{equation}
    ( P_{s,t}\phi ) (x)
    :=
    \int_M \phi(y)\,K(x,t;y,s)\,dg_s(y)
    =
    \int_M \phi\,d\nu_{x,t;s},
\end{equation} 
where \(P_{s,t}\) is the forward heat operator from time \(s\) to time \(t\).

Since 
\begin{align*}
    d_x ( P_{s,t}\phi ) (V) = \int_M \bigl(\phi-  P_{s,t}\phi (x)\bigr)\,
\nabla^x_V \log K \, d\nu_{x,t;s},
\end{align*}
the matrix gradient estimate may be repackaged as the following Fisher-form reverse Poincaré inequality.

\begin{corollary}[Fisher-form reverse Poincar\'e inequality]
\label{cor:sharp-reverse-poincare-hellinger}
    For every \(\phi\in C^\infty(M)\),
    \begin{equation}\label{eq:tensor-reverse-poincare}
        2\tau \vts 
        d_x(P_{s,t}\phi)\otimes d_x(P_{s,t}\phi)
        \le
        \operatorname{Var}_{x,t;s}(\phi) \vts  g^F_\tau (x)
        \le
        \operatorname{Var}_{x,t;s}(\phi) \,g_t(x),
    \end{equation}
    where
    \[
        \operatorname{Var}_{x,t;s}(\phi)
        :=
        \int_M
        \left(\phi- ( P_{s,t}\phi ) (x)\right)^2
        d\nu_{x,t;s}
        =
        \int_M \phi^2\,d\nu_{x,t;s}
        -
        \left(\int_M \phi\,d\nu_{x,t;s}\right)^2. 
    \]
    Equivalently,
    \begin{equation}\label{eq:sharp-reverse-poincare}
        2\tau\,|\nabla_x (P_{s,t}\phi)|_{g_t}^2
        \le
        2\tau\,|\nabla_x (P_{s,t}\phi)|_{(g_\tau^F)^{-1}}^2
        \le
        \operatorname{Var}_{x,t;s}(\phi).
    \end{equation}
\end{corollary}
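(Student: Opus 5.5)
The plan is to use the displayed derivative formula for the heat average together with a Cauchy--Schwarz argument, written in the language of the Fisher metric. Fix \((x,t)\), \(s=t-\tau\), and write \(\bar\phi:=(P_{s,t}\phi)(x)\) and \(d\nu=d\nu_{x,t;s}\). Differentiating \((P_{s,t}\phi)(x)=\int_M\phi\,K\,dg_s\) in \(x\) gives \(\int_M \phi\,\nabla^x_V\log K\,d\nu\). Subtracting \(\bar\phi\) costs nothing because of the source-score centering identity \eqref{eq:score-coordinate-mean-summary}. In terms of the source scores this reads
\[
    2\tau\, d_x(P_{s,t}\phi)(V)=\int_M(\phi-\bar\phi)\,z_V^\tau\,d\nu .
\]

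For the first inequality in \eqref{eq:tensor-reverse-poincare}, I apply Cauchy--Schwarz in \(L^2(d\nu)\) to this display:
\[
    4\tau^2\bigl(d_x(P_{s,t}\phi)(V)\bigr)^2\le \operatorname{Var}_{x,t;s}(\phi)\int_M (z_V^\tau)^2\,d\nu
    =\operatorname{Var}_{x,t;s}(\phi)\cdot 2\tau\, g^F_\tau(V,V),
\]
using \eqref{eq:score-coordinate-cov-summary}. Dividing by \(2\tau\) gives the claimed quadratic-form inequality, evaluated on the diagonal \(V=W\). This suffices, since both sides are symmetric forms and the left side has rank one. The second inequality in \eqref{eq:tensor-reverse-poincare} is then immediate from Theorem~\ref{thm:BamProp4.2}, multiplied by the nonnegative scalar \(\operatorname{Var}_{x,t;s}(\phi)\).

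For the equivalent form \eqref{eq:sharp-reverse-poincare}, I use the elementary fact that, for a covector \(\alpha\) and a positive definite form \(B\), the inequality \(\alpha\otimes\alpha\le cB\) is equivalent to \(|\alpha|^2_{B^{-1}}\le c\). This follows from \(|\alpha|_{B^{-1}}^2=\sup_{V\neq0}\alpha(V)^2/B(V,V)\). Here I take \(\alpha=d_x(P_{s,t}\phi)\) and \(B=g^F_\tau\).

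The main obstacle is the invertibility of \(g^F_\tau\), which is needed for \((g^F_\tau)^{-1}\) to make sense. On a closed connected flow this is supplied by Corollary~\ref{cor:strict-closed-fisher}. In general I would handle it in one of two ways:
\begin{itemize}
\item interpret \(|\alpha|^2_{(g^F_\tau)^{-1}}\) as the supremum above, which equals \(+\infty\) unless \(\alpha\) vanishes on \(\ker g^F_\tau\);
\item observe directly that the Cauchy--Schwarz bound forces \(\alpha(V)=0\) whenever \(g^F_\tau(V,V)=0\).
\end{itemize}
The first inequality of \eqref{eq:sharp-reverse-poincare} follows from \(g^F_\tau\le g_t\), which gives \((g^F_\tau)^{-1}\ge g_t^{-1}\) by operator monotonicity of inversion, or equivalently by comparing the suprema. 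The second follows from the first part of \eqref{eq:tensor-reverse-poincare}.

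Finally, I would note sharpness in the Euclidean model with \(\phi\) linear, where \(z_V\) is linear and Cauchy--Schwarz is an equality.
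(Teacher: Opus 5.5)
Your proposal is correct and follows the paper's own proof: differentiate the heat average, subtract $(P_{s,t}\phi)(x)$ using the source-score centering identity, apply Cauchy--Schwarz in $L^2(d\nu_{x,t;s})$ to get the rank-one tensor bound, and then invoke $g^F_\tau\le g_t$. Your treatment of the middle term $|\nabla_x(P_{s,t}\phi)|^2_{(g^F_\tau)^{-1}}$ is more careful than the paper's, which goes directly to the $g_t$-norm bound. You use the supremum characterization and cite Corollary~\ref{cor:strict-closed-fisher} for invertibility on closed connected flows.
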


\begin{proof}
    See \S \ref{subsec:SharpReversePoincare}.
\end{proof}

\begin{remark}
    This is also called the Fisher--Cram\'{e}r--Rao inequality. The constant in \eqref{eq:sharp-reverse-poincare} is sharp. On Euclidean space,
    \[
        K(x,t;y,s)
        =
        (4\pi\tau)^{-n/2}
        \exp\left(-\frac{|x-y|^2}{4\tau}\right),
    \]
    and for any linear function \(\phi(y)=\langle a,y\rangle\), one has
    \[
        (P_{s,t}\phi)(x)=\langle a,x\rangle,
        \qquad
        \operatorname{Var}_{x,t;s}(\phi)=2\tau |a|^2.
    \]
    Thus equality holds in \eqref{eq:sharp-reverse-poincare}.
\end{remark}

\section{\texorpdfstring{$\varphi$}{varphi}-divergence and the Fisher metric}
\label{sec:phi-divergence-contraction}

\subsection{Data processing and Fisher monotonicity}

Let \(\varphi:(0,\infty)\to\mathbb R\) be convex and \(C^2\), with \(\varphi(1)=0\). For probability measures \(\mu_0,\mu_1\) with \(\mu_0\ll\mu_1\), define the \emph{\(\varphi\)-divergence}
\begin{equation}
    D_\varphi(\mu_0\mid\mu_1)
    :=
    \int_M
    \varphi\left(\frac{d\mu_0}{d\mu_1}\right)d\mu_1,
\end{equation}
see R\'{e}nyi \cite{MR132570}, Csisz\'{a}r \cite{MR219345}, Morimoto \cite{MR167200}, and Ali and Silvey \cite{MR196777}. Adding a multiple of \(u-1\) to \(\varphi(u)\) does not change \(D_\varphi\). Thus, when convenient, we may also assume \(\varphi'(1)=0\), in which case \(\varphi\ge0\).

The quantity \(D_\varphi\) measures relative-density distinguishability. This is complementary to Wasserstein distance, which measures transport through the time-slice metric. In this section, the two viewpoints meet: the \(\varphi\)-divergences linearize to the Fisher metric, while the KL estimate below (see Corollary \ref{cor:KL-spatial-estimate}) gives an alternative density-divergence proof of the Wasserstein monotonicity of Bamler \cite[Lemma~2.7]{Bamler2020A}, McCann--Topping \cite{MR2666905} and the \(1\leq p\leq 2\) case of Arnaudon--Coulibaly--Thalmaier \cite[Theorem 4.1]{MR2790368}.

\begin{example}
    If \(d\mu_i=\rho_i\,dg\), then
    \[
        \varphi_{\rm KL}(u)=u\log u-u+1,
        \qquad
        \varphi_{\rm TV}(u)=\frac12|u-1|,
        \qquad
        \varphi_{\rm H}(u)=\frac12(\sqrt u-1)^2
    \]
    give relative entropy (Kullback–Leibler divergence), total variation, and squared Hellinger distance:
    \begin{align*}
        D_{\varphi_{\rm KL}}(\mu_0\mid\mu_1)
        &=
        \int_M
        \rho_0\log\frac{\rho_0}{\rho_1}\,dg,\\
        D_{\varphi_{\rm TV}}(\mu_0\mid\mu_1)
        &=
        {\rm TV}(\mu_0,\mu_1)
        =
        \frac12\int_M|\rho_0-\rho_1|\,dg,\\
        D_{\varphi_{\rm H}}(\mu_0\mid\mu_1)
        &=
        d_{\rm H}^2(\mu_0,\mu_1)
        =
        \frac12
        \int_M
        \left(\sqrt{\rho_0}-\sqrt{\rho_1}\right)^2dg
    \end{align*}
as associated $\varphi$-divergences.
\end{example}

\begin{remark}
    On a metric space of diameter \(D\),
    \[
        W_p^p(\mu_0,\mu_1)
        \le
        D^p\,{\rm TV}(\mu_0,\mu_1),
    \]
    where $W_p$ is the $p$-Wasserstein distance.
    For probability measures,
    \[
        d_{\rm H}^2\le {\rm TV}\le \sqrt2\,d_{\rm H},
        \qquad
        2\,{\rm TV}^2\le D_{\varphi_{\rm KL}},
        \qquad
        2\,d_{\rm H}^2\le D_{\varphi_{\rm KL}}.
    \]
    Thus divergences can dominate Wasserstein distances once one has a diameter bound. However, Wasserstein distances do not necessarily dominate divergences: if \(x\neq y\), then
    \[
        {\rm TV}(\delta_x,\delta_y)=d_{\rm H}(\delta_x,\delta_y)=1,
        \qquad \text{but} \qquad
        W_p^p(\delta_x,\delta_y)=d(x,y)^p\to0
    \]
    as \(y\to x\).
\end{remark}

We record the differential form of the data-processing inequality for \(\varphi\)-divergences; see \cite{MR167200,MR219345,MR196777}. It is the density-divergence analogue of Wasserstein monotonicity for conjugate heat flows.

\begin{proposition}[\(\varphi\)-divergence contraction]\label{prop:divergence-contraction}
    Let \((M^n,g_s)_{s\in I}\) be a closed Ricci flow, and let \(v_0,v_1\) be positive solution to the conjugate heat equation \(\Box^*v_i=0\), \(i=0,1\), normalized by \(\int_M v_i(\cdot,s)\,dg_s=1\). Set \(d\mu_i(s):= v_i(\cdot,s)\,dg_s\). Then
    \begin{align}\label{eq:divergence-monotonicity}
        \frac{d}{ds}D_\varphi(\mu_0(s)\mid\mu_1(s))
        =
        \int_M
        \varphi''\left(\frac{v_0}{v_1}\right)
        \left|\nabla\left(\frac{v_0}{v_1}\right)\right|_{g_s}^2
        d\mu_1(s)
        \ge 0.
    \end{align}
    If \(\varphi''>0\) and \(M\) is connected, then equality on a nontrivial time interval holds if and only if \(\mu_0(s)=\mu_1(s)\) on that interval.
\end{proposition}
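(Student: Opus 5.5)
The plan is to compute the time derivative directly with the ratio $u:=v_0/v_1$, use the conjugate heat equation to turn it into a divergence-type expression, and integrate by parts. Under Ricci flow the volume form evolves by $\partial_s\,dg_s=-R\,dg_s$. The conjugate heat equation $\Box^*v_i=-\partial_s v_i-\Delta v_i+Rv_i=0$ therefore says that the measures $d\mu_i=v_i\,dg_s$ satisfy
\[
    \partial_s\bigl(v_i\,dg_s\bigr)=-(\Delta v_i)\,dg_s .
\]
So $\mu_i$ evolves by a backward heat flow of densities with respect to $dg_s$. Write $D(s):=\int_M\varphi(u)\,v_1\,dg_s$.

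Differentiating, I get
\[
    \frac{dD}{ds}=\int_M\varphi'(u)\,\partial_s(v_0\,dg_s)+\int_M\bigl(\varphi(u)-u\varphi'(u)\bigr)\partial_s(v_1\,dg_s).
\]
This follows from $\partial_s(\varphi(u)v_1)=\varphi'(u)\partial_s v_0+(\varphi(u)-u\varphi'(u))\partial_s v_1$, and the $R$-terms cancel because they appear in both the density and the volume form. Substituting the flows gives
\[
    \frac{dD}{ds}=-\int_M\varphi'(u)\Delta v_0+\bigl(\varphi-u\varphi'\bigr)(u)\,\Delta v_1 .
\]
Integrating by parts on the closed manifold, and using $\nabla\varphi'(u)=\varphi''(u)\nabla u$ and $\nabla(\varphi-u\varphi')(u)=-u\varphi''(u)\nabla u$, this becomes
\[
    \int_M\varphi''(u)\,\nabla u\cdot\bigl(\nabla v_0-u\nabla v_1\bigr)\,dg_s .
\]
Since $\nabla v_0-u\nabla v_1=v_1\nabla u$, the integral equals $\int_M\varphi''(u)|\nabla u|^2\,d\mu_1$, which is \eqref{eq:divergence-monotonicity}. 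It is nonnegative by convexity of $\varphi$.

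Regularity is not a concern. On a closed flow, positive solutions are smooth and bounded below by a positive constant on compact time subintervals, so $u$ is smooth and positive, and differentiation under the integral and integration by parts are justified.

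For the equality case, suppose $\varphi''>0$, $M$ is connected, and the derivative vanishes on an interval. Then $\nabla u\equiv0$ there, so $u$ is constant in space on each time slice. Both measures are probability measures, which forces $u\equiv1$, that is, $\mu_0=\mu_1$. Conversely, if $\mu_0=\mu_1$ then $u\equiv1$ and the derivative vanishes.

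The only step needing care is the bookkeeping of the scalar-curvature terms. I expect that to be the main, though minor, obstacle. Writing the evolution at the level of the measures $v_i\,dg_s$ makes the cancellation of the $R$-terms transparent.
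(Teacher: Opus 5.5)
Your proposal is correct and follows essentially the same route as the paper. The paper writes the integrand as the one-homogeneous function $\Phi(v_0,v_1)=v_1\varphi(v_0/v_1)$ with $\Phi_a=\varphi'(u)$ and $\Phi_b=\varphi(u)-u\varphi'(u)$, cancels the scalar-curvature terms via $a\Phi_a+b\Phi_b=\Phi$, and integrates by parts using $\nabla v_0-u\nabla v_1=v_1\nabla u$, exactly as you do; your equality argument ($\nabla u\equiv0$ plus unit mass forces $u\equiv1$) is also the paper's, and you additionally supply the trivial converse direction.
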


\begin{proof}
    See \S\ref{subsec:proofs-phi-divergences}.
\end{proof}

The next result shows that the Fisher metric, after normalization, is the universal infinitesimal \(\varphi\)-divergence metric of the pointed heat-kernel family.

\begin{proposition}[Infinitesimal \(\varphi\)-divergence metric]
\label{prop:infinitesimal-phi-divergence}
    Let \(d\nu_{x,t;s}=K(x,t;\cdot,s)\,dg_s,\) \(\tau=t-s. \) Then for every \(V\in T_xM\),
    \begin{align}\label{eq:infinitesimal-divergence-is-fisher}
        \lim_{r\to0}
        \frac{1}{r^2}
        D_\varphi
        \left(
        \nu_{\exp_x(rV),t;s}
        \mid
        \nu_{x,t;s}
        \right)
        =
        \frac{\varphi''(1)}{4\tau}g_\tau^F(V,V).
    \end{align}
\end{proposition}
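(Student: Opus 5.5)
The plan is a second-order Taylor expansion in \(r\) of the divergence along the curve \(x_r:=\exp_x(rV)\), with the time slice \(s\) and the target measure \(\nu_{x,t;s}\) held fixed. Write
\[
    u_r(y):=\frac{K(x_r,t;y,s)}{K(x,t;y,s)},
    \qquad
    F(r):=D_\varphi\bigl(\nu_{x_r,t;s}\mid\nu_{x,t;s}\bigr)=\int_M\varphi(u_r)\,d\nu_{x,t;s},
\]
so that \(u_0\equiv1\) and \(F(0)=\varphi(1)=0\). As noted in the paper, we may replace \(\varphi\) by \(\varphi-\varphi'(1)(u-1)\) without changing \(D_\varphi\) or \(\varphi''\). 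We may therefore assume \(\varphi'(1)=0\).

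\textbf{First derivative.} Since \(M\) is closed and \(K\) is smooth and positive on \(M\times M\) for \(\tau>0\), \(u_r(y)\) is smooth in \((r,y)\), and for \(|r|\) small it stays in a compact subinterval of \((0,\infty)\). This justifies differentiating under the integral, and it keeps \(\varphi\), \(\varphi'\), \(\varphi''\) evaluated only on a compact set where they are continuous. We get
\[
    F'(r)=\int_M\varphi'(u_r)\,\partial_ru_r\,d\nu_{x,t;s},
\]
so \(F'(0)=\varphi'(1)\int_M\partial_ru_r|_{r=0}\,d\nu_{x,t;s}=0\). This vanishing also follows without the normalization of \(\varphi'(1)\), from the source-score centering identity \eqref{eq:score-coordinate-mean-summary}, because
\[
    \partial_ru_r|_{r=0}=\nabla^x_V\log K(x,t;y,s)=\frac{1}{2\tau}z_V^\tau .
\]

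\textbf{Second derivative.} Differentiating again,
\[
    F''(0)=\varphi''(1)\int_M\bigl(\nabla_V^x\log K\bigr)^2\,d\nu_{x,t;s}+\varphi'(1)\int_M\partial_r^2u_r|_{r=0}\,d\nu_{x,t;s}.
\]
The second term vanishes. Indeed \(\int_M u_r\,d\nu_{x,t;s}=\int_M K(x_r,t;y,s)\,dg_s=1\) for all \(r\), so \(\int\partial_r^2u_r\,d\nu=0\); it also vanishes because \(\varphi'(1)=0\). Comparing with the definition \eqref{eq:FisherMetric1},
\[
    F''(0)=\frac{\varphi''(1)}{2\tau}\,g^F_\tau(V,V).
\]

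\textbf{Conclusion.} Taylor's theorem with \(F(0)=F'(0)=0\) gives \(F(r)=\tfrac12F''(0)r^2+o(r^2)\). Hence
\[
    \lim_{r\to0}\frac{F(r)}{r^2}=\frac{\varphi''(1)}{4\tau}\,g^F_\tau(V,V),
\]
which is \eqref{eq:infinitesimal-divergence-is-fisher}.

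The main obstacle is justifying the differentiation under the integral and the uniformity of the Taylor remainder, so that \(F\in C^2\) near \(r=0\). On a closed flow this follows from smoothness and positivity of \(K\) together with compactness. In the complete setting one would instead need heat-kernel Gaussian bounds to control \(u_r\) and its \(r\)-derivatives in \(L^2(d\nu)\) and to dominate \(\varphi''\) along the range of \(u_r\); I expect that to be the only delicate point.
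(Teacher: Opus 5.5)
Your proposal is correct and follows essentially the same route as the paper: a second-order Taylor expansion in which the first-order term vanishes by the unit-mass identity $\int_M K(x_r,t;y,s)\,dg_s(y)=1$, leaving $\frac12\varphi''(1)\int_M(\nabla_V^x\log K)^2\,d\nu_{x,t;s}=\frac{\varphi''(1)}{4\tau}g^F_\tau(V,V)$. The only difference is cosmetic: the paper expands $\varphi$ about $1$ and substitutes $\lambda_r-1=r\nabla_V^x\log K+O(r^2)$, whereas you differentiate $F(r)$ twice under the integral. Your compactness argument on a closed flow supplies the uniformity that the paper uses implicitly.
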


\begin{proof}
    See \S\ref{subsec:infinitesimal-phi-divergence}.
\end{proof}

\begin{remark}
    Assume \(\varphi''(1)>0\). If \(x_r\) is any smooth curve with \(x_0=x\), then
    \[
        \left.
        \frac{d}{dr}
        D_\varphi
        \left(
        \nu_{x_r,t;t-\tau}
        \mid
        \nu_{x,t;t-\tau}
        \right)
        \right|_{r=0}
        =
        \varphi'(1)
        \left.
        \frac{d}{dr}
        \int_M
        K(x_r,t;y,t-\tau)\,dg_{t-\tau}(y)
        \right|_{r=0}
        =
        0,
    \]
    since each heat kernel has unit mass. Thus \(D_\varphi\) vanishes to first order along the diagonal, and \eqref{eq:infinitesimal-divergence-is-fisher} gives
    \[
        \left.
        \operatorname{Hess}_{x_0}
        \left[
        \frac{2\tau}{\varphi''(1)}
        D_\varphi
        \left(
        \nu_{x_0,t;t-\tau}
        \mid
        \nu_{x,t;t-\tau}
        \right)
        \right]
        \right|_{x_0=x}
        =
        g_\tau^F.
    \]
    Thus \(g_\tau^F\) is recovered as the Hessian in the first variable of any nondegenerate \(\varphi\)-divergence along the diagonal, just as a Riemannian metric is recovered as the Hessian of one-half the squared distance:
    \[
        \left.
        \operatorname{Hess}_{x_0}
        \left(
        \frac12d_g^2(x_0,x)
        \right)
        \right|_{x_0=x}
        =
        g.
    \]
\end{remark}

Thus the \(\varphi\)-divergence contraction linearizes to monotonicity of \( \tau\mapsto\tau^{-1}g_\tau^F\). Note that the scale identity \eqref{eq:gF-scale-equation-summary} gives the exact tensorial formula
\begin{equation}\label{eq:gF-over-tau-monotone-Phi}
    \partial_\tau
    \left(
    \frac{1}{16\tau}g_\tau^F
    \right)
    =
    -\frac{1}{16\tau^2}G_\tau
    \le0. 
\end{equation}
To recover monotonicity of \(g_\tau^F\) itself, we use the following sharp \(\varphi\)-Sobolev inequality (see Chafa\"i \cite{MR2081075}).

\begin{proposition}[\(\varphi\)-Sobolev inequality]
\label{prop:sharp-phi-sobolev-heat-kernel}
    Let \(\varphi\in C^4((0,\infty))\) be convex, with \(\varphi(1)=0\), \(\varphi''>0\), and \(\left(1/\varphi''\right)''\le0. \) Fix \((x,t)\). Then for every \(\tau>0\) with \(t-\tau\in I\), and every smooth positive \(u\) with \(\int_M u\,d\nu_{x,t;t-\tau}=1,\) one has
    \begin{equation}\label{eq:sharp-phi-sobolev-heat-kernel}
        D_\varphi
        \left(
        u\, \nu_{x,t;t-\tau}
        \mid
        \nu_{x,t;t-\tau}
        \right)
        \le
        \tau
        \int_M
        \varphi''(u)|\nabla u|^2_{g_{t-\tau}}
        \,d\nu_{x,t;t-\tau}. 
    \end{equation}
\end{proposition}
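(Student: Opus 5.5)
We propose to prove \eqref{eq:sharp-phi-sobolev-heat-kernel} by the semigroup interpolation (Bakry--\'Emery/Chafa\"i) argument, run backward along the conjugate heat flow and driven by the Hein--Naber-type sharp gradient estimate. Fix \((x,t)\), set \(s_0:=t-\tau\), and write \(\nu_s:=\nu_{x,t;s}\) for \(s\in[s_0,t]\).

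\textbf{Interpolation.} Given \(u\) at time \(s_0\), consider the forward heat evolution \(u_s:=P_{s_0,s}u\) for \(s\in[s_0,t]\). Then \(v_0:=u_s K(x,t;\cdot,s)\) is a conjugate heat solution. This follows from the duality \(\int (P_{s_0,s}u)\,d\nu_s=\int u\,d\nu_{s_0}\), or more precisely from the fact that \(u\,\nu_{s_0}\) pushed forward through the conjugate heat flow is \(u_s\nu_s\). Hence \(\mu_0(s)=u_s\nu_s\) and \(\mu_1(s)=\nu_s\) are both conjugate heat flows, with \(v_0/v_1=u_s\). As \(s\uparrow t\), \(\nu_s\to\delta_x\) and \(u_s\) converges to a constant in the relevant sense, so \(D_\varphi(\mu_0(s)\mid\mu_1(s))\to\varphi(1)=0\).

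Proposition~\ref{prop:divergence-contraction} then gives
\[
D_\varphi(u\nu_{s_0}\mid\nu_{s_0})=-\int_{s_0}^{t}\frac{d}{ds}D_\varphi\,ds\cdot(-1)
=\int_{s_0}^{t}\int_M\varphi''(u_s)|\nabla u_s|^2\,d\nu_s\,ds,
\]
with the signs arranged so that \(D_\varphi\) decreases from \(s_0\) to \(t\). It therefore suffices to show that
\[
\Phi(s):=\int_M\varphi''(u_s)|\nabla u_s|^2_{g_s}\,d\nu_s
\]
is nonincreasing in \(s\). Integrating then gives \(D_\varphi\le (t-s_0)\Phi(s_0)=\tau\int\varphi''(u)|\nabla u|^2d\nu_{s_0}\).

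\textbf{Monotonicity of \(\Phi\) (main obstacle).} Differentiate \(\Phi\) using \(\Box u_s=0\), \(\Box^*K=0\), and \(\partial_s g=-2\Ric\). The Bochner formula for Ricci flow, \(\Box|\nabla u|^2=-2|\nabla^2u|^2\), absorbs the Ricci term exactly, with no curvature assumption. One obtains
\[
\Phi'(s)=-\int\Big(2\varphi''(u)|\nabla^2u|^2+4\varphi'''(u)\nabla^2u(\nabla u,\nabla u)+\varphi''''(u)|\nabla u|^4\Big)d\nu_s.
\]
The integrand is a quadratic form in \((\nabla^2 u,\nabla u\otimes\nabla u)\). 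It is nonnegative precisely when \(2\varphi''\varphi''''\ge 4(\varphi''')^2\cdot\tfrac12\), that is, when \(\varphi''\varphi''''\ge 2(\varphi''')^2\). This condition is equivalent to \((1/\varphi'')''\le0\), which is exactly the stated hypothesis, since \((1/\varphi'')''=\bigl(2(\varphi''')^2-\varphi''\varphi''''\bigr)/(\varphi'')^3\). Completing the square \(|\nabla^2u+a\,\nabla u\otimes\nabla u|^2\) with \(a=\varphi'''/\varphi''\) then gives \(\Phi'\le0\).

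The care needed lies in the bookkeeping of this computation. One must verify that integrating \(\Box(\cdot)\) against \(d\nu_s\) produces only the \(\Box\)-terms, using \(\frac{d}{ds}\int F\,d\nu_s=\int\Box F\,d\nu_s\). One must also check that the chain rule \(\Box(\psi(u)|\nabla u|^2)=\psi'(u)\Box u\,|\nabla u|^2+\psi\,\Box|\nabla u|^2-\psi''|\nabla u|^4-2\psi'\nabla u\cdot\nabla|\nabla u|^2\) yields the coefficients above with \(\psi=\varphi''\).

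\textbf{Boundary behavior.} Finally, justify the endpoint limit \(s\uparrow t\). Since \(u\) is smooth and positive on a closed manifold, \(u_s\) is bounded and positive and \(\nabla u_s\) is bounded, so \(\Phi\) remains finite. As \(s\to t\), \(\nu_s\) concentrates at \(x\), and \(u_s\) is smooth up to \(s=t\) with \(u_t(x)=\int u\,d\nu_{s_0}=1\). Hence \(D_\varphi(u_s\nu_s\mid\nu_s)\le\sup_{\operatorname{supp}}|\varphi(u_s)-\varphi(u_s(x))|\)-type bounds tend to \(0\) by continuity of \(u_s\) near \(x\), together with Gaussian concentration of \(\nu_s\).

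Sharpness is not needed for the statement, but the Euclidean Gaussian with \(\varphi(u)=u\log u\) recovers the sharp log-Sobolev inequality with constant \(\tau\), consistent with \(\Phi\) being constant exactly in the split case.
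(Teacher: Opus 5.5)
Your monotonicity step matches the paper's argument. It applies the Ricci-flow Bochner identity to $\varphi''(u_\ell)|\nabla u_\ell|^2$, pairs it with $d\nu_\ell$, and gets the sign from $\varphi''\varphi''''\ge 2(\varphi''')^2\iff(1/\varphi'')''\le0$. One small slip: your intermediate inequality $2\varphi''\varphi''''\ge4(\varphi''')^2\cdot\tfrac12$ simplifies to $\varphi''\varphi''''\ge(\varphi''')^2$. The condition you then state is nevertheless the correct discriminant condition.

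\textbf{The gap is in the interpolation step.} The function $u_sK(x,t;\cdot,s)$ is \emph{not} a conjugate heat solution. From $(\partial_s-\Delta)u_s=0$ and $\Box^*K=0$ one gets
$\Box^*(u_sK)=-2K\Delta u_s-2\langle\nabla u_s,\nabla K\rangle=-2\operatorname{div}(K\nabla u_s)$,
which is nonzero in general. Mass conservation $\int u_s\,d\nu_s=1$ does not give the PDE. Also, conjugate heat flows run backward in time, so $u\,\nu_{s_0}$ cannot be pushed forward to $s>s_0$. A Euclidean example makes this concrete. For $u(y)=e^{\langle a,y-x\rangle-|a|^2\tau}$ one finds that $u_s\nu_s$ is the Gaussian of covariance $2(t-s)I$ centred at $x+2(t-s)a$. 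Its centre moves with $s$, so it is not a conjugate heat flow.

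Hence Proposition~\ref{prop:divergence-contraction} does not apply. Worse, if it did, it would make $s\mapsto D_\varphi(u_s\nu_s\mid\nu_s)$ nondecreasing. Combined with the limit $0$ at $s=t$, that would force $D_\varphi(u\nu_{s_0}\mid\nu_{s_0})\le0$. This is why you had to ``arrange the signs'', and your displayed chain of equalities is in fact self-contradictory.

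\textbf{The fix.} The identity you want is still true, and the repair uses only the tool you already use for $\Phi'$, namely $\frac{d}{ds}\int_M F\,d\nu_s=\int_M(\partial_s-\Delta)F\,d\nu_s$. Take $F=\varphi(u_s)$ and note $D_\varphi(u_s\nu_s\mid\nu_s)=\int_M\varphi(u_s)\,d\nu_s$. This gives $\frac{d}{ds}\int_M\varphi(u_s)\,d\nu_s=-\int_M\varphi''(u_s)|\nabla u_s|^2\,d\nu_s$. As $s\uparrow t$, $\int_M\varphi(u_s)\,d\nu_s\to\varphi(u_t(x))=\varphi(1)=0$. Integrating yields $D_\varphi(u\nu_{s_0}\mid\nu_{s_0})=\int_{s_0}^t\Phi(s)\,ds$, which is exactly the paper's first step. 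With this replacement, your proof coincides with the paper's.
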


\begin{proof}
    See \S\ref{subsec:sharp-phi-sobolev-heat-kernel}.
\end{proof}

\begin{remark}
    For \(\varphi(r)=r\log r-r+1\), Proposition~\ref{prop:sharp-phi-sobolev-heat-kernel} recovers the sharp weighted log-Sobolev inequality of Hein--Naber \cite[Theorem~1.10(2)]{HeinNaber2014CPAM}. For \(\varphi(r)=(r-1)^2\), it recovers their sharp weighted Poincaré inequality \cite[Theorem~1.10(1)]{HeinNaber2014CPAM}. The general version is inspired their calculation.
\end{remark}

Using the data processing inequality and the \(\varphi\)-Sobolev inequality, we obtain the strong data processing inequality.

\begin{corollary}[Strong data-processing inequality]
\label{cor:scale-sharp-phi-SDPI}
    Under the hypotheses of Proposition~\ref{prop:sharp-phi-sobolev-heat-kernel}, let \(\eta_\tau\) be any positive normalized conjugate heat evolution. Then 
    \[ 
        \tau\longmapsto \tau D_\varphi \left( \eta_\tau \mid \nu_{x,t;t-\tau} \right)
    \] 
    is nonincreasing. Equivalently, if \(0<\tau_0<\tau_1\), then
    \begin{equation}\label{eq:scale-sharp-phi-SDPI}
        D_\varphi
        \left(
        \eta_{\tau_1}
        \mid
        \nu_{x,t;t-\tau_1}
        \right)
        \le
        \frac{\tau_0}{\tau_1}
        D_\varphi
        \left(
        \eta_{\tau_0}
        \mid
        \nu_{x,t;t-\tau_0}
        \right).
    \end{equation}
\end{corollary}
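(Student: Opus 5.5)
The plan is to differentiate \(F(\tau):=\tau\,D_\varphi(\eta_\tau\mid\nu_{x,t;t-\tau})\) in \(\tau\) and show that \(F'(\tau)\le0\). The strategy combines the exact dissipation formula of Proposition~\ref{prop:divergence-contraction} with the \(\varphi\)-Sobolev inequality of Proposition~\ref{prop:sharp-phi-sobolev-heat-kernel}.

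First, note that \(\nu_\tau:=\nu_{x,t;t-\tau}\) is itself a positive normalized conjugate heat evolution in the target time \(s=t-\tau\). Set \(\mu_0(s)=\eta_{t-s}\) and \(\mu_1(s)=\nu_{t-s}\), with densities \(v_0,v_1\) relative to \(dg_s\), and let \(u:=v_0/v_1\). Then \(d\eta_\tau=u\,d\nu_\tau\) with \(\int_M u\,d\nu_\tau=1\). Since \(\partial_\tau=-\partial_s\), Proposition~\ref{prop:divergence-contraction} gives
\[
    \frac{d}{d\tau}D_\varphi(\eta_\tau\mid\nu_\tau)
    =
    -\int_M\varphi''(u)\,|\nabla u|^2_{g_{t-\tau}}\,d\nu_\tau .
\]
This is legitimate because the flow is closed; that is the setting of the previous proposition, and I would assume it here as well.

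Second, apply Proposition~\ref{prop:sharp-phi-sobolev-heat-kernel} to this \(u\) at scale \(\tau\):
\[
    D_\varphi(\eta_\tau\mid\nu_\tau)
    \le
    \tau\int_M\varphi''(u)\,|\nabla u|^2\,d\nu_\tau .
\]
Combining the two displays,
\[
    F'(\tau)
    =
    D_\varphi(\eta_\tau\mid\nu_\tau)
    -\tau\int_M\varphi''(u)\,|\nabla u|^2\,d\nu_\tau
    \le 0 .
\]
So \(F\) is nonincreasing. Evaluating at \(\tau_0<\tau_1\) gives \(\tau_1 D_\varphi(\eta_{\tau_1}\mid\nu_{\tau_1})\le \tau_0 D_\varphi(\eta_{\tau_0}\mid\nu_{\tau_0})\), which is \eqref{eq:scale-sharp-phi-SDPI} after dividing by \(\tau_1\). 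The converse implication is immediate, so the two formulations are equivalent.

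The main obstacle is justifying the differentiation. This requires smoothness and positivity of \(u\) on \([\tau_0,\tau_1]\), and enough integrability to differentiate under the integral sign and integrate by parts as in Proposition~\ref{prop:divergence-contraction}. On a closed flow with \(t-\tau\) in the interior of \(I\), positivity and smoothness of both conjugate heat solutions make \(u\) smooth and bounded above and below on compact time intervals. The derivative formula then holds classically, and the hypothesis \(\varphi\in C^4\) is only needed for the Sobolev step. If \(\eta_\tau\) is merely a positive normalized evolution defined on a half-open interval, I would prove monotonicity on compact subintervals and then pass to the limit.
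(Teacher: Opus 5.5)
Your proposal is correct and is essentially the paper's own proof. The paper likewise writes \(w_\tau=d\eta_\tau/d\nu_{x,t;t-\tau}\), uses the backward-time form of Proposition~\ref{prop:divergence-contraction} to get \(\frac{d}{d\tau}D_\varphi(\eta_\tau\mid\nu_{x,t;t-\tau})=-\int_M\varphi''(w_\tau)|\nabla w_\tau|^2\,d\nu_{x,t;t-\tau}\), bounds \(D_\varphi\) by Proposition~\ref{prop:sharp-phi-sobolev-heat-kernel} to conclude \(\frac{d}{d\tau}\bigl[\tau D_\varphi\bigr]\le0\), and then integrates; your remarks on the regularity of \(u\) over compact scale intervals in a closed flow spell out what the paper leaves implicit.
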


\begin{proof}
    See \S\ref{subsec:scale-sharp-phi-SDPI}.
\end{proof}

Applying \eqref{eq:scale-sharp-phi-SDPI} to \(\eta_\tau=\nu_{\exp_x(rV),t;t-\tau}\) and then using \eqref{eq:infinitesimal-divergence-is-fisher} as \(r\to0\), gives
\[
    g_{\tau_1}^F(V,V)\le g_{\tau_0}^F(V,V),
    \qquad
    0<\tau_0<\tau_1.
\]
Thus Fisher monotonicity is the linearization of the strong data-processing inequality. 

\begin{remark}[Second scale variation of \(D_{\varphi_{\rm KL}}\)]
    We can similarly obtain the second scale variation of the Fisher metric in Proposition \ref{prop:second-scale-variation-of-the-fisher-metric} from the second scale variation of \(D_{\varphi_{\rm KL}}\). To that end, fix \(t\), and let \(u_\tau,v_\tau>0\) be normalized solutions of \(\partial_\tau w_\tau = \Delta_{g_{t-\tau}}w_\tau-R_{g_{t-\tau}}w_\tau\). Set
    \begin{align*}
        d\mu_\tau^u&:=u_\tau\,dg_{t-\tau},
        &
        d\mu_\tau^v&:=v_\tau\,dg_{t-\tau},
        &
        h_\tau&:=\log\frac{u_\tau}{v_\tau}, 
    \end{align*}
    write
    \[
        u_\tau=(4\pi\tau)^{-n/2}e^{-f_\tau^u},
        \qquad
        v_\tau=(4\pi\tau)^{-n/2}e^{-f_\tau^v},
    \]
    and define the shrinker defect tensors
    \begin{align*}
        S_\tau^u
        :=
        \Ric_{g_{t-\tau}}+\nabla^2f_\tau^u
        -\frac1{2\tau}g_{t-\tau},
        \qquad
        S_\tau^v
        :=
        \Ric_{g_{t-\tau}}+\nabla^2f_\tau^v
        -\frac1{2\tau}g_{t-\tau}.
    \end{align*}
    Then using \eqref{eq:divergence-monotonicity} we can compute
    \begin{equation}\label{eq:relative-entropy-second-variation}
    \begin{aligned}
        \frac{d^2}{d\tau^2} D_{\varphi_{\rm KL}}(\mu_\tau^u\mid \mu_\tau^v)
        &=
        2\int_M|S_\tau^u-S_\tau^v|^2\,d\mu_\tau^u
        +\frac2\tau\int_M|\nabla h_\tau|^2\,d\mu_\tau^u
        +4\int_M
        S_\tau^v(\nabla h_\tau,\nabla h_\tau)\,d\mu_\tau^u.
    \end{aligned}
    \end{equation}
    Using
    \[
        u_\tau^{(r)}
        =
        K(\exp_x(rV),t;\cdot,t-\tau),
        \qquad
        v_\tau
        =
        K(x,t;\cdot,t-\tau)
    \]
    in \eqref{eq:relative-entropy-second-variation}, dividing \eqref{eq:relative-entropy-second-variation} by \(r^2\), letting \(r\to0\), and then using \eqref{eq:gF-scale-equation-summary} recovers Proposition~\ref{prop:second-scale-variation-of-the-fisher-metric}.
\end{remark}

\begin{example}
    The class of \(\varphi\) in Proposition \ref{prop:sharp-phi-sobolev-heat-kernel} contains
    \[
        \varphi_{\rm KL}(r)=r\log r-r+1,
        \qquad
        \varphi_{\chi^2}(r)=(r-1)^2,
    \]
    and, more generally,
    \[
        \varphi_q(r)=r^q-1-q(r-1),
        \qquad
        1<q\le2.
    \]
    However, it does not contain the Hellinger distance.
\end{example}

\subsection{Dual monotonicity and \texorpdfstring{\(L^p\)}{Lp} estimates for source scores}

Using the \(\varphi\)-Sobolev inequality (Proposition \ref{prop:sharp-phi-sobolev-heat-kernel}), we also obtain a monotonicity formula for a dual version of \(\varphi\)-divergence. 

Fix \((x,t)\), and write
\[
    d\nu_\tau(y)
    =
    K(x,t;y,t-\tau)\,dg_{t-\tau}(y),
    \qquad
    z_V^\tau(y)
    :=
    2\tau\,\nabla_V^x\log K(x,t;y,t-\tau).
\]
For a fixed scale \(\tau\), consider the convex functional on the space of probability densities
\[
    D_{\varphi,\tau}(u)
    :=
    \begin{cases}
    \displaystyle
    \int_M\varphi(u)\,d\nu_\tau,
    &
    u>0,\quad \int_Mu\,d\nu_\tau=1,\\[1.2ex]
    +\infty,
    &
    \text{otherwise}
    \end{cases}
\]
Its Legendre--Fenchel transform, with respect to the pairing \(\langle h,u\rangle_\tau := \int_Mhu\,d\nu_\tau,\) is 
\begin{align}\label{eq:dual-varphi-divergence}
    D_{\varphi,\tau}^*(h)
    :=
    \sup_{\substack{u>0\\ \int_Mu\,d\nu_\tau=1}}
    \left\{
        \int_M h u\,d\nu_\tau
        -
        D_\varphi(u\nu_\tau\mid\nu_\tau)
    \right\}.
\end{align}

Since replacing \(\varphi(r)\) by \(\varphi(r)+c(r-1)\) changes neither \(D_{\varphi,\tau}\) on normalized densities nor \(D_{\varphi,\tau}^*\), we henceforth normalize \(\varphi'(1)=0\) throughout this subsection.

\begin{proposition}[Dual monotonicity]
\label{prop:dual-nonlinear-fisher-monotonicity}
Let \(\varphi\) satisfy the hypotheses of
Proposition~\ref{prop:sharp-phi-sobolev-heat-kernel}. Assume, in addition, that
\begin{equation}
\label{eq:legendre-type-condition}
    \varphi'(0+)=-\infty,
    \qquad
    \varphi'(\infty)=+\infty .
\end{equation}
Fix \(V\in T_xM\). For each \(\tau>0\), let \(u_\tau\) be the maximizing density in
\eqref{eq:dual-varphi-divergence}. Then
\begin{equation}
\label{eq:dual-nonlinear-fisher-dissipation}
\begin{aligned}
    -\frac{d}{d\tau}
    \left(
        \frac{1}{2\tau}
        D_{\varphi,\tau}^*(z_V^\tau)
    \right)
    &=
    \frac{1}{2\tau}
    \left[
        \int_M
        \varphi''(u_\tau)
        |\nabla u_\tau|_{g_{t-\tau}}^2
        \,d\nu_\tau
        -
        \frac1\tau
        D_\varphi(u_\tau\nu_\tau\mid\nu_\tau)
    \right]
    \ge 0 .
\end{aligned}
\end{equation}
Consequently,
\begin{equation}
\label{eq:dual-nonlinear-fisher-endpoint}
    \frac{\varphi''(1)}{2\tau}
    D_{\varphi,\tau}^*(z_V^\tau)
    \le
    \frac12 |V|_{g_t}^2 .
\end{equation}
\end{proposition}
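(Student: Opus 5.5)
The plan is an envelope-theorem computation for the Legendre transform, followed by the $\varphi$-Sobolev inequality and a short-scale limit.

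\emph{The maximizer.} Since $\varphi'' > 0$ and \eqref{eq:legendre-type-condition} holds, $\varphi'$ is a bijection $(0,\infty) \to \mathbb R$. The first-order conditions for the constrained supremum in \eqref{eq:dual-varphi-divergence}, with Lagrange multiplier $\lambda_\tau$, are
\[
    u_\tau = (\varphi')^{-1}\bigl(z_V^\tau - \lambda_\tau\bigr),
    \qquad
    \int_M u_\tau\,d\nu_\tau = 1 .
\]
The multiplier $\lambda_\tau$ is uniquely fixed by the normalization, by monotonicity of $\lambda \mapsto \int (\varphi')^{-1}(z-\lambda)\,d\nu_\tau$. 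On a closed flow $z_V^\tau$ is smooth and bounded, so $u_\tau$ is smooth, positive and bounded. Strict convexity then gives that $u_\tau$ is the unique maximizer. The implicit function theorem gives smooth dependence of $\lambda_\tau$ and $u_\tau$ on $\tau$.

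\emph{The envelope computation.} Write the Lagrangian
\[
    L_\tau(u,\lambda) = \int_M z_V^\tau u\,d\nu_\tau - \int_M \varphi(u)\,d\nu_\tau - \lambda\Bigl(\int_M u\,d\nu_\tau - 1\Bigr),
\]
so that $\frac{d}{d\tau} D^*_{\varphi,\tau}(z_V^\tau) = \partial_\tau L_\tau$ with $u = u_\tau$ and $\lambda = \lambda_\tau$ frozen. Two evolution facts feed in, both coming from $\partial_\tau K = \Delta_y K - RK$ and $\partial_\tau dg_{t-\tau} = R\,dg_{t-\tau}$.
\begin{itemize}
    \item First, $\partial_\tau (K\,dg_{t-\tau}) = (\Delta_y K)\,dg_{t-\tau}$.
    \item Second, $z_V^\tau K = 2\tau \nabla_V^x K$, and $\nabla^x$ commutes with the target equation. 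Hence
    \[
        \partial_\tau\bigl(z_V^\tau K\,dg_{t-\tau}\bigr) = \Bigl(\tfrac1\tau z_V^\tau K + \Delta_y(z_V^\tau K)\Bigr)dg_{t-\tau}.
    \]
\end{itemize}
Integrating by parts on the closed slice gives
\[
    \frac{d}{d\tau} D^*_{\varphi,\tau}
    = \frac1\tau\int_M z_V^\tau u_\tau\,d\nu_\tau
    + \int_M \bigl(z_V^\tau - \lambda_\tau\bigr)\Delta u_\tau\,d\nu_\tau
    - \int_M \Delta\varphi(u_\tau)\,d\nu_\tau .
\]
Substitute $z_V^\tau - \lambda_\tau = \varphi'(u_\tau)$ and use $\Delta\varphi(u) = \varphi'(u)\Delta u + \varphi''(u)|\nabla u|^2$. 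The last two terms then combine to $-\int \varphi''(u_\tau)|\nabla u_\tau|^2\,d\nu_\tau$. By the normalization, $\int z_V^\tau u_\tau\,d\nu_\tau = D^*_{\varphi,\tau} + D_\varphi(u_\tau\nu_\tau \mid \nu_\tau)$. Then
\[
    \frac{d}{d\tau}\Bigl(\frac{1}{2\tau}D^*_{\varphi,\tau}\Bigr) = \frac{1}{2\tau}\Bigl(\frac{d}{d\tau}D^*_{\varphi,\tau} - \frac1\tau D^*_{\varphi,\tau}\Bigr),
\]
which is exactly the negative of the right side of \eqref{eq:dual-nonlinear-fisher-dissipation}. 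The sign $\ge 0$ is Proposition~\ref{prop:sharp-phi-sobolev-heat-kernel} applied to $u = u_\tau$.

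\emph{The endpoint bound.} Since $\tau \mapsto \frac{1}{2\tau}D^*_{\varphi,\tau}(z_V^\tau)$ is nonincreasing, \eqref{eq:dual-nonlinear-fisher-endpoint} follows once we show
\[
    \limsup_{\tau\downarrow 0} \frac{\varphi''(1)}{2\tau}D^*_{\varphi,\tau}(z_V^\tau) \le \tfrac12|V|^2 .
\]
Heuristically, for small centered $h$ one has $D^*_{\varphi,\tau}(h) \approx \operatorname{Var}_{\nu_\tau}(h)/(2\varphi''(1))$. Here $\int z_V^\tau\,d\nu_\tau = 0$ by \eqref{eq:score-coordinate-mean-summary}, and $\int (z_V^\tau)^2\,d\nu_\tau = 2\tau g^F_\tau(V,V) \to 2\tau|V|^2$ by Lemma~\ref{lem:FisherMetricAsTauToZero}. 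This gives the limit $\tfrac12|V|^2$.

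I expect this limit to be the main obstacle. The difficulty is that $z_V^\tau$ is small only in $L^2(\nu_\tau)$ at scale $\sqrt\tau$, not uniformly: it grows linearly, like $\langle y-x, V\rangle$, in the Gaussian tails. So the quadratic expansion of $\varphi^*$ must be controlled with weights. I would first try the rescaling $h = \sqrt\tau\,\xi$ with $\xi = z_V^\tau/\sqrt\tau$. This $\xi$ converges, under the short-time heat-kernel asymptotics used for Lemma~\ref{lem:FisherMetricAsTauToZero}, to a Gaussian linear function with Gaussian moment bounds. I would then bound $\varphi^*(\sqrt\tau\,\xi) - \tfrac{\tau\xi^2}{2\varphi''(1)}$ by $O(\tau^{3/2}|\xi|^3)$ on $\{|\xi| \le \tau^{-1/4}\}$, and use exponential integrability on the complement. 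If that tail control fails, the fallback is to test the supremum against the explicit competitor $u = 1 + z_V^\tau/\varphi''(1)$, suitably truncated.
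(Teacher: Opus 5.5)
Your derivation of the dissipation identity \eqref{eq:dual-nonlinear-fisher-dissipation} is correct and is essentially the paper's argument. The paper also differentiates at the optimizer and lets the multiplier terms cancel. It packages this through $D^*_{\varphi,\rho}(z_\rho)=\lambda_\rho+\int_M\varphi^*(z_\rho-\lambda_\rho)\,d\nu_\rho$, the chain rule of Lemma~\ref{lem:source-score-chain-rule}, and $(\varphi^*)''(z_\rho-\lambda_\rho)|\nabla z_\rho|^2=\varphi''(u_\rho)|\nabla u_\rho|^2$, where you instead freeze $(u,\lambda)$ in the Lagrangian.

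The gap is in the endpoint \eqref{eq:dual-nonlinear-fisher-endpoint}. Since $D^*_{\varphi,\tau}$ is a supremum, you need an \emph{upper} bound on $\limsup_{\tau\downarrow0}\tau^{-1}D^*_{\varphi,\tau}(z_V^\tau)$. Your sketch never explains why $D^*_{\varphi,\tau}(z_V^\tau)$ is controlled by $\int_M\varphi^*(z_V^\tau)\,d\nu_\tau$, which is what your planned expansion of $\varphi^*$ would compute. Your fallback goes the wrong way: inserting a competitor such as $u=1+z_V^\tau/\varphi''(1)$ into the supremum only bounds $D^*$ from \emph{below}. The missing step is Fenchel--Young. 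For every constant $\lambda$ and every admissible normalized $u$,
\[
\int_M zu\,d\nu_\tau-\int_M\varphi(u)\,d\nu_\tau=\lambda+\int_M\bigl((z-\lambda)u-\varphi(u)\bigr)\,d\nu_\tau\le\lambda+\int_M\varphi^*(z-\lambda)\,d\nu_\tau .
\]
Taking $\lambda=0$ gives $D^*_{\varphi,\tau}(z_V^\tau)\le\int_M\varphi^*(z_V^\tau)\,d\nu_\tau$. The paper instead uses the exact identity with $\lambda_\rho$ and proves $\lambda_\rho=O(\rho)$ (Lemma~\ref{lem:lambda-rho-small}), which yields exact asymptotics; for the inequality alone, $\lambda=0$ suffices.

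The tail problem you anticipate also does not arise on a closed flow.
\begin{itemize}
\item By \eqref{eq:small-score-linfty}, $\|z_V^\rho\|_{L^\infty}=O(1)$ uniformly as $\rho\downarrow0$; this comes from Hamilton's gradient estimate plus the sharp short-time lower bound.
\item By \eqref{eq:small-score-lp}, $\int_M|z_V^\rho|^3\,d\nu_\rho=O(\rho^{3/2})$.
\end{itemize}
So $z_V^\rho$ stays in a fixed compact interval, on which $|\varphi^*(a)-a-a^2/(2\varphi''(1))|\le C|a|^3$. The linear term, which your displayed remainder bound omits, integrates to zero by \eqref{eq:score-coordinate-mean-summary}. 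Hence
\[
\frac{\varphi''(1)}{2\rho}\int_M\varphi^*(z_V^\rho)\,d\nu_\rho=\frac12\,g^F_\rho(V,V)+O(\rho^{1/2})\le\frac12|V|_{g_t}^2+O(\rho^{1/2}),
\]
using Theorem~\ref{thm:BamProp4.2}. Together with the monotonicity this gives \eqref{eq:dual-nonlinear-fisher-endpoint}, and no truncation or exponential integrability is needed.
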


\begin{proof}
    See \S\ref{subsec:dual-nonlinear-fisher-monotonicity}.
\end{proof}

\begin{remark}
The condition \eqref{eq:legendre-type-condition} suffices to ensure that the maximizer in \eqref{eq:dual-varphi-divergence} exists. For details, see \S\ref{subsec:dual-nonlinear-fisher-monotonicity}.
\end{remark}

\begin{remark}
For \(\varphi(r)=r\log r-r+1,\) the dual functional is
\[
    D_{\varphi,\tau}^*(z_V^\tau)
    =
    \log\int_M e^{z_V^\tau}\,d\nu_\tau .
\]
Thus Proposition~\ref{prop:dual-nonlinear-fisher-monotonicity} implies that
\begin{align*}
    \tau \longmapsto \frac1{2\tau} \log\int_M e^{z_V^\tau}\,d\nu_\tau
\end{align*}
is non-increasing, and that the following inequality holds:
\[
    \log\int_M e^{z_V^\tau}\,d\nu_\tau
    \le
    \tau |V|_{g_t}^2 .
\]
Consequently, for \(p\ge1\), we recover the \(L^p\)-estimates for the logarithmic derivative of the heat kernel \cite[Proposition 4.2]{Bamler2020A}:
\begin{equation}
    \|z_V^\tau\|_{L^p(d\nu_\tau)}
    \le
    C\sqrt{p\tau}\,|V|_{g_t},
\end{equation}
where \(C\) is a universal constant. This estimate with the sharp Euclidean constant follows from the convex-order estimates for logarithmic derivatives of the conjugate heat kernel; see Koirala~\cite{koirala2026sharp}.
\end{remark}

\subsection{Spatial estimates for \texorpdfstring{$\varphi$}{varphi}-divergences}

We next estimate the dependence of
\[
    x\longmapsto \nu_{x,t;t-\tau}
\]
on the base point with respect to \(\varphi\)-divergences.

First, the strong data-processing inequality and short-time heat-kernel estimates give a relative entropy estimate.

\begin{corollary}[KL spatial estimate]
\label{cor:KL-spatial-estimate}
    For all \(x_0,x_1\in M\) and all \(\tau>0\) with \(t-\tau\in I\),
    \[
        D_{\varphi_{\rm KL}}
        \left(
        \nu_{x_1,t;t-\tau}
        \mid
        \nu_{x_0,t;t-\tau}
        \right)
        \le
        \frac{d_t^2(x_0,x_1)}{4\tau}.
    \]
\end{corollary}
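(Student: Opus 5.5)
Fix $x_0,x_1\in M$, $\tau>0$, and a small intermediate scale $\tau_0\in(0,\tau)$. The plan is to apply the strong data-processing inequality (Corollary~\ref{cor:scale-sharp-phi-SDPI}) with $\varphi=\varphi_{\rm KL}$, which satisfies the hypotheses of Proposition~\ref{prop:sharp-phi-sobolev-heat-kernel}. We take base point $(x_0,t)$ and let $\eta_\sigma:=\nu_{x_1,t;t-\sigma}$ for $\sigma>0$. This is a positive normalized conjugate heat evolution in the target variable. Hence
\[
    D_{\varphi_{\rm KL}}\bigl(\nu_{x_1,t;t-\tau}\mid\nu_{x_0,t;t-\tau}\bigr)
    \le
    \frac{\tau_0}{\tau}\,
    D_{\varphi_{\rm KL}}\bigl(\nu_{x_1,t;t-\tau_0}\mid\nu_{x_0,t;t-\tau_0}\bigr).
\]
It therefore suffices to prove the short-time statement
\[
    \limsup_{\tau_0\downarrow0}\;\tau_0\,D_{\varphi_{\rm KL}}\bigl(\nu_{x_1,t;t-\tau_0}\mid\nu_{x_0,t;t-\tau_0}\bigr)\le \frac{d_t^2(x_0,x_1)}{4}.
\]

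For the short-time statement I would first try the most direct route, using the short-time heat kernel asymptotics (the same expansion used for Lemma~\ref{lem:FisherMetricAsTauToZero}). Write
\[
    \tau_0 D_{\rm KL}=\int_M \tau_0\bigl(\log K(x_1,t;y,t-\tau_0)-\log K(x_0,t;y,t-\tau_0)\bigr)\,d\nu_{x_1,t;t-\tau_0}(y).
\]
Varadhan-type asymptotics give $\tau_0\log K(x_i,t;y,t-\tau_0)\to -d_t^2(x_i,y)/4$, uniformly on compact sets, together with a uniform Gaussian upper bound. Meanwhile $\nu_{x_1,t;t-\tau_0}\to\delta_{x_1}$, with Gaussian concentration of width $\sqrt{\tau_0}$. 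The integrand thus converges to
\[
    -\frac{d^2(x_1,x_1)}{4}+\frac{d^2(x_0,x_1)}{4}=\frac{d^2_t(x_0,x_1)}{4}.
\]

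The main obstacle is justifying this limit. Varadhan's formula is only a logarithmic asymptotic and fails to be uniform at cut points, and the time dependence of $g$ must also be controlled. To avoid the cut locus I would use a chain argument instead. Choose a minimizing $g_t$-geodesic $\gamma$ from $x_0$ to $x_1$ and subdivide it into $N$ short pieces $x^{(j)}$, each of length $d/N$ and within the injectivity radius. For each short piece, the Minakshisundaram--Pleijel parametrix (adapted to Ricci flow) gives
\[
    \tau_0\,D_{\rm KL}\bigl(\nu_{x^{(j+1)},t;t-\tau_0}\mid\nu_{x^{(j)},t;t-\tau_0}\bigr)\to \frac{(d/N)^2}{4}.
\]
KL is not a metric, so the pieces cannot simply be added, and this is the delicate point. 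The fix I would pursue is to avoid chaining KL altogether and instead run the scale monotonicity at $\tau_0$ comparable to $(d/N)^2$ times a small parameter, combined with the infinitesimal result Proposition~\ref{prop:infinitesimal-phi-divergence}.

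An alternative route, which I would actually try first because it is cleaner, is to integrate along the geodesic. Set $F(r):=D_{\rm KL}(\nu_{\gamma(r),t;t-\tau}\mid\nu_{x_0,t;t-\tau})$. Then $F(0)=F'(0)=0$. Differentiating twice in $r$ and using Theorem~\ref{thm:BamProp4.2} together with the reverse-Poincar\'e/score bounds of Corollary~\ref{cor:sharp-reverse-poincare-hellinger}, I would aim to bound $F''$ by
\[
    \frac{1}{2\tau}\,g^F_\tau(\dot\gamma,\dot\gamma)\le \frac{|\dot\gamma|^2}{2\tau}.
\]
Here $F''$ is the expected Fisher information of the moving point plus a Hessian term whose $\nu$-mean is controlled by Lemma~\ref{lem:mean-Hessian-identity}. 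Integrating twice gives $F(d)\le d^2/(4\tau)$. If the Hessian term's sign is not controllable at the moving point, I would fall back on the short-time/SDPI argument above.
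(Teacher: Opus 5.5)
Your reduction is exactly the paper's. Applying Corollary~\ref{cor:scale-sharp-phi-SDPI} with reference $\nu_{x_0,t;t-\sigma}$ and evolving measure $\nu_{x_1,t;t-\sigma}$ reduces everything to
\[
\limsup_{\varepsilon\downarrow0}\varepsilon\,D_{\varphi_{\rm KL}}\bigl(\nu_{x_1,t;t-\varepsilon}\mid\nu_{x_0,t;t-\varepsilon}\bigr)\le\tfrac14 d_t^2(x_0,x_1),
\]
and your heuristic for this limit is the right one. But the proposal never proves it. You name it as the main obstacle, neither alternative closes it, and your last sentence falls back on the very step that was left open.

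The missing idea is that only a $\limsup$ is needed, so one-sided bounds suffice and no two-sided Varadhan asymptotics are required. Split $\varepsilon D_{\varphi_{\rm KL}}$ as $\varepsilon\int\log K(x_1,t;\cdot,t-\varepsilon)\,d\nu_{x_1,t;t-\varepsilon}-\varepsilon\int\log K(x_0,t;\cdot,t-\varepsilon)\,d\nu_{x_1,t;t-\varepsilon}$. The first term has $\limsup\le0$ by the on-diagonal bound $K\le C\varepsilon^{-n/2}$. For the second you need a lower bound with sharp exponent, uniform in $y\in M$:
\[
K(x_0,t;y,t-\varepsilon)\ge C_\eta^{-1}\varepsilon^{-n/2}\exp\Bigl(-\frac{d_t^2(x_0,y)+\eta}{4\varepsilon}\Bigr).
\]
After this, weak convergence $\nu_{x_1,t;t-\varepsilon}\rightharpoonup\delta_{x_1}$ against the continuous function $d_t^2(x_0,\cdot)$, followed by $\eta\downarrow0$, finishes the proof.

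Your chaining instinct belongs here, not at the level of KL. By the semigroup property, write $K(x_0,t;y,t-\varepsilon)$ as an iterated integral over $N$ time steps of length $\varepsilon/N$. Restrict the intermediate points to balls of radius $\theta\sqrt{\varepsilon/N}$ around equally spaced points of a minimizing $g_t$-geodesic from $x_0$ to $y$, and apply the local parametrix lower bound on each short step. This is insensitive to the cut locus, since the parametrix is only used on short steps, and it absorbs the time-dependence of $g$ into a factor $1+\alpha$ in the exponent. This is Lemma~\ref{lem:short-time-sharp-lower-bound}.

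Your two alternatives do not work as written. Chaining KL fails for lack of a triangle inequality, and ``running scale monotonicity at a scale comparable to $(d/N)^2$'' does not supply one. In the geodesic route,
\[
F''(r)=\frac1{2\tau}g^F_\tau(\dot\gamma,\dot\gamma)\big|_{(\gamma(r),t)}+\int_M\nabla^2_{\dot\gamma\dot\gamma}K_r\,\log\frac{K_r}{K_0}\,dg_s .
\]
The second term averages the source Hessian against the nonconstant weight $\log(K_r/K_0)$. Lemma~\ref{lem:mean-Hessian-identity} controls only the unweighted mean, and the weighted term has no sign in general. It vanishes in the Gaussian model only because the weight is affine there. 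So $F''\le1/(2\tau)$ is not available.

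Note also that $A_{\varphi_{\rm KL},p}=\infty$ for every $p\in[1,2]$, so the path-length argument of Proposition~\ref{prop:spatial-phi-divergence-estimate} does not reach KL either. That is why KL has to go through the strong data-processing inequality together with a sharp short-time lower bound.
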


\begin{proof}
    See \S\ref{subsec:KL-spatial-estimate}.
\end{proof}

\begin{remark}
    Let \(\gamma_{x,2\tau}\) be the Euclidean Gaussian with mean \(x\) and covariance \(2\tau I\). Then
    \[
        D_{\varphi_{\rm KL}}
        \left(
        \gamma_{x_1,2\tau}
        \mid
        \gamma_{x_0,2\tau}
        \right)
        =
        \frac{|x_1-x_0|^2}{4\tau},
    \]
    so Corollary~\ref{cor:KL-spatial-estimate} is sharp.
\end{remark}

Combining Corollary~\ref{cor:KL-spatial-estimate} with the heat-kernel Talagrand inequality gives a density-divergence proof of Wasserstein spatial estimates and monotonicity for \(1\le p\le2\). The cases \(p=1\) and \(p=2\) recover Bamler's \(W_1\)-monotonicity \cite[Lemma~2.7]{Bamler2020A} and the McCann--Topping \(W_2\)-monotonicity \cite{MR2666905}. The \(W_p\)-monotonicity for all \(p\ge1\) follows by an alternative coupling argument of Arnaudon--Coulibaly--Thalmaier \cite[Theorem~4.1]{MR2790368}; the restriction \(p\le2\) below is only a limitation of our argument, which passes through \(W_2\). Here \(W_{p,g_s}\) denotes the \(p\)-Wasserstein distance with respect to the time-slice distance \(d_{g_s}\).

\begin{corollary}[Wasserstein spatial estimate and monotonicity]
\label{cor:wasserstein-from-KL}
    For all \(x_0,x_1\in M\) and all \(\tau>0\) with \(t-\tau\in I\),
    \[
        W_{1,g_{t-\tau}}
        \left(
        \nu_{x_1,t;t-\tau},
        \nu_{x_0,t;t-\tau}
        \right)
        \le
        W_{2,g_{t-\tau}}
        \left(
        \nu_{x_1,t;t-\tau},
        \nu_{x_0,t;t-\tau}
        \right)
        \le
        d_t(x_0,x_1).
    \]
    More generally, if \(d\mu_i(s)=v_i(\cdot,s)\,dg_s\), \(i=0,1\), are positive normalized conjugate heat flows on an interval \(I'\subset I\), then for every \(1\le p\le2\) and every \(s_1<s_0\) in \(I'\),
    \[
        W_{p,g_{s_1}}
        \left(
        \mu_0(s_1),
        \mu_1(s_1)
        \right)
        \le
        W_{p,g_{s_0}}
        \left(
        \mu_0(s_0),
        \mu_1(s_0)
        \right).
    \]
    In particular, \(s\mapsto W_{p,g_s}(\mu_0(s),\mu_1(s))\) is nondecreasing.
\end{corollary}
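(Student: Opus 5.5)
The plan is to combine Corollary~\ref{cor:KL-spatial-estimate} with a Talagrand transport--entropy inequality for the conjugate heat kernel measure, and then to upgrade the pointed estimate to monotonicity for arbitrary conjugate heat flows by a superposition (coupling) argument.

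The heat-kernel Talagrand inequality \cite{MR1392331} is the transport analogue of the Hein--Naber log-Sobolev inequality (Proposition~\ref{prop:sharp-phi-sobolev-heat-kernel} with \(\varphi_{\rm KL}\)). It states that
\[
    W_{2,g_{t-\tau}}^2(\mu,\nu_{x,t;t-\tau})\le 4\tau\,D_{\varphi_{\rm KL}}(\mu\mid\nu_{x,t;t-\tau})
\]
for every probability measure \(\mu\). I would derive it by the Otto--Villani argument: log-Sobolev with constant \(\tau\), i.e.\ \(D_{\varphi_{\rm KL}}\le \tau\int|\nabla u|^2/u\), implies \(T_2\) with constant \(4\tau\). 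This holds on the time slice \((M,g_{t-\tau})\) with reference measure \(\nu_{x,t;t-\tau}\).

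For the pointed estimate, apply this with \(\mu=\nu_{x_1,t;t-\tau}\) and use Corollary~\ref{cor:KL-spatial-estimate}:
\[
    W_2^2\le 4\tau\cdot \frac{d_t^2(x_0,x_1)}{4\tau}=d_t^2(x_0,x_1).
\]
The inequality \(W_1\le W_2\) is just Jensen's inequality.

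For the general monotonicity, fix \(s_1<s_0\) and write each flow through its reproducing formula
\[
    \mu_i(s_1)=\int_M \nu_{x,s_0;s_1}\,d\mu_i(s_0)(x),
\]
which holds by uniqueness for the conjugate heat equation on closed \(M\). Take an optimal \(W_p\)-coupling \(\pi\) of \(\mu_0(s_0)\) and \(\mu_1(s_0)\) for \(d_{g_{s_0}}\). For each pair \((x,y)\), choose measurably an optimal \(W_2\)-coupling \(\pi_{x,y}\) of \(\nu_{x,s_0;s_1}\) and \(\nu_{y,s_0;s_1}\) on \((M,g_{s_1})\); measurable selection of optimal plans is standard. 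Then \(\int\pi_{x,y}\,d\pi\) couples \(\mu_0(s_1)\) and \(\mu_1(s_1)\). Since \(p\le2\), Jensen and the pointed estimate (with \(t=s_0\), \(\tau=s_0-s_1\)) give
\[
    \int d_{g_{s_1}}^p\,d\pi_{x,y}\le W_{2,g_{s_1}}(\nu_{x,s_0;s_1},\nu_{y,s_0;s_1})^p\le d_{g_{s_0}}(x,y)^p.
\]
Integrating against \(\pi\) yields \(W_{p,g_{s_1}}^p(\mu_0(s_1),\mu_1(s_1))\le W_{p,g_{s_0}}^p(\mu_0(s_0),\mu_1(s_0))\), i.e.\ \(s\mapsto W_{p,g_s}(\mu_0(s),\mu_1(s))\) is nondecreasing.

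The main obstacle is justifying the heat-kernel Talagrand inequality in the Ricci flow setting with the sharp constant \(4\tau\). The standard Otto--Villani route requires log-Sobolev together with some regularity or growth control; on a closed manifold this is harmless, since the proof via the Hamilton--Jacobi/Hopf--Lax semigroup works for any probability measure satisfying log-Sobolev (Bobkov--Gentil--Ledoux). So I would invoke the Bobkov--Gentil--Ledoux version and note that it uses only the log-Sobolev inequality on the fixed slice \(g_{t-\tau}\). A secondary point is the measurable selection of the couplings \(\pi_{x,y}\); alternatively one can avoid it by using the joint convexity of \(W_p^p\) under mixtures.
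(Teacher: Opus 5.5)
Your proposal is correct and follows the paper's proof essentially step for step. You derive Talagrand's inequality \(W_2^2\le 4\tau\,D_{\varphi_{\rm KL}}\) from the Hein--Naber log-Sobolev inequality via Otto--Villani, and combine it with the KL spatial estimate to get \(W_2\le d_t(x_0,x_1)\). You then superpose pointwise couplings through the reproduction formula to obtain \(W_p\)-monotonicity for \(1\le p\le 2\). The only difference is cosmetic: the paper uses \(\delta\)-almost optimal couplings \(\pi_{z,w}\) and lets \(\delta\downarrow0\), with no comment on measurability. Your measurable-selection remark, or the duality/joint-convexity alternative, addresses a point the paper passes over silently.
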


\begin{proof}
    See \S \ref{subsec:wasserstein-from-KL}.
\end{proof}

We also obtain a spatial estimate for \(D_\varphi\) under a derivative
bound on \(\varphi^{1/p}\).

\begin{proposition}[Spatial \(\varphi\)-divergence estimate]
\label{prop:spatial-phi-divergence-estimate}
    Let \(1\le p\le2\). Let \(\varphi:(0,\infty)\to[0,\infty)\) be locally absolutely continuous, with \(\varphi(1)=0\), and assume that \(\varphi^{1/p}\) is locally absolutely continuous. Set
    \[
        A_{\varphi,p}
        :=
        \operatorname*{ess\,sup}_{r>0}
        r^{p-1}
        \left|
        \frac{d}{dr}\varphi(r)^{1/p}
        \right|^p. 
    \]
    If \(A_{\varphi,p}<\infty\), then for all \(x_0,x_1\in M\) and all \(\tau>0\) with \(t-\tau\in I\),
    \begin{equation}\label{eq:spatial-phi-divergence-estimate}
        D_\varphi
        \left(
        \nu_{x_1,t;t-\tau}
        \mid
        \nu_{x_0,t;t-\tau}
        \right)
        \le
        A_{\varphi,p}
        \left(
        \frac{d_t(x_0,x_1)}{\sqrt{2\tau}}
        \right)^p. 
    \end{equation}
    The same estimate holds with \(x_0\) and \(x_1\) interchanged.
\end{proposition}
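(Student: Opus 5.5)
The estimate \eqref{eq:spatial-phi-divergence-estimate} should follow by moving the base point along a minimizing $g_t$-geodesic and differentiating the $p$-th root of the divergence. Let $\gamma:[0,L]\to M$ be a unit-speed $g_t$-geodesic from $x_0$ to $x_1$, with $L=d_t(x_0,x_1)$. (In the complete case any such curve works, and in general we can take near-minimizing curves.) Set
\[
    F(r):=D_\varphi\bigl(\nu_{\gamma(r),t;t-\tau}\mid\nu_{x_0,t;t-\tau}\bigr)=\int_M\varphi(\rho_r)\,d\nu_0,
    \qquad
    \rho_r:=\frac{K(\gamma(r),t;\cdot,t-\tau)}{K(x_0,t;\cdot,t-\tau)} .
\]
Then $F(0)=\varphi(1)=0$, so it suffices to show $\frac{d}{dr}F^{1/p}\le \bigl(A_{\varphi,p}/(2\tau)^{p/2}\bigr)^{1/p}$ and integrate over $[0,L]$.

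To get this derivative bound, write $\psi:=\varphi^{1/p}$, so that $F=\int\psi(\rho_r)^p\,d\nu_0$. Formally,
\[
    F'=p\int\psi(\rho)^{p-1}\psi'(\rho)\,\partial_r\rho\,d\nu_0,
    \qquad
    \partial_r\rho=\rho\,\nabla^x_{\dot\gamma}\log K(\gamma(r),\cdot).
\]
Apply H\"older with exponents $p/(p-1)$ and $p$:
\[
    |F'|\le p\,F^{(p-1)/p}\Bigl(\int|\psi'(\rho)|^p\rho^{p}\,|\nabla^x_{\dot\gamma}\log K|^p\,d\nu_0\Bigr)^{1/p}.
\]
Now use $|\psi'(\rho)|^p\rho^{p-1}\le A_{\varphi,p}$ and $\rho\,d\nu_0=d\nu_{\gamma(r)}$, which bounds the last integral by $A_{\varphi,p}\int|\nabla^x_{\dot\gamma}\log K|^p\,d\nu_{\gamma(r),t;t-\tau}$. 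For $p\le2$, Jensen together with Theorem~\ref{thm:BamProp4.2} gives
\[
    \int|\nabla^x_{\dot\gamma}\log K|^p\,d\nu\le\Bigl(\frac{g^F_\tau(\dot\gamma,\dot\gamma)}{2\tau}\Bigr)^{p/2}\le(2\tau)^{-p/2}.
\]
Combining these, $\frac{d}{dr}F^{1/p}=F'/(pF^{(p-1)/p})\le A_{\varphi,p}^{1/p}(2\tau)^{-1/2}$. Integrating from $0$ to $L$ yields $F(L)^{1/p}\le A^{1/p}L/\sqrt{2\tau}$, which is \eqref{eq:spatial-phi-divergence-estimate}. The interchanged estimate follows by the same argument with the roles of $x_0$ and $x_1$ swapped, since the bound uses only $g^F_\tau\le g_t$ at the moving point.

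The main obstacle is regularity. The function $\psi$ is only locally absolutely continuous, $F^{1/p}$ may fail to be differentiable where $F=0$, and we must justify differentiating under the integral. I would handle these points in three steps.
\begin{itemize}
\item[(a)] On a closed manifold $\rho_r$ takes values in a compact subinterval of $(0,\infty)$, uniformly in $r$. On that interval $\psi$ is absolutely continuous, so $r\mapsto\psi(\rho_r(y))$ is absolutely continuous for each $y$.
\item[(b)] To avoid dividing by $F^{(p-1)/p}$ when $F=0$, I would work with $(F+\varepsilon)^{1/p}$, or equivalently bound $\frac{d}{dr}\|\psi(\rho_r)\|_{L^p(\nu_0)}$ directly via the triangle inequality for the $L^p$ norm (Minkowski's integral inequality) applied to $\psi(\rho_{r+h})-\psi(\rho_r)=\int_r^{r+h}\psi'(\rho)\partial_r\rho$. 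This version is cleaner and avoids differentiating $F$ altogether.
\item[(c)] At points where $\psi'$ fails to exist, I would note that the set of $y$ with $\rho_r(y)$ in a null set is negligible for a.e. $r$, or else mollify $\varphi$ and pass to the limit.
\end{itemize}
For noncompact complete flows, the same argument goes through under the standing assumptions that make the heat-kernel integrals finite.
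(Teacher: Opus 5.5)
Your proposal is correct and is essentially the paper's proof: the paper also moves the base point along a $g_t$-arclength curve, bounds $\|\partial_a(K_0^{1/p}\varphi^{1/p}(\lambda_a))\|_{L^p(dg_s)}$ by $A_{\varphi,p}^{1/p}$ times the $L^p(d\nu_{\gamma(a),t;s})$-norm of the source score, applies Jensen for $p\le 2$ together with $g^F_\tau\le g_t$, and integrates using the triangle inequality in $L^p$, which is exactly your option (b). Your H\"older computation on $F$ is the same estimate written in differentiated form. The regularity issue in (c) is milder than you suggest, because $A_{\varphi,p}<\infty$ already forces $\varphi^{1/p}$ to be locally Lipschitz on $(0,\infty)$.
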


\begin{proof}
    See \S\ref{subsec:spatial-phi-divergence-estimate}.
\end{proof}

\begin{remark}
    The hypothesis \(A_{\varphi,p}<\infty\) excludes \(\chi^2\), and this exclusion is necessary:
    \[
        D_{\varphi_{\chi^2}}
        \left(
        \gamma_{x_1,2\tau}
        \mid
        \gamma_{x_0,2\tau}
        \right)
        =
        \exp\left(\frac{|x_1-x_0|^2}{2\tau}\right)-1. 
    \]
    Thus no global estimate of the form
    \[
        D_{\varphi_{\chi^2}}
        \le
        C\,\left(\frac{|x_1-x_0|}{\sqrt\tau}\right)^{p}
    \]
    can hold uniformly.
\end{remark}

The endpoint cases \(p=2\) and \(p=1\) give the Hellinger and total variation estimates.

\begin{corollary}[Hellinger and total variation]
\label{cor:hellinger-tv-from-phi}
    For pointed heat kernels,
    \begin{subequations}
    \begin{align}
        \label{eq:hell-yeah-ger-estimate}
        d_{\rm H}
        \left(
        \nu_{x_1,t;t-\tau},
        \nu_{x_0,t;t-\tau}
        \right)
        &\le
        \frac{d_t(x_0,x_1)}{4\sqrt\tau},\\
        {\rm TV}
        \left(
        \nu_{x_1,t;t-\tau},
        \nu_{x_0,t;t-\tau}
        \right)
        &\le
        \frac{d_t(x_0,x_1)}{2\sqrt{2\tau}}. 
    \end{align}
    \end{subequations}
\end{corollary}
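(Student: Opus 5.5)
This corollary is a direct specialization of Proposition~\ref{prop:spatial-phi-divergence-estimate}. The plan is to pick the two model functions \(\varphi_{\rm H}\) (with \(p=2\)) and \(\varphi_{\rm TV}\) (with \(p=1\)), compute the constant \(A_{\varphi,p}\) for each, and substitute into \eqref{eq:spatial-phi-divergence-estimate}. Both functions are nonnegative, vanish at \(1\), and are locally absolutely continuous. Their \(p\)-th roots, \(\tfrac{1}{\sqrt2}|\sqrt r-1|\) and \(\tfrac12|r-1|\), are Lipschitz on compact subsets of \((0,\infty)\), so they are locally absolutely continuous as well. Hence the hypotheses of the proposition hold once \(A_{\varphi,p}<\infty\) is checked.

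\emph{Hellinger case.} Take \(p=2\) and \(\varphi=\varphi_{\rm H}=\tfrac12(\sqrt r-1)^2\). Then \(\varphi^{1/2}=\tfrac1{\sqrt2}|\sqrt r-1|\), whose derivative has absolute value \(\tfrac{1}{2\sqrt2\,\sqrt r}\) almost everywhere. This gives
\[
    A_{\varphi_{\rm H},2}=\operatorname*{ess\,sup}_{r>0}\, r\cdot\frac{1}{8r}=\frac18 .
\]
Substituting into \eqref{eq:spatial-phi-divergence-estimate}, and recalling that \(D_{\varphi_{\rm H}}=d_{\rm H}^2\), yields
\[
    d_{\rm H}^2\le\frac18\cdot\frac{d_t^2}{2\tau}=\frac{d_t^2}{16\tau}.
\]
Taking square roots gives \eqref{eq:hell-yeah-ger-estimate}.

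\emph{Total variation case.} Take \(p=1\) and \(\varphi=\varphi_{\rm TV}=\tfrac12|r-1|\). Then \(r^{p-1}=1\), and \(|\tfrac{d}{dr}\varphi|=\tfrac12\) almost everywhere, so \(A_{\varphi_{\rm TV},1}=\tfrac12\). Since \(D_{\varphi_{\rm TV}}={\rm TV}\), the proposition gives
\[
    {\rm TV}\le\frac12\cdot\frac{d_t}{\sqrt{2\tau}}=\frac{d_t}{2\sqrt{2\tau}} .
\]
Because \(d_{\rm H}\) and \({\rm TV}\) are symmetric, the order of \(x_0\) and \(x_1\) in either estimate does not matter.

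The only real issue is whether Proposition~\ref{prop:spatial-phi-divergence-estimate} genuinely allows the non-\(C^2\) endpoint \(\varphi_{\rm TV}\), which has a kink at \(r=1\). The proposition as stated requires only local absolute continuity and an essential supremum, so this is permitted. If one preferred to work with smooth \(\varphi\), one could instead approximate \(\varphi_{\rm TV}\) by smooth convex functions whose constants \(A\) tend to \(\tfrac12\), and then pass to the limit using dominated convergence. Apart from this point, the proof consists only of the two computations above.
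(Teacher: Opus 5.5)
Your proposal is correct and is exactly the paper's argument. The paper also applies Proposition~\ref{prop:spatial-phi-divergence-estimate} with $p=2$ to $\varphi_{\rm H}$ (where $A_{\varphi_{\rm H},2}=1/8$) and with $p=1$ to $\varphi_{\rm TV}$ (where $A_{\varphi_{\rm TV},1}=1/2$), then substitutes; your constants and hypothesis checks are right, and the smoothing remark for the kink of $\varphi_{\rm TV}$ is unnecessary because the proposition as stated already covers it.
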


\begin{proof}
    See \S\ref{subsec:hellinger-tv-from-phi}.
\end{proof}

\begin{remark}
    In the Euclidean Gaussian model case,
    \[
        d_{\rm H}^2
        \left(
        \gamma_{x_1,2\tau},
        \gamma_{x_0,2\tau}
        \right)
        =
        1-
        \exp\left(-\frac{|x_1-x_0|^2}{16\tau}\right),
        \]
    so the constant \((4\sqrt{\tau})^{-1}\) in \eqref{eq:hell-yeah-ger-estimate} is infinitesimally sharp.
\end{remark}

\section{Large Fisher eigenvalues and heat-score splitting}
\label{sec:large-fisher-eigenvalues}

\subsection{Fisher deficits and good scales}
\label{subsec:fisher-good-scales}

Fix a spacetime point \(\mathbf x=(x,t)\). All endomorphisms in this subsection act on \(T_xM\), and all traces and inner products are computed using \(g_t\). The scale variable \(\tau>0\) is always a backward time scale.

If \(\Acal:T_xM\to T_xM\) is \(g_t\)-self-adjoint and \(\Pcal\subset T_xM\) is a \(k\)-plane, define
\begin{equation}\label{eq:trace11}
    \tr_{\Pcal}\Acal
    :=
    \sum_{i=1}^k g_t(\Acal e_i,e_i),
\end{equation}
where \(e_1,\ldots,e_k\) is any \(g_t\)-orthonormal basis of \(\Pcal\). This is independent of the chosen basis.

Let
\[
    1\ge
    \lambda_1(\mathbf x;\tau)
    \ge
    \cdots
    \ge
    \lambda_n(\mathbf x;\tau)
    \ge0
\]
be the eigenvalues of the Fisher endomorphism \(\Jcal_\tau(\mathbf x)\), listed with multiplicity. For \(k\in\{0,1,\ldots,n\}\), define the \emph{top Fisher deficit} by
\begin{equation}
    \dfplus{k}(\mathbf x;\tau)
    :=
    \sum_{i=1}^k
    \bigl(1-\lambda_i(\mathbf x;\tau)\bigr),
    \qquad
    \dfplus{0}(\mathbf x;\tau):=0. 
\end{equation} 
By the Ky Fan variational principle,
\begin{equation}\label{eq:sec6-top-deficit-Ky-Fan}
    \dfplus{k}(\mathbf x;\tau)
    =
    \min_{\substack{\Pcal\subset T_xM\\ \dim\Pcal=k}}
    \tr_{\Pcal}\bigl(I-\Jcal_\tau(\mathbf x)\bigr).
\end{equation}
A \emph{top Fisher \(k\)-plane} at \((\mathbf x,\tau)\) is any \(k\)-plane where the minimum in \eqref{eq:sec6-top-deficit-Ky-Fan} is attained; equivalently, it is spanned by eigenvectors corresponding to \(\lambda_1,\ldots,\lambda_k\). If there is multiplicity at the \(k\)-th eigenvalue, such a plane need not be unique. When a choice is needed, we may write it as \(\Pcal_k^F(\mathbf x;\tau)\).

If \(\dfplus{k}(\mathbf x;\tau)=0\), then \(\lambda_1(\mathbf x;\tau)=\cdots=\lambda_k(\mathbf x;\tau)=1\). Hence any top Fisher \(k\)-plane is contained in the nullspace of the Fisher defect \(D^F_\tau=g_t-g^F_\tau\). Thus Theorem~\ref{thm:fisher-equality-rigidity} implies  under completeness the universal cover splits off a Euclidean factor of dimension at least \(k\). In particular, on a connected closed Ricci flow, Corollary~\ref{cor:strict-closed-fisher} shows that this exact case cannot occur at positive scale.

The main goal of this section is to prove almost splitting when \(\delta_k^+\ll1\).

We use the scale identity
\begin{equation}\label{eq:scaleIdentity1-1tensor}
    \Gcal_\tau-\Jcal_\tau
    =
    -\partial_{\log\tau}\Jcal_\tau,
\end{equation}
which is Proposition~\ref{prop:ScaleEqFisherMetric} after composing with \(g_t^{-1}\), to propagate the bound \(\delta_k^+\ll1\) to a comparable smaller scale.

\begin{proposition}[Good comparable Fisher scale]
\label{prop:sec6-good-scale-fixed-plane}
    Fix \(\tau_0\), \(\theta\in(0,1)\), and let \(\Pcal\subset T_xM\) be a \(k\)-plane. Suppose \(\tr_{\Pcal}(I-\Jcal_{\tau_0})\le\delta.\) Then there exists \(\tau\in[\theta\tau_0,\tau_0]\) such that
    \begin{align}\label{eq:sec6-good-scale-estimates}
        \tr_{\Pcal}(I-\Jcal_\tau)\le\delta \qquad \text{and} \qquad  \tr_{\Pcal}(\Gcal_\tau-\Jcal_\tau)
        \le
        \frac{\delta}{\log(\theta^{-1})}.
    \end{align}
    In particular, if \(\dfplus{k}(\mathbf x;\tau_0)\le\delta\) and \(\Pcal\) is any top Fisher \(k\)-plane at scale \(\tau_0\), then there exists a scale \(\tau\in[\theta\tau_0,\tau_0]\) such that \eqref{eq:sec6-good-scale-estimates} holds for this plane.
\end{proposition}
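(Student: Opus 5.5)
The plan is a pigeonhole (mean value) argument in logarithmic scale, using the scale identity \eqref{eq:scaleIdentity1-1tensor} together with Fisher monotonicity (Proposition~\ref{prop:FisherMetMonot}). Fix a $g_t$-orthonormal basis $e_1,\dots,e_k$ of $\Pcal$. Since $\Pcal$ is independent of $\tau$, the function
\[
    \phi(\tau):=\tr_{\Pcal}(I-\Jcal_\tau)=\sum_{i=1}^k\bigl(1-g^F_\tau(e_i,e_i)\bigr)
\]
is smooth in $\tau$. By Proposition~\ref{prop:FisherMetMonot} it is nondecreasing in $\tau$. By Theorem~\ref{thm:BamProp4.2} it is nonnegative. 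The first estimate in \eqref{eq:sec6-good-scale-estimates} is then automatic for every $\tau\in[\theta\tau_0,\tau_0]$, because $\phi(\tau)\le\phi(\tau_0)\le\delta$.

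For the second estimate, trace \eqref{eq:scaleIdentity1-1tensor} over $\Pcal$. This gives
\[
    \tr_{\Pcal}(\Gcal_\tau-\Jcal_\tau)=-\partial_{\log\tau}\tr_{\Pcal}\Jcal_\tau=\partial_{\log\tau}\phi(\tau)\ \ge 0.
\]
Integrate in $\sigma=\log\tau$ over $[\log(\theta\tau_0),\log\tau_0]$, an interval of length $\log(\theta^{-1})$:
\[
    \int_{\log(\theta\tau_0)}^{\log\tau_0}\tr_{\Pcal}(\Gcal_{e^\sigma}-\Jcal_{e^\sigma})\,d\sigma=\phi(\tau_0)-\phi(\theta\tau_0)\le\phi(\tau_0)\le\delta,
\]
where we dropped $\phi(\theta\tau_0)\ge0$. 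The integrand is continuous and nonnegative. Hence its minimum over the compact interval is at most the average, which is at most $\delta/\log(\theta^{-1})$. Any minimizing $\tau$ satisfies both estimates in \eqref{eq:sec6-good-scale-estimates}.

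The "in particular" statement is the case $\Pcal=\Pcal^F_k(\mathbf x;\tau_0)$. By \eqref{eq:sec6-top-deficit-Ky-Fan}, $\tr_{\Pcal}(I-\Jcal_{\tau_0})=\dfplus{k}(\mathbf x;\tau_0)\le\delta$. Note that the argument keeps the plane fixed rather than re-choosing a top plane at each scale, so that $\phi$ stays smooth.

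The only points needing care are the smoothness of $\tau\mapsto g^F_\tau$ on $[\theta\tau_0,\tau_0]$ and the validity of Proposition~\ref{prop:ScaleEqFisherMetric} there. Both follow from the standing assumption that the flow exists on $[t-\tau_0,t]$ and from standard heat-kernel regularity. No step is a genuine obstacle.
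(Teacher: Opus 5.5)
Your proposal is correct and follows the paper's proof essentially step for step. Fisher monotonicity gives the first estimate on all of $[\theta\tau_0,\tau_0]$. Integrating the traced scale identity in $\log\sigma$ and using $\tr_{\Pcal}\Jcal_{\theta\tau_0}\le k$ (equivalently, your $\phi(\theta\tau_0)\ge 0$) then yields the second estimate by averaging over an interval of logarithmic length $\log(\theta^{-1})$, with the ``in particular'' clause following from the Ky Fan characterization exactly as you state.
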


\begin{proof}
    See \S~\ref{subsec:GoodCompScaleFisher}.
\end{proof}

\subsection{Canonical heat-score maps}
\label{subsec:canonical-heat-score-maps}

Throughout this subsection we fix a spacetime point $ \mathbf x=(x,t)$ and a time scale \(\tau>0\). We set $s:=t-\tau$. For notational convenience, we write for \(\ell\in[s,t)\)
\[
    d\nu_\ell(q)
    :=
    K(x,t;q,\ell)\,dg_\ell(q),
\]
and set $d\nu_t:=\delta_x$. Thus \(d\nu_\ell\) is the conjugate heat-kernel measure based at \((x,t)\) and evaluated at time \(\ell\).

\begin{definition}[Source scores and canonical heat-score maps]
\label{def:canonical-heat-score-map}
    For \(V\in T_xM\), recall the source score at scale \(\tau\) is \(z_V^\tau(y) = 2\tau\,\nabla_V^x\log K(x,t;y,s).\) Define the \emph{score heat evolution} \(v_V: M \times [s,t] \to \mathbb{R}\) by
    \begin{equation}\label{eq:score_heat_evolution}
        v_V(q,\ell)
        :=
        (P_{s,\ell}z_V^\tau)(q),
        \qquad s\le \ell\le t,
    \end{equation}
    so that \((\partial_\ell-\Delta_{g_\ell})v_V=0\) on \(M\times[s,t],\) with initial value \(v_V(\cdot,s)=z_V^\tau.\) Equivalently,
    \begin{equation}
        v_V(q,\ell)
        =
        \int_M K(q,\ell;y,s)z_V^\tau(y)\,dg_s(y).
    \end{equation} 
    
    If \(\Pcal\subset T_xM\) is a \(k\)-plane and \(e_1,\ldots,e_k\) is a \(g_t\)-orthonormal basis of \(\Pcal\), we write
    \begin{equation}
        z_i:=z_{e_i}^\tau,
        \qquad
        v_i:=v_{e_i}.
    \end{equation}
    The map
    \begin{equation}
        v=(v_1,\ldots,v_k):M\times[s,t]\to\R^k
    \end{equation} 
    is called the \emph{canonical heat-score map} associated to $(\mathbf x,\tau,\Pcal,e_1,\ldots,e_k)$.
\end{definition}

The point of the forward heat evolution is to turn the source-score observable \(z_V^\tau\) into a canonical parabolic coordinate function $v_V$ whose terminal gradient is \(\Jcal_\tau V\) and whose failure to be affine is measured exactly by its spacetime Hessian energy. In the model Euclidean case, \(v_V=z_V^\tau\) are time-independent linear coordinates.

We first prove that the heat score maps are centered; cf.~\cite[(2.9) in \S 2.2]{MR1800071}.

\begin{lemma}
\label{lem:sec6-forward-heat-score-centering}
    We have the \emph{heat-score centering identity}
    \begin{equation}
        \int_M v_V(\cdot,\ell)\,d\nu_\ell=0
        \qquad
        \text{for every }\ell\in[s,t].
    \end{equation} 
\end{lemma}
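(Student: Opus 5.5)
The plan is to use the reproduction (semigroup) property of the heat kernel to show that the quantity $\int_M v_V(\cdot,\ell)\,d\nu_\ell$ does not depend on $\ell$, and then to evaluate it at the endpoint $\ell=s$.

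The key observation is the Chapman--Kolmogorov identity. For $s\le\ell<t$,
\[
\int_M K(x,t;q,\ell)\,K(q,\ell;y,s)\,dg_\ell(q)=K(x,t;y,s).
\]
Writing $v_V(q,\ell)=\int_M K(q,\ell;y,s)\,z_V^\tau(y)\,dg_s(y)$ and applying Fubini, which is justified on a closed manifold since all kernels are smooth and positive for $\ell>s$, we obtain
\[
\int_M v_V(\cdot,\ell)\,d\nu_\ell=\int_M\!\!\int_M K(x,t;q,\ell)K(q,\ell;y,s)\,dg_\ell(q)\,z_V^\tau(y)\,dg_s(y)=\int_M z_V^\tau\,d\nu_{x,t;s}.
\]
By the source-score centering identity \eqref{eq:score-coordinate-mean-summary}, the right-hand side is zero. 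Equivalently, the identity says $\int_M (P_{s,\ell}\phi)\,d\nu_\ell=(P_{s,t}\phi)(x)$ with $\phi=z_V^\tau$.

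Three cases require separate treatment.
\begin{itemize}
\item \emph{The endpoint $\ell=s$.} Here $v_V(\cdot,s)=z_V^\tau$, so the claim is exactly \eqref{eq:score-coordinate-mean-summary}.
\item \emph{The endpoint $\ell=t$.} With $d\nu_t=\delta_x$, the claim reads $v_V(x,t)=(P_{s,t}z_V^\tau)(x)=\int_M z_V^\tau\,d\nu_{x,t;s}=0$, again by \eqref{eq:score-coordinate-mean-summary}.
\item \emph{An alternative derivation for $s<\ell<t$.} One can instead differentiate in $\ell$. Since $\Box v_V=0$ and the density $K(x,t;\cdot,\ell)$ satisfies $\Box^*K=0$, the standard duality $\frac{d}{d\ell}\int_M u\,d\nu_\ell=\int_M\Box u\,d\nu_\ell$ shows that the integral is constant in $\ell$. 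Continuity at the endpoints then identifies the constant as $0$.
\end{itemize}

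The only delicate point is justifying the interchange of integrals, or the limits as $\ell\to s$ and $\ell\to t$, since $z_V^\tau$ involves $\nabla_x\log K$. On a closed flow $z_V^\tau$ is smooth and bounded, so no difficulty arises. In a complete noncompact setting I would instead use the $L^p(d\nu)$ bounds on $z_V^\tau$ from \cite[Proposition 4.2]{Bamler2020A} together with Gaussian heat-kernel bounds to justify Fubini. I expect this integrability check to be the main obstacle, and it is a mild one.
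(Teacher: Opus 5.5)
Your proposal is correct and is essentially the paper's proof: Chapman--Kolmogorov plus Fubini reduces $\int_M v_V(\cdot,\ell)\,d\nu_\ell$ to $\int_M z_V^\tau\,d\nu_{x,t;s}$, which vanishes by the source-score centering identity, and you treat the endpoint $\ell=t$ separately using $d\nu_t=\delta_x$, as the paper does. Your alternative argument by differentiating in $\ell$ and your remarks on the noncompact case go beyond what is needed on a closed flow, but they are harmless.
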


\begin{proof}
    See \S\ref{subsec:sec6-forward-heat-score-centering}.
\end{proof}

We shall also use the source-score covariance identity \eqref{eq:score-coordinate-cov-summary}
\begin{equation}\label{eq:sec6-source-score-covariance}
    \frac1{2\tau}
    \int_M z_V^\tau z_W^\tau\,d\nu_s
    =
    g_t(\Jcal_\tau V,W),
    \qquad V,W\in T_xM. 
\end{equation}
In particular, if \(|V|_{g_t}=1\), then Theorem \ref{thm:BamProp4.2} implies
\begin{equation}\label{eq:sec6-source-score-L2-bound}
    \tau^{-1}\int_M (z_V^\tau)^2\,d\nu_s
    =
    2g_t(\Jcal_\tau V,V)
    \le 2. 
\end{equation}

\subsection{Gradient identity}

We now prove an identity which allows us to use Fisher metric to control the gradient of the heat score maps; cf.~\cite[(2.8) in \S 2.2]{MR1800071}. 
\begin{lemma}
\label{lem:sec6-terminal-value-gradient}
    For all \(V\in T_xM\),
    \begin{equation}
        \nabla v_V(x,t)=\Jcal_\tau V.
    \end{equation}
    Equivalently, for every \(W\in T_xM\),
    \begin{equation}
        \nabla_Wv_V(x,t)
        =
        g_t(\Jcal_\tau V,W) =g_\tau^F(V,W). 
    \end{equation}
    Consequently, for all \(V,W\in T_xM\),
    \begin{equation}
        \left\langle
        \nabla v_V,\nabla v_W
        \right\rangle_{g_t}(x,t)
        =
        g_t(\Jcal_\tau V,\Jcal_\tau W).
    \end{equation}
\end{lemma}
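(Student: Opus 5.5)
The plan is to compute $\nabla_W v_V(x,t)$ directly from the integral representation
\[
v_V(q,t)=\int_M K(q,t;y,s)\,z_V^\tau(y)\,dg_s(y),
\]
in which the source score $z_V^\tau(y)=2\tau\,\nabla_V^x\log K(x,t;y,s)$ is held fixed. It depends on the base point $x$ only as a fixed function of $y$, so it does not vary with $q$. Differentiating in $q$ in the direction $W\in T_xM$, then evaluating at $q=x$ (differentiation under the integral is justified on a closed manifold, or by the standard Gaussian bounds in the complete case), gives
\[
\nabla_W v_V(x,t)=\int_M \nabla_W^x K(x,t;y,s)\,z_V^\tau(y)\,dg_s(y)
=\int_M \nabla_W^x\log K(x,t;y,s)\,z_V^\tau(y)\,d\nu_{x,t;s}(y).
\]

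Next, write $\nabla_W^x\log K=\frac{1}{2\tau}z_W^\tau$. The right-hand side then becomes
\[
\frac1{2\tau}\int_M z_W^\tau z_V^\tau\,d\nu_s,
\]
which equals $g^F_\tau(V,W)=g_t(\Jcal_\tau V,W)$ by the source-score covariance identity \eqref{eq:score-coordinate-cov-summary} (equivalently \eqref{eq:sec6-source-score-covariance}). This holds for every $W$. Since $\Jcal_\tau$ is $g_t$-self-adjoint, we conclude $\nabla v_V(x,t)=\Jcal_\tau V$. The final claim then follows by taking the $g_t$-inner product of $\nabla v_V(x,t)=\Jcal_\tau V$ with $\nabla v_W(x,t)=\Jcal_\tau W$.

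The only subtle point, and the one I expect to need care, is the bookkeeping of roles. The base point $x$ enters twice: once as the fixed parameter defining $z_V^\tau$, and once as the evaluation point $q$ of the heat evolution. Only the second is differentiated. Because $v_V(\cdot,\ell)=P_{s,\ell}z_V^\tau$ with $z_V^\tau$ frozen, there is no extra term from varying the parameter in $z_V^\tau$. This is exactly the information-geometric identity $\partial_W\mathbb E_\theta[T]=\mathbb E_\theta[T\,\partial_W\log p_\theta]$ from \cite{MR1800071} applied to the statistic $T=z_V^\tau$. Note also that the centering identity \eqref{eq:score-coordinate-mean-summary} is not needed here, although it would let one subtract the mean of $z_V^\tau$ without changing anything.
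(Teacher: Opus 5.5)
Your proposal is correct and is essentially the paper's proof. You differentiate $v_V(q,t)=\int_M K(q,t;y,s)\,z_V^\tau(y)\,dg_s(y)$ in $q$ along $W$ with $z_V^\tau$ held fixed, rewrite the result as $\frac1{2\tau}\int_M z_W^\tau z_V^\tau\,d\nu_s=g^F_\tau(V,W)=g_t(\Jcal_\tau V,W)$, and read off $\nabla v_V(x,t)=\Jcal_\tau V$, which uses only nondegeneracy of $g_t$ (self-adjointness of $\Jcal_\tau$ is not needed) and gives the inner-product formula at once.
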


\begin{proof}
    See \S \ref{subsec:sec6-terminal-value-gradient}.
\end{proof}

\subsection{Hessian identity}

The next identity is the analytic mechanism that converts Fisher information into Hessian control for the canonical heat-score functions.

\begin{lemma}[Weighted Dirichlet identity]
\label{lem:sec6-weighted-dirichlet-identity}
    Let \(u,w\) be smooth solutions of the forward heat equation \((\partial_\ell-\Delta_{g_\ell})u=0\) and \((\partial_\ell-\Delta_{g_\ell})w=0\) on \(M\times[s,t]\). Then
    \begin{equation}\label{eq:sec6-weighted-dirichlet-identity}
        \int_M
        \left\langle
        \nabla u,\nabla w
        \right\rangle_{g_s}
        \,d\nu_s
        -
        \left\langle
        \nabla u,\nabla w
        \right\rangle_{g_t}(x,t)
        =
        2\int_s^t\int_M
        \left\langle
        \nabla^2u,\nabla^2w
        \right\rangle_{g_\ell}
        \,d\nu_\ell\,d\ell. 
    \end{equation}
\end{lemma}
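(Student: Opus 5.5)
The plan is the standard Bochner argument along the conjugate heat flow: differentiate in \(\ell\) the quantity
\[
    \Phi(\ell):=\int_M \langle \nabla u,\nabla w\rangle_{g_\ell}\,d\nu_\ell,\qquad s\le \ell\le t,
\]
and then integrate from \(s\) to \(t\). At the endpoint \(\ell=t\) we have \(d\nu_t=\delta_x\), so \(\Phi(t)=\langle\nabla u,\nabla w\rangle_{g_t}(x,t)\), while \(\Phi(s)\) is the first term on the left of \eqref{eq:sec6-weighted-dirichlet-identity}. It therefore suffices to show
\[
    \frac{d}{d\ell}\Phi(\ell)=-2\int_M\langle\nabla^2u,\nabla^2w\rangle_{g_\ell}\,d\nu_\ell \qquad (s<\ell<t),
\]
and to check that \(\Phi\) is continuous at \(\ell=t\).

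The derivative comes from the duality between the forward heat operator \(\Box=\partial_\ell-\Delta_{g_\ell}\) and the conjugate operator \(\Box^*=-\partial_\ell-\Delta_{g_\ell}+R\). Set \(K_\ell:=K(x,t;\cdot,\ell)\), so that \(\Box^*K_\ell=0\). For any smooth \(F\),
\[
    \frac{d}{d\ell}\int_M F K_\ell\,dg_\ell=\int_M (\Box F)\,K_\ell\,dg_\ell .
\]
We apply this with \(F=\langle\nabla u,\nabla w\rangle_{g_\ell}\). Under Ricci flow \(\partial_\ell g^{ij}=2R^{ij}\). By Bochner's formula, \(\Delta\langle\nabla u,\nabla w\rangle=\langle\nabla\Delta u,\nabla w\rangle+\langle\nabla u,\nabla\Delta w\rangle+2\langle\nabla^2u,\nabla^2w\rangle+2\Ric(\nabla u,\nabla w)\). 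Hence, using \(\partial_\ell u=\Delta u\) and \(\partial_\ell w=\Delta w\),
\[
    \Box\langle\nabla u,\nabla w\rangle
    =2\Ric(\nabla u,\nabla w)+\langle\nabla\Delta u,\nabla w\rangle+\langle\nabla u,\nabla\Delta w\rangle-\Delta\langle\nabla u,\nabla w\rangle
    =-2\langle\nabla^2u,\nabla^2w\rangle .
\]
The Ricci terms cancel exactly. Integrating this against \(d\nu_\ell\) gives the claimed derivative. (This is the polarized form of the familiar identity \(\Box|\nabla u|^2=-2|\nabla^2u|^2\).)

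For the endpoint, \(F\) is smooth on \(M\times[s,t]\) and \(\nu_\ell\rightharpoonup\delta_x\) as \(\ell\uparrow t\), so \(\Phi(\ell)\to F(x,t)\). Integrating the derivative over \([s,t]\) then yields \(\Phi(s)-\Phi(t)=2\int_s^t\int_M\langle\nabla^2u,\nabla^2w\rangle\,d\nu_\ell\,d\ell\), which is \eqref{eq:sec6-weighted-dirichlet-identity}.

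The main obstacle is justifying these manipulations. The integration by parts hidden in the displayed duality identity needs no boundary terms and needs integrability, which is automatic on closed \(M\). In the complete case it requires Gaussian heat-kernel bounds together with growth control on \(u\), \(w\) and their derivatives, which I would assume as standing hypotheses. The weak convergence as \(\ell\uparrow t\) must also be uniform enough to pass the limit through, and the time-integral is improper near \(\ell=t\) unless its integrand is known to be bounded there. Both follow from the smoothness of \(u\) and \(w\) up to time \(t\) and from the standard short-time concentration of the heat kernel.
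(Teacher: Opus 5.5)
Your proposal is correct and is essentially the paper's proof. The paper also integrates the polarized Bochner identity $(\partial_\ell-\Delta_{g_\ell})\langle\nabla u,\nabla w\rangle_{g_\ell}=-2\langle\nabla^2u,\nabla^2w\rangle_{g_\ell}$ against $d\nu_\ell$, using $\frac{d}{d\ell}\int_M F\,d\nu_\ell=\int_M(\partial_\ell-\Delta_{g_\ell})F\,d\nu_\ell$, with the Ricci terms cancelling against $\partial_\ell g^{ij}=2R^{ij}$. Your remarks on continuity at $\ell=t$, where $d\nu_t=\delta_x$, and on integrability only make explicit what the paper leaves implicit.
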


\begin{proof}
    It follows by integrating the polarized Bochner formula 
    \begin{equation}
        (\partial_\ell-\Delta_{g_\ell})
        \left\langle
        \nabla u,\nabla w
        \right\rangle_{g_\ell}
        =
        -2
        \left\langle
        \nabla^2u,\nabla^2w
        \right\rangle_{g_\ell}
    \end{equation} 
    against the conjugate heat-kernel measure and using
    \begin{align}
        \frac{d}{d\ell} \int_M
        \left\langle
        \nabla u,\nabla w
        \right\rangle_{g_\ell }
        \,d\nu_\ell= \int  (\partial_\ell-\Delta_{g_\ell})
        \left\langle
        \nabla u,\nabla w
        \right\rangle_{g_\ell}\,d\nu_\ell.
    \end{align}
    The Ricci terms in the ordinary Bochner formula cancel with the variation of the metric under the Ricci flow.
\end{proof}

As a consequence of \eqref{eq:target_Dirichlet_tensor}, Lemma \ref{lem:sec6-terminal-value-gradient} and Lemma \ref{lem:sec6-weighted-dirichlet-identity}, we obtain the following Hessian identity.

\begin{corollary}
\label{cor:sec6-operator-affineness-defect}
    For every \(V,W\in\Pcal\),
    \begin{align}
        G_\tau(V,W)
        -
        g_t(\Jcal_\tau V,\Jcal_\tau W)
        &=
        2\int_s^t\int_M
        \left\langle
        \nabla^2v_V,\nabla^2v_W
        \right\rangle_{g_\ell}
        \,d\nu_\ell\,d\ell.
    \end{align}
\end{corollary}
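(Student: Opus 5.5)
The identity follows by applying Lemma~\ref{lem:sec6-weighted-dirichlet-identity} with \(u=v_V\) and \(w=v_W\). These are smooth solutions of the forward heat equation on \(M\times[s,t]\) by Definition~\ref{def:canonical-heat-score-map}. The lemma gives
\[
    \int_M\langle\nabla v_V,\nabla v_W\rangle_{g_s}\,d\nu_s-\langle\nabla v_V,\nabla v_W\rangle_{g_t}(x,t)
    =2\int_s^t\int_M\langle\nabla^2v_V,\nabla^2v_W\rangle_{g_\ell}\,d\nu_\ell\,d\ell .
\]
It then remains to identify the two boundary terms on the left.

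At the initial time \(\ell=s\) we have \(v_V(\cdot,s)=z_V^\tau\) and \(v_W(\cdot,s)=z_W^\tau\). Here \(d\nu_s=K(x,t;\cdot,s)\,dg_s=d\nu_\tau\), so the first term equals \(\int_M\langle\nabla_y z_V^\tau,\nabla_y z_W^\tau\rangle_{g_s}\,d\nu_\tau\). By the definition \eqref{eq:target_Dirichlet_tensor}, this is exactly \(G_\tau(V,W)\).

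At the terminal point, Lemma~\ref{lem:sec6-terminal-value-gradient} gives \(\langle\nabla v_V,\nabla v_W\rangle_{g_t}(x,t)=g_t(\Jcal_\tau V,\Jcal_\tau W)\). Substituting both identifications yields the claimed formula. The restriction \(V,W\in\Pcal\) plays no role, and the identity holds for all \(V,W\in T_xM\).

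The only point needing care is the endpoint \(\ell=t\), where \(d\nu_t=\delta_x\) and the conjugate heat kernel degenerates. We must check that the weighted Dirichlet identity is valid up to \(\ell=t\). Here \(v_V\) is smooth up to time \(t\), being the heat evolution of the smooth function \(z_V^\tau\) from time \(s\). Hence \(\int_M\langle\nabla v_V,\nabla v_W\rangle\,d\nu_\ell\to\langle\nabla v_V,\nabla v_W\rangle(x,t)\) as \(\ell\uparrow t\), by weak convergence \(\nu_\ell\to\delta_x\). Integrating the derivative formula of Lemma~\ref{lem:sec6-weighted-dirichlet-identity} over \([s,t-\epsilon]\) and letting \(\epsilon\to0\) then gives the identity. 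The Hessian integrand is bounded, so the time integral converges.
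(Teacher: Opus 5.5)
Your proposal is correct and follows the paper's proof: you apply Lemma~\ref{lem:sec6-weighted-dirichlet-identity} with \(u=v_V\), \(w=v_W\), identify the \(\ell=s\) boundary term with \(G_\tau(V,W)\) via \(v_V(\cdot,s)=z_V^\tau\), and identify the \(\ell=t\) term with \(g_t(\Jcal_\tau V,\Jcal_\tau W)\) via Lemma~\ref{lem:sec6-terminal-value-gradient}. Your extra remarks are also correct and only make explicit what the paper leaves implicit: the identity holds for all \(V,W\in T_xM\), and the endpoint \(\ell=t\) is handled by a limit using \(\nu_\ell\rightharpoonup\delta_x\) and the smoothness of \(v_V\) up to time \(t\).
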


\begin{proof}
    See \S \ref{subsec:hessian_identity}.
\end{proof}

\subsection{Integral gradient estimate}

The following result converts Hessian control and a pointwise terminal gradient estimate into an integral gradient estimate.

\begin{proposition}
\label{prop:sec6-averaged-gradient-orthonormality}
    There exists a constant \(C=C(k)<\infty\) such that the following holds. Let
    \[
        v=(v_1,\ldots,v_k):M\times[s,t]\to\R^k
    \]
    be a \(k\)-tuple of forward heat solutions. Suppose that \(\delta_1,\delta_2\ge0\) and
    \begin{align}
    \label{eq:terminal-gradient-and-hessian-estimate-hypothesis}
        \max_{1\le i,j\le k}
        \left|
        \left\langle
        \nabla v_i,\nabla v_j
        \right\rangle_{g_t}(x,t)
        -
        \delta_{ij}
        \right|
        \le\delta_1, \qquad \text{and} \qquad
        \int_s^t\int_M
        \sum_{i=1}^k
        |\nabla^2v_i|_{g_\ell}^2
        \,d\nu_\ell\,d\ell
        \le\delta_2.
    \end{align}
    Then
    \begin{align}
        \tau^{-1}
        \int_s^t\int_M
        \sum_{i,j=1}^k
        \left|
        \left\langle
        \nabla v_i,\nabla v_j
        \right\rangle_{g_\ell}
        -
        \delta_{ij}
        \right|
        d\nu_\ell\,d\ell
        \le
        C(k)\left(
            \delta_1+\delta_2+
            \sqrt{\delta_2(1+\delta_1+2\delta_2)}
        \right).
    \end{align}
\end{proposition}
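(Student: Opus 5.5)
The idea is to track the pairwise quantities $a_{ij}(\ell):=\int_M \langle\nabla v_i,\nabla v_j\rangle_{g_\ell}\,d\nu_\ell$ and their pointwise deviations. The approach has two steps. First, control the $\nu_\ell$-mean of $\langle\nabla v_i,\nabla v_j\rangle_{g_\ell}$ at each time $\ell$ using the weighted Dirichlet identity. Second, control the fluctuation about that mean using a Poincaré inequality in the target variable.

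\emph{Means.} Apply Lemma~\ref{lem:sec6-weighted-dirichlet-identity} on $[\ell,t]$ instead of $[s,t]$; the same proof works, with $d\nu_\ell$ as the initial measure. This gives
\[
a_{ij}(\ell)-\langle\nabla v_i,\nabla v_j\rangle_{g_t}(x,t)=2\int_\ell^t\int_M\langle\nabla^2v_i,\nabla^2v_j\rangle\,d\nu_{\ell'}\,d\ell'.
\]
By Cauchy--Schwarz and the Hessian hypothesis, the right side is at most $2\delta_2$ in absolute value. Hence $|a_{ij}(\ell)-\delta_{ij}|\le\delta_1+2\delta_2$ for every $\ell\in[s,t]$. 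In particular $\int_M|\nabla v_i|^2\,d\nu_\ell\le 1+\delta_1+2\delta_2$, uniformly in $\ell$.

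\emph{Fluctuations.} For fixed $\ell$, set $\tau_\ell:=t-\ell$ and $h:=\langle\nabla v_i,\nabla v_j\rangle_{g_\ell}$. The Hein--Naber Poincaré inequality for $d\nu_\ell$ (the $L^2$ one, or better its $L^1$ version, which holds with constant $C\sqrt{\tau_\ell}$) gives
\[
\int_M|h-a_{ij}(\ell)|\,d\nu_\ell\le C\sqrt{\tau_\ell}\int_M|\nabla h|\,d\nu_\ell\le C\sqrt{\tau_\ell}\int_M\bigl(|\nabla^2v_i||\nabla v_j|+|\nabla v_i||\nabla^2v_j|\bigr)\,d\nu_\ell.
\]
If only the $L^2$ Poincaré inequality is available, we apply it to the components of $\nabla v_i$ in a frame instead; equivalently we use $\int|h-\bar h|\le$ a term built from $\|\nabla v_i-\overline{\nabla v_i}\|_{L^2}$, although the vector mean is not geometric. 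The cleaner route is to bound the $L^1$-oscillation of $h$ via $|\nabla h|$. By Cauchy--Schwarz at each time,
\[
\int_M|h-a_{ij}|\,d\nu_\ell\le C\sqrt{\tau_\ell}\,\Bigl(\sum_i\int_M|\nabla^2v_i|^2\,d\nu_\ell\Bigr)^{1/2}(1+\delta_1+2\delta_2)^{1/2}.
\]
Integrating in $\ell$ over $[s,t]$ and applying Cauchy--Schwarz in time together with $\int_s^t\tau_\ell\,d\ell=\tau^2/2$ gives
\[
\int_s^t\int_M|h-a_{ij}|\,d\nu_\ell\,d\ell\le C\tau\sqrt{\delta_2(1+\delta_1+2\delta_2)}.
\]
This is exactly the square-root term in the claim. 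Adding the mean estimate integrated over an interval of length $\tau$, which contributes $\tau(\delta_1+2\delta_2)$, and summing over the $k^2$ pairs $(i,j)$ yields the stated bound after dividing by $\tau$.

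\emph{Main obstacle.} The delicate step is the fluctuation estimate: it needs a Poincaré-type inequality for $d\nu_\ell$ with constant $\lesssim\sqrt{t-\ell}$ for a function $h$ that is not itself a heat solution. I would first try the Hein--Naber $L^1$ Poincaré inequality. Failing that, I would use the $L^2$ version, $\mathrm{Var}_{\nu_\ell}(h)\le 2\tau_\ell\int|\nabla h|^2\,d\nu_\ell$. That version requires $\int|\nabla^2v_i|^2|\nabla v_j|^2$, which is not available, so the $L^1$ route is preferred. The pointwise gradient bound on $|\nabla v_j|$ is unavailable here, which is why the argument passes through $L^2$ means of $|\nabla v_j|$ and Cauchy--Schwarz.
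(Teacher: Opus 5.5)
Your proposal is correct and matches the paper's proof: split each $\langle\nabla v_i,\nabla v_j\rangle_{g_\ell}-\delta_{ij}$ into its $\nu_\ell$-mean and a fluctuation. You control the mean with the weighted Dirichlet identity applied on $[\ell,t]$, and the fluctuation with the $L^1$-Poincaré inequality (constant $C\sqrt{t-\ell}$), Cauchy--Schwarz, and the uniform bound $\int_M|\nabla v_j|^2\,d\nu_\ell\le 1+\delta_1+2\delta_2$. The differences are cosmetic: you bound the mean uniformly in $\ell$ rather than after integrating in $\ell$, and the paper cites the $L^1$-Poincaré inequality from Bamler's Section~11 rather than from Hein--Naber.
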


\begin{proof}
    See \S \ref{subsec:AvgGradOrthoFromHessEnergy}.
\end{proof}

\subsection{Fisher rank produces heat-kernel strong splitting}
\label{subsec:fisher-rank-produces-strong-splitting}

\begin{definition}[Heat-kernel strong splitting map]
\label{def:sec6-heat-kernel-strong-splitting-map}
    Let \(\eta>0\). A map
    \[
        v=(v_1,\ldots,v_k):M\times[t-\tau,t]\to\R^k
    \]
    is called a \emph{heat-kernel strong $(k,\eta,\tau)$-splitting map} at \(\mathbf x=(x,t)\) if the following conditions hold.
    \begin{enumerate}
        \item Each component \(v_i\) solves the heat equation \((\partial_\ell-\Delta_{g_\ell})v_i=0\) on \(M\times[t-\tau,t].\)
    
        \item Each component is centered:
        \[
            \int_M v_i(\cdot,\ell)\,d\nu_\ell=0
            \qquad
            \text{for every }\ell\in[t-\tau,t].
        \]
    
        \item The initial heat-kernel \(L^2\)-size is controlled:
        \[
            \tau^{-1}
            \int_M v_i(\cdot,t-\tau)^2\,d\nu_{t-\tau}
            \le
            2
            \qquad
            \text{for every }i.
        \]
    
        \item The three splitting errors satisfy
        \begin{align*}
            &\max_{1\le i,j\le k}\left|
            \left\langle
            \nabla v_i,\nabla v_j
            \right\rangle_{g_t}(x,t)
            -
            \delta_{ij}
            \right|+\tau^{-1}
            \int_{t-\tau}^t\int_M
            \sum_{i,j=1}^k
            \left|
            \left\langle
            \nabla v_i,\nabla v_j
            \right\rangle_{g_\ell}
            -
            \delta_{ij}
            \right|
            d\nu_\ell\,d\ell\\
            &\qquad +\int_{t-\tau}^t\int_M
            \sum_{i=1}^k
            |\nabla^2v_i|_{g_\ell}^2
            \,d\nu_\ell\,d\ell\leq \eta,
        \end{align*}
    \end{enumerate}
\end{definition}

We are now ready to prove closeness of the Fisher endomorphism to identity leads to pointwise gradient estimate of the heat score maps as well as Hessian estimates.

\begin{proposition}
\label{prop:sec6-terminal-orthonormality-from-fisher-pinching}
    Let \(\Pcal\subset T_xM\) be a \(k\)-plane, and suppose \(\tr_\Pcal(I-\Jcal_\tau)\le\delta.\) Let \(e_1,\ldots,e_k\) be any \(g_t\)-orthonormal basis of \(\Pcal\), and put \(v_i:=v_{e_i}\). Then
    \begin{align}
        \left|
        \left\langle
        \nabla v_i,\nabla v_j
        \right\rangle_{g_t}(x,t)
        -
        \delta_{ij}
        \right|
        \le
        2\delta
        \qquad
        \text{for all }i,j.
    \end{align}
    If, in addition, \(\tr_{\Pcal}(\Gcal_\tau-\Jcal_\tau)\le\delta_1,\) then
    \begin{equation}\label{eq:sec6-hessian-energy-small-bound}
        2\int_{t-\tau}^t\int_M
        \sum_{i=1}^k
        |\nabla^2v_i|_{g_\ell}^2
        \,d\nu_\ell\,d\ell
        \le
        \delta+\delta_1. 
    \end{equation}
\end{proposition}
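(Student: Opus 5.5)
The plan is to reduce everything to linear algebra on the $k\times k$ matrices $A_{ij}:=g_t(\Jcal_\tau e_i,e_j)$ and $B_{ij}:=g_t(\Jcal_\tau e_i,\Jcal_\tau e_j)=(\Jcal_\tau^2)$-entries restricted to $\Pcal$. Two facts do the work: Lemma~\ref{lem:sec6-terminal-value-gradient} and Corollary~\ref{cor:sec6-operator-affineness-defect}. By Theorem~\ref{thm:BamProp4.2} we have $0\le\Jcal_\tau\le I$ on all of $T_xM$, so $0\le I-\Jcal_\tau\le I$.

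\textbf{Terminal orthonormality.} By Lemma~\ref{lem:sec6-terminal-value-gradient},
\[
\langle\nabla v_i,\nabla v_j\rangle_{g_t}(x,t)-\delta_{ij}=g_t\bigl((\Jcal_\tau^2-I)e_i,e_j\bigr).
\]
Write $I-\Jcal_\tau^2=(I-\Jcal_\tau)(I+\Jcal_\tau)$. Since $I-\Jcal_\tau$ and $I+\Jcal_\tau$ commute and $0\le I+\Jcal_\tau\le 2I$, we get $0\le I-\Jcal_\tau^2\le 2(I-\Jcal_\tau)$. For a nonnegative self-adjoint $Q$, Cauchy--Schwarz gives $|g_t(Qe_i,e_j)|\le\sqrt{g_t(Qe_i,e_i)\,g_t(Qe_j,e_j)}\le \tr_{\Pcal}Q$. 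Apply this with $Q=I-\Jcal_\tau^2$. Then $\tr_\Pcal Q\le 2\tr_\Pcal(I-\Jcal_\tau)\le2\delta$, which is the first claim.

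\textbf{Hessian energy.} Sum Corollary~\ref{cor:sec6-operator-affineness-defect} over $V=W=e_i$:
\[
2\int\!\!\int\sum_i|\nabla^2v_i|^2\,d\nu_\ell\,d\ell=\tr_\Pcal\Gcal_\tau-\tr_\Pcal\Jcal_\tau^2 .
\]
Split the right side as
\[
\tr_\Pcal(\Gcal_\tau-\Jcal_\tau)+\tr_\Pcal(\Jcal_\tau-\Jcal_\tau^2).
\]
The first term is at most $\delta_1$ by hypothesis. For the second, $\Jcal_\tau-\Jcal_\tau^2=\Jcal_\tau(I-\Jcal_\tau)\le I-\Jcal_\tau$, since $0\le\Jcal_\tau\le I$ and the two factors commute. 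Hence the second term is at most $\delta$, and together these give \eqref{eq:sec6-hessian-energy-small-bound}.

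There is no real obstacle. The only point needing care is that the trace over $\Pcal$ of $G_\tau(e_i,e_i)$ really is $\tr_\Pcal\Gcal_\tau$; this holds by \eqref{eq:target_Dirichlet_11}. One should also use operator inequalities on the whole space before restricting to $\Pcal$, since $\Pcal$ need not be $\Jcal_\tau$-invariant.
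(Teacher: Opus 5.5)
Your proof is correct and follows the paper's own argument essentially line by line. Both use the factorization $0\le I-\Jcal_\tau^2=(I+\Jcal_\tau)(I-\Jcal_\tau)\le 2(I-\Jcal_\tau)$ with matrix entries bounded by $\tr_\Pcal$ of the nonnegative restriction, and the split of Corollary~\ref{cor:sec6-operator-affineness-defect} into $\tr_\Pcal(\Gcal_\tau-\Jcal_\tau)+\tr_\Pcal(\Jcal_\tau-\Jcal_\tau^2)$. Your explicit Cauchy--Schwarz justification of the entry bound simply fills in a step the paper states without proof.
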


\begin{proof}
    See \S \ref{subsec:canonical_estimates_from_fisher_production}.
\end{proof}

We now combine the good-scale selection Proposition \ref{prop:sec6-good-scale-fixed-plane}, the canonical heat-score identities in \S \ref{subsec:canonical-heat-score-maps}, and the averaged-gradient estimate Proposition \ref{prop:sec6-terminal-orthonormality-from-fisher-pinching} to obtain a canonical heat-kernel strong splitting map.

\begin{theorem}
\label{thm:sec6-fisher-rank-produces-strong-splitting-map}
    Fix \(k\in\{1,\ldots,n\}\), \(\theta\in(0,1)\), and \(\delta\in (0,1)\). Suppose that \(\dfplus{k}(\mathbf x;\tau_0)\le\delta,\)  and let \(\Pcal=\Pcal_k^F(\mathbf x;\tau_0)\)  be any top Fisher \(k\)-plane at scale \(\tau_0\). Then there exists \(\tau\in[\theta\tau_0,\tau_0]\) such that, for any \(g_t\)-orthonormal basis of \(\Pcal\), the associated canonical heat-score map is a heat-kernel strong $(k,C(\theta)\sqrt{\delta},\tau)$-splitting map at \(\mathbf x\).
\end{theorem}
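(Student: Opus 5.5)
The proof assembles the results of this section. First, apply Proposition~\ref{prop:sec6-good-scale-fixed-plane} to the top Fisher \(k\)-plane \(\Pcal=\Pcal_k^F(\mathbf x;\tau_0)\). Since \(\dfplus{k}(\mathbf x;\tau_0)=\tr_\Pcal(I-\Jcal_{\tau_0})\le\delta\), it yields a scale \(\tau\in[\theta\tau_0,\tau_0]\) with
\[
\tr_\Pcal(I-\Jcal_\tau)\le\delta,
\qquad
\tr_\Pcal(\Gcal_\tau-\Jcal_\tau)\le \delta_1:=\frac{\delta}{\log(\theta^{-1})}.
\]
This scale depends only on \(\Pcal\), not on the choice of orthonormal basis, which is why the conclusion holds for every basis. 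Now fix any \(g_t\)-orthonormal basis \(e_1,\dots,e_k\) of \(\Pcal\) and let \(v\) be the canonical heat-score map at scale \(\tau\). Properties (1)–(3) of Definition~\ref{def:sec6-heat-kernel-strong-splitting-map} follow directly. Property (1) holds by construction in Definition~\ref{def:canonical-heat-score-map}. Property (2) is Lemma~\ref{lem:sec6-forward-heat-score-centering}. Property (3) is \eqref{eq:sec6-source-score-L2-bound}, since \(v_i(\cdot,t-\tau)=z_{e_i}^\tau\).

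For property (4), Proposition~\ref{prop:sec6-terminal-orthonormality-from-fisher-pinching} gives the terminal gradient error \(\max_{i,j}|\langle\nabla v_i,\nabla v_j\rangle(x,t)-\delta_{ij}|\le 2\delta\). It also gives the Hessian energy bound \(\int\!\int\sum_i|\nabla^2 v_i|^2\,d\nu_\ell\,d\ell\le \tfrac12(\delta+\delta_1)\). I would double-check that the hypothesis ``\(V,W\in\Pcal\)'' in Corollary~\ref{cor:sec6-operator-affineness-defect} is all that Proposition~\ref{prop:sec6-terminal-orthonormality-from-fisher-pinching} needs. Summing over the basis there gives \(\tr_\Pcal\Gcal_\tau-\sum_i|\Jcal_\tau e_i|^2\), which is at most \(\tr_\Pcal(\Gcal_\tau-\Jcal_\tau)+\tr_\Pcal(\Jcal_\tau-\Jcal_\tau^2)\). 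Because \(0\le\Jcal_\tau\le I\), we have \(\Jcal_\tau-\Jcal_\tau^2\le I-\Jcal_\tau\), so the second term is controlled by \(\delta\). This is consistent with the stated bound.

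Next, feed these into Proposition~\ref{prop:sec6-averaged-gradient-orthonormality} with \(\delta_1'=2\delta\) and \(\delta_2'=\tfrac12(\delta+\delta/\log\theta^{-1})\le C(\theta)\delta\). This bounds the averaged gradient error by \(C(k)\bigl(\delta_1'+\delta_2'+\sqrt{\delta_2'(1+\delta_1'+2\delta_2')}\bigr)\). Since \(\delta<1\), all quantities are bounded, and the square-root term gives \(C(k,\theta)\sqrt\delta\). The dominant contribution is therefore \(\sqrt{\delta}\), while the linear terms satisfy \(\delta\le\sqrt\delta\). Summing the three errors yields \(\eta=C(\theta)\sqrt\delta\), where the dependence on \(k\le n\) is absorbed into the constant (or one allows \(C=C(n,\theta)\)).

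The only real content lies in the cited propositions, so the main obstacle is bookkeeping. The key points are that the good scale is chosen independently of the basis, and that the constants stay uniform in \(\delta\in(0,1)\) so that every term is dominated by \(\sqrt\delta\). If the stated \(C(\theta)\) must be literally independent of \(k\), I would use \(k\le n\) and regard the dimension as fixed.
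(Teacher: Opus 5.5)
Your proposal is correct and follows the paper's proof essentially step for step. You choose the good scale via Proposition~\ref{prop:sec6-good-scale-fixed-plane} for the fixed plane $\Pcal$, hence independently of the basis. Properties (1)--(3) come from the construction, Lemma~\ref{lem:sec6-forward-heat-score-centering}, and \eqref{eq:sec6-source-score-L2-bound}. Property (4) comes from feeding Proposition~\ref{prop:sec6-terminal-orthonormality-from-fisher-pinching} into Proposition~\ref{prop:sec6-averaged-gradient-orthonormality}, with $\delta<1$ letting the linear terms be absorbed into $\sqrt\delta$.

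Your remark that the constant also depends on $k$, hence on $n$, is accurate; the paper leaves this dependence implicit in $C(\theta)$.
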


\begin{proof}
    See \S \ref{subsec:proof_fisher_rank_strong_splitting}.
\end{proof}

\section{Top Fisher deficit and Nash entropy}
\label{sec:fisher-epsilon-regularity}

In this section we establish a quantitative equivalence, at comparable scales, between the smallness of the pointed Nash deficit \(-\Nash\) and that of the full Fisher deficit. We first prove that a small full Fisher deficit forces \(-\Nash\) to be small at a comparable scale. A qualitative form of this implication was communicated to us by Yongjia Zhang, who proved it using
\(\mathbb F\)-compactness \cite{MR4623543} and a contradiction argument. The quantitative statement and the proof below are inspired by Zhang's result and by the arguments in \cite[Propositions~14.1 and 14.2]{Bamler2020C}.

\begin{theorem}\label{thm:top-fisher-epsilon-regularity}
    There are dimensional constants \(c_n>0,\) \(\delta_n>0,\) \(C_n<\infty\) with the following property. Let $(M^n,g_\ell)_{\ell\in[t-\tau_0,t]}$ be a closed Ricci flow and let $\mathbf x=(x,t)$. If \(\delta_n^+(\mathbf x;\tau_0) \leq\delta\leq\delta_n,\) then
    \begin{equation}
    \label{eq:main-entropy-conclusion}
        -C_n\delta^{1/8}
        \leq
        \Nash_{\mathbf x}(c_n\tau_0)
        \leq0.
    \end{equation}
\end{theorem}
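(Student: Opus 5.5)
The plan is to go through the full heat-score splitting map given by Theorem \ref{thm:sec6-fisher-rank-produces-strong-splitting-map} with \(k=n\), and then to control the Nash entropy with a Bamler-type argument in the spirit of \cite[Propositions~14.1 and 14.2]{Bamler2020C}. Fix \(\theta=1/2\).

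\textbf{Step 1.} Apply Theorem \ref{thm:sec6-fisher-rank-produces-strong-splitting-map} with \(k=n\). This yields a scale \(\tau\in[\tau_0/2,\tau_0]\) and a heat-kernel strong \((n,C\sqrt\delta,\tau)\)-splitting map \(v=(v_1,\dots,v_n)\) at \(\mathbf x\). The map is centered, has \(L^2(d\nu)\) size at most \(2\tau\), has gradients that are almost orthonormal in the spacetime-averaged sense, and has small Hessian energy.

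\textbf{Step 2.} Show that the pushforward measures \(v(\cdot,\ell)_*\nu_\ell\) are close to the Euclidean Gaussian of covariance \(2(t-\ell)I\) for \(\ell\) in a middle range of \([t-\tau,t]\). The key tool is Itô/Bochner-type calculus along the conjugate heat flow. For any polynomial or test function \(\Phi\) on \(\R^n\), differentiate \(\int\Phi(v)\,d\nu_\ell\) in \(\ell\). This produces \(\int \partial_{ij}\Phi(v)\langle\nabla v_i,\nabla v_j\rangle\,d\nu_\ell\), and we replace \(\langle\nabla v_i,\nabla v_j\rangle\) by \(\delta_{ij}\) at an error controlled by the averaged-gradient bound of Proposition \ref{prop:sec6-averaged-gradient-orthonormality}. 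Choosing \(\Phi\) to be exponential or Hermite-type functions, with truncation handled by the \(L^p\) bounds \(\|z_V^\tau\|_{L^p}\le C\sqrt{p\tau}\), gives that the characteristic function or log-Laplace transform of \(v_*\nu_\ell\) matches the Gaussian one up to \(C\delta^{1/4}\). The two square roots that appear here, one from Theorem \ref{thm:sec6-fisher-rank-produces-strong-splitting-map} and one from Cauchy--Schwarz in Proposition \ref{prop:sec6-averaged-gradient-orthonormality}, explain the degradation of exponents.

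\textbf{Step 3.} Convert this distributional closeness into a lower bound on the entropy. Since \(v\) is almost a Riemannian submersion (in fact almost an isometry, because \(k=n\)) in \(L^1(d\nu_\ell)\), a coarea/change-of-variables estimate shows that the density of \(\nu_\ell\) cannot concentrate more than the Gaussian. Concretely, pick \(\ell=t-c_n\tau_0\) and bound the \(\nu_\ell\)-measure of \(P^*\)-parabolic balls from above by roughly the Euclidean value \((1+C\delta^{1/8})\). This is a volume-ratio statement at scale \(\sqrt{\tau}\).

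\textbf{Step 4.} Feed this into Bamler's lower bound: an almost-Euclidean upper bound on the conjugate heat kernel concentration yields \(\Nash_{\mathbf x}(c_n\tau_0)\ge -C\delta^{1/8}\). An alternative route uses \(-\Nash\le\) the relative entropy of \(\nu\) with respect to the pulled-back Gaussian. Here \(D_{\rm KL}(v_*\nu\mid\gamma)\) is small by Step 2, plus a Jacobian error \(\int|\log\det(\nabla v)|\,d\nu\) controlled by the gradient estimate. The upper bound \(\Nash\le0\) is standard. Positivity of all constants and the requirement \(\delta\le\delta_n\) ensure that the error terms are absorbed.

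The main obstacle is Step 3/4: passing from weak (averaged) almost-orthonormality of \(\nabla v\) to a genuine entropy bound. A Jacobian close to \(1\) in \(L^1\) does not control \(\int\log\) of it without some integrability, and \(v\) need not be injective. I would handle this by using the Hessian smallness on most time slices to get good \(L^2\) control of \(\nabla v\). I would then apply the Gaussian log-Sobolev-type inequality for \(\nu_\ell\) (Proposition \ref{prop:sharp-phi-sobolev-heat-kernel} with \(\varphi_{\rm KL}\)) to test functions \(\Phi(v)\), which turns the estimate into a comparison of \(\nu_\ell\) with \(v^*\gamma\) in entropy. I expect the \(\delta^{1/8}\) exponent to come from this last layer of interpolation.
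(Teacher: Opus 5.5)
\paragraph{Verdict: the architecture is right, but the step that bounds the entropy is missing.} Your Steps 1--2 match the paper, as does the identity behind your ``alternative route'': up to a Jacobian error, $-\Nash$ is the relative entropy of $\nu$ against the pulled-back Gaussian. (For Step 2 the paper uses $\Phi=P^{\R^n}_{1-\theta}\phi$. Its Hessian is bounded, so $L^1$ control of $G-I$ suffices.) Non-injectivity is harmless in that identity. The pushforward density is a sum over the fiber, and the log of the sum dominates the weighted average of the logs of the summands. This gives $D_{\varphi_{\rm KL}}(\tilde\mu\mid\gamma_n)\ge -N-C\eta^{1/2}$, so extra preimages only help.

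\paragraph{The gap.} It is the assertion that $D_{\rm KL}(v_*\nu\mid\gamma)$ is small ``by Step 2''. Closeness of characteristic functions, Laplace transforms, or even $W_2$ gives no upper bound on relative entropy. A measure spread over a fine family of thin slabs is $W_2$-close to $\gamma_n$ but has enormous KL divergence. Nothing in Step 2 rules out $v_*\nu$ having this form, and that is exactly how a very negative Nash entropy (concentration of $\nu$) would look after pushing forward by an almost-isometry. Step 3 fails for the same reason: bounding $\nu$-measures of balls by change of variables needs an upper bound on the pushforward density. Moreover, the volume--entropy comparisons in \cite{Bamler2020A} lose dimensional constants, so they cannot give $\Nash\ge -C\delta^{1/8}$.

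Your proposed repair through the log-Sobolev inequality of Proposition~\ref{prop:sharp-phi-sobolev-heat-kernel} points the wrong way. It bounds $D(\,\cdot\mid\nu_\ell)$ from above, which gives a \emph{lower} bound on $\int\log K\,d\lambda$ for $\lambda\approx v^*\gamma$. But $\Nash_{\mathbf x}\ge-\epsilon$ is an \emph{upper} bound on $\int\log K\,d\nu$. Put differently, a log-Sobolev inequality with constant $\tau$ also holds for arbitrarily concentrated measures, so it cannot supply the anti-concentration that $\Nash\ge-\epsilon$ expresses.

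\paragraph{What is needed.} You need a regularity estimate on the pushforward density, fed into a transport--entropy inequality. In Lemma~\ref{lem:quantitative-area-estimate} the paper proceeds as follows.
\begin{enumerate}
\item Cut off by $\psi=\chi(|G-I|^2)$. Then $y$ is a local diffeomorphism on $\operatorname{supp}\psi$, with $|\log J_y|\le C|G-I|$ and $|\nabla\log J_y|^2\le C\sum_i|\nabla^2y_i|^2$.
\item Use Perelman's $\mathcal W\le\Nash\le0$ together with the scalar curvature lower bound to get $a\int|\nabla f_a|^2\,d\nu\le C_n$.
\item By convexity over the fibers, deduce that the cut-off pushforward has bounded Fisher information, $I(\tilde\mu\mid\gamma_n)\le C$.
\item Apply the Gaussian HWI inequality $D_{\varphi_{\rm KL}}(\tilde\mu\mid\gamma_n)\le W_2(\tilde\mu,\gamma_n)\,I(\tilde\mu\mid\gamma_n)^{1/2}$, with $W_2(\tilde\mu,\gamma_n)\le C\eta^{1/4}$ and $\eta=C\sqrt\delta$.
\end{enumerate}
Controlling the cutoff errors in $\int F\,d\tilde\nu$ and $\int|y|^2\,d\tilde\mu$ also requires three bounds: $F\ge-C_n$ and $\int e^{\beta F}\,d\nu\le C_n$ (Proposition~\ref{prop:zhang-moments}), and $\int|y|^4\,d\nu\le C_n$ (hypercontractivity). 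The exponent $\tfrac18$ comes from the $\sqrt\delta$ in the splitting map combined with the $\eta^{1/4}$ Wasserstein cost of the cutoff, not from a log-Sobolev interpolation.
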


\begin{proof}
    See \S \ref{subsec:top-fisher-epsilon-regularity}.
\end{proof}

We also have two forms of converse to Theorem \ref{thm:top-fisher-epsilon-regularity}. We first prove an average converse.

\begin{proposition}
\label{prop:nash-gives-good-fisher-scale}
    Let \((M^n,(g_\ell)_{\ell\in I})\) be a closed Ricci flow, let
    \(\mathbf x=(x,t)\), and let \(\tau>0\) satisfy
    \([t-\tau,t]\subset I\). Put \(s:=t-\tau\). Then, for every
    \(\theta\in(0,1)\), there exists \(\rho\in(\theta\tau,\tau)\)
    such that
    \begin{equation}
    \label{eq:nash-gives-good-fisher-scale}
        \int_M
        \dfplus{n}\bigl((y,s+\rho);\rho\bigr)
        \,d\nu_{x,t;s+\rho}(y)
        \le
        \frac{2\bigl(-\Nash_{\mathbf x}(\tau)\bigr)}
        {\log(\theta^{-1})}.
    \end{equation}
\end{proposition}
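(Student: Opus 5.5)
The plan is to integrate Bamler's heat-operator formula for the pointed Nash entropy along the conjugate heat kernel based at \(\mathbf x\), and then pick a good scale by averaging in \(\log\rho\).

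\textbf{Step 1: rewrite the integrand.} Fix \(s=t-\tau\) throughout. For a base point \((y,\ell)\) with \(\ell\in(s,t]\), put \(\rho:=\ell-s\). By the trace identity \eqref{eq:trace-deficit-Nash},
\[
    \dfplus{n}\bigl((y,\ell);\rho\bigr)
    =\operatorname{tr}_{g_\ell}D^F_\rho(y,\ell)
    =-2\rho\,E(y,\ell;s).
\]
Here \(E(y,\ell;s)=\Box_{y,\ell}\mathcal N_{y,\ell}(\ell-s)\le0\) by \eqref{eq:E-Fisher-information}. So the quantity we must average is \(-2\rho\) times the heat operator of the function \(u(y,\ell):=\mathcal N_{y,\ell}(\ell-s)\) on \(M\times(s,t]\).

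\textbf{Step 2: integrate against the conjugate heat kernel.} Define
\[
    \Phi(\ell):=\int_M \mathcal N_{y,\ell}(\ell-s)\,d\nu_{x,t;\ell}(y).
\]
Since \(d\nu_{x,t;\ell}\) solves the conjugate heat equation, the standard duality on a closed manifold gives
\[
    \Phi'(\ell)=\int_M \Box u\,d\nu_{x,t;\ell}=\int_M E(\cdot,\ell;s)\,d\nu_{x,t;\ell}\le0 .
\]
At \(\ell=t\) we have \(\Phi(t)=\mathcal N_{\mathbf x}(\tau)\). As \(\ell\downarrow s\), the scale \(\ell-s\) tends to \(0\), so \(\mathcal N_{y,\ell}(\ell-s)\to0\) uniformly in \(y\) on a closed smooth flow, by the short-time heat kernel asymptotics. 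Hence \(\Phi(s^+)=0\). Integrating from \(s\) to \(t\), and writing \(A(\rho):=\int_M\dfplus{n}((y,s+\rho);\rho)\,d\nu_{x,t;s+\rho}(y)\ge0\), we get
\[
    -\mathcal N_{\mathbf x}(\tau)=\int_s^t\int_M(-E)\,d\nu_{x,t;\ell}\,d\ell
    =\int_0^\tau \frac{A(\rho)}{2\rho}\,d\rho .
\]

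\textbf{Step 3: choose the scale.} Since \(A\ge0\), restricting to \(\rho\in(\theta\tau,\tau)\) gives
\[
    \int_{\theta\tau}^{\tau}A(\rho)\,d\log\rho\le 2\bigl(-\mathcal N_{\mathbf x}(\tau)\bigr).
\]
The interval has logarithmic length \(\log(\theta^{-1})\), and \(A\) is continuous. Hence some \(\rho\in(\theta\tau,\tau)\) satisfies \(A(\rho)\le 2(-\mathcal N_{\mathbf x}(\tau))/\log(\theta^{-1})\), which is \eqref{eq:nash-gives-good-fisher-scale}. If equality in the average would force the bound only at an endpoint, the mean value argument still yields an interior point, since either \(A\) is constant or it dips strictly below its mean somewhere inside.

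\textbf{Main obstacle.} The main technical point is justifying Step 2 rigorously. This means differentiating \(\Phi\) under the integral, with the singular behavior as \(\ell\downarrow s\), and showing that the boundary term there vanishes, i.e.\ \(\sup_y|\mathcal N_{y,\ell}(\ell-s)|\to0\). On a closed flow both follow from smoothness of \(K\) away from the diagonal and the uniform short-time expansion already used for Lemma~\ref{lem:FisherMetricAsTauToZero}. Alternatively, one can integrate over \([s+\epsilon,t]\), use \(\Phi(s+\epsilon)\le0\) to drop that boundary term with the correct sign, and then let \(\epsilon\to0\).
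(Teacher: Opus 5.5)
Your proposal is correct and follows the paper's argument. The paper derives \((\partial_r-\Delta_{g_r})\mathcal N_s^*=-\frac{1}{2(r-s)}\dfplus{n}\) from the trace identity and applies Duhamel's formula using \(\mathcal N_s^*(\cdot,r)\to0\) as \(r\downarrow s\); this is your pairing of \(u\) against \(d\nu_{x,t;\ell}\), and it is followed by the \(\log\rho\)-averaging that the paper leaves implicit. Your alternative of integrating only over \([s+\theta\tau,t]\) and discarding \(\Phi(s+\theta\tau)\le0\) is a good refinement: it already gives \(\int_{\theta\tau}^{\tau}\frac{A(\rho)}{2\rho}\,d\rho\le-\Nash_{\mathbf x}(\tau)\) without any short-time analysis of the boundary term.
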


\begin{proof}
    The proof follows from the following identity that relates Nash entropy with the top Fisher deficit. Let \(s<t\), let \(\mathbf x=(x,t)\), and put \(\tau=t-s\). Then
    \begin{equation}
    \label{eq:nash-integrated-fisher-deficit}
         -\Nash_{\mathbf x}(\tau)
         =
         \frac12\int_s^t\frac1{r-s}
         \int_M
         \delta_n^+\bigl((y,r);r-s\bigr)
         \,d\nu_{x,t;r}(y)\,dr.
    \end{equation}
    See \S \ref{subsec:nash-gives-good-fisher-scale}.
\end{proof}

When the Nash entropy is close to zero, we can convert the average smallness of the top Fisher deficit into a pointwise smallness using a heat-kernel lower bound and gradient estimates for the top Fisher deficit.

\begin{theorem}
\label{thm:quantitative-nash-to-fisher}
    There are dimensional constants \(c_n\in(0,1),\) \(\varepsilon_n>0,\) \(C_n<\infty\) with the following property. Let \((M^n,g_\ell)_{\ell\in[t-\tau_0,t]}\) be a closed Ricci flow and let \(\mathbf x=(x,t)\). If \(\Nash_{\mathbf x}(\tau_0)\ge-\varepsilon_n,\) then
    \begin{equation}
    \label{eq:quantitative-nash-to-fisher}
        \dfplus{n}(\mathbf x;c_n\tau_0)
        \le
        C_n\bigl(-\Nash_{\mathbf x}(\tau_0)\bigr)^{1/2}.
    \end{equation}
\end{theorem}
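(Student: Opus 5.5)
The plan is to combine the averaged converse, Proposition~\ref{prop:nash-gives-good-fisher-scale}, with a pointwise regularity argument for the top Fisher deficit. The identity \eqref{eq:nash-integrated-fisher-deficit} tells us that \(-\Nash_{\mathbf x}(\tau_0)\) controls a heat-kernel average of \(\delta_n^+\) over spacetime points \((y,r)\), each taken at scale \(r-s\). We want a pointwise bound at the base point \(\mathbf x\) itself, at a scale comparable to \(\tau_0\).

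\textbf{Step 1 (average bound at a good scale).} Fix \(\theta=1/2\). Proposition~\ref{prop:nash-gives-good-fisher-scale} gives \(\rho\in(\tau_0/2,\tau_0)\) with
\[
\int_M \delta_n^+\bigl((y,s+\rho);\rho\bigr)\,d\nu_{x,t;s+\rho}(y)\le \frac{2(-\Nash_{\mathbf x}(\tau_0))}{\log 2},
\]
where \(s=t-\tau_0\). Write \(t'=s+\rho\). Then \(t-t'=\tau_0-\rho\le\tau_0/2\), and this difference may be very small, which is a problem: \(d\nu_{x,t;t'}\) could be nearly a delta mass. To avoid this I would run the proposition instead on a window \([s+\tau_0/4,\,s+\tau_0/2]\) for the intermediate time \(r\). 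Since both the integrand and the weight \(1/(r-s)\) in \eqref{eq:nash-integrated-fisher-deficit} are nonnegative, restricting the integral to this window still gives
\[
\frac{1}{\tau_0}\int_{s+\tau_0/4}^{s+\tau_0/2}\int_M\delta_n^+\,d\nu_{x,t;r}\,dr\le C(-\Nash_{\mathbf x}(\tau_0)).
\]
So some \(r\) in the window has \(t-r\ge \tau_0/2\), scale \(r-s\sim\tau_0\), and average deficit \(\lesssim -\Nash\).

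\textbf{Step 2 (from average to pointwise near \(x\)).} Because \(\Nash_{\mathbf x}(\tau_0)\ge-\varepsilon_n\) is small, Bamler's results (\cite{Bamler2020A}) give a Gaussian heat-kernel lower bound: \(K(x,t;y,r)\ge c\,\tau_0^{-n/2}\) on a \(g_r\)-ball of radius \(\sim\sqrt{\tau_0}\) around an \(H_n\)-center, together with a noncollapsing volume bound. We also need a gradient estimate for \(y\mapsto\delta_n^+((y,r);r-s)\). I would obtain it from the trace formula \eqref{eq:trace-deficit-Nash}, \(\delta_n^+=\tr D^F=-2\tau E\), combined with Bamler's spatial gradient bound for the Nash entropy, \(|\nabla \Nash|\le C\tau^{-1/2}\). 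This bound has to be upgraded to \(E=\Box\Nash\); a cleaner route is to show \(\delta_n^+\) is Lipschitz at scale \(\sqrt\tau\) directly, using the matrix square identity \eqref{eq:deficit-square-refines-Nash} and heat-kernel derivative \(L^p\) bounds. With a Lipschitz bound \(L\tau_0^{-1/2}\) and a lower bound on the measure of small balls, the standard argument applies: if \(\delta_n^+(y_0)=a\), then \(\delta_n^+\ge a/2\) on a ball of radius \(\sim a\sqrt{\tau_0}\), whose \(\nu\)-mass is \(\gtrsim a^n\). This gives \(a^{n+1}\lesssim-\Nash\). That is weaker than the square-root power claimed, so the sharper exponent presumably requires a better mechanism.

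\textbf{Step 3 (propagating to the base point, sharper exponent).} Rather than a ball argument, I would use subsolution structure. By Remark~\ref{rem:fisher-heat-logistic-equation}, \(\Box(\tau D^F)\ge0\) in the sense of tensors, so \(\Box(\tau\delta_n^+)\ge0\) for fixed \(s\). The quantity \(\tau\,\tr D^F\) is therefore a forward heat subsolution in \((y,r)\), and the reproduction formula gives
\[
(t-s)\,\delta_n^+(\mathbf x;t-s)\le \int_M (r-s)\,\delta_n^+((y,r);r-s)\,d\nu_{x,t;r}(y).
\]
This is exactly the transfer we need, with no Lipschitz bound required. Combined with Step 1 it would give a linear bound at scale \(\tau_0\). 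That conflicts with needing \(c_n\) and a square root, so a sign or normalization subtlety must be present: \(\tr\) of the logistic term \((D^F)^2/2\) is nonnegative, so the subsolution direction should hold. The main obstacle is making this comparison rigorous while keeping the same \(s\). I expect the authors lose a square root when passing from \(\delta_n^+\) averaged at a good scale to the chosen scale via the scale monotonicity (Proposition~\ref{prop:FisherMetMonot}), giving \(c_n\tau_0\). My fallback is the Step 2 Lipschitz argument with a Cauchy--Schwarz refinement, which should yield the \(1/2\) power.
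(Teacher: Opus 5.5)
There is a genuine gap, and it is in Step~3, the step your sharp bound depends on. The matrix square identity gives $\Box_{x,t}(\tau\,\tr D^F_\tau)\ge0$. So $u:=\tau\,\dfplus{n}$ is a \emph{super}solution of the forward heat equation, not a subsolution. Since $\frac{d}{dr}\int_M u(\cdot,r)\,d\nu_{x,t;r}=\int_M\Box u\,d\nu_{x,t;r}\ge0$, letting $r\uparrow t$ yields
\[
(t-s)\,\dfplus{n}(\mathbf x;t-s)\;\ge\;\int_M (r-s)\,\dfplus{n}\bigl((y,r);r-s\bigr)\,d\nu_{x,t;r}(y),
\]
which is the reverse of the inequality you wrote. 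The base-point deficit dominates earlier heat averages, so smallness of those averages (your Step~1) gives no upper bound on $\dfplus{n}(\mathbf x;\cdot)$. This, not a normalization subtlety, is why you seemed to get an implausibly strong linear bound.

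Step~2 does not rescue the argument. It works at times with $t-r\gtrsim\tau_0$, where $\nu_{x,t;r}$ is spread at scale $\sqrt{\tau_0}$, and at best bounds the deficit at some point $(y_0,r)$, not at $\mathbf x$. A Lipschitz bound on $\dfplus{n}=-2\tau\,\Box\Nash$ also does not follow from a gradient bound on $\Nash$. The ``Cauchy--Schwarz refinement'' fallback is not specified.

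The missing idea is to work at the base point and at \emph{short} backward times. The paper fixes $\sigma=c_n\tau_0$, $s=t-\sigma$, $d(y,r):=\dfplus{n}((y,r);r-s)$ and $A:=d(x,t)$, and then argues as follows.

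\begin{enumerate}
\item The hypothesis $\Nash_{\mathbf x}(\tau_0)\ge-\varepsilon_n$ enters through Bamler's $\varepsilon$-regularity. It gives bounded curvature and almost-zero pointed Nash entropy on a parabolic neighbourhood of $\mathbf x$.
\item Local derivative estimates for $K$ in the source variables, combined with the exponential integrability of $f-\Nash$ (Proposition~\ref{prop:zhang-moments}), give $L^p(d\nu)$ bounds on $\nabla_y^j\log K$ for $j\le3$. Hence $\sigma|\partial_r d|+\sigma|\nabla^2 d|\le L$ near $\mathbf x$.
\item Since $d\ge0$, the Hessian bound upgrades to $|\nabla\sqrt d|\le\sqrt{L/(2\sigma)}$. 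Together with the time-derivative bound, this gives $d\ge A/4$ on $B_{g_{t-u}}(x,b\sqrt{A\sigma})$ for all $0<u\le aA\sigma$.
\item Bamler's concentration estimate gives $\nu_{x,t;t-u}(B_{g_{t-u}}(x,\Lambda\sqrt u))\ge m_0$. On this window $\Lambda\sqrt u\le b\sqrt{A\sigma}$, so there is no loss of a power of $A$ in the mass, unlike your ball argument at scale $\sqrt{\tau_0}$.
\item Inserting this into \eqref{eq:nash-integrated-fisher-deficit}, restricted to $u\in(0,aA\sigma]$, gives $-\Nash_{\mathbf x}(\sigma)\ge\frac{am_0}{8}A^2$. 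Monotonicity of $\Nash$ in the scale finishes the proof.
\end{enumerate}

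The exponent $1/2$ therefore comes from a time window of length $\sim A\sigma$ on which the integrand is $\gtrsim A$. The $\sqrt d$-Lipschitz bound is what makes the persistence radius $\sqrt{A\sigma}$ rather than $A\sqrt\sigma$. The exponent does not come from scale monotonicity of $g^F_\tau$, as you guessed.
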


\begin{proof}
See \S\ref{subsec:quantitative-nash-to-fisher}.
\end{proof}

\begin{remark}
    The analogue of
    Theorem~\ref{thm:top-fisher-epsilon-regularity} fails for the top
    \((n-1)\)-Fisher deficit. Indeed, consider the static flat Ricci flow on
    \[
        M_{L,\epsilon}:=(S_L^1)^{n-1}\times S_\epsilon^1,
    \]
    where the subscripts denote the radii. For every fixed \(\tau>0\),
    the product structure implies
    \[
        0\le \dfplus{n-1}(\mathbf x;\tau)
        \le (n-1)\bigl(1-\lambda_L(\tau)\bigr)\longrightarrow0
        \qquad\text{as }L\to\infty,
    \]
    where \(\lambda_L(\tau)\) is the Fisher eigenvalue in an
    \(S_L^1\)-direction. On the other hand, pointed Nash entropy is additive
    under products, and the Nash entropy of \(S_L^1\) and \(S_{\epsilon}^1\) satisfy
    \[
        \Nash_{S_L^1}(\tau)\longrightarrow0
        \quad(L\to\infty),
        \qquad
        \Nash_{S_\epsilon^1}(\tau)
        =
        \log\!\left(\frac{\sqrt\pi\,\epsilon}{\sqrt{e\tau}}\right)+o(1)
        \longrightarrow-\infty
        \quad(\epsilon\to0).
    \]
    Thus one may choose \(L_j\to\infty\) and \(\epsilon_j\to0\) so that
    \(\dfplus{n-1}(\mathbf x;\tau)\to0\) while
    \(\Nash_{\mathbf x}(\tau)\to-\infty\).
\end{remark}

However, when we impose a lower bound on the Nash entropy, we do obtain a regularity consequence of the codimension-one Fisher splitting theorem. The small top \((n-1)\)-Fisher deficit is used to produce the heat-kernel strong splitting map. The resulting curvature-radius bound then follows from Bamler's \(\varepsilon\)-regularity theorem \cite[Proposition 16.1]{Bamler2020C}.

\begin{definition}[Curvature radius]
\label{def:sec6-curvature-radius}
    The \emph{parabolic curvature radius} at a spacetime point \(\mathbf x=(x,t)\) is
    \[
        r_{\Rm}(\mathbf x)
        :=
        \sup
        \left\{
        r>0:
        |\Rm|\le r^{-2}
        \text{ on }P(x,t,r,-r^2)
        \right\},
    \]
    where \(P(x,t,r,-r^2)\) denotes the backward parabolic neighborhood.
\end{definition}

\begin{theorem}[Fisher \(\varepsilon\)-regularity from \(n-1\) large eigenvalues]
\label{thm:sec6-fisher-epsilon-regularity-n-minus-one}
    Fix \(n\ge2\) and \(Y<\infty\). There exist constants \(\delta=\delta(n,Y)>0,\) \(\kappa=\kappa(n,Y)>0\) with the following property. Let \((M^n,(g_\ell)_{\ell\in I})\) be a closed Ricci flow, let \(\mathbf x=(x,t)\), and let \(\tau>0\) be such that \([t-\tau,t]\subset I. \) Suppose that \(\Nash_{\mathbf x}(\tau)\ge -Y\) and \(\dfplus{n-1}(\mathbf x;\tau)\le\delta. \) Then \(r_{\Rm}(\mathbf x)\ge \kappa\sqrt{\tau}.\)
\end{theorem}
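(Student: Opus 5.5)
We reduce the statement to Bamler's \(\varepsilon\)-regularity theorem \cite[Proposition~16.1]{Bamler2020C}. In the form we use, it says the following. Given \(n\) and \(Y\), there is \(\eta_B=\eta_B(n,Y)>0\) with this property: if \(\Nash_{\mathbf x}(r^2)\ge -Y\) and there exists a strong \((n-1,\eta_B,r^2)\)-splitting map at \(\mathbf x\) in Bamler's sense, then \(r_{\Rm}(\mathbf x)\ge \eta_B r\). Our task is therefore to manufacture such a map at a scale comparable to \(\tau\), from the hypothesis \(\dfplus{n-1}(\mathbf x;\tau)\le\delta\).

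\emph{Step 1 (producing a splitting map).} Fix \(\theta=\tfrac12\). Apply Theorem~\ref{thm:sec6-fisher-rank-produces-strong-splitting-map} with \(k=n-1\), \(\tau_0=\tau\), and a top Fisher \((n-1)\)-plane \(\Pcal\). This gives a scale \(\tau_1\in[\tau/2,\tau]\) and a canonical heat-score map \(v=(v_1,\dots,v_{n-1})\) that is a heat-kernel strong \((n-1,C\sqrt\delta,\tau_1)\)-splitting map at \(\mathbf x\). The map satisfies the heat equation, is centered with respect to \(d\nu_\ell\), and has \(L^2\) size at most \(2\tau_1\). It also has small terminal gradient defect, small averaged defect in the Gram matrix \(\langle\nabla v_i,\nabla v_j\rangle\), and small weighted Hessian energy.

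\emph{Step 2 (comparing with Bamler's definition, the main obstacle).} Bamler's strong splitting map on \([t-r^2,t]\) asks for the following: \(\int_{t-r^2}^{t}\int_M|\nabla^2 v_i|^2\,d\nu_\ell\,d\ell\le\eta\), \(r^{-2}\int\int|\langle\nabla v_i,\nabla v_j\rangle-\delta_{ij}|\,d\nu_\ell\,d\ell\le\eta\), and \(v_i(x,t)=0\) (or centering). Our Definition~\ref{def:sec6-heat-kernel-strong-splitting-map} contains exactly these integral quantities with \(r^2=\tau_1\). Centering is in the form Bamler allows, and a translation by constants adjusts the value at \(x\) if needed. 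The sup-norm or growth conditions that appear in some variants follow from the \(L^2\) bound in item (3) together with Bamler's heat-kernel \(L^p\) estimates, or from \eqref{eq:sec6-source-score-L2-bound}. I expect this matching of normalizations to be the only delicate point, specifically whether Bamler's constants refer to \(\tau\) or to \(r^2\) and whether they refer to the measure \(d\nu_\ell\); all are comparable up to factors of \(2\). Next we need the Nash lower bound at scale \(\tau_1\). Since \(\tau\mapsto\Nash_{\mathbf x}(\tau)\) is nonincreasing, \(\Nash_{\mathbf x}(\tau_1)\ge\Nash_{\mathbf x}(\tau)\ge -Y\).

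\emph{Step 3 (choice of constants).} Choose \(\delta(n,Y)\) so that \(C(1/2)\sqrt\delta\le\eta_B(n,Y)\). Bamler's proposition then gives \(r_{\Rm}(\mathbf x)\ge\eta_B\sqrt{\tau_1}\ge \eta_B\sqrt{\tau/2}\), so we may set \(\kappa:=\eta_B/\sqrt2\). The hypothesis that the flow is closed ensures that all integrations by parts behind Steps 1 and 2 are legitimate, and that Bamler's theorem applies.
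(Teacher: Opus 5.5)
Your overall route is the paper's: take the splitting map from Theorem~\ref{thm:sec6-fisher-rank-produces-strong-splitting-map} with \(k=n-1\) and \(\theta=\tfrac12\), use monotonicity of \(\Nash_{\mathbf x}\), then apply \cite[Proposition~16.1]{Bamler2020C}. There is a genuine gap in Step~2, which is the step you flagged. The mismatch there is not a factor of \(2\); it depends on \(\varepsilon\).

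In \cite[Definition~5.7]{Bamler2020C}, as the paper's own proof uses it, a strong \((k,\varepsilon,r)\)-splitting map at \(\mathbf x\) lives on the window \([t-\varepsilon^{-1}r^2,\,t-\varepsilon r^2]\), not on \([t-r^2,t]\). Its Gram error is normalized by \(r^{-2}\). With your choice \(r^2=\tau_1\) and \(\varepsilon=\eta_B\), Bamler's definition requires the map on \([t-\eta_B^{-1}\tau_1,\,t-\eta_B\tau_1]\). That interval reaches back a factor \(\eta_B^{-1}\) beyond \([t-\tau_1,t]\), where the heat-score map exists, and possibly outside \(I\). So your map is not a Bamler strong splitting map at scale \(\sqrt{\tau_1}\). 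As a result, the choices \(C(1/2)\sqrt\delta\le\eta_B\) and \(\kappa=\eta_B/\sqrt2\) are not justified.

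The fix, which is what the paper does, is to shrink the scale so that the back end of Bamler's window matches the start of the map's interval. Set \(\rho:=\sqrt{\varepsilon_B\tau_1}\). Then
\[
[t-\varepsilon_B^{-1}\rho^2,\,t-\varepsilon_B\rho^2]=[t-\tau_1,\,t-\varepsilon_B^2\tau_1]\subset[t-\tau_1,t],
\]
and you restrict the map to this window. The heat equation, centering and the Hessian energy survive restriction, since \(\varepsilon_B\le 1\). The renormalized Gram error, however, picks up the factor \(\tau_1/\rho^2=\varepsilon_B^{-1}\):
\[
\rho^{-2}\int_{t-\tau_1}^{t-\varepsilon_B^2\tau_1}\int_M\sum_{i,j=1}^{n-1}\bigl|\langle\nabla v_i,\nabla v_j\rangle-\delta_{ij}\bigr|\,d\nu_\ell\,d\ell\le\varepsilon_B^{-1}C(1/2)\sqrt\delta .
\]
You therefore need \(C(1/2)\sqrt\delta\le\varepsilon_B^2\), not merely \(\le\varepsilon_B\).

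Since \(\rho^2\le\tau\), monotonicity still gives \(\Nash_{\mathbf x}(\rho^2)\ge -Y\). Bamler's theorem then yields \(r_{\Rm}(\mathbf x)\ge\varepsilon_B\rho=\varepsilon_B^{3/2}\sqrt{\tau_1}\ge\varepsilon_B^{3/2}\sqrt{\tau/2}\), so \(\kappa=\varepsilon_B^{3/2}/\sqrt2\). The form of Bamler's theorem you quote, for maps on \([t-r^2,t]\), is in fact true with \(\eta_B=\varepsilon_B^2\). But it holds only as a consequence of this rescaling argument, which your proposal omits.
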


\begin{proof}
    See \S\ref{subsec:FullRankFishEpsReg}.
\end{proof}

\section{Proofs}\label{sec7}

In this section we give proofs of the results in this paper.

\subsection{Proof of Lemma~\ref{lem:FisherMetricAsTauToZero}}
\label{subsec:FisherMetTauToZero}

We need the following short-time lower bound on the heat kernel. It is obtained from the local heat-kernel expansion for evolving metrics by the standard chain-of-balls argument; we include the details because the estimate is used with the sharp coefficient in the exponent.

\begin{lemma}[Short-time lower bound with sharp exponent]
\label{lem:short-time-sharp-lower-bound}
    Let \((M,g_s)\) be a smooth Ricci flow on a closed manifold. Fix \(t\in I\). For every \(\eta>0\) there are \(C_\eta<\infty\) and \(\varepsilon_\eta>0\) such that, for all \(x,y\in M\) and \(0<\varepsilon<\varepsilon_\eta\),
    \[
        K(x,t;y,t-\varepsilon)
        \ge
        C_\eta^{-1}\varepsilon^{-n/2}
        \exp\left(
        -\frac{d_t^2(x,y)+\eta}{4\varepsilon}
        \right).
    \]
    Moreover, \(\nu_{x,t;t-\varepsilon}\rightharpoonup\delta_x\) as \(\varepsilon\downarrow0.\)
\end{lemma}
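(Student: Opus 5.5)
The plan is to patch together local short-time asymptotics of the heat kernel along a chain of small balls joining \(x\) to \(y\). The first step is the \emph{local} statement. Fix \(t\) and choose \(r_0>0\) below the injectivity radius of \(g_t\), so that on the compact set \(M\times[t-\varepsilon_0,t]\) the metrics \(g_\ell\) are uniformly equivalent and smooth. The standard parametrix (Minakshisundaram--Pleijel) construction for the backward-in-time heat kernel of the evolving metric \(g_\ell\) (Garofalo--Lanconelli / Guenther style) gives, for \(d_t(x,y)<r_0\),
\[
    K(x,t;y,t-\varepsilon)=(4\pi\varepsilon)^{-n/2}e^{-d_t^2(x,y)/4\varepsilon}\bigl(u_0(x,y)+O(\varepsilon)\bigr),
\]
with \(u_0\) smooth, \(u_0(x,x)=1\), and \(u_0\ge c>0\) on the \(r_0\)-neighborhood of the diagonal. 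The distance \(d_t\) is used rather than \(d_{t-\varepsilon}\); they differ by \(O(\varepsilon)\,d\), which only perturbs the exponent by \(O(d^2)\cdot O(\varepsilon)/\varepsilon=O(1)\) on bounded distances. This error is harmless, and it can be absorbed into \(\eta\) by using balls of radius \(r\) with \(Cr^2\le\eta/2\). This gives the lower bound
\[
    K\ge c\,\varepsilon^{-n/2}e^{-(d_t^2+\eta/2)/4\varepsilon}
\]
whenever \(d_t(x,y)<r\) and \(\varepsilon<\varepsilon_1\).

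The second step handles the global case by the semigroup property and a chain argument. Take a minimizing \(g_t\)-geodesic from \(x\) to \(y\) and subdivide it into \(N\) points \(x=p_0,\dots,p_N=y\) with consecutive distance \(d/N<r/2\). Since \(M\) is compact, \(d\le\operatorname{diam}\), so \(N\) can be chosen uniformly. Split the time interval into \(N\) steps of length \(\varepsilon/N\) and apply
\[
    K(x,t;y,t-\varepsilon)\ge\int_{B_1}\cdots\int_{B_{N-1}}\prod_i K(q_{i-1},t_{i-1};q_i,t_i)\,dg\cdots,
\]
with \(q_i\) ranging over balls \(B_i=B(p_i,\rho\sqrt{\varepsilon}/N)\) or similar. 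On these balls
\[
    \sum_i \frac{d^2(q_{i-1},q_i)}{4\varepsilon/N}\le \frac{N\sum_i (d/N+2\rho')^2}{4\varepsilon}.
\]
With ball radius \(\rho'=\eta'\,\sqrt{\varepsilon}\)-type choices, this is \(\le (d^2+\eta)/(4\varepsilon)\). The prefactors \((\varepsilon/N)^{-n/2}\) from each factor, multiplied by the ball volumes \(\sim(\rho')^n\), telescope to \(C_\eta^{-1}\varepsilon^{-n/2}\).

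The main obstacle is this bookkeeping: the sharp exponent must be preserved, meaning the cross terms from ball radii must contribute at most \(\eta/(4\varepsilon)\) and the volume factors must not lose powers of \(\varepsilon\). I would choose ball radius \(\rho'=\sigma\sqrt{\varepsilon}\) with \(\sigma\) small depending on \(\eta\) and \(N\). In that case the cross term \(N\cdot 4(d/N)\rho'/(\varepsilon/N)\cdot\frac14\) is of order \(d\,\sigma N/\sqrt{\varepsilon}\), which is \(\le\eta/(8\varepsilon)\) once \(\varepsilon\) is small. The volumes \((\sigma\sqrt\varepsilon)^{n(N-1)}\) then exactly cancel the extra \(\varepsilon^{-n(N-1)/2}\), leaving a constant depending on \(\sigma,N\), i.e.\ on \(\eta\).

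Finally, for \(\nu_{x,t;t-\varepsilon}\rightharpoonup\delta_x\), it suffices to show that mass concentrates. Take a test function \(\phi\) with \(\phi(x)=0\) and \(\phi\ge0\). The Gaussian upper bound from the same parametrix, or alternatively the Wasserstein bound \(W_1\le d_t(x,x)+\ldots\), handles this directly. The simplest route is the local expansion together with \(\int K\,dg=1\): the expansion shows that the mass in \(B(x,r)\) tends to \(1\), since \(\int_{B(x,r)}(4\pi\varepsilon)^{-n/2}e^{-d^2/4\varepsilon}u_0\,dg\to u_0(x,x)=1\). Hence the mass outside \(B(x,r)\) tends to \(0\), and weak convergence to \(\delta_x\) follows.
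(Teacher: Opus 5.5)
\textbf{Overall.} Your outline follows the paper's proof. You take a local lower bound from the short-time parametrix, then apply the semigroup property over \(N\) steps of length \(h=\varepsilon/N\), with the intermediate points confined to balls of radius comparable to \(\sqrt{\varepsilon}\) around an equally spaced subdivision of a minimizing \(g_t\)-geodesic. Your handling of the cross terms (they contribute \(O(\varepsilon^{-1/2})\), negligible against \(\eta/\varepsilon\)) and of the volume factors is correct.

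\textbf{The gap: accounting of the local error.} As written, this accounting does not close, and it is exactly where the sharp exponent is decided. In your first step you convert the \(O(1)\) error in the exponent into an additive slack, \(K\ge c\,h^{-n/2}e^{-(d_t^2+\eta/2)/4h}\). Apply this to each of the \(N\) factors at time step \(h\), and the slacks add up to \(N\cdot\frac{\eta/2}{4h}=\frac{N^2\eta/2}{4\varepsilon}\), not \(\frac{\eta/2}{4\varepsilon}\). Your displayed chain estimate silently drops them. Your stated mechanism, shrinking \(r\) until \(Cr^2\) is below the local slack, cannot repair this. The chain needs \(N>2\operatorname{diam}_{g_t}(M)/r\), so the total slack \(N^2Cr^2\) stays bounded below by \(4C\operatorname{diam}_{g_t}(M)^2\).

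\textbf{How to fix it.} No slack is needed in the local bound. For the factor from \(t_{i-1}\) to \(t_i\), replacing the distance in the expansion by \(d_t\) changes the exponent by at most \(C\varepsilon\,d_t^2(q_{i-1},q_i)/h=CN\,d_t^2(q_{i-1},q_i)\). On the restricted region these sum to at most \(C\bigl(d_t(x,y)+2N\sigma\sqrt{\varepsilon}\bigr)^2\), a bounded quantity that is absorbed into \(C_\eta\). Alternatively, give each factor slack \(\eta/(2N^2)\). This is legitimate after decreasing \(\varepsilon_\eta\), since the error being absorbed is \(O(1)\) while the slack is divided by \(h\).

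The paper instead uses a \emph{multiplicative} slack, \(K(a,t_1;b,t_1-h)\ge c_\alpha h^{-n/2}\exp\bigl(-(1+\alpha)d_t^2(a,b)/4h\bigr)\). This is compatible with chaining because \(N\sum_j d_t^2(q_j,q_{j+1})\le d_t^2(x,y)+O(\sqrt{\varepsilon})\). The paper then takes \(\alpha\operatorname{diam}_{g_t}(M)^2<\eta/4\).

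\textbf{Two smaller points.} In either version, state the local bound uniformly in the base time \(t_1\in[t-\varepsilon_0,t]\), not only at \(t\). Your chain evaluates \(K\) at base times \(t_{i-1}=t-(i-1)h\neq t\). Your weak-convergence argument (Laplace asymptotics on \(B(x,r)\) together with unit mass) is fine; the paper simply calls it standard.
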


\begin{proof}
    The local heat-kernel expansion for evolving metrics \cite[Theorem~24.21]{MSM163} gives the following local lower bound. For every \(\alpha>0\) there are \(r>0\), \(c_\alpha>0\), and \(\varepsilon_\alpha>0\) such that, whenever \(d_t(a,b)<r\), \(t_1\in[t-\varepsilon_\alpha,t]\), and \(0<h<\varepsilon_\alpha\),
    \[
        K(a,t_1;b,t_1-h)
        \ge
        c_\alpha h^{-n/2}
        \exp\left(
        -\frac{(1+\alpha)d_t^2(a,b)}{4h}
        \right).
    \]
    Here the smooth variation of \(g_s\) on \([t-\varepsilon_\alpha,t]\) and the zeroth-order term in the conjugate heat equation are absorbed into the factor \(1+\alpha\) and the constant \(c_\alpha\).
    
    We pass from this local estimate to the global one by a standard chaining argument. Let \(D:=\operatorname{diam}_{g_t}M. \) Choose \(\alpha>0\) so small that \(\alpha D^2<\eta/4\). Then choose \(N=N(\eta)\) so large that every minimizing \(g_t\)-geodesic can be divided into \(N\) subsegments of length \(<r/4\). Put
    \[
        h:=\frac{\varepsilon}{N},
        \qquad
        t_j:=t-jh,
        \qquad
        j=0,\ldots,N. 
    \]
    Let \(p_0=x,\ldots,p_N=y\) be an equally spaced subdivision of a minimizing \(g_t\)-geodesic from \(x\) to \(y\). By the semigroup property,
    \[
        K(x,t;y,t-\varepsilon)
        =
        \int_{M^{N-1}}
        \prod_{j=0}^{N-1}
        K(q_j,t_j;q_{j+1},t_{j+1})
        \prod_{j=1}^{N-1}dg_{t_j}(q_j),
    \]
    where \(q_0=x\) and \(q_N=y\). Restrict the integration to \(q_j\in B_{g_t}(p_j,\theta\sqrt h),\) \(j=1,\ldots,N-1,\) where \(\theta>0\) is fixed. For \(\varepsilon\) sufficiently small, the local lower bound applies to each factor. Moreover, since \(M\) is compact and \(g_s\to g_t\) smoothly,
    \[
        \operatorname{vol}_{g_{t_j}}
        B_{g_t}(p_j,\theta\sqrt h)
        \ge c h^{n/2}.
    \]
    Hence
    \[
        K(x,t;y,t-\varepsilon)
        \ge
        C^{-1}h^{-n/2}
        \exp\left(
        -\frac{1+\alpha}{4h}
        \sup
        \sum_{j=0}^{N-1}d_t^2(q_j,q_{j+1})
        \right),
    \]
    where the supremum is over the restricted integration region. For such \(q_j\),
    \[
        d_t(q_j,q_{j+1})
        \le
        \frac{d_t(x,y)}{N}+2\theta\sqrt h. 
    \]
    Therefore
    \[
        N\sum_{j=0}^{N-1}d_t^2(q_j,q_{j+1})
        \le
        d_t^2(x,y)+O_{N,\theta}(\sqrt\varepsilon).
    \]
    Since \(h=\varepsilon/N\), after decreasing \(\varepsilon_\eta\) we get
    \[
        \frac{1+\alpha}{4h}
        \sum_{j=0}^{N-1}d_t^2(q_j,q_{j+1})
        \le
        \frac{d_t^2(x,y)+\eta}{4\varepsilon}.
    \]
    Absorbing the fixed powers of \(N\), \(\theta\), and the local constants into \(C_\eta\), we obtain
    \[
        K(x,t;y,t-\varepsilon)
        \ge
        C_\eta^{-1}\varepsilon^{-n/2}
        \exp\left(
        -\frac{d_t^2(x,y)+\eta}{4\varepsilon}
        \right).
    \]
    
    The convergence \(\nu_{x,t;t-\varepsilon}\rightharpoonup\delta_x \) is standard.
\end{proof}

We next record the short-time estimates for the source scores.

\begin{lemma}[Short-time source-score estimates]
\label{lem:short-time-source-score-moments}
Fix \((x,t)\), and let \(V,W\in T_xM\). As \(\rho\downarrow0\),
\begin{subequations}
\begin{align}
    \|z_V^\rho\|_{L^\infty(M)}
        &=O(1),
    \label{eq:small-score-linfty}\\
    \int_M z_V^\rho z_W^\rho\,d\nu_\rho
        &=2\rho\,\langle V,W\rangle_{g_t}+o(\rho),
    \label{eq:small-score-second}\\
    \int_M |z_V^\rho|^p\,d\nu_\rho
        &=O(\rho^{p/2})
        \qquad\text{for every }p\ge3 .
    \label{eq:small-score-lp}
\end{align} 
\end{subequations}
\end{lemma}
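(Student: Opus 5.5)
The plan is to derive all three estimates from the local short-time asymptotics of the heat kernel and its source gradient, in the same form used in Lemma~\ref{lem:short-time-sharp-lower-bound}. Write \(K(x,t;y,t-\rho)=(4\pi\rho)^{-n/2}e^{-f_\rho(y)}\), so that
\[
z_V^\rho=2\rho\,\nabla^x_V\log K=-2\rho\,\nabla^x_V f_\rho .
\]
Near the diagonal, say for \(d_t(x,y)<r\), the parametrix expansion for evolving metrics \cite[Theorem~24.21]{MSM163} gives
\[
f_\rho=\frac{d_t^2(x,y)}{4\rho}+O(1)+O\!\left(\frac{d_t^3(x,y)}{\rho}\right),
\]
and the expansion can be differentiated in \(x\). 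Hence
\[
z_V^\rho(y)=-\tfrac12\nabla^x_V d_t^2(x,y)+O(\rho)+O(d_t^2(x,y))=\langle \exp_x^{-1}y,V\rangle+O(\rho+d_t^2(x,y)),
\]
where the correction comes from the time dependence of \(g\) and from the curvature terms in the expansion.

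For \eqref{eq:small-score-linfty}, off the diagonal I would use Hamilton's gradient estimate for positive solutions of the heat equation, applied in the source variable on a compact manifold. This gives \(\rho|\nabla_x\log K|^2\le C(1+\log(A/(\rho^{n/2}K)))\). Combining it with the Gaussian upper bound and the lower bound of Lemma~\ref{lem:short-time-sharp-lower-bound} yields \(|\nabla_x\log K|\le C(d+\sqrt\rho)/\rho\). Therefore \(|z_V^\rho|\le C(d_t(x,y)+\sqrt\rho)\le C(\operatorname{diam}+1)\), which is \(O(1)\).

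The same pointwise bound \(|z_V^\rho|\le C(d+\sqrt\rho)\) gives \eqref{eq:small-score-lp}, once I have Gaussian concentration of \(\nu_\rho\). The needed concentration is
\[
\int d_t^p\,d\nu_\rho=O(\rho^{p/2}),
\]
which follows from the Gaussian upper bound \(K\le C\rho^{-n/2}e^{-d^2/(5\rho)}\) on a closed flow. This concentration estimate is the step I expect to require the most care. If a clean Gaussian upper bound with uniform constants is not at hand, I would fall back on Bamler's variance bound \(\int d^2\,d\nu_\rho\le H_n\rho\) together with the \(L^p\) estimates \(\|z\|_{L^p(d\nu)}\le C\sqrt{p\rho}\). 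The latter are recorded in the remark after Proposition~\ref{prop:dual-nonlinear-fisher-monotonicity}, and they give \eqref{eq:small-score-lp} directly.

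For \eqref{eq:small-score-second}, I would split \(M\) into \(B_t(x,\rho^{1/2-\epsilon})\) and its complement. On the complement, the contribution is \(O(\rho^\infty)\) by \eqref{eq:small-score-linfty} and the Gaussian tail. On the ball, I insert the expansion
\[
z_V^\rho z_W^\rho=\langle u,V\rangle\langle u,W\rangle+O\bigl(|u|(\rho+|u|^2)\bigr),\qquad u=\exp_x^{-1}y .
\]
In the rescaled normal coordinates \(u=\sqrt\rho\,\xi\), the measure \(\nu_\rho\) converges to the Gaussian of covariance \(2I\) with density errors \(O(\sqrt\rho\,|\xi|^k)\). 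Thus the main term is \(\rho\int \xi_V\xi_W\,d\gamma+o(\rho)=2\rho\langle V,W\rangle_{g_t}+o(\rho)\). By concentration, the error terms are \(O(\rho^{3/2})\).
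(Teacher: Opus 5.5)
Your route is the paper's: the parametrix near the diagonal, Hamilton's gradient estimate with the short-time upper bound and the lower bound of Lemma~\ref{lem:short-time-sharp-lower-bound} for the global bound, the Gaussian upper bound for concentration, and rescaled normal coordinates for the second moment. Your shrinking ball $B_t(x,\rho^{1/2-\epsilon})$, in place of the paper's $B(x,R\sqrt\rho)$ with $R\to\infty$ afterwards, is a cosmetic difference. Two steps need repair, however.

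\textbf{The pointwise bound used for \eqref{eq:small-score-lp}.} The bound $|\nabla_x\log K|\le C(d+\sqrt\rho)/\rho$ does not follow from Lemma~\ref{lem:short-time-sharp-lower-bound}. There $\eta>0$ is fixed, and $C_\eta,\varepsilon_\eta$ depend on it. So near the diagonal the lemma only gives $\log\bigl(A/(\rho^{n/2}K)\bigr)\le C+\eta/(4\rho)$, hence $|\nabla_x\log K|\le C/\rho$ and $|z_V^\rho|\le C$. That is enough for \eqref{eq:small-score-linfty}, and it is exactly what the paper proves. But your argument for \eqref{eq:small-score-lp} uses the global bound $|z_V^\rho|\le C(d+\sqrt\rho)$, which is then unsupported. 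You can fix this in either of two ways:
\begin{enumerate}
\item Combine the local parametrix lower bound $K\ge c\rho^{-n/2}e^{-(1+\alpha)d^2/(4\rho)}$ on $\{d<r\}$ with Lemma~\ref{lem:short-time-sharp-lower-bound} for some $\eta<r^2$ on $\{d\ge r\}$. This gives $K\ge c\rho^{-n/2}e^{-d^2/(2\rho)}$ everywhere, and then your Hamilton step is correct.
\item Do what the paper does: use your local expansion $|z_V^\rho|\le C(d+\rho)$ on $B_t(x,r_0)$, and only $|z_V^\rho|\le C$ together with the Gaussian tail outside.
\end{enumerate}
Separately, differentiating the expansion in $x$ needs the structured form $K=(4\pi\rho)^{-n/2}e^{-d_t^2/(4\rho)}\Psi$ with $\Psi$ smooth, not bare $O(\cdot)$ remainders. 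In that form the $O(d^3/\rho)$ term does not appear.

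\textbf{The fallback is circular as stated.} The $L^p$ bound in the remark after Proposition~\ref{prop:dual-nonlinear-fisher-monotonicity} is derived from that proposition. Its endpoint, Claim~\ref{claim:dual-small-scale-endpoint}, and Lemma~\ref{lem:lambda-rho-small} use \eqref{eq:small-score-linfty}, \eqref{eq:small-score-second}, and \eqref{eq:small-score-lp} with $p=3$. Cite \cite[Proposition~4.2]{Bamler2020A} directly instead. It gives $\int_M|z_V^\rho|^p\,d\nu_\rho\le C_p\rho^{p/2}|V|_{g_t}^p$ for every $\rho$. That is a legitimate and more direct route to \eqref{eq:small-score-lp}, and it makes the variance bound unnecessary.
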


\begin{proof}
Write \(g:=g_t\), and choose \(r_0>0\) smaller than the \(g\)-injectivity radius
at \(x\).

The estimate \eqref{eq:small-score-linfty} follows from the following claim.
\begin{claim}\label{claim:improved-pointwise-estimate-local}
    We have
    \begin{subequations}
    \begin{align}
    \label{eq:local-z-score-bound}
        |z_V^\rho(y)|
        &\le
        C_V\bigl(d_g(x,y)+\rho\bigr)
        \qquad
        \text{on }B_g(x,r_0),\\
        |z_V^\rho(y)|&\le C_V \qquad \text{on }M.\label{eq:off-diagonal-z-score-bound}
    \end{align}
    \end{subequations}
\end{claim}

\begin{proof}[Proof of Claim \ref{claim:improved-pointwise-estimate-local}]
On \(B_g(x,r_0)\), the short-time parametrix for the heat kernel on a smooth time-dependent background gives
\begin{align}\label{eq:local-heat-kernel-parametrix}
    K(x,t;y,t-\rho)
    =
    (4\pi\rho)^{-n/2}
    \exp\left(-\frac{d_g(x,y)^2}{4\rho}\right)
    \Psi(x,y,\rho),
\end{align}
where \(\Psi\) is smooth and positive, with \(\Psi(x,x,0)=1\); see
\cite[Theorem~24.21]{MSM163}. Differentiating in the source variable gives
\[
    \nabla_V^x\log K
    =
    -\frac1{4\rho}\nabla_V^x d_g(x,y)^2
    +
    O_V(1).
\]
Since \(\nabla_x\left(\frac12 d_g(x,y)^2\right) = -\exp_x^{-1}(y),\) we obtain, uniformly on \(B_g(x,r_0)\),
\begin{equation}
\label{eq:local-z-score-unscaled}
    z_V^\rho(y)
    =
    2\rho\,\nabla_V^x\log K(x,t;y,t-\rho)
    =
    \langle V,\exp_x^{-1}(y)\rangle_g
    +
    O_V(\rho).
\end{equation}
In particular, \eqref{eq:local-z-score-bound} holds.

We next prove a crude bound away from \(x\). Fix \(y\in M\), and set
\[
    u(w,q):=K(w,q;y,t-\rho),
    \qquad
    t-\rho<q\le t.
\]
Then \(u\) solves the forward heat equation in the source variables. We shall use
Hamilton's elementary gradient estimate: if \(u>0\) solves \(\partial_q u=\Delta_{g_q}u\) on \(M\times[q_0,q_1]\), and \(u\le A\), then
\begin{equation}
\label{eq:hamilton-gradient-estimate-short-time}
    (q-q_0)|\nabla\log u|_{g_q}^2
    \le
    \log\frac{A}{u}
    \qquad
    \text{for }q_0<q\le q_1 .
\end{equation}
Indeed, for \(f:=\log(A/u)\), one has \((\partial_q-\Delta)f=-|\nabla f|^2.\) Because \(g_q\) evolves by Ricci flow, the Ricci terms cancel in the Bochner
formula:
\[
    (\partial_q-\Delta)|\nabla f|^2
    =
    -2\langle\nabla f,\nabla|\nabla f|^2\rangle
    -
    2|\nabla^2f|^2 .
\]
Applying the maximum principle to \((q-q_0)|\nabla f|^2-f\) gives \eqref{eq:hamilton-gradient-estimate-short-time}.

Apply \eqref{eq:hamilton-gradient-estimate-short-time} on
\([t-\rho/2,t]\). The standard short-time upper bound gives
\[
    \sup_{w\in M,\ q\in[t-\rho/2,t]}
    K(w,q;y,t-\rho)
    \le
    C\rho^{-n/2}.
\]
On the other hand, Lemma~\ref{lem:short-time-sharp-lower-bound}, with one fixed
choice of \(\eta>0\), gives
\[
    K(x,t;y,t-\rho)
    \ge
    C^{-1}\rho^{-n/2}
    \exp\left(
        -\frac{d_g(x,y)^2+\eta}{4\rho}
    \right).
\]
Since \(M\) is compact,
\[
    \log
    \frac{
        \sup_{w,\ q\in[t-\rho/2,t]}K(w,q;y,t-\rho)
    }{
        K(x,t;y,t-\rho)
    }
    \le
    \frac{C}{\rho}.
\]
Hamilton's estimate \eqref{eq:hamilton-gradient-estimate-short-time} therefore implies
\[
    |\nabla_x\log K(x,t;y,t-\rho)|_{g_t}
    \le
    \frac{C}{\rho},
\]
and hence \eqref{eq:off-diagonal-z-score-bound} holds.
\end{proof}

We now prove the \(L^p\)-estimate. By the standard Gaussian upper bound for the
heat kernel on a smooth closed background,
\begin{align}\label{eq:gaussian-upper-bound}
    d\nu_\rho(y)
    \le
    C\rho^{-n/2}
    \exp\left(
        -c\frac{d_g(x,y)^2}{\rho}
    \right)dg(y);
\end{align}
see, for example, \cite[Theorem~26.25]{MSM163}. Using
\eqref{eq:local-z-score-bound}, for every \(p\ge3\),
\[
\begin{aligned}
    \int_{B_g(x,r_0)}
        |z_V^\rho|^p\,d\nu_\rho
    &\le
    C
    \int_{B_g(x,r_0)}
        \bigl(d_g(x,y)+\rho\bigr)^p
        \rho^{-n/2}
        \exp\left(
            -c\frac{d_g(x,y)^2}{\rho}
        \right)dg(y)                     
    \le
    C_p\rho^{p/2}.
\end{aligned}
\]
On \(M\setminus B_g(x,r_0)\), \eqref{eq:off-diagonal-z-score-bound} and the Gaussian upper bound \eqref{eq:gaussian-upper-bound} give
\[
    \int_{M\setminus B_g(x,r_0)}
        |z_V^\rho|^p\,d\nu_\rho
    \le
    C\exp\left(-\frac{c}{\rho}\right)
    =
    O(\rho^{p/2}).
\]
This proves \eqref{eq:small-score-lp}.

It remains to identify the leading second moment. Let
\[
    \gamma_n(\xi):=(4\pi)^{-n/2}e^{-|\xi|^2/4}.
\]
Fix \(R<\infty\), and use the rescaled \(g\)-normal coordinates
\[
    y=\exp_x^g(\sqrt\rho\,\xi),
    \qquad |\xi|\le R .
\]
By the local parametrix \eqref{eq:local-heat-kernel-parametrix}, after pulling back \(d\nu_\rho\) by
\(\xi\mapsto\exp_x^g(\sqrt\rho\,\xi)\), we have uniformly for \(|\xi|\le R\),
\begin{equation}
\label{eq:local-heat-measure-asymptotic}
    \bigl(\exp_x^g(\sqrt\rho\,\cdot)\bigr)^*d\nu_\rho
    =
    \gamma_n(\xi)(1+o_R(1))\,d\xi .
\end{equation}
Moreover, \eqref{eq:local-z-score-unscaled} gives
\begin{equation}
\label{eq:local-z-score-asymptotic}
    z_V^\rho(\exp_x^g(\sqrt\rho\,\xi))
    =
    \sqrt\rho\,\langle V,\xi\rangle_g
    +
    o_R(\sqrt\rho),
\end{equation}
again uniformly for \(|\xi|\le R\). Therefore
\[
    \lim_{\rho\downarrow0}
    \frac1\rho
    \int_{B_g(x,R\sqrt\rho)}
        z_V^\rho z_W^\rho\,d\nu_\rho
    =
    \int_{|\xi|\le R}
        \langle V,\xi\rangle_g
        \langle W,\xi\rangle_g
        \gamma_n(\xi)\,d\xi .
\]

We next estimate the part outside \(B_g(x,R\sqrt\rho)\). On
\(B_g(x,r_0)\setminus B_g(x,R\sqrt\rho)\), the local bound
\eqref{eq:local-z-score-bound} and the Gaussian upper bound \eqref{eq:gaussian-upper-bound} imply
\[
    \limsup_{\rho\downarrow0}
    \frac1\rho
    \left|
        \int_{B_g(x,r_0)\setminus B_g(x,R\sqrt\rho)}
            z_V^\rho z_W^\rho\,d\nu_\rho
    \right|
    \le
    C_{V,W}e^{-cR^2}.
\]
On \(M\setminus B_g(x,r_0)\), the bound
\eqref{eq:off-diagonal-z-score-bound} and the Gaussian upper bound \eqref{eq:gaussian-upper-bound} give an
exponentially small contribution. Hence
\[
    \limsup_{\rho\downarrow0}
    \left|
        \frac1\rho
        \int_M z_V^\rho z_W^\rho\,d\nu_\rho
        -
        \int_{|\xi|\le R}
            \langle V,\xi\rangle_g
            \langle W,\xi\rangle_g
            \gamma_n(\xi)\,d\xi
    \right|
    \le
    C_{V,W}e^{-cR^2}.
\]
Letting \(R\to\infty\) and using
\[
    \int_{\mathbb R^n}
        \langle V,\xi\rangle_g
        \langle W,\xi\rangle_g
        \gamma_n(\xi)\,d\xi
    =
    2\langle V,W\rangle_g,
\]
we obtain
\[
    \int_M z_V^\rho z_W^\rho\,d\nu_\rho
    =
    2\rho\,\langle V,W\rangle_g+o(\rho).
\]
Since \(g=g_t\), this proves \eqref{eq:small-score-second}.
\end{proof}

\begin{proof}[Proof of Lemma~\ref{lem:FisherMetricAsTauToZero}]
Fix \((x,t)\) and \(V,W\in T_xM\). By the source-score covariance identity
\eqref{eq:score-coordinate-cov-summary},
\[
    g_\tau^F(V,W)
    =
    \frac1{2\tau}
    \int_M z_V^\tau z_W^\tau\,d\nu_\tau .
\]
Applying Lemma~\ref{lem:short-time-source-score-moments} with \(\rho=\tau\), we get
\[
    \int_M z_V^\tau z_W^\tau\,d\nu_\tau
    =
    2\tau\,\langle V,W\rangle_{g_t}+o(\tau).
\]
Therefore
\[
    g_\tau^F(V,W)
    \longrightarrow
    \langle V,W\rangle_{g_t}
    \qquad
    \text{as }\tau\downarrow0.
\]
Since \(V,W\in T_xM\) were arbitrary,
\[
    g_\tau^F(x,t)\longrightarrow g_t(x)
\]
as symmetric \(2\)-tensors.
\end{proof}

\subsection{Proof of scale equation for the Fisher metric}
\label{subsec:ScaleEqFisherMetric}

We begin with a useful formula for solutions to conjugate heat equation, where
\[
    \Box^*:=-\partial_s-\Delta_{g(s)}+R
\]
is the conjugate heat operator.

\begin{lemma}\label{lem:quotient_identity_conjugate_heat}
    Let \((M,g(s))\) be a smooth time-dependent Riemannian manifold. Suppose \(u>0\), \(p\), and \(q\) satisfy \(\Box^*u=0,\) \(\Box^*p=0,\) \(\Box^*q=0.\) Then
    \begin{equation}\label{eq:general-quotient-identity}
        \Box^*\left(\frac{pq}{u}\right)
        =
        -2u
        \left\langle
        \nabla\left(\frac{p}{u}\right),
        \nabla\left(\frac{q}{u}\right)
        \right\rangle_{g(s)}. 
    \end{equation}
\end{lemma}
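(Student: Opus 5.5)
The plan is a direct computation with the product and quotient rules. I first write \(\Box^* = -\partial_s - \Delta + R\) and note that, since \(R\) enters linearly, for any functions \(a,b\) one has
\[
    \Box^*(ab)=(\Box^* a)\,b + a\,(-\partial_s-\Delta)b - 2\langle\nabla a,\nabla b\rangle.
\]
I then set \(\phi:=p/u\) and \(\psi:=q/u\), so that \(pq/u = u\,\phi\psi\). The first step is to compute the evolution of a quotient \(\phi=p/u\) of two conjugate heat solutions. Writing \(p=u\phi\) and using \(\Box^*u=\Box^*p=0\), the product formula gives
\[
    0=\Box^*(u\phi)=u\,(-\partial_s-\Delta)\phi-2\langle\nabla u,\nabla\phi\rangle.
\]
Hence \((-\partial_s-\Delta)\phi = 2\langle\nabla\log u,\nabla\phi\rangle\), and the same identity holds for \(\psi\).

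The second step computes \(\Box^*(u\phi\psi)\). Applying the product rule with \(a=u\) and \(b=\phi\psi\) gives
\[
    \Box^*(u\phi\psi) = u\,(-\partial_s-\Delta)(\phi\psi) - 2\langle\nabla u,\nabla(\phi\psi)\rangle .
\]
Next I expand \((-\partial_s-\Delta)(\phi\psi)\) as
\[
    \psi\,(-\partial_s-\Delta)\phi+\phi\,(-\partial_s-\Delta)\psi-2\langle\nabla\phi,\nabla\psi\rangle
\]
and substitute the first-step identities. This produces
\[
    2\psi\langle\nabla u,\nabla\phi\rangle + 2\phi\langle\nabla u,\nabla\psi\rangle - 2u\langle\nabla\phi,\nabla\psi\rangle .
\]
The first two terms equal \(2\langle\nabla u,\nabla(\phi\psi)\rangle\), so they cancel exactly against the cross term, leaving \(-2u\langle\nabla\phi,\nabla\psi\rangle\). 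That is the claimed identity.

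There is no real obstacle here. The only point needing care is to check that the time-derivative term introduces no extra contribution from the evolving metric. It does not, because the identity is pointwise in \(s\): the evolution of \(g(s)\) enters only through \(\Delta_{g(s)}\) and \(R\), and both are treated consistently at each fixed \(s\). In particular, no Ricci flow assumption is needed, consistent with the lemma's hypothesis of an arbitrary smooth time-dependent metric.
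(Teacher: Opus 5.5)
Your proposal is correct and follows essentially the same route as the paper. The paper also writes \(p=uf\), \(q=uh\), uses \(\Box^*u=0\) to turn the product rule into the drift identity \(-\partial_s f-\Delta f-2\langle\nabla\log u,\nabla f\rangle=0\), and then expands \(\Box^*(ufh)\) so that the drift terms cancel and leave \(-2u\langle\nabla f,\nabla h\rangle\).
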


\begin{proof}
    Write
    \[
        f:=\frac pu,
        \qquad
        h:=\frac qu. 
    \]
    For any smooth function \(\phi\), the product rule gives
    \begin{align}\label{eq:box*uphi}
        \Box^*(u\phi)
        =
        u\left(
        -\partial_s\phi-\Delta\phi
        -2\langle\nabla\log u,\nabla\phi\rangle
        \right),
    \end{align}
    because \(\Box^*u=0\). Since \(p=uf\) and \(q=uh\) are conjugate heat solutions, \eqref{eq:box*uphi} implies
    \begin{align}\label{eq:vanishingof-f-h}
        -\partial_sf-\Delta f
        -2\langle\nabla\log u,\nabla f\rangle=0 \qquad  \text{and} \qquad -\partial_sh-\Delta h
        -2\langle\nabla\log u,\nabla h\rangle=0.
    \end{align}
    Applying \eqref{eq:box*uphi} again with \(\phi=fh\) and using \eqref{eq:vanishingof-f-h}, we obtain
    \begin{align*}
        \Box^*(ufh)&=
        u h
        \left(
        -\partial_s f-\Delta f
        -2\langle\nabla\log u,\nabla f\rangle
        \right) +
        u f
        \left(
        -\partial_s h-\Delta h
        -2\langle\nabla\log u,\nabla h\rangle
        \right)
        -2u\langle\nabla f,\nabla h\rangle           \\
        &=
        -2u\langle\nabla f,\nabla h\rangle. 
    \end{align*}
    Since \(ufh=pq/u\), this proves the claim.
\end{proof}

\begin{proof}[Proof of Proposition~\ref{prop:ScaleEqFisherMetric}]
    Fix \(V,W\in T_xM\), and write
    \[
        K=K(x,t;y,s),
        \qquad
        K_V=\nabla_V^xK(x,t;y,s),
        \qquad
        K_W=\nabla_W^xK(x,t;y,s).
    \]
    Then
    \[
        g^F_\tau(V,W)
        =
        2\tau
        \int_M
        \frac{K_VK_W}{K}\,dg_s,
        \qquad
        \tau=t-s.
    \]
    Since \(\Box_y^*\) acts only in the \((y,s)\)-variables, \(\Box_y^*K=\Box_y^*K_V=\Box_y^*K_W=0.\) Applying Lemma~\ref{lem:quotient_identity_conjugate_heat} with \(u=K\), \(p=K_V\), and \(q=K_W\), and using \(z_V^\tau=2\tau\,\frac{K_V}{K},\) \(z_W^\tau=2\tau\,\frac{K_W}{K},\) gives
    \begin{equation}\label{eq:quotient-score-version}
        \Box_y^*
        \left(
        \frac{K_VK_W}{K}
        \right)
        =
        -\frac{1}{2\tau^2}
        K
        \left\langle
        \nabla_y z_V^\tau,
        \nabla_y z_W^\tau
        \right\rangle_{g_s}.
    \end{equation}
    
    For any smooth \(F\), since \(M\) is closed and \(\partial_s dg_s=-R\,dg_s\),
    \begin{align*}
        \int_M\Box_y^*F\,dg_s
        =
        \int_M
        \bigl(-\partial_sF-\Delta F+RF\bigr)\,dg_s =
        -\frac{d}{ds}
        \int_MF\,dg_s. 
    \end{align*}
    As \(\partial_\tau=-\partial_s\), this implies
    \[
        \frac{d}{d\tau}
        \int_M
        \frac{K_VK_W}{K}\,dg_s
        =
        \int_M
        \Box_y^*
        \left(
        \frac{K_VK_W}{K}
        \right)
        dg_s. 
    \]
    Therefore, by \eqref{eq:quotient-score-version},
    \begin{align*}
        \partial_\tau g^F_\tau(V,W)
        &=
        \frac1{\tau}g^F_\tau(V,W)
        +
        2\tau
        \frac{d}{d\tau}
        \int_M
        \frac{K_VK_W}{K}\,dg_s                       =
        \frac1{\tau}g^F_\tau(V,W)
        -
        \frac1{\tau}
        \int_M
        \left\langle
        \nabla_y z_V^\tau,
        \nabla_y z_W^\tau
        \right\rangle_{g_s}
        d\nu_\tau                                      \\
        &=
        \frac1{\tau}
        \bigl(
        g^F_\tau(V,W)-G_\tau(V,W)
        \bigr).
    \end{align*}
    Multiplying by \(\tau\) proves \(\tau\,\partial_\tau g^F_\tau=g^F_\tau-G_\tau.\)
\end{proof}

\subsection{Proof of Fisher metric monotonicity}
\label{subsec:PfFisherMetricMonot}

\begin{proof}[Proof of Proposition~\ref{prop:FisherMetMonot}]
    For \(V\in T_xM\), recall
    \[
        z_V^\tau(y)
        =
        2\tau\,\nabla_V^x\log K(x,t;y,s),
        \qquad
        \int_M z_V^\tau\,d\nu_\tau=0. 
    \]
    The Hein--Naber Poincar\'e inequality \cite[Theorem~1.10]{HeinNaber2014CPAM} gives
    \[
        \int_M (z_V^\tau)^2\,d\nu_\tau
        \le
        2\tau
        \int_M |\nabla_y z_V^\tau|_{g_s}^2\,d\nu_\tau. 
    \]
    By the definitions of \(g^F_\tau\) and \(G_\tau\),
    \[
        \int_M (z_V^\tau)^2\,d\nu_\tau
        =
        2\tau\,g^F_\tau(V,V),
        \qquad
        \int_M |\nabla_y z_V^\tau|_{g_s}^2\,d\nu_\tau
        =
        G_\tau(V,V).
    \]
    Hence \(g^F_\tau(V,V)\le G_\tau(V,V)\) for all \(V\in T_xM,\) so \(g^F_\tau\le G_\tau\) as quadratic forms. The scale equation \(\tau\,\partial_\tau g^F_\tau = g^F_\tau-G_\tau\) therefore implies \(\partial_\tau g^F_\tau\le0.\)
\end{proof}

\subsection{Proof of hessian of source score and source shrinker identity}
\label{subsec:target-hessian-source-score}

\begin{proof}[Proof of Lemma \ref{lem:target-hessian-source-score}]
    Since the normalizing factor in \(K_\tau=(4\pi\tau)^{-n/2}e^{-f_\tau}\) is independent of the source point \(x\), we have
    \[
        z_V^\tau
        =
        2\tau\nabla_V^x\log K_\tau
        =
        -2\tau\nabla_V^x f_\tau.
    \]
    Hence
    \[
        \nabla_y^2 z_V^\tau
        =
        -2\tau\,\nabla_y^2(\nabla_V^x f_\tau)
        =
        -2\tau\,\nabla_V^x(\nabla_y^2 f_\tau),
    \]
    where source differentiation commutes with the target Hessian because the
    target metric \(g_{t-\tau}\) is independent of the source point \(x\).
    Since the only source-dependent term in
    \[
        S_\tau
        =
        \Ric_{g_{t-\tau}}
        +
        \nabla_y^2f_\tau
        -
        \frac1{2\tau}g_{t-\tau}
    \]
    is \(\nabla_y^2f_\tau\), we have
    \[
        \nabla_V^xS_\tau
        =
        \nabla_V^x(\nabla_y^2f_\tau).
    \]
    This proves \eqref{eq:hessian-score-source-shrinker}.

    Moreover, \(z_V^\tau=-2\tau\nabla_V^x f_\tau\), while
    \(g_{t-\tau}\), \(R_{g_{t-\tau}}\), and \(\tau\) are independent of the
    source point \(x\). Therefore
    \begin{align*}
        \nabla_V^x w_\tau
        &=
        2\tau\,
        \Delta_{f_\tau}\bigl(\nabla_V^x f_\tau\bigr)
        +
        \nabla_V^x f_\tau                   =
        -
        \left(
            \Delta_{f_\tau}z_V^\tau
            +
            \frac1{2\tau}z_V^\tau
        \right),
    \end{align*}
    which is \eqref{eq:drift-laplace-score-source-perelman-harnack}.
\end{proof}

\subsection{Proof of the second scale variation of the Fisher metric}
\label{subsec:second-scale-variation-of-the-fisher-metric}

Throughout this proof the source point \((x,t)\) is fixed and \(s=t-\tau\).
We keep the shorthand \(K_\tau,\nu_\tau,f_\tau,z_V^\tau\), but all target
differential operators and inner products are taken with respect to
\(g_{t-\tau}\).

We first record the following consequence of the heat-kernel gauge. If \(q_\tau\)
is any time-dependent function, then
\begin{align}\label{eq:time-derivative-in-moving-gauge}
    \frac{d}{d\tau}\int_M q_\tau\,d\nu_\tau
    =
    \int_M
    \bigl(\partial_\tau+\mathcal L_{\nabla f_\tau}\bigr)q_\tau
    \,d\nu_\tau .
\end{align}
Indeed, using \eqref{eq: target time Lie heat ker meas}, namely \(\bigl(\partial_\tau+\mathcal L_{\nabla f_\tau}\bigr)\nu_\tau=0,\) we obtain
\begin{align*}
    \frac{d}{d\tau}\int_M q_\tau\,d\nu_\tau
    =
    \int_M \partial_\tau q_\tau\,d\nu_\tau
    +
    \int_M q_\tau\,\partial_\tau d\nu_\tau  =
    \int_M \partial_\tau q_\tau\,d\nu_\tau
    -
    \int_M q_\tau\,\mathcal L_{\nabla f_\tau}d\nu_\tau     
    =
    \int_M
    \bigl(\partial_\tau q_\tau+\mathcal L_{\nabla f_\tau}q_\tau\bigr)
    \,d\nu_\tau .
\end{align*}
In the last step we used \(\int_M \mathcal L_{\nabla f_\tau}(q_\tau\,d\nu_\tau)=0.\)

Write
\[
    \Delta_{f_\tau}u
    :=
    \Delta_{g_{t-\tau}}u
    -
    \left\langle
        \nabla_y f_\tau,\nabla_yu
    \right\rangle_{g_{t-\tau}}
\]
for the weighted target Laplacian. The weighted Bochner formula gives
\begin{align*}
    \Delta_{f_\tau}
    \left\langle\nabla_yu,\nabla_yv\right\rangle_{g_{t-\tau}}
    &=
    \left\langle
        \nabla_y\Delta_{f_\tau}u,\nabla_yv
    \right\rangle_{g_{t-\tau}}
    +
    \left\langle
        \nabla_yu,\nabla_y\Delta_{f_\tau}v
    \right\rangle_{g_{t-\tau}}              +
    2\left\langle
        \nabla_y^2u,\nabla_y^2v
    \right\rangle_{g_{t-\tau}}                                  \\
    &\quad+
    2
    \bigl(
        \Ric_{g_{t-\tau}}+\nabla_y^2f_\tau
    \bigr)
    (\nabla_yu,\nabla_yv).
\end{align*}
Combining this with weighted integration by parts gives
\begin{align}\label{eq:integrated-weighted-Bochner}
    \int_M
    (\Delta_{f_\tau}u)(\Delta_{f_\tau}v)\,d\nu_\tau
    =
    \int_M
    \left\langle
        \nabla_y^2u,\nabla_y^2v
    \right\rangle_{g_{t-\tau}}
    \,d\nu_\tau                             +
    \int_M
    \bigl(
        \Ric_{g_{t-\tau}}+\nabla_y^2f_\tau
    \bigr)
    (\nabla_yu,\nabla_yv)\,d\nu_\tau .
\end{align}

\begin{proof}[Proof of Proposition~\ref{prop:second-scale-variation-of-the-fisher-metric}]
    We differentiate the target Dirichlet tensor
    \[
        G_\tau(V,W)
        =
        \int_M
        \left\langle
            \nabla_y z_V^\tau,\nabla_y z_W^\tau
        \right\rangle_{g_{t-\tau}}
        \,d\nu_\tau .
    \]
    Using \eqref{eq:time-derivative-in-moving-gauge},
    \eqref{eq:freezing-of-conjugate-heat-kernel}, and the source-score evolution
    \[
        \bigl(\partial_\tau+\mathcal L_{\nabla f_\tau}\bigr)z_V^\tau
        =
        \Delta_{f_\tau}z_V^\tau+\frac1\tau z_V^\tau,
    \]
    we compute
    \begin{align*}
        \frac{d}{d\tau} G_\tau(V,W)
        &=
        \int_M
        \bigl(\partial_\tau+\mathcal L_{\nabla f_\tau}\bigr)
        \left\langle
            \nabla_y z_V^\tau,
            \nabla_y z_W^\tau
        \right\rangle_{g_{t-\tau}}
        \,d\nu_\tau                                             \\
        &=
        \frac{2}{\tau} G_\tau(V,W)
        -
        2
        \int_M
        \bigl(\Ric_{g_{t-\tau}}+\nabla_y^2f_\tau\bigr)
        (\nabla_y z_V^\tau,\nabla_y z_W^\tau)
        \,d\nu_\tau                                             \\
        &\quad+
        \int_M
        \left\langle
            \nabla_y\Delta_{f_\tau}z_V^\tau,
            \nabla_yz_W^\tau
        \right\rangle_{g_{t-\tau}}
        \,d\nu_\tau                         +
        \int_M
        \left\langle
            \nabla_yz_V^\tau,
            \nabla_y\Delta_{f_\tau}z_W^\tau
        \right\rangle_{g_{t-\tau}}
        \,d\nu_\tau                                             \\
        &=
        \frac{2}{\tau} G_\tau(V,W)
        -
        2
        \int_M
        \bigl(\Ric_{g_{t-\tau}}+\nabla_y^2f_\tau\bigr)
        (\nabla_y z_V^\tau,\nabla_y z_W^\tau)
        \,d\nu_\tau                         -
        2
        \int_M
        (\Delta_{f_\tau} z_V^\tau)
        (\Delta_{f_\tau} z_W^\tau)
        \,d\nu_\tau                                             \\
        &=
        -2
        \int_M
        \left\langle
            \nabla_y^2 z_V^\tau,
            \nabla_y^2 z_W^\tau
        \right\rangle_{g_{t-\tau}}
        \,d\nu_\tau                         -
        4
        \int_M
        S_\tau
        (\nabla_y z_V^\tau,\nabla_y z_W^\tau)
        \,d\nu_\tau .
    \end{align*}
    In the third equality we integrated by parts. In the fourth equality we used
    \eqref{eq:integrated-weighted-Bochner} and \(\Ric_{g_{t-\tau}}+\nabla_y^2f_\tau = S_\tau+\frac1{2\tau}g_{t-\tau}.\) The contribution of \(\frac1{2\tau}g_{t-\tau}\) cancels the term
    \(\frac2\tau G_\tau(V,W)\). We now apply this formula to the derivative of the scale identity \(\tau\partial_\tau g_\tau^F=g_\tau^F-G_\tau\) to obtain
    \[
        \frac{d^2}{d\tau^2}g_\tau^F(V,W)
        =
        -\frac1\tau\frac{d}{d\tau}G_\tau(V,W)=\frac{2}{\tau}
        \int_M
        \left\langle
            \nabla_y^2 z_V^\tau,
            \nabla_y^2 z_W^\tau
        \right\rangle_{g_{t-\tau}}
        \,d\nu_\tau              +
        \frac{4}{\tau}
        \int_M
        S_\tau
        \bigl(
            \nabla_y z_V^\tau,
            \nabla_y z_W^\tau
        \bigr)
        \,d\nu_\tau.
    \]
\end{proof}

\subsection{Proof of mean source hessian identity}\label{subsec:mean-Hessian-identity}

\begin{proof}[Proof of Lemma \ref{lem:mean-Hessian-identity}]
    The heat-kernel normalization gives
    \[
        \int_M W^x\log K_\tau\,d\nu_\tau=0.
    \]
    Differentiate this identity in the source direction $V$. Since
    \[
        V(d\nu_\tau)=V^x\log K_\tau\,d\nu_\tau,
    \]
    we obtain
    \[
        0=\int_M V(W\log K_\tau)\,d\nu_\tau
        +\int_M V\log K_\tau\,W\log K_\tau\,d\nu_\tau.
    \]
    Writing
    \[
        V(W\log K_\tau)
        =\nabla^x_{V,W}\log K_\tau+(\nabla_VW)\log K_\tau
    \]
    and using again the first derivative normalization for $\nabla_VW$, we get
    \[
        \int_M \nabla^x_{V,W}\log K_\tau\,d\nu_\tau
        =-\int_M V\log K_\tau\,W\log K_\tau\,d\nu_\tau
        =-\frac1{2\tau}g^F_\tau(V,W).
    \]
    Adding $(2\tau)^{-1}g_t(V,W)$ gives \eqref{eq:mean-Hessian-identity}.
\end{proof}

\subsection{Proof of the matrix square identity refining the pointed Nash identity}
\label{subsec:matrix-square-refines-Nash}

Let \(\Box:=\partial_t-\Delta_{g(t)}\). We use Uhlenbeck's trick (\cite[\S 2]{Hamilton1986}), so the pulled-back metric components  $g_{ab}$ are time-independent $\partial_t g_{ab}=0$ and \(\Box\) commutes with spatial covariant differentiation on scalar functions.

\begin{lemma}\label{lem:BasicBochnerRF}
    Let \(u>0\) solve \(\Box u=0\). Then, in an Uhlenbeck frame, the symmetric $2$-tensor $\frac{\nabla u \otimes \nabla u}{u}$ satisfies
    \[
        \Box
        \left(
        \frac{\nabla_a u\,\nabla_b u}{u}
        \right)
        =
        -2u\,
        \nabla_a\nabla_c\log u\,
        \nabla_b\nabla_c\log u. 
    \]
\end{lemma}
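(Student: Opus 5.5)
The plan is to substitute \(\varphi:=\log u\), so that
\[
\frac{\nabla_a u\,\nabla_b u}{u}=u\,\nabla_a\varphi\,\nabla_b\varphi,
\]
and then compute \(\Box\) of this product directly in an Uhlenbeck frame.

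First I record the scalar equation for \(\varphi\). Since \(\Box u=0\), we have
\[
\Box\varphi=\frac{\Box u}{u}+\frac{|\nabla u|^2}{u^2}=|\nabla\varphi|^2.
\]

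Next I check the commutation \(\Box\nabla_a\varphi=\nabla_a\Box\varphi\) for scalars in the Uhlenbeck frame. This is the one step that uses the Ricci flow rather than just a fixed metric, so I verify it explicitly instead of only citing \cite[\S 2]{Hamilton1986}.
\begin{itemize}
\item The frame evolves by \(\partial_t e_a=\Ric(e_a)\). Hence \(\partial_t(e_a\varphi)=e_a(\partial_t\varphi)+R_{ac}\nabla_c\varphi\).
\item The Ricci identity gives \(\Delta\nabla_a\varphi=\nabla_a\Delta\varphi+R_{ac}\nabla_c\varphi\).
\item Subtracting, the two Ricci terms cancel, so \(\Box\nabla_a\varphi=\nabla_a\Box\varphi=\nabla_a|\nabla\varphi|^2=2\nabla_c\varphi\,\nabla_a\nabla_c\varphi\).
\end{itemize}

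Then I apply the product rule for the heat operator,
\[
\Box(FG)=(\Box F)\,G+F\,(\Box G)-2\langle\nabla F,\nabla G\rangle,
\]
twice. Write \(T_{ab}:=\nabla_a\varphi\,\nabla_b\varphi\). First,
\[
\Box T_{ab}=2\nabla_c\varphi\bigl(\nabla_a\nabla_c\varphi\,\nabla_b\varphi+\nabla_a\varphi\,\nabla_b\nabla_c\varphi\bigr)-2\nabla_c\nabla_a\varphi\,\nabla_c\nabla_b\varphi.
\]
Second, using \(\Box u=0\) and \(\nabla_c u=u\nabla_c\varphi\),
\[
\Box(uT_{ab})=u\,\Box T_{ab}-2u\,\nabla_c\varphi\,\nabla_c T_{ab}.
\]
Here \(\nabla_c T_{ab}=\nabla_c\nabla_a\varphi\,\nabla_b\varphi+\nabla_a\varphi\,\nabla_c\nabla_b\varphi\). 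By symmetry of the Hessian, \(\nabla_a\nabla_c\varphi=\nabla_c\nabla_a\varphi\), so the first-order cross terms cancel exactly. What remains is
\[
\Box(uT_{ab})=-2u\,\nabla_a\nabla_c\log u\,\nabla_b\nabla_c\log u,
\]
which is the claimed identity.

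The only real obstacle is making sure no Ricci terms survive. The frame-time derivative and the Ricci identity must cancel, and the identity must be applied with the correct index placement in a tensor rather than a scalar calculation. Everything after the commutation step is routine bookkeeping.
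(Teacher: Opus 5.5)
Your proof is correct and is essentially the paper's argument: a direct computation in an Uhlenbeck frame using $\Box\nabla_a=\nabla_a\Box$ on scalars together with the product rule for $\Box$. The only difference is bookkeeping. The paper commutes on $u$ itself, so that $\Box\nabla_a u=0$, then uses $\Box(u^{-1})=-2u^{-3}|\nabla u|^2$ and completes the square. You work with $\log u$ and let the first-order cross terms cancel, and you also verify the commutation explicitly rather than citing Hamilton.
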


\begin{proof}
    Since \(\Box\nabla_a u=\nabla_a\Box u=0\), the product rule and \(\Box(u^{-1})=-2u^{-3}|\nabla u|^2\) give
    \begin{align*}
        \Box
        \left(
        \frac{\nabla_a u\,\nabla_b u}{u}
        \right)
        &=
        -\frac{2}{u}
        \left(
        \nabla_c\nabla_a u
        -
        \frac{\nabla_a u\,\nabla_c u}{u}
        \right)
        \left(
        \nabla_c\nabla_b u
        -
        \frac{\nabla_b u\,\nabla_c u}{u}
        \right)  \\
        &=
        -2u\,
        \nabla_c\nabla_a\log u\,
        \nabla_c\nabla_b\log u. 
    \end{align*}
    This is the claimed identity.
\end{proof}

\begin{proof}[Proof of Proposition~\ref{prop:matrix-square-refines-Nash}]
    Fix \((y,s)\), write
    \[
        K=K(x,t;y,s),
        \qquad
        \tau=t-s,
    \]
    and take all derivatives in the source variables \((x,t)\). By Lemma~\ref{lem:BasicBochnerRF},
    \[
        \Box_{x,t}
        \left(
        \frac{\nabla_aK\,\nabla_bK}{K}
        \right)
        =
        -2K\,
        \nabla_a\nabla_c\log K\,
        \nabla_b\nabla_c\log K. 
    \]
    After integrating in \(y\), this becomes
    \begin{equation}\label{eq:Box-A-Nash-section}
        \Box_{x,t}\int_M \frac{\nabla_a K \nabla_b K}{K} dg_s 
        =  
        -2
        \int_M
        \nabla_c\nabla_a\log K\,
        \nabla_c\nabla_b\log K
        \,d\nu_{x,t;s}.
    \end{equation}
    Also, differentiating \(\int_M K(x,t;y,s)\,dg_s(y)=1\) twice in \(x\) gives
    \begin{equation}\label{eq:A-average-Hess-Nash-section}
        \int_M  \frac{\nabla_a K \nabla_b K}{K} dg_s 
        =
        -\int_M\nabla_a\nabla_b\log K\,d\nu_{x,t;s}.
    \end{equation}
    Since \(g_{ab}(t)\) is constant in an Uhlenbeck frame and \(\Box\tau=1\), and by substituting \eqref{eq:Box-A-Nash-section} and \eqref{eq:A-average-Hess-Nash-section}, and completing the square, we have 
    \begin{align*}
        \Box_{x,t} \left(  \frac{\tau}{2}D^F_{\tau,ab} \right)  
        & = \Box_{x,t} \left(  \frac{\tau}{2} ( g_t-g^F_\tau )_{ab}  \right)  \\
        & = \frac{1}{2}g_{t,ab} - \Box_{x,t} \left( \tau^2  \int_M 
        \frac{\nabla_a K \nabla_b K}{K} dg_s  \right) \\
        & = \frac{1}{2}g_{t,ab} + 2\tau \int_M\nabla_a\nabla_b\log K\,d\nu_{x,t;s} 
        + 2\tau^2  \int_M
        \nabla_c\nabla_a\log K\,
        \nabla_c\nabla_b\log K
        \,d\nu_{x,t;s} \\
        & = 2\tau^2
        \int_M
        \left(
        \nabla_c\nabla_a\log K+\frac1{2\tau}g_{ca}
        \right)
        \left(
        \nabla_c\nabla_b\log K+\frac1{2\tau}g_{cb}
        \right)
        d\nu_{x,t;s}, 
    \end{align*}
    which proves the first identity in \eqref{eq:deficit-square-refines-Nash}.

    By Lemma~\ref{lem:mean-Hessian-identity},
    \[
        \int_M \left(\mathsf H^{(x)}-\frac1{2\tau}D^F_\tau\right)\,d\nu_\tau=0.
    \]
    Hence the cross terms vanish, and
    \begin{align*}
        2\tau^2 \int_M
        \mathsf H^{(x)}\circ\mathsf H^{(x)}
        \,d\nu_\tau
        &=
        \frac1{2}(D^F_\tau)^2
        +
        2\tau^2\int_M
        \left(\mathsf H^{(x)}-\frac1{2\tau}D^F_\tau\right)
        \circ\left(\mathsf H^{(x)}-\frac1{2\tau}D^F_\tau\right)
        \,d\nu_\tau
    \end{align*}
    Substituting this into the first identity in \eqref{eq:deficit-square-refines-Nash} proves the second identity.
\end{proof}

\subsection{Proof of scalar square identity for pointed Nash entropy}
\label{subsec:scalar-Nash-square-identity}

\begin{proof}[Proof of Corollary~\ref{cor:scalar-Nash-square-identity}]
    By the definition of \(g^F_\tau\),
    \[
        \tr_{g_t}D^F_\tau
        =
        n
        -
        2\tau
        \int_M|\nabla_x\log K|^2\,d\nu_{x,t;s}.
    \]
    Thus, using \eqref{eq:E-Fisher-information}, \(\frac{\tau}{2}\tr_{g_t}D^F_\tau = -\tau^2E.\) Therefore, taking the trace of the first identity in \eqref{eq:deficit-square-refines-Nash} gives
    \[
        \Box_{x,t}
        \left(
        \frac{\tau}{2}\tr_{g_t}D^F_\tau
        \right)=-\Box_{x,t}(\tau^2E)
        =
        2\tau^2
        \int_M
        \left|
        \nabla_x^2\log K
        +
        \frac1{2\tau}g_t
        \right|^2
        d\nu_{x,t;s},
    \]
    which proves the first identity in \eqref{eq:tau2E-square}. Taking the trace of second identity in \eqref{eq:deficit-square-refines-Nash} and using
    \[
        \operatorname{tr}(D^F_\tau)^2
        \ge
        \frac1n(\operatorname{tr}D^F_\tau)^2
    \]
    gives
    \[
        \Box_{x,t}
        \left(
        \tau\operatorname{tr}D^F_\tau
        \right)
        \ge
        \frac1n
        \left(
        \operatorname{tr}D^F_\tau
        \right)^2,
    \]
    which proves \eqref{eq:tau2E-square}.
    
    Since \(\Box_{x,t}\tau=1\) and \(\nabla_x\tau=0\),
    \[
        \Box_{x,t}(\tau^2E)
        =
        2\tau E+\tau^2\Box_{x,t}E. 
    \]
    Expanding the square in \eqref{eq:tau2E-square} and using
    \[
        \int_M\Delta_x\log K\,d\nu_{x,t;s}
        =
        -\int_M|\nabla_x\log K|^2\,d\nu_{x,t;s},
    \]
    which follows by differentiating \(\int_M K(x,t;y,s)\,dg_s(y)=1\) twice in \(x\) and tracing, gives
    \[
        \Box_{x,t}E
        =
        -2\int_M|\nabla_x^2\log K|^2\,d\nu_{x,t;s}
        +
        \frac n{2\tau^2}.
    \]
    This proves \eqref{eq:BoxE-raw-Nash-section}.
\end{proof}

\subsection{Fisher equality rigidity}
\label{subsec:PfFisherEqualityRigidity}

\begin{proof}[Proof of Theorem~\ref{thm:fisher-equality-rigidity}]
    We work in an Uhlenbeck frame in the source \((x,t)\)-variables, so the metric components are time-independent. The Fisher defect identity \eqref{eq:deficit-square-refines-Nash} gives
    \[
        \Box_{x,t}
        \left(
        \frac{\tau}{2}D^F_{\tau,ab}
        \right)
        =
        2\tau^2
        \int_M
        \mathsf H^{(x)}_{ca}\mathsf H^{(x)}_{cb}
        \,d\nu_{x,t;s}.
    \]
    The right-hand side is nonnegative as a symmetric \(2\)-tensor, since its quadratic form in a vector \(V\) is
    \[
        2\tau^2
        \int_M
        |\mathsf H^{(x)}(\cdot,V)|^2
        \,d\nu_{x,t;s}.
    \]
    Thus the tensor equation for \((\tau/2)D^F_\tau\) satisfies the null-eigenvector condition for the cone of nonnegative symmetric tensors. Hamilton's strong maximum principle for tensor systems \cite[\S 8]{Hamilton1986}, applied locally and then propagated along paths using connectedness of \(M\), implies that the nullspaces
    \[
        \mathcal K_x:=\ker D^F_{\tau_0}(x,t_0;s)
    \]
    form a nontrivial parallel distribution on \((M,g(t_0))\).
    
    The same maximum-principle argument gives vanishing of the square term on null vectors. Hence, if \(V\in\mathcal K_x\), then
    \[
        0=
        2\tau_0^2
        \int_M
        |\mathsf H^{(x)}(\cdot,V)|^2
        \,d\nu_{x,t_0;s}.
    \]
    Since \(K(x,t_0;\cdot,s)>0\), the measure \(d\nu_{x,t_0;s}\) has positive smooth density. Therefore
    \[
        \mathsf H^{(x)}(x,t_0;y,s)(\cdot,V)=0
        \qquad
        \text{for every }y\in M,
    \]
    which proves \eqref{eq:null-H-vanishing}.
    
    We next prove the curvature-vanishing statement. Fix \(y\in M\), let \(P:TM\to\mathcal K\) be the \(g(t_0)\)-orthogonal projection onto the parallel distribution \(\mathcal K\), and set
    \[
        X_y:=P\nabla_x\log K(x,t_0;y,s).
    \]
    Since \(PX_y=X_y\) and \(\nabla P=0\), differentiating gives
    \[
        P(\nabla_WX_y)=\nabla_WX_y.
    \]
    Thus \(\nabla_WX_y\in\mathcal K\). For \(V\in\mathcal K\), using \eqref{eq:null-H-vanishing}, we have
    \begin{align*}
        g_{t_0}(\nabla_WX_y,V)
        =
        g_{t_0}(P\nabla_W\nabla_x\log K,V)
        =
        \nabla_x^2\log K(W,V)  =
        -\frac1{2\tau_0}g_{t_0}(W,V)
        =
        -\frac1{2\tau_0}g_{t_0}(PW,V).
    \end{align*}
    Both \(\nabla_WX_y\) and \(-\frac1{2\tau_0}PW\) lie in \(\mathcal K\), so the last identity for all \(V\in\mathcal K\) implies
    \[
        \nabla_WX_y=-\frac1{2\tau_0}PW. 
    \]
    Extending \(W,Z\) locally, this identity, together with \(\nabla P=0\), gives
    \begin{align*}
        \Rm(W,Z)X_y
        =
        \nabla_W\nabla_ZX_y
        -
        \nabla_Z\nabla_WX_y
        -
        \nabla_{[W,Z]}X_y      =
        -\frac1{2\tau_0}
        P\bigl(\nabla_WZ-\nabla_ZW-[W,Z]\bigr)
        =
        0,
    \end{align*}
    because the Levi-Civita connection is torsion-free. It remains to show that the vectors \(X_y(x)\), as \(y\) varies, span \(\mathcal K_x\). If \(U\in\mathcal K_x\) is orthogonal to all \(X_y(x)\), then
    \[
        0
        =
        g_{t_0}(U,X_y(x))
        =
        g_{t_0}(U,P\nabla_x\log K)
        =
        g_{t_0}(U,\nabla_x\log K)
        =
        \nabla_U^x\log K(x,t_0;y,s)
    \]
    for every \(y\in M\). Hence
    \[
        g^F_{\tau_0}(U,U)
        =
        2\tau_0
        \int_M
        \bigl(\nabla_U^x\log K\bigr)^2
        \,d\nu_{x,t_0;s}
        =
        0.
    \]
    Since \(U\in\ker D^F_{\tau_0}\),
    \[
        0=D^F_{\tau_0}(U,U)
        =
        |U|_{g(t_0)}^2-g^F_{\tau_0}(U,U),
    \]
    so \(U=0\). Thus the vectors \(X_y(x)\) span \(\mathcal K_x\), and therefore
    \[
        \Rm_{g(t_0)}(W,Z)V=0
    \]
    for all \(V\in\mathcal K_x\) and all \(W,Z\in T_xM\). This proves \eqref{eq:curvature-annihilates-K}.
    
    Assume now that \((M,g(t_0))\) is complete. On the universal cover, the lifted distribution \(\widetilde{\mathcal K}\) is parallel. The de Rham splitting theorem gives a product factor tangent to \(\widetilde{\mathcal K}\), and \eqref{eq:curvature-annihilates-K} shows that this factor is flat. Hence
    \[
        (\widetilde M,\widetilde g(t_0))
        \cong
        (\mathbb R^k,g_{\mathrm{Euc}})\times (N,h(t_0)),
        \qquad
        k=\dim\mathcal K,
    \]
    with \(\widetilde{\mathcal K}\) tangent to the Euclidean factor.
    
    Finally, the spacetime version of Hamilton's strong maximum principle gives invariance of the null distribution on \((s,t_0]\). Repeating the preceding argument at each time gives a parallel flat distribution, and
    \[
        \Rm(W,Z)V=0
        \qquad
        \text{for }V\text{ in the null distribution}.
    \]
    Moreover, \(\Rm(W,Z)V=0\) for \(V\) in the null distribution implies \(\Ric_{g(t)}(V,\cdot)=0\). Indeed, for any \(U\in T_xM\) and any \(g(t)\)-orthonormal basis \(e_1,\ldots,e_n\), the first Bianchi identity gives
    \begin{align*}
        \Ric_{g(t)}(V,U)
        =
        \sum_i \Rm(e_i,V,U,e_i)       =
        -\sum_i \Rm(U,e_i,V,e_i)
        =
        0,
    \end{align*}
    where the omitted Bianchi term is \(\Rm(V,U,e_i,e_i)=0\), and the last equality uses \(\Rm(U,e_i)V=0\). Thus
    \[
        \partial_tg(t)(V,\cdot)
        =
        -2\Ric_{g(t)}(V,\cdot)
        =
        0.
    \]
    If \(g(t)\) is complete for all \(s<t\le t_0\), the lifted flow splits consistently as
    \[
        (\widetilde M,\widetilde g(t))
        \cong
        (\mathbb R^k,g_{\mathrm{Euc}})\times (N,h(t)),
        \qquad
        s<t\le t_0,
    \]
    where the Euclidean factor is static and \(h(t)\) evolves by Ricci flow.
\end{proof}

\subsection{Proof of strict Fisher deficit on closed flows}
\label{subsec:PfStrictFisherDeficitClosedFlows}

\begin{proof}[Proof of Corollary~\ref{cor:strict-closed-fisher}]
    Set \(\tau:=t-s>0\). We first prove \(g^F_\tau(x,t)>0\). Suppose that \(g^F_\tau(V,V)=0\) for some nonzero \(V\in T_xM\). Then
    \[
        0
        =
        2\tau
        \int_M
        \bigl(\nabla_V^x\log K(x,t;y,s)\bigr)^2
        d\nu_{x,t;s}(y).
    \]
    Since the heat kernel is positive, this implies
    \[
        \nabla_V^x K(x,t;y,s)=0
        \qquad
        \text{for all }y\in M. 
    \]
    
    Fix \(\ell\in(s,t)\). Differentiating the reproduction formula
    \[
        K(x,t;y,s)
        =
        \int_M K(x,t;z,\ell)K(z,\ell;y,s)\,dg_\ell(z)
    \]
    in the source direction \(V\), we obtain
    \[
        \int_M
        \nabla_V^xK(x,t;z,\ell)K(z,\ell;y,s)\,dg_\ell(z)
        =
        0
        \qquad
        \text{for all }y\in M. 
    \]
    Equivalently, \(\nabla_V^xK(x,t;\cdot,\ell)\) is orthogonal to the range of the forward heat operator \(P_{s,\ell}\). On a closed smooth Ricci-flow background this range is dense, by backward uniqueness. Hence
    \[
        \nabla_V^xK(x,t;z,\ell)=0
        \qquad
        \text{for all }z\in M,\quad s<\ell<t. 
    \]
    This contradicts the short-time heat-kernel asymptotics; see \cite[Theorem~24.21]{MSM163}: if
    \[
        z_\varepsilon:=\exp_x^{g_t}(\sqrt\varepsilon\,V),
    \]
    then, as \(\varepsilon\downarrow0\),
    \[
        \nabla_V^x\log K(x,t;z_\varepsilon,t-\varepsilon)
        =
        \frac{|V|_{g_t}^2}{2\sqrt\varepsilon}
        +
        O(1),
    \]
    which is nonzero for all sufficiently small \(\varepsilon\). Therefore \(g^F_\tau(x,t)\) is positive definite.
    
    It remains to prove the strict upper bound. Suppose that
    \[
        D^F_\tau(V,V)=0,
        \qquad
        D^F_\tau:=g_t-g^F_\tau,
    \]
    for some nonzero \(V\in T_xM\). Since \(M\) is closed, the matrix gradient estimate gives \(D^F_{\tau'}\ge0\) at every positive scale \(\tau'>0\), so Theorem~\ref{thm:fisher-equality-rigidity} applies. Thus the universal cover of \((M,g_t)\) splits off a Euclidean factor tangent to the Fisher-null distribution, and for every null vector,
    \[
        \mathsf H^{(x)}(\cdot,V)=0.
    \]
    
    Fix \(y\in M\), and lift \(\log K(\cdot,t;y,s)\) to the universal cover. Since \(M\) is closed and the heat kernel is positive and smooth, this lifted function is bounded. Let \(\gamma:\mathbb R\to \widetilde M\) be a unit-speed geodesic tangent to the Euclidean factor. The identity \(\mathsf H^{(x)}(\gamma',\gamma')=0\) gives
    \[
        \frac{d^2}{dr^2}
        \log K(\pi(\gamma(r)),t;y,s)
        =
        -\frac1{2\tau}.
    \]
    Hence
    \[
        \log K(\pi(\gamma(r)),t;y,s)
        =
        -\frac{r^2}{4\tau}+ar+b,
    \]
    which is unbounded below as \(|r|\to\infty\). This contradicts boundedness of the lifted heat kernel logarithm. Therefore \(D^F_\tau(x,t;s)\) is positive definite.
    
    We have shown \(0<g^F_\tau(x,t)<g_t(x).\) Equivalently, the eigenvalues of \(\Jcal_\tau=g_t^{-1}g^F_\tau\) satisfy \(0<\lambda_i(x,t;t-s)<1\) for every \(i=1,\ldots,n.\)
\end{proof}

\subsection{Proof of sharp reverse Poincar\'e inequality}
\label{subsec:SharpReversePoincare}

\begin{proof}[Proof of Corollary~\ref{cor:sharp-reverse-poincare-hellinger}]
    Differentiating \(\int_M K(x,t;y,s)\,dg_s(y)=1\) in the source variable gives
    \[
        \int_M \nabla_x\log K(x,t;y,s)\,d\nu_{x,t;s}(y)=0. 
    \]
    Hence, for \(V\in T_xM\),
    \begin{align*}
        d_x(P_{s,t}\phi)(V)
        &=
        \int_M
        \phi(y)\,\nabla_V^x\log K(x,t;y,s)
        \,d\nu_{x,t;s}(y)                                  \\
        &=
        \int_M
        \bigl(\phi-(P_{s,t}\phi)(x)\bigr)
        \nabla_V^x\log K(x,t;y,s)
        \,d\nu_{x,t;s}(y).
    \end{align*}
    By Cauchy--Schwarz,
    \begin{align*}
        \bigl(d_x(P_{s,t}\phi)(V)\bigr)^2
        &\le
        \operatorname{Var}_{x,t;s}(\phi)
        \int_M
        \bigl(\nabla_V^x\log K\bigr)^2
        \,d\nu_{x,t;s}         =
        \frac{\operatorname{Var}_{x,t;s}(\phi)}{2\tau}
        g^F_\tau(V,V).
    \end{align*}
    This proves the tensor inequality in \eqref{eq:tensor-reverse-poincare}. Using \(g^F_\tau\le g_t\), we obtain
    \[
        \bigl(d_x(P_{s,t}\phi)(V)\bigr)^2
        \le
        \frac{\operatorname{Var}_{x,t;s}(\phi)}{2\tau}
        |V|_{g_t}^2. 
    \]
    Taking the supremum over all \(g_t\)-unit vectors \(V\) gives
    \[
        |\nabla_x(P_{s,t}\phi)|_{g_t}^2
        \le
        \frac{\operatorname{Var}_{x,t;s}(\phi)}{2\tau},
    \]
    which is \eqref{eq:sharp-reverse-poincare}.
\end{proof}

\subsection{Proofs of the \texorpdfstring{$\varphi$}{varphi}-divergence estimates}
\label{subsec:proofs-phi-divergences}

\begin{proof}[Proof of Proposition~\ref{prop:divergence-contraction}]
    Set \(\lambda=\frac{v_0}{v_1}\) and \(\Phi(a,b)=b\varphi(a/b)\). Then
    \[
        \Phi_a=\varphi'(\lambda),
        \qquad
        \Phi_b=\varphi(\lambda)-\lambda\varphi'(\lambda),
        \qquad
        a\Phi_a+b\Phi_b=\Phi. 
    \]
    Since \(\partial_s v_i=-\Delta v_i+Rv_i\) and \(\partial_s dg_s=-R\,dg_s,\) the curvature terms cancel by homogeneity. Hence, after integrating by parts,
    \begin{align*}
        \frac{d}{ds}D_\varphi
        &=
        -\int_M
        (\Phi_a\Delta v_0+\Phi_b\Delta v_1)\,dg_s
        =
        \int_M
        \left(
        \langle\nabla\Phi_a,\nabla v_0\rangle
        +
        \langle\nabla\Phi_b,\nabla v_1\rangle
        \right)dg_s                                      \\
        &=
        \int_M
        \varphi''(\lambda)
        \langle\nabla\lambda,\nabla v_0-\lambda\nabla v_1\rangle
        dg_s
        =
        \int_M v_1\varphi''(\lambda)|\nabla\lambda|^2\,dg_s. 
    \end{align*}
    If \(\varphi''>0\), equality forces \(\nabla\lambda=0\). Since \(M\) is connected and both measures have mass one, \(\lambda\equiv1\).
\end{proof}

\subsection{Proof of infinitesimal \texorpdfstring{$\varphi$}{varphi}-divergence}\label{subsec:infinitesimal-phi-divergence}

\begin{proof}[Proof of Proposition~\ref{prop:infinitesimal-phi-divergence}]
    Let
    \[
        K_r(y)=K(\exp_x(rV),t;y,s),
        \qquad
        K_0(y)=K(x,t;y,s),
        \qquad
        \lambda_r=\frac{K_r}{K_0}.
    \]
    Then
    \[
        \lambda_r
        =
        1+r\nabla_V^x\log K(x,t;\cdot,s)+O(r^2),
        \qquad
        \int_M(\lambda_r-1)\,d\nu_{x,t;s}=0. 
    \]
    Taylor expansion at \(1\) gives
    \[
        D_\varphi
        \left(
        \nu_{\exp_x(rV),t;s}
        \mid
        \nu_{x,t;s}
        \right)
        =
        \frac12\varphi''(1)
        \int_M(\lambda_r-1)^2\,d\nu_{x,t;s}
        +o(r^2).
    \]
    Therefore
    \begin{align*}
        \lim_{r\to0}
        \frac{1}{r^2}
        D_\varphi
        \left(
        \nu_{\exp_x(rV),t;s}
        \mid
        \nu_{x,t;s}
        \right)
        =
        \frac12\varphi''(1)
        \int_M
        \left(\nabla_V^x\log K(x,t;y,s)\right)^2
        d\nu_{x,t;s}(y)          =
        \frac{\varphi''(1)}{4\tau}g_\tau^F(V,V),
    \end{align*}
    which proves \eqref{eq:infinitesimal-divergence-is-fisher}. 
\end{proof}

\subsection{Proof of sharp \texorpdfstring{$\varphi$}{varphi}-Sobolev inequality}\label{subsec:sharp-phi-sobolev-heat-kernel}

\begin{proof}[Proof of Proposition~\ref{prop:sharp-phi-sobolev-heat-kernel}]
    Write \(s=t-\tau\), set \(d\nu_\ell=d\nu_{x,t;\ell}\), and let \(u_\ell=P_{s,\ell}u\). Since \(\int_M u\,d\nu_s=1\), we have \(u_t(x)=1\). Thus
    \[
        D_\varphi(u\nu_s\mid\nu_s)
        =
        \int_s^t
        \int_M
        \varphi''(u_\ell)|\nabla u_\ell|^2
        d\nu_\ell\,d\ell. 
    \]
    Put \(a=\varphi''\). The Ricci-flow Bochner formula gives
    \begin{align*}
        (\partial_\ell-\Delta)
        \left(a(u_\ell)|\nabla u_\ell|^2\right)
        &=
        -2a(u_\ell)|\nabla^2u_\ell|^2
        -4a'(u_\ell)\nabla^2u_\ell(\nabla u_\ell,\nabla u_\ell) 
        -a''(u_\ell)|\nabla u_\ell|^4. 
    \end{align*}
    The condition \((1/a)''\le0\) is equivalent to \(aa''\ge2(a')^2\), and hence the right-hand side is nonpositive. Pairing with the conjugate heat-kernel measure yields
    \[
        \frac{d}{d\ell}
        \int_M
        a(u_\ell)|\nabla u_\ell|^2d\nu_\ell
        \le0. 
    \]
    Therefore
    \[
        D_\varphi(u\nu_s\mid\nu_s)
        \le
        (t-s)
        \int_M
        \varphi''(u)|\nabla u|^2d\nu_s,
    \]
    which is \eqref{eq:sharp-phi-sobolev-heat-kernel}.
\end{proof}

\subsection{Proof of \texorpdfstring{$\varphi$}{varphi}-strong data processing inequality}\label{subsec:scale-sharp-phi-SDPI}

\begin{proof}[Proof of Corollary~\ref{cor:scale-sharp-phi-SDPI}]
    Let
    \[
        w_\tau
        :=
        \frac{d\eta_\tau}{d\nu_{x,t;t-\tau}}.
    \]
    The backward-time form of Proposition~\ref{prop:divergence-contraction} gives
    \[
        \frac{d}{d\tau}
        D_\varphi
        \left(
        \eta_\tau
        \mid
        \nu_{x,t;t-\tau}
        \right)
        =
        -
        \int_M
        \varphi''(w_\tau)|\nabla w_\tau|^2
        d\nu_{x,t;t-\tau}.
    \]
    By Proposition~\ref{prop:sharp-phi-sobolev-heat-kernel},
    \[
        D_\varphi
        \left(
        \eta_\tau
        \mid
        \nu_{x,t;t-\tau}
        \right)
        \le
        \tau
        \int_M
        \varphi''(w_\tau)|\nabla w_\tau|^2
        d\nu_{x,t;t-\tau}.
    \]
    Hence
    \[
        \frac{d}{d\tau}
        \left[
        \tau
        D_\varphi
        \left(
        \eta_\tau
        \mid
        \nu_{x,t;t-\tau}
        \right)
        \right]
        \le0,
    \]
    and the finite-scale estimate follows by integration.
\end{proof}

\subsection{Proof of dual monotonicity}
\label{subsec:dual-nonlinear-fisher-monotonicity}

For \(0<\rho\le\tau\), write
\[
    z_\rho:=z_V^\rho .
\]

We first record the source-score chain rule.

\begin{lemma}[Source-score chain rule]
\label{lem:source-score-chain-rule}
For every smooth function \(\chi:\mathbb R\to\mathbb R\),
\begin{equation}
\label{eq:source-score-chain-rule}
    \frac{d}{d\rho}
    \int_M \chi(z_\rho)\,d\nu_\rho
    =
    \frac1\rho
    \int_M z_\rho\chi'(z_\rho)\,d\nu_\rho
    -
    \int_M
    \chi''(z_\rho)
    |\nabla z_\rho|_{g_{t-\rho}}^2
    \,d\nu_\rho .
\end{equation}
\end{lemma}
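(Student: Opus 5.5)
The plan is to differentiate under the integral using the moving-gauge formula \eqref{eq:time-derivative-in-moving-gauge}, which for any time-dependent function \(q_\rho\) gives
\[
    \frac{d}{d\rho}\int_M q_\rho\,d\nu_\rho=\int_M\bigl(\partial_\rho+\mathcal L_{\nabla f_\rho}\bigr)q_\rho\,d\nu_\rho .
\]
We apply this with \(q_\rho=\chi(z_\rho)\). Since \(\bigl(\partial_\rho+\mathcal L_{\nabla f_\rho}\bigr)\) is a first-order derivation on functions, the chain rule gives
\[
    \bigl(\partial_\rho+\mathcal L_{\nabla f_\rho}\bigr)\chi(z_\rho)=\chi'(z_\rho)\bigl(\partial_\rho+\mathcal L_{\nabla f_\rho}\bigr)z_\rho .
\]
This reduces the lemma to knowing the moving-gauge evolution of the source score.

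For that evolution we use the identity quoted in the proof of Proposition~\ref{prop:second-scale-variation-of-the-fisher-metric}:
\[
    \bigl(\partial_\rho+\mathcal L_{\nabla f_\rho}\bigr)z_\rho=\Delta_{f_\rho}z_\rho+\frac1\rho z_\rho .
\]
To make the argument self-contained, I would verify it directly. Write \(z_\rho=-2\rho\,\nabla_V^x f_\rho\). Differentiating the conjugate heat equation for \(K_\rho=(4\pi\rho)^{-n/2}e^{-f_\rho}\) gives
\[
    \partial_\rho f_\rho=\Delta f_\rho-|\nabla f_\rho|^2+R-\frac{n}{2\rho}.
\]
Applying \(\nabla_V^x\), which commutes with all target operators and kills \(R\) and \(n/(2\rho)\), yields
\[
    \partial_\rho\nabla_V^x f=\Delta\nabla_V^xf-2\langle\nabla f,\nabla\nabla_V^xf\rangle .
\]
Multiplying by \(-2\rho\) and accounting for the factor \(\rho\) produces
\[
    \partial_\rho z=\frac1\rho z+\Delta z-2\langle\nabla f,\nabla z\rangle .
\]
Adding \(\mathcal L_{\nabla f}z=\langle\nabla f,\nabla z\rangle\) then gives \(\Delta_f z+z/\rho\), as claimed.

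With these two facts in hand, we obtain
\[
    \frac{d}{d\rho}\int\chi(z_\rho)\,d\nu_\rho=\frac1\rho\int z\chi'(z)\,d\nu_\rho+\int\chi'(z)\,\Delta_{f_\rho}z\,d\nu_\rho .
\]
The last term is handled by weighted integration by parts, which is valid because \(\Delta_{f_\rho}\) is self-adjoint with respect to \(d\nu_\rho\propto e^{-f_\rho}dg_{t-\rho}\) on a closed manifold:
\[
    \int\chi'(z)\,\Delta_{f}z\,d\nu=-\int\langle\nabla\chi'(z),\nabla z\rangle\,d\nu=-\int\chi''(z)\,|\nabla z|^2\,d\nu .
\]
This is exactly \eqref{eq:source-score-chain-rule}.

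The only delicate point is the bookkeeping of signs and the normalization term \(-\frac{n}{2\rho}\) in the evolution of \(f_\rho\). This term is source-independent, so it drops out under \(\nabla_V^x\), and the \(\frac1\rho z\) term comes solely from differentiating the prefactor \(2\rho\). I would also note that smoothness and compactness justify differentiating under the integral sign.
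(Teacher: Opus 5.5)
Your proof is correct and is essentially the paper's argument: both rest on the pointwise score evolution $\partial_\rho z_\rho=\frac1\rho z_\rho+\Delta z_\rho-2\langle\nabla f_\rho,\nabla z_\rho\rangle$, followed by integration by parts against $d\nu_\rho$. The only difference is packaging. You absorb the variation of the measure through the moving-gauge identity \eqref{eq:time-derivative-in-moving-gauge} and the self-adjointness of $\Delta_{f_\rho}$, whereas the paper integrates $\int_M\chi(z_\rho)\,\Delta K\,dg_{t-\rho}$ by parts directly and recombines the resulting terms as $\operatorname{div}(K\nabla z_\rho)$.
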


\begin{proof}
Let
\[
    K=K(x,t;\cdot,t-\rho),
\]
and let all gradients and Laplacians be taken in the target variable with respect
to \(g_{t-\rho}\). Since \(K\) solves the conjugate heat equation and
\(\partial_\rho dg_{t-\rho}=R\,dg_{t-\rho}\),
\[
    \partial_\rho K=\Delta K-RK,
    \qquad
    \partial_\rho(K\,dg_{t-\rho})=(\Delta K)\,dg_{t-\rho}.
\]
Moreover, \(\nabla_V^xK\) satisfies the same conjugate heat equation in the target
variables. Hence
\[
\begin{aligned}
    \partial_\rho(\nabla_V^x\log K)
    &=
    \frac{\Delta(\nabla_V^xK)-R\nabla_V^xK}{K}
    -
    \frac{\nabla_V^xK}{K^2}(\Delta K-RK)      \\
    &=
    \Delta(\nabla_V^x\log K)
    +
    2\left\langle
        \nabla\log K,
        \nabla(\nabla_V^x\log K)
    \right\rangle .
\end{aligned}
\]
Since \(z_\rho=2\rho\,\nabla_V^x\log K\),
\begin{equation}
\label{eq:source-score-pointwise-evolution}
    \partial_\rho z_\rho
    =
    \frac1\rho z_\rho
    +
    \Delta z_\rho
    +
    2\langle\nabla\log K,\nabla z_\rho\rangle .
\end{equation}

Differentiating the integral and using that \(M\) is closed,
\begin{align*}
    \frac{d}{d\rho}\int_M\chi(z_\rho)\,d\nu_\rho
    &=
    \int_M\chi'(z_\rho)\partial_\rho z_\rho\,d\nu_\rho
    +
    \int_M\chi(z_\rho)\Delta K\,dg_{t-\rho}             \\
    &=
    \int_M
    \left[
        \chi'(z_\rho)\partial_\rho z_\rho
        +
        \Delta\chi(z_\rho)
    \right]d\nu_\rho\\
    &=\frac1\rho
    \int_Mz_\rho\chi'(z_\rho)\,d\nu_\rho                 +
    2\int_M
    \chi'(z_\rho)
    \left(
        \Delta z_\rho
        +
        \langle\nabla\log K,\nabla z_\rho\rangle
    \right)d\nu_\rho                                    \\
    &\quad+
    \int_M
    \chi''(z_\rho)|\nabla z_\rho|^2
    \,d\nu_\rho\\
    &=\frac1\rho
    \int_Mz_\rho\chi'(z_\rho)\,d\nu_\rho                 +
    2\int_M
    \chi'(z_\rho)
    \left(
        \operatorname{div}(K\nabla z_\rho)
    \right)dg_{t-\rho} +
    \int_M
    \chi''(z_\rho)|\nabla z_\rho|^2
    \,d\nu_\rho\\
    &=\frac1\rho
    \int_Mz_\rho\chi'(z_\rho)\,d\nu_\rho                 
    -
    \int_M
    \chi''(z_\rho)|\nabla z_\rho|^2
    \,d\nu_\rho.
\end{align*}                                                    
We integrated by parts in the second equality. We substituted \eqref{eq:source-score-pointwise-evolution} and used
\[
    \Delta\chi(z_\rho)
    =
    \chi'(z_\rho)\Delta z_\rho
    +
    \chi''(z_\rho)|\nabla z_\rho|^2
\]
in the third equality. We used 
\[
    K\left(
        \Delta z_\rho
        +
        \langle\nabla\log K,\nabla z_\rho\rangle
    \right)
    =
    \operatorname{div}(K\nabla z_\rho),
\]
in the fourth equality. Finally, we integrated by parts in the fifth equality.
This proves \eqref{eq:source-score-chain-rule}.
\end{proof}

We next identify the optimizer in the dual problem. Let
\begin{equation}
\label{eq:legendre-transform-of-varphi}
    \varphi^*(a):=\sup_{r>0}\{ar-\varphi(r)\}
\end{equation}
denote the Legendre transform.

\begin{lemma}[Dual optimizer]
\label{lem:dual-optimizer-fenchel}
For every \(\rho>0\), the supremum defining
\(D_{\varphi,\rho}^*(z_\rho)\) in \eqref{eq:dual-varphi-divergence} is achieved by a unique positive density
\(u_\rho\). Moreover, there is a unique number \(\lambda_\rho\) such that
\begin{subequations}
\begin{align}
    \label{eq:dual-optimizer-euler}
    z_\rho-\lambda_\rho
    &=
    \varphi'(u_\rho),
    \qquad
    \int_Mu_\rho\,d\nu_\rho=1,\\
    \label{eq:dual-fenchel-formula}
    D_{\varphi,\rho}^*(z_\rho)
    &=
    \lambda_\rho+
    \int_M\varphi^*(z_\rho-\lambda_\rho)\,d\nu_\rho =
    \int_Mz_\rho u_\rho\,d\nu_\rho
    -
    D_\varphi(u_\rho\nu_\rho\mid\nu_\rho).
\end{align}
\end{subequations}
The maps \(\rho\mapsto\lambda_\rho\) and \(\rho\mapsto u_\rho\) are \(C^1\) for
\(\rho>0\).
\end{lemma}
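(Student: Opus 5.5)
The plan is to reduce the constrained supremum in \eqref{eq:dual-varphi-divergence} to a one-dimensional problem for the Lagrange multiplier, and then read off every claim from that reduction. We normalize \(\varphi'(1)=0\) and set \(\psi:=(\varphi')^{-1}\). Since \(\varphi''>0\), \(\varphi'\) is strictly increasing. The Legendre-type condition \eqref{eq:legendre-type-condition} then makes \(\varphi'\) a \(C^1\) diffeomorphism \((0,\infty)\to\mathbb R\). Hence \(\psi:\mathbb R\to(0,\infty)\) is \(C^1\), strictly increasing, with \(\psi(-\infty)=0\), \(\psi(+\infty)=\infty\), and \(\psi=(\varphi^*)'\) by standard convex duality.

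First we construct the multiplier. Define
\[
    \Lambda(\lambda):=\int_M\psi(z_\rho-\lambda)\,d\nu_\rho .
\]
On a closed manifold \(z_\rho\) is smooth, hence bounded, so \(\Lambda\) is finite, continuous and strictly decreasing. By monotone convergence \(\Lambda(\lambda)\to\infty\) as \(\lambda\to-\infty\) and \(\Lambda(\lambda)\to0\) as \(\lambda\to+\infty\). This gives a unique \(\lambda_\rho\) with \(\Lambda(\lambda_\rho)=1\). We then set \(u_\rho:=\psi(z_\rho-\lambda_\rho)>0\), which is smooth and satisfies \eqref{eq:dual-optimizer-euler}.

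Second we prove optimality and uniqueness. This is a pointwise Fenchel--Young argument. For any admissible \(u\) we have \(ua-\varphi(u)\le\varphi^*(a)\), with equality if and only if \(u=\psi(a)\). Taking \(a=z_\rho-\lambda_\rho\), using \(\int u\,d\nu_\rho=1\) and integrating, we get
\[
    \int z_\rho u\,d\nu_\rho-D_\varphi(u\nu_\rho\mid\nu_\rho)\le \lambda_\rho+\int_M\varphi^*(z_\rho-\lambda_\rho)\,d\nu_\rho .
\]
Equality holds exactly when \(u=u_\rho\) \(\nu_\rho\)-a.e., and since \(\nu_\rho\) has positive density this means \(u=u_\rho\) everywhere. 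This proves existence, uniqueness, and both expressions in \eqref{eq:dual-fenchel-formula}; the second expression follows from the equality case of Fenchel--Young. Uniqueness of \(\lambda_\rho\) follows from the strict monotonicity of \(\Lambda\).

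Third, and this is the main obstacle, we prove the \(C^1\) dependence on \(\rho\). We would apply the implicit function theorem to
\[
    F(\rho,\lambda):=\int_M\psi(z_\rho-\lambda)K(x,t;\cdot,t-\rho)\,dg_{t-\rho}-1 .
\]
The integrand is \(C^1\) in \((\rho,\lambda)\) because \(z_\rho\) and \(K\) are smooth in \(\rho>0\) on the closed manifold, and \(\psi\in C^1\) since \(\varphi\in C^4\). On compact \(\rho\)-intervals away from \(0\), \(z_\rho\) is uniformly bounded, so we may differentiate under the integral sign. We also have
\[
    \partial_\lambda F=-\int_M\psi'(z_\rho-\lambda_\rho)\,d\nu_\rho=-\int_M\frac{d\nu_\rho}{\varphi''(u_\rho)}<0 .
\]
The implicit function theorem therefore gives \(\lambda_\rho\in C^1\), and then \(u_\rho=\psi(z_\rho-\lambda_\rho)\) is \(C^1\) in \(\rho\), uniformly on \(M\). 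The only care needed is to keep the bounds on \(z_\rho\) uniform locally in \(\rho\), so that the differentiation under the integral is justified. On a closed manifold this is automatic from smoothness of the heat kernel for \(\rho>0\).
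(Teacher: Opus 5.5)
Your proposal is correct and follows essentially the same route as the paper. You find the multiplier from strict monotonicity of \(\lambda\mapsto\int_M(\varphi')^{-1}(z_\rho-\lambda)\,d\nu_\rho\), get optimality and uniqueness from the pointwise Fenchel--Young inequality and its equality case, and obtain \(C^1\) dependence from the implicit function theorem using \(\partial_\lambda F=-\int_M\varphi''(u_\rho)^{-1}\,d\nu_\rho<0\); the only step you leave implicit, which the paper states, is that the local implicit-function branch \(\lambda(\rho)\) coincides with \(\lambda_\rho\) by uniqueness of the normalizing constant.
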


\begin{proof}
By \eqref{eq:legendre-type-condition} and \(\varphi''>0\), the function \(\varphi':(0,\infty)\to\mathbb R\) is a \(C^3\) diffeomorphism. Since \(M\) is closed and \(z_\rho\) is smooth,
\[
    u_\lambda:=(\varphi')^{-1}(z_\rho-\lambda)
\]
is a positive \(C^3\) function for every \(\lambda\in\mathbb R\). The map
\[
    \lambda\longmapsto\int_Mu_\lambda\,d\nu_\rho
\]
is continuous and strictly decreasing. Because \(z_\rho\) is bounded, the limits in
\eqref{eq:legendre-type-condition} imply that this integral tends to \(+\infty\) as
\(\lambda\to-\infty\) and to \(0\) as \(\lambda\to+\infty\). Thus there is a unique
\(\lambda_\rho\) such that
\[
    \int_Mu_{\lambda_\rho}\,d\nu_\rho=1.
\]
Set \(u_\rho:=u_{\lambda_\rho}\). This proves \eqref{eq:dual-optimizer-euler}.

Fenchel's inequality gives, for every probability density \(u\),
\[
    (z_\rho-\lambda_\rho)u-\varphi(u)
    \le
    \varphi^*(z_\rho-\lambda_\rho).
\]
After integration and using \(\int_Mu\,d\nu_\rho=1\), we get
\[
    \int_Mz_\rho u\,d\nu_\rho
    -
    \int_M\varphi(u)\,d\nu_\rho
    \le
    \lambda_\rho+
    \int_M\varphi^*(z_\rho-\lambda_\rho)\,d\nu_\rho .
\]
Equality holds precisely when \(z_\rho-\lambda_\rho=\varphi'(u)\), hence
precisely for \(u=u_\rho\). This proves existence and uniqueness of the maximizer
and the first equality in \eqref{eq:dual-fenchel-formula}. The second equality
follows from Fenchel's identity
\[
    \varphi(u_\rho)+\varphi^*(z_\rho-\lambda_\rho)
    =
    (z_\rho-\lambda_\rho)u_\rho
\]
and \(\int_Mu_\rho\,d\nu_\rho=1\).

The \(C^1\)-dependence on \(\rho\) follows from the implicit function theorem
applied to
\[
    (\rho,\lambda)
    \longmapsto
    F(\rho,\lambda)
    :=
    \int_M
    (\varphi')^{-1}(z_\rho-\lambda)
    \,d\nu_\rho
    -1,
\]
since, at \(\lambda=\lambda_\rho\),
\[
    \partial_\lambda
    \left[
        \int_M(\varphi')^{-1}(z_\rho-\lambda)\,d\nu_\rho-1
    \right]
    =
    -
    \int_M\frac1{\varphi''(u_\rho)}\,d\nu_\rho
    <0.
\]
Indeed, for each fixed \(\rho_0>0\), the implicit function theorem gives a
neighborhood of \(\rho_0\) and a unique \(C^1\) function \(\lambda(\rho)\) such that
\[
    F(\rho,\lambda(\rho))=0,
    \qquad
    \lambda(\rho_0)=\lambda_{\rho_0}.
\]
By uniqueness of the normalizing constant, \(\lambda(\rho)=\lambda_\rho\) on this
neighborhood. Hence \(\rho\mapsto\lambda_\rho\) as well as \(u_\rho=(\varphi')^{-1}(z_\rho-\lambda_\rho)\) are \(C^1\) for \(\rho>0\).
\end{proof}

\begin{lemma}
Assume that \(\varphi''>0\) and that \(\varphi\) satisfies
\eqref{eq:legendre-type-condition}. If \(z_\rho-\lambda_\rho=\varphi'(u_\rho)\), then
\begin{equation}\label{eq:two-derivative-of-varphi-star}
    (\varphi^*)''(z_\rho-\lambda_\rho)
    |\nabla z_\rho|^2
    =
    \varphi''(u_\rho)|\nabla u_\rho|^2 .
\end{equation}
Furthermore, for \(a\in \R\) near \(0\),
\begin{equation}
\label{eq:varphi-star-local-expansion}
    \varphi^*(a)
    =
    a+\frac{a^2}{2{\varphi''(1)}}+O(a^3),
    \qquad
    (\varphi^*)'(a)
    =
    1+\frac {a}{\varphi''(1)}+O(a^2).
\end{equation}
\end{lemma}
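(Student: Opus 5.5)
The plan is to treat \(\varphi^*\) as the Legendre transform of a strictly convex function whose derivative \(\varphi':(0,\infty)\to\mathbb R\) is a diffeomorphism; this holds by \(\varphi''>0\) and \eqref{eq:legendre-type-condition}. The basic facts are then classical. For \(a\in\mathbb R\), the supremum in \eqref{eq:legendre-transform-of-varphi} is attained at the unique \(r=(\varphi')^{-1}(a)\). The envelope theorem gives \((\varphi^*)'(a)=(\varphi')^{-1}(a)\). Differentiating the identity \(\varphi'\bigl((\varphi^*)'(a)\bigr)=a\) once more gives
\[
    (\varphi^*)''(a)=\frac{1}{\varphi''\bigl((\varphi^*)'(a)\bigr)}.
\]
With these formulas, both claims reduce to short computations.

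For \eqref{eq:two-derivative-of-varphi-star}, set \(a=z_\rho-\lambda_\rho\). Because \(\varphi'(u_\rho)=a\), we have \((\varphi^*)'(a)=u_\rho\) and \((\varphi^*)''(a)=1/\varphi''(u_\rho)\). Since \(\lambda_\rho\) is constant in \(y\), taking the target gradient of \(z_\rho-\lambda_\rho=\varphi'(u_\rho)\) gives \(\nabla z_\rho=\varphi''(u_\rho)\nabla u_\rho\). It follows that
\[
    (\varphi^*)''(a)|\nabla z_\rho|^2=\frac{\varphi''(u_\rho)^2|\nabla u_\rho|^2}{\varphi''(u_\rho)}=\varphi''(u_\rho)|\nabla u_\rho|^2 .
\]
This step needs only that \(u_\rho\) is \(C^1\) in \(y\), which Lemma~\ref{lem:dual-optimizer-fenchel} provides.

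For \eqref{eq:varphi-star-local-expansion}, we use the standing normalization \(\varphi(1)=0\) and \(\varphi'(1)=0\). Then \(a=0\) corresponds to \(r=1\), so \(\varphi^*(0)=0\cdot 1-\varphi(1)=0\), \((\varphi^*)'(0)=1\), and \((\varphi^*)''(0)=1/\varphi''(1)\). Since \(\varphi\in C^4\) and \(\varphi''>0\), the inverse function theorem makes \((\varphi')^{-1}\) of class \(C^3\), hence \(\varphi^*\in C^4\) near \(0\). Taylor's theorem with remainder then gives \(\varphi^*(a)=a+a^2/(2\varphi''(1))+O(a^3)\), and the derivative expansion follows the same way from \((\varphi^*)'\in C^3\).

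The only real point to watch is the normalization. The Legendre transform does change when \(\varphi\) is shifted by \(c(r-1)\): it becomes \(\varphi^*(a-c)+c\). So the expansion with leading coefficient \(1\) depends on the convention \(\varphi'(1)=0\) fixed at the start of this subsection, and I will state explicitly that this convention is in force. Apart from that, the argument is routine regularity bookkeeping.
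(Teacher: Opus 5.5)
Your proposal is correct and follows essentially the same route as the paper: you identify the maximizer \(r=(\varphi')^{-1}(a)\), derive \((\varphi^*)'=(\varphi')^{-1}\) and \((\varphi^*)''=1/\varphi''\circ(\varphi')^{-1}\), and then use \(\nabla z_\rho=\varphi''(u_\rho)\nabla u_\rho\) together with a Taylor expansion at \(a=0\) based on \(\varphi(1)=\varphi'(1)=0\). Your remarks on the \(C^4\) regularity of \(\varphi^*\) and on the role of the normalization \(\varphi'(1)=0\) just make explicit what the paper uses implicitly.
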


\begin{proof}
Since \(\varphi''>0\) and \(\varphi\) satisfies
\eqref{eq:legendre-type-condition}, the supremum defining \(\varphi^*(a)=\sup_{r>0}\{ar-\varphi(r)\}\) is attained at the unique point \(r=(\varphi')^{-1}(a).\) Thus
\[
    \varphi^*(a)=ar-\varphi(r),
    \qquad
    \varphi'(r)=a.
\]
Differentiating with respect to \(a\) gives
\begin{align}\label{eq:legendre-identities}
    (\varphi^*)'(a)=(\varphi')^{-1}(a),
    \qquad
    (\varphi^*)''(a)
    =
    \frac1{\varphi''((\varphi')^{-1}(a))}.
\end{align}
Finally, using
\(z_\rho-\lambda_\rho=\varphi'(u_\rho)\), we get
\[
    (\varphi^*)''(z_\rho-\lambda_\rho)
    =
    \frac1{\varphi''(u_\rho)},
    \qquad
    \nabla z_\rho=\varphi''(u_\rho)\nabla u_\rho,
\]
which proves \eqref{eq:two-derivative-of-varphi-star}.  

By \eqref{eq:legendre-identities},
\[
    (\varphi^*)'(0)=1,
    \qquad
    (\varphi^*)''(0)=\frac1{\varphi''(1)}.
\]
Since \(\varphi(1)=\varphi'(1)=0\), we also have \(\varphi^*(0)=0\). Therefore,
near \(a=0\), we obtain the expansion \eqref{eq:varphi-star-local-expansion}.
\end{proof}

\begin{lemma}
\label{lem:lambda-rho-small}
Let \(\lambda_\rho\) be as in Lemma~\ref{lem:dual-optimizer-fenchel}. As
\(\rho\downarrow0\),
\[
    \lambda_\rho=O(\rho).
\]
\end{lemma}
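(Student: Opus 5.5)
The plan is to extract \(\lambda_\rho\) from the normalization constraint in \eqref{eq:dual-optimizer-euler}, using the short-time moment bounds for the source score. By \eqref{eq:legendre-identities}, the constraint can be written as
\[
    1=\int_M u_\rho\,d\nu_\rho=\int_M(\varphi^*)'(z_\rho-\lambda_\rho)\,d\nu_\rho .
\]
If \(z_\rho-\lambda_\rho\) stays in a fixed small neighborhood of \(0\), the expansion \eqref{eq:varphi-star-local-expansion} applies. Together with the centering identity \eqref{eq:score-coordinate-mean-summary}, \(\int_M z_\rho\,d\nu_\rho=0\), it then gives
\[
    0=\int_M\left(\frac{z_\rho-\lambda_\rho}{\varphi''(1)}+O\bigl((z_\rho-\lambda_\rho)^2\bigr)\right)d\nu_\rho
    =-\frac{\lambda_\rho}{\varphi''(1)}+O\Bigl(\int_M z_\rho^2\,d\nu_\rho+\lambda_\rho^2\Bigr).
\]
By \eqref{eq:small-score-second} we have \(\int_M z_\rho^2\,d\nu_\rho=O(\rho)\). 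So once we know \(\lambda_\rho\to0\), the \(\lambda_\rho^2\) term is absorbed into the left side and \(\lambda_\rho=O(\rho)\) follows.

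The first step is therefore an a priori bound on \(\lambda_\rho\), which is where the difficulty lies. The estimate \eqref{eq:small-score-linfty} says only that \(\|z_\rho\|_{L^\infty}=O(1)\), not that it is small, so the Taylor expansion cannot be applied pointwise on all of \(M\).

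To get \(\lambda_\rho\) bounded, use that \(\lambda\mapsto\int_M(\varphi')^{-1}(z_\rho-\lambda)\,d\nu_\rho\) is strictly decreasing. With \(\|z_\rho\|_\infty\le C\), the value at \(\lambda=-C\) is at least \((\varphi')^{-1}(0)=1\), and the value at \(\lambda=C\) is at most \(1\). Hence \(|\lambda_\rho|\le C\).

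To show \(\lambda_\rho\to0\), use the monotonicity again with \(\lambda=\pm\epsilon\). Split \(M\) into the set where \(|z_\rho|\le\epsilon/2\) and its complement. By Chebyshev and \eqref{eq:small-score-second}, the complement has \(\nu_\rho\)-measure \(O(\rho/\epsilon^2)\). On that complement the integrand \((\varphi')^{-1}(z_\rho\mp\epsilon)\) is bounded by a constant depending on \(C\), because \(z_\rho\) is uniformly bounded. On the good set, \((\varphi')^{-1}(z_\rho-\epsilon)\le(\varphi')^{-1}(-\epsilon/2)<1\), with the analogous bound \(>1\) for \(+\epsilon\). For \(\rho\) small relative to \(\epsilon\), this forces \(|\lambda_\rho|<\epsilon\), so \(\lambda_\rho\to0\).

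With \(\lambda_\rho\to0\) in hand, run the expansion on the good set \(\{|z_\rho|\le\epsilon_0\}\) for a fixed small \(\epsilon_0\). On the bad set, use the same Chebyshev bound upgraded through \eqref{eq:small-score-lp}: its measure is \(O(\rho^{p/2})\), which is \(O(\rho^2)\) for \(p=4\). There both \((\varphi^*)'(z_\rho-\lambda_\rho)\) and the linear approximation are bounded, so the error is \(O(\rho^2)\). The centering identity restricted to the good set also costs only \(\int_{\mathrm{bad}}|z_\rho|\,d\nu_\rho=O(\rho^2)\).

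Collecting terms gives \(\lambda_\rho/\varphi''(1)=O(\rho)+O(\lambda_\rho^2)\), hence \(\lambda_\rho=O(\rho)\).
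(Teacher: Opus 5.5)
Your proposal is correct and follows essentially the same route as the paper. Both arguments rewrite the normalization as \(\int_M(\varphi^*)'(z_\rho-\lambda_\rho)\,d\nu_\rho=1\), bound \(\lambda_\rho\) using \(\|z_\rho\|_{L^\infty}=O(1)\) and monotonicity, show \(\lambda_\rho\to0\), and then Taylor-expand on \(\{|z_\rho|\text{ small}\}\) with a Chebyshev bound on the complement before absorbing the \(\lambda_\rho^2\) term. The differences are minor: you prove \(\lambda_\rho\to0\) quantitatively by testing monotonicity at \(\lambda=\pm\epsilon\) instead of using the paper's subsequence/Lipschitz argument, and you use the fourth moment to make the bad-set error \(O(\rho^2)\), whereas the paper's \(O(\rho)\) from the second moment already suffices.
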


\begin{proof}
Using \eqref{eq:legendre-identities}, the normalization equation for \(u_\rho\) in \eqref{eq:dual-optimizer-euler} becomes
\begin{equation}
\label{eq:lambda-rho-normalization}
    \int_M
    (\varphi^*)'(z_\rho-\lambda_\rho)\,d\nu_\rho
    =
    1.
\end{equation}
By Lemma~\ref{lem:short-time-source-score-moments},
\[
    \int_M z_\rho^2\,d\nu_\rho=O(\rho),
    \qquad
    \|z_\rho\|_{L^\infty(M)}=O(1), \qquad \text{as }\rho\to 0.
\]
In particular, \(z_\rho\to0\) in \(L^1(d\nu_\rho)\).

We first prove that \(\lambda_\rho\to0\). Choose \(C<\infty\) such that \(|z_\rho|\le C\) for all sufficiently small \(\rho\). By \eqref{eq:legendre-identities}, \((\varphi^*)'=(\varphi')^{-1}.\) Thus \((\varphi^*)'\) is strictly increasing,
\[
    \lim_{a\to-\infty}(\varphi^*)'(a)=0,
    \qquad
    \lim_{a\to+\infty}(\varphi^*)'(a)=+\infty,
    \qquad
    (\varphi^*)'(0)=1.
\]
Choose \(A>C\). If \(\lambda_\rho\ge A\), then \(z_\rho-\lambda_\rho\le C-A<0,\) so
\[
    \int_M(\varphi^*)'(z_\rho-\lambda_\rho)\,d\nu_\rho
    \le
    (\varphi^*)'(C-A)
    <1,
\]
contradicting \eqref{eq:lambda-rho-normalization}. Similarly, we can rule out
\(\lambda_\rho\le -A\). Hence \(\lambda_\rho\)
is bounded.

Let \(\rho_j\downarrow0\) be any sequence. Passing to a subsequence, we may assume \(\lambda_{\rho_j}\to\lambda_0.\) The quantities \(z_{\rho_j}-\lambda_{\rho_j}\) and \(-\lambda_0\) lie in a fixed
compact interval, and \((\varphi^*)'\) is Lipschitz on this interval. Therefore
\[
\begin{aligned}
    \left|
        \int_M
        (\varphi^*)'(z_{\rho_j}-\lambda_{\rho_j})\,d\nu_{\rho_j}
        -
        (\varphi^*)'(-\lambda_0)
    \right|
    &=
    \left|
        \int_M
        \left[
            (\varphi^*)'(z_{\rho_j}-\lambda_{\rho_j})
            -
            (\varphi^*)'(-\lambda_0)
        \right]d\nu_{\rho_j}
    \right|                                      \\
    &\le
    C\int_M
    \left|
        z_{\rho_j}-\lambda_{\rho_j}+\lambda_0
    \right|d\nu_{\rho_j}                         \\
    &\le
    C\int_M |z_{\rho_j}|\,d\nu_{\rho_j}
    +
    C|\lambda_{\rho_j}-\lambda_0|.
\end{aligned}
\]
Passing to the limit in \eqref{eq:lambda-rho-normalization} gives \((\varphi^*)'(-\lambda_0)=1.\) Using again \((\varphi^*)'=(\varphi')^{-1}\) and \(\varphi'(1)=0\), we get
\(\lambda_0=0\). Thus every subsequential limit of \(\lambda_\rho\) is zero, and
therefore \(\lambda_\rho\to0.\)

It remains to get the rate. Fix \(\delta>0\) small enough that the Taylor expansion \eqref{eq:varphi-star-local-expansion} holds for \(|a|\le\delta\). Since \(\lambda_\rho\to0\), for all sufficiently small
\(\rho\), we have \(|\lambda_\rho|\le \frac{\delta}{2}.\) On the set \(\{|z_\rho|\le\delta/2\}\), we may apply the Taylor expansion to
\(a=z_\rho-\lambda_\rho\). On the complementary set, the integrand is uniformly
bounded, while Chebyshev's inequality gives
\[
    \nu_\rho\bigl(\{|z_\rho|>\delta/2\}\bigr)
    \le
    \frac{4}{\delta^2}
    \int_M z_\rho^2\,d\nu_\rho
    =
    O(\rho).
\]
Hence integrating the Taylor expansion in
\eqref{eq:lambda-rho-normalization} gives
\[
\begin{aligned}
    1
    &=
    \int_M
    \left[
        1+\frac{z_\rho-\lambda_\rho}{\varphi''(1)}
        +
        O\bigl((z_\rho-\lambda_\rho)^2\bigr)
    \right]d\nu_\rho
    +
    O(\rho)             =
    1-\frac{\lambda_\rho}{\varphi''(1)}
    +
    O(\rho+\lambda_\rho^2),
\end{aligned}
\]
where we used
\[
    \int_M z_\rho\,d\nu_\rho=0,
    \qquad
    \int_M z_\rho^2\,d\nu_\rho=O(\rho).
\]
Thus
\[
    |\lambda_\rho|
    \le
    C(\rho+\lambda_\rho^2).
\]
Since \(\lambda_\rho\to0\), the quadratic term can be absorbed for all sufficiently
small \(\rho\). Therefore \(|\lambda_\rho|\le C\rho,\) which proves the lemma.
\end{proof}

\begin{proof}[Proof of Proposition~\ref{prop:dual-nonlinear-fisher-monotonicity}]
Let \(u_\rho\) and \(\lambda_\rho\) be given by
Lemma~\ref{lem:dual-optimizer-fenchel}. Differentiating
\eqref{eq:dual-fenchel-formula}, the resulting \(\lambda_\rho'\)-terms cancel because
\[
    \int_M(\varphi^*)'(z_\rho-\lambda_\rho)\,d\nu_\rho
    =
    \int_Mu_\rho\,d\nu_\rho
    =
    1.
\]
Hence, applying Lemma~\ref{lem:source-score-chain-rule} to
\[
    a\longmapsto \varphi^*(a-\lambda_\rho),
\]
with \(\lambda_\rho\) fixed inside the chain rule, and then using
\eqref{eq:two-derivative-of-varphi-star}, we obtain
\[
\begin{aligned}
    \frac{d}{d\rho}D_{\varphi,\rho}^*(z_\rho)
    &=
    \frac1\rho
    \int_M z_\rho u_\rho\,d\nu_\rho
    -
    \int_M
    (\varphi^*)''(z_\rho-\lambda_\rho)
    |\nabla z_\rho|^2
    \,d\nu_\rho                                      \\
    &=
    \frac1\rho
    \int_M z_\rho u_\rho\,d\nu_\rho
    -
    \int_M
    \varphi''(u_\rho)|\nabla u_\rho|^2
    \,d\nu_\rho .
\end{aligned}
\]
Using the second identity in \eqref{eq:dual-fenchel-formula}, this gives
\[
\begin{aligned}
    \frac{d}{d\rho}
    \left(
        \frac{1}{2\rho}D_{\varphi,\rho}^*(z_\rho)
    \right)
    &=
    \frac1{2\rho}
    \left[
        \frac1\rho
        D_\varphi(u_\rho\nu_\rho\mid\nu_\rho)
        -
        \int_M
        \varphi''(u_\rho)|\nabla u_\rho|^2
        \,d\nu_\rho
    \right]
    \le 0.
\end{aligned}
\]
The inequality is precisely the \(\varphi\)-Sobolev inequality (Proposition~\ref{prop:sharp-phi-sobolev-heat-kernel}). This proves
\eqref{eq:dual-nonlinear-fisher-dissipation}.

\begin{claim}[Small-scale endpoint]
\label{claim:dual-small-scale-endpoint}
As \(\rho\downarrow0\),
\begin{equation}
\label{eq:dual-small-scale-endpoint}
    D_{\varphi,\rho}^*(z_V^\rho)
    =
    \frac{\rho}{\varphi''(1)}
    |V|_{g_t}^2
    +
    o(\rho).
\end{equation}
\end{claim}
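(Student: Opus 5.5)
We will expand the Fenchel formula \eqref{eq:dual-fenchel-formula},
\[
    D_{\varphi,\rho}^*(z_\rho)=\lambda_\rho+\int_M\varphi^*(z_\rho-\lambda_\rho)\,d\nu_\rho,
\]
to second order in \(a:=z_\rho-\lambda_\rho\), and then evaluate the resulting moments with Lemma~\ref{lem:short-time-source-score-moments}.

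\textbf{Step 1: Taylor expansion on a good set.} By Lemma~\ref{lem:lambda-rho-small}, \(\lambda_\rho=O(\rho)\). Fix \(\delta>0\) so that \eqref{eq:varphi-star-local-expansion} holds for \(|a|\le\delta\). Split \(M\) into the good set \(\{|z_\rho|\le\delta/2\}\) and its complement. On the good set we have \(|a|\le\delta\) for small \(\rho\), and
\[
    \varphi^*(a)=a+\frac{a^2}{2\varphi''(1)}+O(|a|^3).
\]

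\textbf{Step 2: the bad set.} By \eqref{eq:small-score-linfty}, \(z_\rho\) is uniformly bounded, so \(\varphi^*(a)\), \(a\), and \(a^2\) are uniformly bounded there. Chebyshev's inequality with \eqref{eq:small-score-lp} for \(p=4\) gives
\[
    \nu_\rho(\{|z_\rho|>\delta/2\})\le C\delta^{-4}\int_M z_\rho^4\,d\nu_\rho=O(\rho^2)=o(\rho).
\]
Hence the full integrand may be replaced by its quadratic Taylor polynomial over all of \(M\), at an error of \(o(\rho)+O\bigl(\int_M|a|^3\,d\nu_\rho\bigr)\).

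\textbf{Step 3: computing the moments.} Use the centering identity \eqref{eq:score-coordinate-mean-summary}, \(\int_M z_\rho\,d\nu_\rho=0\). Then
\[
    \lambda_\rho+\int_M a\,d\nu_\rho=\lambda_\rho-\lambda_\rho=0.
\]
The quadratic term is
\[
    \frac1{2\varphi''(1)}\int_M (z_\rho-\lambda_\rho)^2\,d\nu_\rho
    =\frac1{2\varphi''(1)}\Bigl(\int_M z_\rho^2\,d\nu_\rho+\lambda_\rho^2\Bigr),
\]
and \(\lambda_\rho^2=O(\rho^2)\). By \eqref{eq:small-score-second} with \(V=W\), \(\int_M z_\rho^2\,d\nu_\rho=2\rho|V|_{g_t}^2+o(\rho)\). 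The quadratic term is therefore
\[
    \frac{\rho}{\varphi''(1)}|V|^2_{g_t}+o(\rho).
\]
For the cubic error, \(\int_M|a|^3\,d\nu_\rho\lesssim\int_M|z_\rho|^3\,d\nu_\rho+|\lambda_\rho|^3=O(\rho^{3/2})=o(\rho)\), using \eqref{eq:small-score-lp} with \(p=3\). Combining Steps 1–3 yields \eqref{eq:dual-small-scale-endpoint}.

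\textbf{Main obstacle.} The delicate point is the exact cancellation of the linear terms. It relies on expanding around \(a=z_\rho-\lambda_\rho\) rather than around \(z_\rho\) alone, which is what makes the \(\lambda_\rho\) contribution cancel, together with the score centering identity. The \(O(\rho)\) rate of \(\lambda_\rho\) from Lemma~\ref{lem:lambda-rho-small} is essential, since it pushes all \(\lambda_\rho\)-dependent remainders to \(O(\rho^2)\). Once this is in place, the rest follows directly from the moment bounds of Lemma~\ref{lem:short-time-source-score-moments}.
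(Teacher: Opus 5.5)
Your proof is correct and follows the paper's argument: you expand the Fenchel formula to second order in $z_\rho-\lambda_\rho$, cancel the linear terms using centering, and control $\lambda_\rho^2$ and the cubic remainder with Lemma~\ref{lem:lambda-rho-small} and \eqref{eq:small-score-lp} with $p=3$. Your good/bad-set split with Chebyshev at $p=4$ is a harmless extra precaution. The paper applies the cubic Taylor bound directly, which is legitimate because $z_\rho-\lambda_\rho$ stays in a fixed compact interval on which $\varphi^*$ is $C^3$.
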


\begin{proof}
By \eqref{eq:dual-fenchel-formula} and
\eqref{eq:varphi-star-local-expansion},
\[
\begin{aligned}
    D_{\varphi,\rho}^*(z_\rho)
    &=
    \lambda_\rho+
    \int_M
    \left[
        z_\rho-\lambda_\rho
        +
        \frac{(z_\rho-\lambda_\rho)^2}{2{\varphi''(1)}}
    \right]d\nu_\rho
    +
    O\left(
        \int_M |z_\rho-\lambda_\rho|^3\,d\nu_\rho
    \right).
\end{aligned}
\]
The linear terms cancel by \eqref{eq:score-coordinate-mean-summary}. Moreover,
Lemma~\ref{lem:lambda-rho-small} gives \(\lambda_\rho^2=O(\rho^2)\), while
\eqref{eq:small-score-lp} with \(p=3\) gives
\[
    \int_M |z_\rho-\lambda_\rho|^3\,d\nu_\rho
    =
    O(\rho^{3/2})=o(\rho).
\]
Therefore
\[
    D_{\varphi,\rho}^*(z_\rho)
    =
    \frac1{2\varphi''(1)}
    \int_M z_\rho^2\,d\nu_\rho
    +
    o(\rho).
\]
Using \eqref{eq:small-score-second}, the equation \eqref{eq:dual-small-scale-endpoint} follows.
\end{proof}

Claim~\ref{claim:dual-small-scale-endpoint} gives
\[
    \lim_{\rho\downarrow0}
    \frac{\varphi''(1)}{2\rho}
    D_{\varphi,\rho}^*(z_V^\rho)
    =
    \frac12 |V|_{g_t}^2.
\]
Since
\[
    \rho\longmapsto
    \frac{1}{2\rho}D_{\varphi,\rho}^*(z_V^\rho)
\]
is nonincreasing, we get
\[
    \frac{\varphi''(1)}{2\tau}
    D_{\varphi,\tau}^*(z_V^\tau)
    \le
    \frac12 |V|_{g_t}^2,
\]
which is \eqref{eq:dual-nonlinear-fisher-endpoint}.
\end{proof}

\subsection{Proof of the spatial estimate for KL divergence}\label{subsec:KL-spatial-estimate}

\begin{proof}[Proof of Corollary~\ref{cor:KL-spatial-estimate}]
    By Corollary~\ref{cor:scale-sharp-phi-SDPI}, applied with reference heat kernel \(\nu_{x_0,t;t-\sigma}\) and evolving measure \(\nu_{x_1,t;t-\sigma}\), for \(0<\varepsilon<\tau\),
    \[
        D_{\varphi_{\rm KL}}
        \left(
        \nu_{x_1,t;t-\tau}
        \mid
        \nu_{x_0,t;t-\tau}
        \right)
        \le
        \frac{\varepsilon}{\tau}
        D_{\varphi_{\rm KL}}
        \left(
        \nu_{x_1,t;t-\varepsilon}
        \mid
        \nu_{x_0,t;t-\varepsilon}
        \right).
    \]
    Since both heat-kernel measures have mass one,
    \begin{align*}
        \varepsilon
        D_{\varphi_{\rm KL}}
        \left(
        \nu_{x_1,t;t-\varepsilon}
        \mid
        \nu_{x_0,t;t-\varepsilon}
        \right)
        &=
        \varepsilon
        \int_M
        \log K(x_1,t;y,t-\varepsilon)
        \,d\nu_{x_1,t;t-\varepsilon}(y)                         \\
        &\quad-
        \varepsilon
        \int_M
        \log K(x_0,t;y,t-\varepsilon)
        \,d\nu_{x_1,t;t-\varepsilon}(y).
    \end{align*}
    
    The standard short-time upper bound
    \[
        K(x_1,t;y,t-\varepsilon)\le C\varepsilon^{-n/2}
    \]
    implies
    \[
        \limsup_{\varepsilon\downarrow0}
        \varepsilon
        \int_M
        \log K(x_1,t;y,t-\varepsilon)
        \,d\nu_{x_1,t;t-\varepsilon}(y)
        \le0. 
    \]
    On the other hand, Lemma~\ref{lem:short-time-sharp-lower-bound} gives, for every \(\eta>0\),
    \[
        -\varepsilon\log K(x_0,t;y,t-\varepsilon)
        \le
        \frac14d_t^2(x_0,y)
        +
        \frac{\eta}{4}
        +
        O_\eta(\varepsilon|\log\varepsilon|).
    \]
    Using \(\nu_{x_1,t;t-\varepsilon}\rightharpoonup\delta_{x_1},\) we get
    \begin{align*}
        \limsup_{\varepsilon\downarrow0}
        \varepsilon
        D_{\varphi_{\rm KL}}
        \left(
        \nu_{x_1,t;t-\varepsilon}
        \mid
        \nu_{x_0,t;t-\varepsilon}
        \right)
        &\le
        \frac14d_t^2(x_0,x_1)+\frac{\eta}{4}.
    \end{align*}
    Letting \(\eta\downarrow0\),
    \[
        \limsup_{\varepsilon\downarrow0}
        \varepsilon
        D_{\varphi_{\rm KL}}
        \left(
        \nu_{x_1,t;t-\varepsilon}
        \mid
        \nu_{x_0,t;t-\varepsilon}
        \right)
        \le
        \frac14d_t^2(x_0,x_1).
    \]
    Letting \(\varepsilon\downarrow0\) in the strong data-processing inequality proves
    \[
        D_{\varphi_{\rm KL}}
        \left(
        \nu_{x_1,t;t-\tau}
        \mid
        \nu_{x_0,t;t-\tau}
        \right)
        \le
        \frac{d_t^2(x_0,x_1)}{4\tau}.
    \]
\end{proof}

\subsection{Proof of Wasserstein spatial estimate and monotonicity}\label{subsec:wasserstein-from-KL}

\begin{proof}[Proof of Corollary \ref{cor:wasserstein-from-KL}]
    Applying Corollary~\ref{cor:KL-spatial-estimate} on the time interval \([s_1,s_0]\) gives, for all \(z,w\in M\),
    \[
        D_{\varphi_{\rm KL}}
        \left(
        \nu_{z,s_0;s_1}
        \mid
        \nu_{w,s_0;s_1}
        \right)
        \le
        \frac{d_{s_0}^2(z,w)}{4(s_0-s_1)}.
    \]
    The sharp weighted log-Sobolev inequality (LSI) for \(\nu_{w,s_0;s_1}\) \cite[Theorem~1.10]{HeinNaber2014CPAM}, together with the Otto--Villani Theorem (see \cite{MR1760620} or Bakry--Gentil--Ledoux \cite[Theorem~9.6.1]{MR3155209}), gives the Talagrand inequality (see \cite{MR1392331} for the Euclidean Gaussian version)
    \[
        W_{2,g_{s_1}}^2
        \left(
        \eta,
        \nu_{w,s_0;s_1}
        \right)
        \le
        4(s_0-s_1)
        D_{\varphi_{\rm KL}}
        \left(
        \eta
        \mid
        \nu_{w,s_0;s_1}
        \right).
    \]
    Taking \(\eta=\nu_{z,s_0;s_1}\), we obtain
    \begin{align}\label{eq:w2-estimate}
        W_{2,g_{s_1}}
        \left(
        \nu_{z,s_0;s_1},
        \nu_{w,s_0;s_1}
        \right)
        \le
        d_{s_0}(z,w).
    \end{align}
    This proves the first assertion by taking \(s_0=t\), \(s_1=t-\tau\), \(z=x_1\), and \(w=x_0\), and using \(W_1\le W_2\).
    
    We now prove monotonicity. Let \(\pi\) be any coupling of \(\mu_0(s_0)\) and \(\mu_1(s_0)\). For each \((z,w)\), choose a coupling \(\pi_{z,w}\) of \(\nu_{z,s_0;s_1}\) and \(\nu_{w,s_0;s_1}\) such that, for a fixed \(\delta>0\),
    \[
        \int_{M\times M}
        d_{s_1}^p\,d\pi_{z,w}
        \le
        d_{s_0}^p(z,w)+\delta. 
    \]
    This is possible because \(1\le p\le2\) and the \(W_2\)-estimate \eqref{eq:w2-estimate} implies
    \[
        W_{p,g_{s_1}}
        \left(
        \nu_{z,s_0;s_1},
        \nu_{w,s_0;s_1}
        \right)
        \le
        d_{s_0}(z,w).
    \]
    By the reproduction formula for conjugate heat flows,
    \[
        \mu_i(s_1)
        =
        \int_M
        \nu_{z,s_0;s_1}\,d\mu_i(s_0)(z),
        \qquad i=0,1.
    \]
    Thus
    \[
        \Pi
        :=
        \int_{M\times M}
        \pi_{z,w}\,d\pi(z,w)
    \]
    is a coupling of \(\mu_0(s_1)\) and \(\mu_1(s_1)\). Therefore
    \begin{align*}
        W_{p,g_{s_1}}^p
        \left(
        \mu_0(s_1),
        \mu_1(s_1)
        \right)
        \le
        \int_{M\times M}
        \int_{M\times M}
        d_{s_1}^p\,d\pi_{z,w}\,d\pi(z,w)
        \le
        \int_{M\times M}
        d_{s_0}^p(z,w)\,d\pi(z,w)
        +\delta. 
    \end{align*}
    Letting \(\delta\downarrow0\) and then taking the infimum over all couplings \(\pi\) of \(\mu_0(s_0)\) and \(\mu_1(s_0)\) gives
    \[
        W_{p,g_{s_1}}
        \left(
        \mu_0(s_1),
        \mu_1(s_1)
        \right)
        \le
        W_{p,g_{s_0}}
        \left(
        \mu_0(s_0),
        \mu_1(s_0)
        \right).
    \]
\end{proof}

\subsection{Proof of spatial \texorpdfstring{$\varphi$}{varphi}-divergence estimate}\label{subsec:spatial-phi-divergence-estimate}

\begin{proof}[Proof of Proposition~\ref{prop:spatial-phi-divergence-estimate}]
    Set \(s=t-\tau\), and let \(\gamma:[0,L]\to M\) be a smooth curve from \(x_0\) to \(x_1\), parametrized by \(g_t\)-arclength. Put
    \[
        K_a=K(\gamma(a),t;\cdot,s),
        \qquad
        K_0=K(x_0,t;\cdot,s),
        \qquad
        \lambda_a=\frac{K_a}{K_0},
        \qquad
        h=\varphi^{1/p}.
    \]
    Define
    \[
        \Psi(a):=K_0^{1/p}h(\lambda_a)\in L^p(M,dg_s).
    \]
    Then
    \[
        \|\Psi(L)\|_{L^p(dg_s)}^p
        =
        D_\varphi
        \left(
        \nu_{x_1,t;s}
        \mid
        \nu_{x_0,t;s}
        \right),
        \qquad
        \Psi(0)=0. 
    \]
    For a.e. \(a\),
    \[
        \partial_a\Psi
        =
        K_0^{1/p}
        h'(\lambda_a)\lambda_a
        \nabla_{\dot\gamma(a)}^x\log K_a. 
    \]
    Thus
    \begin{align*}
        \|\partial_a\Psi\|_{L^p(dg_s)}^p
        &=
        \int_M
        \lambda_a^{p-1}|h'(\lambda_a)|^p
        \left|
        \nabla_{\dot\gamma(a)}^x\log K_a
        \right|^p d\nu_{\gamma(a),t;s}     \le
        A_{\varphi,p}
        \int_M
        \left|
        \nabla_{\dot\gamma(a)}^x\log K_a
        \right|^p d\nu_{\gamma(a),t;s}.
    \end{align*}
    Since \(1\le p\le2\) and \(d\nu_{\gamma(a),t;s}\) has unit mass,
    \[
        \left(
        \int_M
        \left|
        \nabla_{\dot\gamma(a)}^x\log K_a
        \right|^p d\nu_{\gamma(a),t;s}
        \right)^{1/p}
        \le
        \left(
        \int_M
        \left|
        \nabla_{\dot\gamma(a)}^x\log K_a
        \right|^2 d\nu_{\gamma(a),t;s}
        \right)^{1/2}.
    \]
    By the definition of \(g_\tau^F\) and the estimate \(g_\tau^F\le g_t\),
    \[
        \|\partial_a\Psi\|_{L^p(dg_s)}
        \le
        A_{\varphi,p}^{1/p}
        \left(
        \frac{1}{2\tau}g_\tau^F(\dot\gamma(a),\dot\gamma(a))
        \right)^{1/2}
        \le
        \frac{A_{\varphi,p}^{1/p}}{\sqrt{2\tau}}. 
    \]
    Therefore
    \[
        D_\varphi
        \left(
        \nu_{x_1,t;s}
        \mid
        \nu_{x_0,t;s}
        \right)^{1/p}
        =
        \|\Psi(L)-\Psi(0)\|_{L^p}
        \le 
        \int_0^L \|\partial_a\Psi\|_{L^p(dg_s)}\,da  \leq            
        A_{\varphi,p}^{1/p}\frac{L}{\sqrt{2\tau}}. 
    \]
    Taking the infimum over \(\gamma\) proves the estimate. Reversing the roles of \(x_0\) and \(x_1\) gives the reversed estimate.
\end{proof}

\subsection{Proof of Hellinger and total variation estimate}\label{subsec:hellinger-tv-from-phi}

\begin{proof}[Proof of Corollary~\ref{cor:hellinger-tv-from-phi}]
    For
    \[
        \varphi_{\rm H}(r)=\frac12(\sqrt r-1)^2
    \]
    one has \(D_{\varphi_{\rm H}}=d_{\rm H}^2\) and \(\varphi_{\rm H}''(r)=1/(4r^{3/2})\). For the spatial estimate, \(\varphi_{\rm H}^{1/2}\) satisfies
    \[
        r\left|\frac{d}{dr}\varphi_{\rm H}(r)^{1/2}\right|^2=\frac18
        \qquad\text{for a.e. }r>0.
    \]
    Thus \(A_{\varphi_{\rm H},2}=1/8\), and Proposition~\ref{prop:spatial-phi-divergence-estimate} with \(p=2\) gives the Hellinger estimate. For total variation,
    \[
        \varphi_{\rm TV}(r)=\frac12|r-1|,
        \qquad
        A_{\varphi_{\rm TV},1}=\frac12,
    \]
    and the same proposition with \(p=1\) gives the total variation estimate.
\end{proof}

\subsection{Proof of good comparable Fisher scale}
\label{subsec:GoodCompScaleFisher}

\begin{proof}[Proof of Proposition~\ref{prop:sec6-good-scale-fixed-plane}]
    By Fisher monotonicity, \(\Jcal_\sigma\) is nonincreasing in the scale \(\sigma\). Hence, for \(\theta\tau_0\le\sigma\le\tau_0\),
    \[
        \Jcal_\sigma\ge \Jcal_{\tau_0}
        \qquad \text{and}\qquad
        \tr_{\Pcal}(I-\Jcal_\sigma)
        \le
        \tr_{\Pcal}(I-\Jcal_{\tau_0})
        \le \delta. 
    \]
    This proves the first estimate in \eqref{eq:sec6-good-scale-estimates} at every scale in the interval.
    
    Integrating \(\Gcal_\sigma-\Jcal_\sigma = -\partial_{\log\sigma}\Jcal_\sigma\) gives
    \begin{align*}
        \int_{\theta\tau_0}^{\tau_0}
        \tr_{\Pcal}(\Gcal_\sigma-\Jcal_\sigma)\,d\log\sigma
        =
        \tr_{\Pcal}(\Jcal_{\theta\tau_0})
        -
        \tr_{\Pcal}(\Jcal_{\tau_0})       \le
        k-\tr_{\Pcal}(\Jcal_{\tau_0})       =
        \tr_{\Pcal}(I-\Jcal_{\tau_0})
        \le \delta. 
    \end{align*}
    Since the logarithmic length of the interval is \(\log(\theta^{-1})\), some \(\tau\in[\theta\tau_0,\tau_0]\) satisfies the second estimate in \eqref{eq:sec6-good-scale-estimates}:
    \[
        \tr_{\Pcal}(\Gcal_\tau-\Jcal_\tau)
        \le
        \frac{\delta}{\log(\theta^{-1})}.
    \]
    
    Finally, if \(\Pcal\) is a top Fisher \(k\)-plane at scale \(\tau_0\), then \(\tr_{\Pcal}(I-\Jcal_{\tau_0}) = \dfplus{k}(\mathbf x;\tau_0) \le\delta\) by \eqref{eq:sec6-top-deficit-Ky-Fan}, so the above argument applies.
\end{proof}

\subsection{Proof of centeredness of heat-score maps}
\label{subsec:sec6-forward-heat-score-centering}

\begin{proof}[Proof of Lemma~\ref{lem:sec6-forward-heat-score-centering}]
    For \(s\le\ell<t\), the heat kernel reproduction formula gives 
    \begin{align*}
        \int_M v_V(q,\ell)\,d\nu_\ell(q)
        &=
        \int_M
        \left[
        \int_M K(q,\ell;y,s)z_V^\tau(y)\,dg_s(y)
        \right]
        K(x,t;q,\ell)\,dg_\ell(q)                         \\
        &=
        \int_M z_V^\tau(y)K(x,t;y,s)\,dg_s(y)
        =
        \int_M z_V^\tau\,d\nu_s
        =
        0
    \end{align*}
    by the source-score centering identity \eqref{eq:score-coordinate-mean-summary}. At \(\ell=t\), since \(d\nu_t=\delta_x\),
    \[
        \int_M v_V(\cdot,t)\,d\nu_t
        =
        v_V(x,t)
        =
        \int_M K(x,t;y,s)z_V^\tau(y)\,dg_s(y)
        =
        \int_M z_V^\tau\,d\nu_s
        =
        0. 
    \]
\end{proof}

\subsection{Proof of terminal gradient formula}
\label{subsec:sec6-terminal-value-gradient}

\begin{proof}[Proof of Lemma~\ref{lem:sec6-terminal-value-gradient}]
    For \(W\in T_xM\), differentiating the terminal heat evolution in the source variable gives
    \begin{align*}
        \nabla_Wv_V(x,t)
        &=
        \int_M \nabla_W^xK(x,t;y,s)z_V^\tau(y)\,dg_s(y)      =
        \int_M
        \nabla_W^x\log K(x,t;y,s)\,
        z_V^\tau(y)\,d\nu_s(y) \\
        &=
        2\tau
        \int_M
        \nabla_W^x\log K\,
        \nabla_V^x\log K
        \,d\nu_s               =
        g_t(\Jcal_\tau V,W).
    \end{align*}
    Thus \(\nabla v_V(x,t)=\Jcal_\tau V\). Applying this to \(V\) and \(W\) gives
    \[
        \left\langle
        \nabla v_V,\nabla v_W
        \right\rangle_{g_t}(x,t)
        =
        g_t(\Jcal_\tau V,\Jcal_\tau W).
    \]
\end{proof}

\subsection{Proof of Hessian identity}
\label{subsec:hessian_identity}

\begin{proof}[Proof of Corollary~\ref{cor:sec6-operator-affineness-defect}]
    Since \(v_V(\cdot,s)=z_V^\tau\), the definition of \(G_\tau\) gives
    \[
        \int_M
        \left\langle
        \nabla v_V,\nabla v_W
        \right\rangle_{g_s}
        d\nu_s
        =
        G_\tau(V,W).
    \]
    At the terminal time, \(d\nu_t=\delta_x\), and Lemma~\ref{lem:sec6-terminal-value-gradient} gives
    \[
        \int_M
        \left\langle
        \nabla v_V,\nabla v_W
        \right\rangle_{g_t}
        d\nu_t
        =
        g_t(\Jcal_\tau V,\Jcal_\tau W).
    \]
    Applying the weighted Dirichlet identity, Lemma~\ref{lem:sec6-weighted-dirichlet-identity}, with \(u=v_V\) and \(w=v_W\), we obtain
    \[
        G_\tau(V,W)
        -
        g_t(\Jcal_\tau V,\Jcal_\tau W)
        =
        2\int_s^t\int_M
        \left\langle
        \nabla^2v_V,\nabla^2v_W
        \right\rangle_{g_\ell}
        \,d\nu_\ell\,d\ell. 
    \]
\end{proof}

\subsection{Proof of averaged gradient orthonormality from Hessian energy}
\label{subsec:AvgGradOrthoFromHessEnergy}

We use the following \(L^1\)-Poincaré inequality to pass from terminal gradient control and Hessian energy to averaged gradient orthonormality.

\begin{lemma}[\(L^1\)-Poincaré inequality]
\label{lem:sec6-intermediate-L1-poincare}
    There exists a universal constant \(C_P<\infty\) with the following property. For every \(\ell\in[s,t)\) and every smooth function \(\phi\) on \((M,g_\ell)\),
    \[
        \int_M
        \left|
        \phi-\int_M\phi\,d\nu_\ell
        \right|
        d\nu_\ell
        \le
        C_P\sqrt{t-\ell}
        \int_M|\nabla\phi|_{g_\ell}\,d\nu_\ell. 
    \]
\end{lemma}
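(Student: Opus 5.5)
The plan is to derive the $L^1$-Poincaré inequality for $\nu_\ell=\nu_{x,t;\ell}$ from a Gaussian concentration property. I do not expect to route it through the $L^2$ Hein--Naber inequality, since that only controls $L^2$ norms of the gradient. The standard mechanism is Bakry--Ledoux/Milman-type: a log-Sobolev inequality (or, more simply, the reverse Poincaré / gradient estimate of the heat semigroup in dual form) yields a Cheeger-type isoperimetric inequality. In the Ricci-flow setting the cleanest route uses the semigroup itself.

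Set $\tau'=t-\ell$. After subtracting the mean, $\int|\phi-\bar\phi|\,d\nu_\ell=\sup\{\int(\phi-\bar\phi)\psi\,d\nu_\ell\}$ over $|\psi|\le1$. For such $\psi$, let $\psi_r:=P_{\ell,r}\psi$ for $r\in[\ell,t]$. Then $\psi_t(x)=\int\psi\,d\nu_\ell$. We interpolate:
\[
\int_M\phi\,\psi\,d\nu_\ell-\Bigl(\int\phi\,d\nu_\ell\Bigr)\Bigl(\int\psi\,d\nu_\ell\Bigr).
\]
The cleaner duality is with $\phi$ flowed forward. Write $\phi_r=P_{\ell,r}\phi$. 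Then
\[
\int\phi\,d\nu_\ell\text{-variation}=\int_M\phi\psi\,d\nu_\ell-\phi_t(x)\psi_t(x)=-\int_\ell^t\frac{d}{dr}\int\phi_r\psi_r\,d\nu_r\,dr=2\int_\ell^t\int\langle\nabla\phi_r,\nabla\psi_r\rangle\,d\nu_r\,dr,
\]
by the same integration against conjugate heat kernel measure as in Lemma~\ref{lem:sec6-weighted-dirichlet-identity}, with $\Box(\phi_r\psi_r)=-2\langle\nabla\phi_r,\nabla\psi_r\rangle$.

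Two pointwise estimates close the argument.

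\emph{First}, the Ricci-flow gradient contraction $|\nabla\phi_r|\le P_{\ell,r}|\nabla\phi|$. This follows from Bochner with the Ricci terms cancelling, in the form $\Box|\nabla\phi_r|\le0$. Hence
\[
\int|\nabla\phi_r|\,d\nu_r\le\int|\nabla\phi|\,d\nu_\ell .
\]

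\emph{Second}, the reverse Poincaré Corollary~\ref{cor:sharp-reverse-poincare-hellinger}, applied on $[\ell,r]$ at base points $(q,r)$, gives
\[
|\nabla\psi_r|(q)\le\sqrt{\operatorname{Var}/(2(r-\ell))}\le(2(r-\ell))^{-1/2}.
\]
Combining,
\[
\Bigl|\int(\phi-\bar\phi)\psi\,d\nu_\ell\Bigr|\le2\int_\ell^t(2(r-\ell))^{-1/2}dr\int|\nabla\phi|\,d\nu_\ell=2\sqrt{2(t-\ell)}\int|\nabla\phi|\,d\nu_\ell .
\]
Taking the supremum over $\psi$ gives $C_P=2\sqrt2$.

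The main obstacle is the first estimate: justifying $\Box|\nabla\phi_r|\le0$ where $\nabla\phi_r=0$. I would handle it by using $\sqrt{|\nabla\phi_r|^2+\epsilon}$, which is smooth and satisfies $\Box\le0$ by Kato's inequality, and then letting $\epsilon\to0$. A secondary point is the endpoint $\ell=t$ handling and the closedness of $M$ needed for the integrations by parts; this is fine since $\ell<t$ and the flows are closed.
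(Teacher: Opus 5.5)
Your proof is correct, and it takes a different route from the paper. The paper gives no argument of its own: it quotes the $p=1$ case of Bamler's $L^p$-Poincar\'e inequality for conjugate heat-kernel measures \cite[Section~11]{Bamler2020A}, applied to $\nu_\ell=\nu_{x,t;\ell}$ at scale $t-\ell$.

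You instead build the inequality from ingredients already in the paper:
\begin{enumerate}
\item $L^1$--$L^\infty$ duality.
\item The interpolation identity $\int_M\phi\psi\,d\nu_\ell-\phi_t(x)\psi_t(x)=2\int_\ell^t\int_M\langle\nabla\phi_r,\nabla\psi_r\rangle\,d\nu_r\,dr$, which is the computation behind Lemma~\ref{lem:sec6-weighted-dirichlet-identity}.
\item The $L^1$ gradient contraction coming from $\Box|\nabla u|^2=-2|\nabla^2u|^2$. Your regularization works, since $\Box\sqrt{|\nabla\phi_r|^2+\epsilon}\le-\epsilon|\nabla^2\phi_r|^2(|\nabla\phi_r|^2+\epsilon)^{-3/2}$.
\item The smoothing bound $|\nabla\psi_r|\le(2(r-\ell))^{-1/2}$. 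This is Corollary~\ref{cor:sharp-reverse-poincare-hellinger}, i.e.\ the matrix estimate $g^F\le g$ at base points $(q,r)$.
\end{enumerate}

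What your route buys is self-containedness. The splitting-map section then rests only on $g^F\le g$ and the Ricci-flow Bochner identity, and the argument pairs the source-side Fisher bound with target-side gradient contraction.

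What it gives up is sharpness and generality. You get $C_P=2\sqrt2$, whereas on the Gaussian model ($\nu_\ell$ Gaussian of variance $2(t-\ell)$) the optimal constant is $\sqrt\pi$, the value in Bamler's $p=1$ statement; Bamler's inequality also covers every $p\ge1$. Even inserting the Gaussian-sharp bound $|\nabla\psi_r|\le(\pi(r-\ell))^{-1/2}$ for $|\psi|\le1$ would only give $4/\sqrt\pi$ in your scheme, because you take the sup norm of $\nabla\psi_r$ separately at each time before integrating in $r$. Since $C_P$ only enters through $C(k)$ in Proposition~\ref{prop:sec6-averaged-gradient-orthonormality}, this loss is harmless.

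Two small points should be stated explicitly. First, take the supremum over smooth $\psi$ with $|\psi|\le1$ (e.g.\ $\tanh(k(\phi-\bar\phi))$), so that the corollary applies as stated. Second, note that $r\mapsto\int_M\phi_r\psi_r\,d\nu_r$ is continuous at $r=t$ because $\nu_r\rightharpoonup\delta_x$, and that the $(r-\ell)^{-1/2}$ singularity at $r=\ell$ is integrable.
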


\begin{proof}
    This is the \(p=1\) case of the conjugate heat-kernel \(L^p\)-Poincaré inequality, applied to the conjugate heat-kernel measure based at \((x,t)\) and evaluated at the intermediate time \(\ell\); see \cite[Section~11]{Bamler2020A}.
\end{proof}

\begin{proof}[Proof of Proposition~\ref{prop:sec6-averaged-gradient-orthonormality}]
    For \(1\le i,j\le k\), define
    \begin{align*}
        h_{ij}(\cdot,\ell)
        :=
        \left\langle
        \nabla v_i,\nabla v_j
        \right\rangle_{g_\ell}
        -
        \delta_{ij}, \qquad \text{and}\qquad 
        m_{ij}(\ell)
        :=
        \int_M h_{ij}(\cdot,\ell)\,d\nu_\ell. 
    \end{align*}
    We estimate the mean and the fluctuation separately.
    
    \begin{claim}\label{claim:mean-estimate}
        We have
        \begin{equation}\label{eq:sec6-mean-gradient-error}
            \tau^{-1}
            \int_s^t
            \sum_{i,j=1}^k
            |m_{ij}(\ell)|\,d\ell
            \le
            k^2\delta_1+2k\delta_2. 
        \end{equation}
    \end{claim}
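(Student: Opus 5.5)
The plan is to obtain the mean bound by computing the time derivative of \(m_{ij}(\ell)\) exactly and then integrating twice in \(\ell\). The weighted Dirichlet identity (Lemma~\ref{lem:sec6-weighted-dirichlet-identity}) applies on every subinterval \([\ell,t]\), since it only uses the Bochner formula against the conjugate heat-kernel measure. Applying it with \(u=v_i\), \(w=v_j\) gives, for every \(\ell\in[s,t]\),
\[
    m_{ij}(\ell)
    =
    h_{ij}(x,t)
    +
    2\int_\ell^t\int_M
    \left\langle\nabla^2v_i,\nabla^2v_j\right\rangle_{g_{\ell'}}
    \,d\nu_{\ell'}\,d\ell'.
\]
Here we use that \(\int_M d\nu_\ell=1\), so the \(\delta_{ij}\) terms cancel, and that \(d\nu_t=\delta_x\).

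Next, each term is bounded separately. The terminal term satisfies \(|h_{ij}(x,t)|\le\delta_1\) by the first hypothesis in \eqref{eq:terminal-gradient-and-hessian-estimate-hypothesis}. For the Hessian term, apply pointwise Cauchy--Schwarz and then \(2ab\le a^2+b^2\):
\[
    2\bigl|\langle\nabla^2v_i,\nabla^2v_j\rangle\bigr|
    \le
    |\nabla^2v_i|^2+|\nabla^2v_j|^2 .
\]
Set \(E_i:=\int_s^t\int_M|\nabla^2v_i|^2\,d\nu\,d\ell\), so that \(\sum_iE_i\le\delta_2\). The Hessian term is then bounded uniformly in \(\ell\) by \(E_i+E_j\), which gives
\[
    |m_{ij}(\ell)|\le\delta_1+E_i+E_j .
\]

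Finally, integrate over \([s,t]\), which has length \(\tau\), divide by \(\tau\), and sum over \(i,j\). The terminal terms contribute \(k^2\delta_1\). The Hessian terms contribute
\[
    \sum_{i,j}(E_i+E_j)=2k\sum_iE_i\le 2k\delta_2 .
\]
Adding these yields \eqref{eq:sec6-mean-gradient-error}.

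No step here is a genuine obstacle. The only points to check are that the Dirichlet identity may be applied on \([\ell,t]\) rather than \([s,t]\), and the endpoint convention \(d\nu_t=\delta_x\). Integrating the uniform-in-\(\ell\) bound avoids any weighting by \(t-\ell\).
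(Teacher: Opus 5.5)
Your proof is correct and follows essentially the same route as the paper: you apply the weighted Dirichlet identity on \([\ell,t]\), bound the terminal term by \(\delta_1\) and the Hessian cross term via \(2|\langle S,T\rangle|\le|S|^2+|T|^2\), and then sum over \(i,j\). The only difference is cosmetic. You bound \(\int_\ell^t\) by \(\int_s^t\) uniformly in \(\ell\), whereas the paper integrates in \(\ell\), uses Fubini, and then uses \(q-s\le\tau\); both give the same constant \(k^2\delta_1+2k\delta_2\).
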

    
    \begin{proof}[Proof of Claim \ref{claim:mean-estimate}]
        First apply Lemma~\ref{lem:sec6-weighted-dirichlet-identity} on the interval
        \([\ell,t]\) with \(u=v_i\) and \(w=v_j\). Since \(d\nu_t=\delta_x\), we obtain
        \begin{align*}
            m_{ij}(\ell)
            =
            \left\langle
            \nabla v_i,\nabla v_j
            \right\rangle_{g_t}(x,t)
            -
            \delta_{ij} +
            2\int_\ell^t\int_M
            \left\langle
            \nabla^2v_i,\nabla^2v_j
            \right\rangle_{g_q}
            \,d\nu_q\,dq. 
        \end{align*}
        Therefore, by \eqref{eq:terminal-gradient-and-hessian-estimate-hypothesis} and \(2|\langle S,T\rangle|\le |S|^2+|T|^2\),
        \[
            |m_{ij}(\ell)|
            \le
            \delta_1
            +
            \int_\ell^t\int_M
            \left(
            |\nabla^2v_i|_{g_q}^2
            +
            |\nabla^2v_j|_{g_q}^2
            \right)
            d\nu_q\,dq. 
        \]
        Integrating in \(\ell\), dividing by \(\tau=t-s\), and using
        \[
            \tau^{-1}
            \int_s^t
            \int_\ell^t F(q)\,dq\,d\ell
            =
            \tau^{-1}
            \int_s^t (q-s)F(q)\,dq
            \le
            \int_s^t F(q)\,dq,
        \]
        we get
        \begin{align*}
            \tau^{-1}\int_s^t |m_{ij}(\ell)|\,d\ell
            &\le
            \delta_1
            +
            \int_s^t\int_M
            \left(
            |\nabla^2v_i|_{g_q}^2
            +
            |\nabla^2v_j|_{g_q}^2
            \right)
            d\nu_q\,dq. 
        \end{align*}
        Summing over \(i,j\), and using \eqref{eq:terminal-gradient-and-hessian-estimate-hypothesis}, gives the desired estimate. 
    \end{proof}
    
    We next estimate the fluctuation.
    \begin{claim}\label{claim:fluctuation}
        We have
        \begin{equation}\label{eq:sec6-fluctuation-gradient-error}
        \begin{aligned}
            \tau^{-1}
            \int_s^t\int_M
            \sum_{i,j=1}^k
            |h_{ij}-m_{ij}(\ell)|
            \,d\nu_\ell\,d\ell           \le
            C(k)
            \sqrt{\delta_2(1+\delta_1+2\delta_2)}. 
        \end{aligned}
        \end{equation}
    \end{claim}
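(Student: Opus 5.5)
We will bound the fluctuation slice by slice with the \(L^1\)-Poincaré inequality of Lemma~\ref{lem:sec6-intermediate-L1-poincare}, applied at each intermediate time \(\ell\in[s,t)\) to \(\phi=h_{ij}(\cdot,\ell)\). Since \(m_{ij}(\ell)\) is exactly the \(d\nu_\ell\)-mean of \(h_{ij}\), we get
\[
    \int_M|h_{ij}-m_{ij}(\ell)|\,d\nu_\ell
    \le
    C_P\sqrt{t-\ell}\int_M|\nabla h_{ij}|_{g_\ell}\,d\nu_\ell .
\]
Pointwise, \(\nabla h_{ij}=\nabla^2v_i(\nabla v_j,\cdot)+\nabla^2v_j(\nabla v_i,\cdot)\), hence \(|\nabla h_{ij}|\le|\nabla^2v_i||\nabla v_j|+|\nabla^2v_j||\nabla v_i|\). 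Using \(\sqrt{t-\ell}\le\sqrt\tau\) and Cauchy--Schwarz in spacetime with respect to \(d\nu_\ell\,d\ell\), the time-integrated fluctuation of each pair is bounded by
\[
    C_P\sqrt\tau\Bigl(\int_s^t\!\!\int_M|\nabla^2v_i|^2\,d\nu_\ell\,d\ell\Bigr)^{1/2}\Bigl(\int_s^t\!\!\int_M|\nabla v_j|^2\,d\nu_\ell\,d\ell\Bigr)^{1/2}+(i\leftrightarrow j).
\]
The first factor is at most \(\sqrt{\delta_2}\) by hypothesis \eqref{eq:terminal-gradient-and-hessian-estimate-hypothesis}.

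For the gradient factor we use Lemma~\ref{lem:sec6-weighted-dirichlet-identity} on \([\ell,t]\) with \(u=w=v_j\), exactly as in the proof of Claim~\ref{claim:mean-estimate}:
\[
    \int_M|\nabla v_j|^2\,d\nu_\ell=|\nabla v_j|^2_{g_t}(x,t)+2\int_\ell^t\!\!\int_M|\nabla^2v_j|^2\,d\nu_q\,dq\le 1+\delta_1+2\delta_2,
\]
uniformly in \(\ell\). Integrating over \([s,t]\) gives \(\int_s^t\!\int_M|\nabla v_j|^2\le\tau(1+\delta_1+2\delta_2)\), so each pair contributes at most \(2C_P\sqrt\tau\,\sqrt{\delta_2}\,\sqrt{\tau(1+\delta_1+2\delta_2)}\).

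Dividing by \(\tau\) and summing over the \(k^2\) pairs yields \eqref{eq:sec6-fluctuation-gradient-error} with \(C(k)=2k^2C_P\). The factor \(\sqrt{t-\ell}\) from the Poincaré inequality must combine with \(\sqrt{\tau}\) from the gradient \(L^2\) bound to produce exactly the normalization \(\tau^{-1}\). The only step needing care is confirming that the Poincaré inequality holds at every intermediate slice with a universal constant, as cited from Bamler. Combining this claim with Claim~\ref{claim:mean-estimate} via the triangle inequality \(|h_{ij}|\le|h_{ij}-m_{ij}|+|m_{ij}|\) then finishes Proposition~\ref{prop:sec6-averaged-gradient-orthonormality}.
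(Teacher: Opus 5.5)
Your proposal is correct and follows essentially the same route as the paper. Both arguments use the slice-wise $L^1$-Poincar\'e inequality for $h_{ij}$, the pointwise bound $|\nabla h_{ij}|\le|\nabla^2v_i||\nabla v_j|+|\nabla v_i||\nabla^2v_j|$, spacetime Cauchy--Schwarz, and the uniform bound $\int_M|\nabla v_j|^2\,d\nu_\ell\le 1+\delta_1+2\delta_2$ obtained from the weighted Dirichlet identity on $[\ell,t]$. The only cosmetic difference is that you replace $\sqrt{t-\ell}$ by $\sqrt\tau$ before Cauchy--Schwarz, whereas the paper keeps the weight $t-\ell$ in the Hessian factor and uses $t-\ell\le\tau$ afterwards; your explicit constant $C(k)=2k^2C_P$ is fine.
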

    
    \begin{proof}[Proof of Claim \ref{claim:fluctuation}]
        By the \(L^1\)-Poincaré inequality Lemma~\ref{lem:sec6-intermediate-L1-poincare},
        \[
            \int_M
            |h_{ij}-m_{ij}(\ell)|\,d\nu_\ell
            \le
            C_P\sqrt{t-\ell}
            \int_M|\nabla h_{ij}|_{g_\ell}\,d\nu_\ell. 
        \]
        Moreover,
        \[
            |\nabla h_{ij}|_{g_\ell}
            \le
            |\nabla^2v_i|_{g_\ell}|\nabla v_j|_{g_\ell}
            +
            |\nabla v_i|_{g_\ell}|\nabla^2v_j|_{g_\ell}.
        \]
        Thus
        \begin{align*}
            \tau^{-1}
            \int_s^t\int_M
            |h_{ij}-m_{ij}(\ell)|
            \,d\nu_\ell\,d\ell  
            \le
            C_P\tau^{-1}
            \int_s^t
            \sqrt{t-\ell}
            \int_M
            \left(
            |\nabla^2v_i|\,|\nabla v_j|
            +
            |\nabla v_i|\,|\nabla^2v_j|
            \right)
            d\nu_\ell\,d\ell. 
        \end{align*}
        
        We estimate the first term on the right. By Cauchy--Schwarz in spacetime,
        \begin{align}\label{eq:intermediate-step-for-fluctuation}
        \begin{aligned}
            &\tau^{-1}
            \int_s^t
            \sqrt{t-\ell}
            \int_M
            |\nabla^2v_i|\,|\nabla v_j|
            \,d\nu_\ell\,d\ell                                      \\
            &\qquad\le
            \tau^{-1}
            \left(
            \int_s^t
            (t-\ell)
            \int_M|\nabla^2v_i|^2\,d\nu_\ell\,d\ell
            \right)^{1/2}
            \left(
            \int_s^t
            \int_M|\nabla v_j|^2\,d\nu_\ell\,d\ell
            \right)^{1/2}\\
            &\qquad \le \left(
            \int_s^t\int_M|\nabla^2v_i|^2\,d\nu_\ell\,d\ell
            \right)^{1/2}
            \left(
            \tau^{-1}
            \int_s^t\int_M|\nabla v_j|^2\,d\nu_\ell\,d\ell
            \right)^{1/2},
        \end{aligned}
        \end{align}
        where we used \(t-\ell\le\tau\) in the second inequality.
        
        For the gradient term, apply Lemma~\ref{lem:sec6-weighted-dirichlet-identity} on \([\ell,t]\) with \(u=w=v_j\):
        \[
            \int_M|\nabla v_j|_{g_\ell}^2\,d\nu_\ell
            =
            |\nabla v_j|_{g_t}^2(x,t)
            +
            2\int_\ell^t\int_M
            |\nabla^2v_j|_{g_q}^2
            \,d\nu_q\,dq\leq 1+\delta_1+2\delta_2
        \]
        for every \(\ell\in[s,t]\), where we used \eqref{eq:terminal-gradient-and-hessian-estimate-hypothesis} in the last inequality.
        
        Summing over \(i,j\) in \eqref{eq:intermediate-step-for-fluctuation}, using Cauchy--Schwarz in the finite sum, and then using \eqref{eq:terminal-gradient-and-hessian-estimate-hypothesis}, we obtain the desired estimate.
    \end{proof}
    Using
    \[
        |h_{ij}|
        \le
        |m_{ij}(\ell)|
        +
        |h_{ij}-m_{ij}(\ell)|,
    \]
    together with \eqref{eq:sec6-mean-gradient-error} and
    \eqref{eq:sec6-fluctuation-gradient-error}, yields
    \begin{align*}
        \tau^{-1}
        \int_s^t\int_M
        \sum_{i,j=1}^k |h_{ij}|
        \,d\nu_\ell\,d\ell
        \le
        C(k)\left(
            \delta_1+\delta_2+
            \sqrt{\delta_2(1+\delta_1+2\delta_2)}
        \right).
    \end{align*}
\end{proof}

\subsection{Proof of canonical estimates from Fisher pinching and production}
\label{subsec:canonical_estimates_from_fisher_production}

\begin{proof}[Proof of Proposition \ref{prop:sec6-terminal-orthonormality-from-fisher-pinching}]
    By Lemma \ref{lem:sec6-terminal-value-gradient},
    \[
        \left\langle
        \nabla v_i,\nabla v_j
        \right\rangle_{g_t}(x,t)
        =
        g_t(\Jcal_\tau e_i,\Jcal_\tau e_j)
        =
        g_t(\Jcal_\tau^2e_i,e_j).
    \]
    Since \(0\le \Jcal_\tau\le I\), we have
    \[
        0\le I-\Jcal_\tau^2 =(I+\Jcal_\tau)(I-\Jcal_\tau)\le 2(I-\Jcal_\tau).
    \]
    Taking the trace over \(\Pcal\) gives
    \[
        \tr_\Pcal(I-\Jcal_\tau^2)
        \le
        2\tr_\Pcal(I-\Jcal_\tau)
        \le
        2\delta.
    \]
    The restriction of \(I-\Jcal_\tau^2\) to \(\Pcal\) is nonnegative definite, so every matrix coefficient in an orthonormal basis is bounded in absolute value by its trace. Therefore
    \begin{align*}
        \left|
        \left\langle
        \nabla v_i,\nabla v_j
        \right\rangle_{g_t}(x,t)
        -
        \delta_{ij}
        \right|
        =
        \left|
        g_t((\Jcal_\tau^2-I)e_i,e_j)
        \right| \le
        \tr_\Pcal(I-\Jcal_\tau^2) \le
        2\delta.
    \end{align*}
    
    We now use Corollary \ref{cor:sec6-operator-affineness-defect} and our hypothesis to obtain
    \begin{align*}
        2\int_{t-\tau}^t\int_M
        \sum_{i=1}^k
        |\nabla^2v_i|_{g_\ell}^2
        \,d\nu_\ell\,d\ell
        =
        \tr_{\Pcal}(\Gcal_\tau)
        -
        \tr_{\Pcal}(\Jcal_\tau^2)= \tr_{\Pcal}(\Gcal_\tau-\Jcal_\tau)
        +
        \tr_{\Pcal}(\Jcal_\tau-\Jcal_\tau^2)\leq \delta +\delta_1,
    \end{align*}
    where we used \(0\le \Jcal_\tau\le I\) to get \(0\le \Jcal_\tau-\Jcal_\tau^2\le I-\Jcal_\tau\) in the last inequality.
\end{proof}

\subsection{Proof that Fisher rank produces heat-kernel strong splitting}
\label{subsec:proof_fisher_rank_strong_splitting}

\begin{proof}[Proof of Theorem~\ref{thm:sec6-fisher-rank-produces-strong-splitting-map}]
    By Proposition~\ref{prop:sec6-good-scale-fixed-plane}, there exists \(\tau\in[\theta\tau_0,\tau_0]\) such that
    \[
        \tr_{\Pcal}(I-\Jcal_\tau)\le\delta
        \qquad \text{and} \qquad \tr_{\Pcal}(\Gcal_\tau-\Jcal_\tau)
        \le
        \frac{\delta}{\log(\theta^{-1})}.
    \]
    Let \(e_1,\ldots,e_k\) be any \(g_t\)-orthonormal basis of \(\Pcal\), and let \(v=(v_1,\ldots,v_k)\)  be the canonical heat-score map associated to \((\mathbf x,\tau,\Pcal,e_1,\ldots,e_k)\). The heat equation and centering for \(v_i\) follow from Lemma~\ref{lem:sec6-forward-heat-score-centering}. The initial \(L^2\)-size estimate follows from \eqref{eq:sec6-source-score-L2-bound}. Finally, Propositions~\ref{prop:sec6-terminal-orthonormality-from-fisher-pinching} and~\ref{prop:sec6-averaged-gradient-orthonormality} imply that
    \begin{align*}
        &\max_{1\le i,j\le k}\left|
        \left\langle
        \nabla v_i,\nabla v_j
        \right\rangle_{g_t}(x,t)
        -
        \delta_{ij}
        \right|+\tau^{-1}
        \int_s^t\int_M
        \sum_{i,j=1}^k
        \left|
        \left\langle
        \nabla v_i,\nabla v_j
        \right\rangle_{g_\ell}
        -
        \delta_{ij}
        \right|
        d\nu_\ell\,d\ell\\
        &\qquad +\int_s^t\int_M
        \sum_{i=1}^k
        |\nabla^2v_i|_{g_\ell}^2
        \,d\nu_\ell\,d\ell\leq C(\theta) \sqrt{\delta}.
    \end{align*}
\end{proof}

\subsection{Proof of top Fisher \texorpdfstring{\(\varepsilon\)}{epsilon}-regularity}\label{subsec:top-fisher-epsilon-regularity}

Let $K(x,t;\,\cdot,s)$ denote the conjugate heat kernel based at $(x,t)$, normalized to have unit mass at time $s<t$. If $\tau=t-s$ and
\[
    K(x,t;y,s)=(4\pi\tau)^{-n/2}e^{-f(y,s)},
    \qquad d\nu_s=K(x,t;\,\cdot,s)\,dg_s,
\]
then
\[
    \Nash_{x,t}(\tau):=\int_M f(\cdot,s)\,d\nu_s-\frac n2
\]
denotes the pointed Nash entropy. At time \(s=t-\tau\), we write the \textit{renormalized potential function}:
\begin{align}
    F:=f-\Nash_{x,t}(\tau).
\end{align}

We need the following result due to Yongjia Zhang on the exponential integrability of the renormalized potential function. See also \cite[Proposition 6.5]{Bamler2020C}. For completeness, we include an alternate proof.

\begin{proposition}\label{prop:zhang-moments}
    Let $(M^n,g_t)_{t\in I}$ be a Ricci flow on a connected closed manifold. Suppose that $(x,t)\in M\times I$, $\tau>0$, and \([t-3\tau,t]\subset I.\) Then, for every $0\le\beta<\tfrac12$,
    \begin{equation}
    \label{eq:zhang-moments}
        F\ge-C(n)\quad\text{on }M,
        \qquad \text{and} \qquad
        \int_M e^{\beta F}\,d\nu_s
        \le
        \exp\!\left\{
        \frac{n\beta}{2}
        +\frac{13n\beta}{8}\log\frac1{1-2\beta}
        \right\}.
    \end{equation}
\end{proposition}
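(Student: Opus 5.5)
The plan is to prove the two assertions of \eqref{eq:zhang-moments} separately. The pointwise lower bound on \(F\) will come from the on-diagonal heat-kernel upper bound, and the exponential moment bound will come from a Herbst-type differential inequality in \(\beta\). That inequality combines Perelman's Harnack inequality \(w\le0\) with the sharp weighted log-Sobolev inequality of Hein--Naber, which is the case \(\varphi=\varphi_{\rm KL}\) of Proposition~\ref{prop:sharp-phi-sobolev-heat-kernel}. The extra room \([t-3\tau,t]\subset I\) is used twice: to get an upper bound on \(K\) at time \(s\) in terms of the Nash entropy, and to get a scalar curvature lower bound \(R_{g_s}\ge -\tfrac{n}{2(s-(t-3\tau))}=-\tfrac{n}{4\tau}\) at the time \(s=t-\tau\).

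\emph{Step 1: \(F\ge -C(n)\).} I would invoke Bamler's heat-kernel upper bound in terms of pointed Nash entropy \cite{Bamler2020A}, namely \(K(x,t;y,s)\le C(n)\tau^{-n/2}\exp(-\Nash_{x,t}(\tau))\) when the flow exists on a backward interval of length a fixed multiple of \(\tau\). It is proved by combining the semigroup property with the \(L^1\)--\(L^\infty\) bound from Nash entropy and monotonicity of \(\Nash\) in \(\tau\). Taking logarithms gives \(f\ge \Nash_{x,t}(\tau)-C(n)\), that is, \(F\ge -C(n)\).

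\emph{Step 2: differential inequality.} Set \(Z(\beta):=\int_M e^{\beta F}d\nu_s\), \(\phi(\beta):=\log Z(\beta)\) and \(G(\beta):=Z^{-1}\int_M|\nabla f|^2e^{\beta F}d\nu_s\).
\begin{enumerate}
\item Multiply Perelman's Harnack quantity \(w=\tau(2\Delta f-|\nabla f|^2+R)+f-n\le0\) by \(e^{\beta F}\) and integrate against \(d\nu_s=(4\pi\tau)^{-n/2}e^{-f}dg_s\). One integration by parts gives \(\int_M 2\Delta f\,e^{\beta F}d\nu_s=2(1-\beta)\int_M|\nabla f|^2e^{\beta F}d\nu_s\).
\item Using \(R\ge -n/(4\tau)\), this yields
\[
\tau(1-2\beta)\,G(\beta)+\phi'(\beta)\le \tfrac{5n}{4}-\Nash_{x,t}(\tau).
\]
\item The log-Sobolev inequality applied to the density \(e^{\beta F}/Z\) gives \(\beta\phi'(\beta)-\phi(\beta)\le \tau\beta^2G(\beta)\).
\item Eliminating \(G\) gives a linear differential inequality of the form
\[
\beta\phi'-\phi\le\frac{\beta^2}{1-2\beta}\Bigl(\tfrac{5n}{4}-\Nash-\phi'\Bigr).
\]
It should integrate, starting from \(\phi(0)=0\) and \(\phi'(0)=\int_M F\,d\nu_s=n/2\), to the asserted bound with the factor \(\log\tfrac{1}{1-2\beta}\).
\end{enumerate}

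\emph{Expected main obstacle.} The term \(-\Nash_{x,t}(\tau)\ge0\) is unbounded and must not survive, because the asserted constant depends only on \(n\). I would try to cancel it using the normalization \(F=f-\Nash\). Concretely, I would replace \(f-n\) in \(w\) by \(F-(n-\Nash)\) and use the entropy comparison \(\mathcal W\le \Nash\), i.e. \(\int_M\bigl(\tau(|\nabla f|^2+R)+f-n\bigr)d\nu_s\le \Nash_{x,t}(\tau)\), which follows from \(\partial_\tau(\tau\Nash)=\mathcal W\) and monotonicity of \(\mathcal W\). This yields the companion bound \(\tau\int_M|\nabla f|^2d\nu_s\le \tfrac{n}{2}+\tfrac{n}{4}\), after which the \(\beta\)-weighted version would be run relative to this baseline.

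If this cancellation does not close, the fallback is to bound \(G(\beta)\) via Step 1 together with a Hamilton-type gradient estimate \(\tau|\nabla f|^2\le C(n)(F+C(n))\), obtained on \([t-3\tau,s]\) as in \eqref{eq:hamilton-gradient-estimate-short-time}. Then I would run the Herbst argument with the LSI alone, accepting some loss in the explicit constant \(13n/8\).
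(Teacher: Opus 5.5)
Your Step~1 and the log-Sobolev/Herbst step match the paper's proof. The gap is in Step~2: the Harnack inequality you use cannot produce a dimensional constant, and neither of your repairs closes this. Perelman's $w\le 0$ contains the zeroth-order term $f-n=F+\mathcal N-n$. So the weighted inequality you derive, $\tau(1-2\beta)G+\phi'\le\frac{5n}{4}-\mathcal N$, necessarily carries $-\mathcal N$. Your scheme does integrate, using $\phi'(\beta)\ge\phi'(0)=\frac n2$ by convexity of $\phi$, but only to
\[
    \phi(\beta)\le\frac{n\beta}{2}+\frac{\beta}{2}\Bigl(\frac{3n}{4}-\mathcal N_{x,t}(\tau)\Bigr)\log\frac{1}{1-2\beta},
\]
and $-\mathcal N$ is unbounded. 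This loss is built into $w\le 0$, not into your bookkeeping. On a flat torus $(S^1_L)^{n-1}\times S^1_\epsilon$ with $\epsilon^2\ll\tau\ll L^2$, one has $w\approx\mathcal N_{x,t}(\tau)-\frac12$ on the bulk of $\nu_s$. So the pointwise inequality is off by order $-\mathcal N$ exactly where you integrate it.

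Your first repair, the entropy comparison $\mathcal W\le\mathcal N$, only concerns the unweighted integral $\int_M w\,d\nu_s$. It gives the $\beta=0$ baseline $\tau\int_M|\nabla f|^2d\nu_s\le\frac{3n}{4}$, but says nothing about $\int_M|\nabla f|^2e^{\beta F}d\nu_s$. There is no weighted version to run the argument ``relative to''. Your fallback is also unavailable. The estimate \eqref{eq:hamilton-gradient-estimate-short-time} is for forward heat solutions in the source variable, where the Ricci terms cancel. In the target variable, $K(x,t;\cdot,\cdot)$ solves the conjugate equation, whose Bochner formula has $\Ric$ and $\nabla R$ terms that are not controlled without curvature bounds. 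Moreover, the kernel reaches time $s$ by evolving over $(s,t]$, not $[t-3\tau,s]$. Even granting $\tau|\nabla f|^2\le C(n)(F+C(n))$, the Herbst argument would give integrability only for $\beta$ below a dimensional threshold, not the stated range and constant.

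The missing ingredient is a Li--Yau-type inequality valid for \emph{all} positive solutions of the conjugate heat equation, with no zeroth-order term in $f$: Kuang--Zhang's estimate \cite[Theorem~2.1(ii)]{MR2433960}. After translating $t-3\tau$ to time $0$, the time $s$ sits at $2\tau$, in the second half of $[0,3\tau]$. One then gets $2\Delta f-|\nabla f|^2+R\le\frac{3n}{\tau}$ at time $s$; this, besides $R\ge-\frac{n}{4\tau}$, is what the extra room $[t-3\tau,s]$ is for. Weighting by $e^{\beta F}$ and integrating by parts exactly as in your Step~2(1) gives
\[
(1-2\beta)\int_M|\nabla f|^2e^{\beta F}d\nu_s+\int_M Re^{\beta F}d\nu_s\le\frac{3n}{\tau}Z.
\]
Hence $\tau G(\beta)\le\frac{13n}{4(1-2\beta)}$, where $\frac{13}{4}=3+\frac14$, with no $\mathcal N$ and no $\phi'$. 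Your LSI step then reads $(\phi/\beta)'\le\frac{13n}{4(1-2\beta)}$. Integrating from $\lim_{\beta\downarrow0}\phi(\beta)/\beta=\frac n2$ gives exactly $\frac{n\beta}{2}+\frac{13n\beta}{8}\log\frac{1}{1-2\beta}$.
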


\begin{proof}
    Since \((\partial_\sigma-\Delta)R=2|\operatorname{Ric}|^2\ge \frac2nR^2,\) the scalar parabolic maximum principle, applied from time $t-3\tau$, gives
    \[
        R(\cdot,\sigma)\ge-\frac{n}{2(\sigma-t+3\tau)}
        \qquad (t-3\tau<\sigma\le t).
    \]
    Hence
    \begin{equation}
    \label{eq:R-lower}
        R\ge-\frac{n}{4\tau}
        \qquad\text{on }M\times[s,t].
    \end{equation}
    Bamler's normalized heat-kernel bound \cite[Theorem~7.1]{Bamler2020A}, applied with \(R_{\min}=-n/(4\tau)\), yields
    \[
        K(x,t;y,s)\le C(n)\tau^{-n/2}
        e^{-\Nash_{x,t}(\tau)}.
    \]
    Comparing this with \(K(x,t;y,s)=(4\pi\tau)^{-n/2}e^{-f(y)}\) gives \(F\ge-C(n)\).
    
    We now prove the exponential integrability. Translate $t-3\tau$ to time $0$. The resulting flow is defined on $[0,3\tau]$, the conjugate heat kernel is a positive solution on $M\times[0,3\tau)$, and $s$ corresponds to time $2\tau\ge(3\tau)/2$, where the backward time is $3\tau-2\tau=\tau$. Therefore Kuang--Zhang's estimate \cite[Theorem~2.1(ii)]{MR2433960} gives, at time $s$,
    \begin{equation}
    \label{eq:harnack}
        2\Delta f-|\nabla f|^2+R\le\frac{3n}{\tau}.
    \end{equation}
    
    Fix $0\le\beta<\tfrac12$ and let \(d\nu:=d\nu_s\) and
    \[
        Z(\beta):=\int_M e^{\beta F}\,d\nu.
    \]
    Since $M$ is closed, $d\nu=(4\pi\tau)^{-n/2}e^{-f}dg_s$, and $\nabla F=\nabla f$, integration by parts gives
    \[
        \int_M (\Delta f)e^{\beta F}\,d\nu
        =(1-\beta)\int_M |\nabla f|^2e^{\beta F}\,d\nu.
    \]
    Consequently, \eqref{eq:R-lower} and \eqref{eq:harnack} imply
    \[
        (1-2\beta)\int_M |\nabla f|^2e^{\beta F}\,d\nu
        +\int_M R e^{\beta F}\,d\nu
        \le\frac{3n}{\tau}Z(\beta),
    \]
    and hence
    \begin{equation}
    \label{eq:weighted-energy}
        \tau\int_M |\nabla f|^2e^{\beta F}\,d\nu
        \le \frac{13n}{4(1-2\beta)}Z(\beta).
    \end{equation}
    
    The log-Sobolev inequality of Hein--Naber \cite[Theorem~1.10(2)]{HeinNaber2014CPAM}, applied in homogeneous form (to obtain it, apply the normalized inequality to $\sqrt{w/\int_Mw\,d\nu}$), states that
    \[
        \int_M w\log w\,d\nu
        -\left(\int_Mw\,d\nu\right)\log\left(\int_Mw\,d\nu\right)
        \le \tau\int_M\frac{|\nabla w|^2}{w}\,d\nu
    \]
    for every positive smooth $w$. Applying this to $w=e^{\beta F}$ and then using \eqref{eq:weighted-energy} yields
    \[
        \beta Z'(\beta)-Z(\beta)\log Z(\beta)
        \le \frac{13n}{4}\frac{\beta^2}{1-2\beta}Z(\beta).
    \]
    Thus, for $\beta>0$,
    \[
        \left(\frac{\log Z(\beta)}{\beta}\right)'
        \le\frac{13n}{4(1-2\beta)}.
    \]
    Moreover, since $Z(0)=1$,
    \[
        \lim_{\beta\downarrow0}\frac{\log Z(\beta)}{\beta}
        =\int_MF\,d\nu=\frac n2,
    \]
    where the last equality is the definition of $\Nash_{x,t}(\tau)$. Integration from $0$ to $\beta$ proves the exponential integrability.
\end{proof}

We also need the following lemma to control the Nash entropy of a closed Ricci flow from an almost Euclidean map in the spirit of \cite[Lemma~14.2]{Bamler2020C}. In contrast to \cite[Lemma~14.1]{Bamler2020C}, we neither assume static condition nor impose a lower bound on the Nash entropy. A lower bound on Nash entropy allows to obtain a scale where the flow is almost self-similar in which case good analytical estimates for the potential function are available, see \cite[\S 7]{Bamler2020C}, and the hypothesis of \cite[Lemma~14.2]{Bamler2020C} assumes such estimate. Such estimates are unavailable in our setup.

We write
\[
    d\gamma_n=(4\pi)^{-n/2}e^{-|z|^2/4}\,dz
\]
for the Gaussian probability measure on $\R^n$.

\begin{lemma}\label{lem:quantitative-area-estimate}
    Fix $A<\infty$ and $\beta>0$. There are constants $\eta_0=\eta_0(n,A,\beta)>0$ and $C=C(n,A,\beta)<\infty$ with the following property. Let $(M^n,g)$ be closed, let
    \[
        d\nu=(4\pi)^{-n/2}e^{-N-F}\,dg
    \] 
    be a probability measure, where $F\in C^\infty(M)$ and $N\in\R$, and let $y=(y_1,\ldots,y_n)\in C^\infty(M;\R^n)$. Write $G_{ij}=\langle\nabla y_i,\nabla y_j\rangle$. Suppose that $0<\eta\le\eta_0$ and
    \begin{subequations}
    \begin{align}
    \label{eq:quantitative-area-potential}
        &F\ge-A,
        \qquad
        \int_M\bigl(e^{\beta F}+|\nabla F|^2\bigr)\,d\nu\le A,
        \qquad
        \int_MF\,d\nu=\frac n2,\\
        \label{eq:quantitative-area-map}
        &\int_M|y|^4\,d\nu\le A,
        \qquad
        \left|\int_M|y|^2\,d\nu-2n\right|
        +\int_M|G-I|\,d\nu
        +\int_M\sum_i|\nabla^2y_i|^2\,d\nu
        \le\eta,\\
        \label{eq:quantitative-area-transport}
        &W_2(y_\#\nu,\gamma_n)\le A\eta^{1/3}.
        \end{align}
    \end{subequations}
    Then
    \begin{equation}
    \label{eq:quantitative-area-conclusion}
        N\ge-C\eta^{1/4}.
    \end{equation}
\end{lemma}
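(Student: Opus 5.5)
The plan is to compare $\nu$ with the Gaussian through the map $y$, using the area formula. The normalization of $\nu$ ties $N$ to the volume density. We write
\[
    1=\int_M d\nu=(4\pi)^{-n/2}e^{-N}\int_M e^{-F}\,dg .
\]
The aim is to show that $e^{-N}\lesssim 1+C\eta^{1/4}$. Equivalently, we want
\[
    (4\pi)^{-n/2}\int_M e^{-F}\,dg\ \ge\ 1-C\eta^{1/4}.
\]
If the density $e^{-F}$ is comparable to $e^{-|y|^2/4}$, then the area formula gives
\[
    \int_M e^{-|y|^2/4}\sqrt{\det G}\,dg=\int_{\R^n}e^{-|z|^2/4}\,\#y^{-1}(z)\,dz .
\]
Where $y$ is close to a local isometry, this is at least the Gaussian mass of the image. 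So the proof splits into three parts. First we relate $F$ to $|y|^2/4$. Second we show that the image $y(M)$ captures almost all of the Gaussian mass. Third we control $\sqrt{\det G}$.

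\textbf{Step 1 (relating $F$ and $|y|^2/4$).} We use Jensen in the form $\log\int e^{-F+|y|^2/4}\ldots$. Set $\psi:=F-|y|^2/4$. The moment hypotheses give $\int\psi\,d\nu=\tfrac n2-\tfrac14\int|y|^2d\nu=O(\eta)$. We then write
\[
    (4\pi)^{-n/2}\int_M e^{-F}dg=e^{N}\int_M d\nu .
\]
This identity is circular, so we argue differently. We express
\[
    e^{-N}=\int_M (4\pi)^{-n/2}e^{-F}\,dg\cdot(\text{normalization}),
\]
and bound $(4\pi)^{-n/2}\int e^{-F}dg$ from below by restricting to a good set $U$. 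On $U$ we want $|G-I|$ small, $\sqrt{\det G}\le 1+\epsilon$, and $|\psi|$ small. This gives
\[
    \int_U e^{-F}dg\ \ge\ (1+\epsilon)^{-1}\int_U e^{-|y|^2/4-\psi}\sqrt{\det G}\,dg .
\]
Then the area formula converts this into a lower bound by $\int_{y(U)}e^{-|z|^2/4-\tilde\psi}dz$.

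\textbf{Step 2 (the image covers the Gaussian).} We use the transport bound $W_2(y_\#\nu,\gamma_n)\le A\eta^{1/3}$ to show that $y_\#(\nu|_U)$ has mass $\ge 1-C\eta^{1/4}$ on balls of radius $R$. We also need to compare the density of $y_\#\nu$ with $e^{-|z|^2/4}$. For this the key input is smallness of $\nabla\psi=\nabla F-\tfrac12\sum y_i\nabla y_i$ in $L^1(\nu)$. We would get it by integrating by parts with the Hessian bound, as in \cite[Lemma 14.2]{Bamler2020C}. Consider the weighted Laplacian $\Delta_F y_i=\Delta y_i-\langle\nabla F,\nabla y_i\rangle$. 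The identity $\int\langle\nabla F,\nabla y_i\rangle\,y_j\,d\nu=\int(\Delta y_i\,y_j+G_{ij})\,d\nu$, together with small $\nabla^2y$ and $G\approx I$, shows that $\int\langle\nabla F,\nabla y_i\rangle y_j\,d\nu\approx\delta_{ij}$. Combined with $\int|\nabla F|^2d\nu\le A$, this should give $\int|\nabla F-\tfrac12 y_i\nabla y_i|^2d\nu\lesssim\eta^{1/2}$ by expanding the square. Here $|y|^4$ controls the cross terms and $e^{\beta F}$ handles the tails. A Poincaré inequality for $\nu$ is not assumed. Instead we use the pushforward, and the $W_2$ bound lets us compare $\psi$ with its mean on most of the mass.

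\textbf{Step 3 (assembling).} Multiplicity $\#y^{-1}\ge1$ on $y(U)$ only helps. Collecting the errors ($\epsilon\sim\eta^{1/2}$ from Chebyshev on $|G-I|$, the truncation error from $|y|^4$, and the tail error from $e^{\beta F}$ with $F\ge-A$) and optimizing the cutoffs gives $e^{-N}\ge\ldots$ in the correct direction, hence $N\ge-C\eta^{1/4}$. The main obstacle is Step 2: turning integral smallness of $\nabla\psi$ into a pointwise comparison of $e^{-F}$ with $e^{-|y|^2/4}$ on a set of large measure without any Poincaré inequality. My first attempt would be to use the $W_2$ bound to localize to slabs where $y$ is almost injective, and then apply a one-dimensional fundamental theorem of calculus along the $\nabla y_i$ flow lines.
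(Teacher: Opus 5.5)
There is a genuine gap. It sits at the step you flag as the main obstacle, but the problem is worse than a missing Poincar\'e inequality: the estimate you want to feed into Step~2 is false under the hypotheses. Expanding the square and using your own integration-by-parts identity gives
\begin{align*}
    \int_M\Bigl|\nabla F-\frac12\sum_i y_i\nabla y_i\Bigr|^2d\nu
    &=\int_M|\nabla F|^2\,d\nu-\sum_i\int_M\bigl(G_{ii}+y_i\Delta y_i\bigr)\,d\nu
    +\frac14\int_M\sum_{i,j}y_iy_jG_{ij}\,d\nu\\
    &=\int_M|\nabla F|^2\,d\nu-\frac n2+(\text{error terms}).
\end{align*}
In the Gaussian model, $\int|\nabla F|^2\,d\gamma_n=\frac n2$. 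So smallness of $\nabla\psi$ in $L^2(\nu)$ amounts to $\int_M|\nabla F|^2d\nu\le\frac n2+o(1)$. That is an almost-self-similarity estimate of exactly the kind that \cite[Lemma~14.2]{Bamler2020C} presupposes and that this lemma deliberately avoids; the hypothesis only gives $\int_M|\nabla F|^2d\nu\le A$. For instance, perturbing $F$ by a small fast oscillation such as $\epsilon\sin(y_1/\epsilon)$ changes the moment and $W_2$ data by $O(\epsilon)$ but leaves $\nabla\psi$ of size one in $L^2(\nu)$.

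More fundamentally, Step~2's goal cannot come from the transport bound. $W_2(y_\#\nu,\gamma_n)$ controls where the \emph{mass} of $y_\#\nu$ sits. It does not control the Lebesgue or Gaussian measure of $y(U)$, and it gives no lower bound on $\int_M e^{-F}dg$. Without the gradient bound, $\nu$ could concentrate on many tiny $g$-balls whose images form a fine lattice: $y_\#\nu$ is then $W_2$-close to $\gamma_n$ while $e^{-N}$ is enormous. The bound $\int|\nabla F|^2d\nu\le A$ is what excludes this. It must enter as a Fisher-information bound, not as a pointwise comparison of $F$ with $|y|^2/4$.

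The missing idea is an entropy upper bound via the HWI inequality.
\begin{itemize}
\item The paper cuts off with $\psi=\chi(|G-I|^2)$, so that $1-m\le C\eta$ and $\int|\nabla\psi|^2d\nu\le C\eta$.
\item By the area formula, the density of $\widetilde\mu=y_\#(m^{-1}\psi^2\nu)$ is a finite sum over fibers.
\item Writing $\nabla\log p$ as a convex combination of fiber quantities gives $I(\widetilde\mu\mid\gamma_n)\le C$. This uses only $\int|\nabla F|^2d\nu\le A$, the Hessian bound, and $\int|y|^2d\nu$.
\item The cutoff moves $W_2$ by $O(\eta^{1/4})$, via the fourth-moment bound; this is the source of the exponent. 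The Gaussian HWI inequality then gives $D_{\varphi_{\rm KL}}(\widetilde\mu\mid\gamma_n)\le C\eta^{1/4}$.
\item The matching lower bound $D_{\varphi_{\rm KL}}(\widetilde\mu\mid\gamma_n)\ge-N-C\eta^{1/2}$ is your ``multiplicity only helps'' observation made precise. One uses $\log\sum_q a_q\ge\sum_q\lambda_q\log a_q$ on each fiber, then the area formula, $\int F\,d\nu=\frac n2$, $\int|y|^2d\nu\approx2n$, and $|\log J_y|\le C|G-I|$.
\end{itemize}
Your normalization identity and fiberwise Jensen are the right ingredients for the lower bound. The other half needs the Fisher-information/HWI step, not a pointwise comparison of densities.
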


\begin{proof}
    All constants below depend only on $n,A,\beta$. We may assume $\eta_0\le1$. Fix a dimensional constant $\kappa>0$ so small that
    \[
        |G-I|\le2\kappa
        \quad\Longrightarrow\quad
        \frac12I\le G\le2I,
    \]
    and choose $\chi\in C^\infty([0,\infty),[0,1])$ such that
    \[
        \chi=1\ \text{on }[0,\kappa^2],
        \qquad
        \chi=0\ \text{on }[4\kappa^2,\infty),
        \qquad
        |\chi'|\le C\kappa^{-2}.
    \]
    Set
    \[
        \psi=\chi(|G-I|^2),
        \qquad
        m=\int_M\psi^2\,d\nu.
    \]
    
    \begin{claim}\label{claim:m-is-controlled}
        After decreasing $\eta_0$, we have
        \begin{equation}
        \label{eq:quantitative-area-cutoff}
            m\ge\frac12,
            \qquad
            1-m\le C\eta,
            \qquad
            \int_M|\nabla\psi|^2\,d\nu\le C\eta.
        \end{equation}
    \end{claim}
    
    \begin{proof}[Proof of Claim \ref{claim:m-is-controlled}]
        The lower bound on \(m\) follows from the second inequality in \eqref{eq:quantitative-area-cutoff} after decreasing \(\eta_0\). Since $1-\psi^2$ is supported on $\{|G-I|\ge\kappa\}$, Chebyshev's inequality gives
        \[
            1-m
            \le \nu\{|G-I|\ge\kappa\}
            \le \kappa^{-1}\int_M|G-I|\,d\nu
            \le C\eta.
        \]
        On $\operatorname{supp}\nabla\psi$ one has $\kappa\le|G-I|\le2\kappa$, and hence \(|\nabla\psi|\le C\kappa^{-1}|\nabla G|.\) Moreover,
        \[
            \nabla G_{ij}
            =\nabla^2y_i(\,\cdot\,,\nabla y_j)
             +\nabla^2y_j(\,\cdot\,,\nabla y_i),
        \]
        so, since $\operatorname{tr}G\le C_n$ on $\operatorname{supp}\nabla\psi$,
        \[
            |\nabla G|^2\le C\tr(G) \sum_i |\nabla^2 y_i|^2\le C\sum_i|\nabla^2y_i|^2.
        \]
        This proves the last estimate in \eqref{eq:quantitative-area-cutoff}.
    \end{proof}
    
    On $\operatorname{supp}\psi$ we have $\frac12I\le G\le2I$. Since \(G=Dy(Dy)^T\), the differential $Dy$ has full rank on $\operatorname{supp}\psi$, and compactness shows that $y$ is a local diffeomorphism on a neighborhood of $\operatorname{supp}\psi$. Put
    \[
        d\widetilde\nu=m^{-1}\psi^2\,d\nu,
        \qquad
        \widetilde\mu=y_\#\widetilde\nu.
    \]
    
    \begin{claim}\label{claim:density-of-push-forward}
        The measure $\widetilde\mu$ has a density $p$ with respect to Lebesgue measure, given for almost every $z\in\R^n$ by
        \begin{equation}
        \label{eq:quantitative-area-density}
            p(z)=\frac{(4\pi)^{-n/2}e^{-N}}{m}
            \sum_{q\in y^{-1}(z)\cap\{\psi>0\}}
            \frac{\psi(q)^2e^{-F(q)}}{J_y(q)},
            \qquad
            J_y=(\det G)^{1/2}.
        \end{equation}
        Each sum in \eqref{eq:quantitative-area-density} is finite.
    \end{claim}
    
    \begin{proof}[Proof of Claim \ref{claim:density-of-push-forward}]
        By the area formula for maps between Riemannian manifolds (see Federer \cite[Theorem 3.2.3 and \S 3.2.46]{MR257325} or Cibotaru and de Lira \cite[Corollary~4.6]{MR3458179}), for every nonnegative Borel function $\phi$ on $\R^n$,
        \begin{align*}
            \int_{\R^n}\phi\,d\widetilde\mu =
            \frac{(4\pi)^{-n/2}e^{-N}}{m}
            \int_M\phi(y)\psi^2e^{-F}\,dg=
            \frac{(4\pi)^{-n/2}e^{-N}}{m}
            \int_{\R^n}\phi(z)
            \sum_{q\in y^{-1}(z)\cap\{\psi>0\}}
            \frac{\psi(q)^2e^{-F(q)}}{J_y(q)}\,dz.
        \end{align*}
        Thus $\widetilde\mu=p\,dz$ with $p$ as in \eqref{eq:quantitative-area-density}. The sums are finite: an infinite fiber in $\operatorname{supp}\psi$ would have an accumulation point, contradicting the local injectivity of $y$.  
    \end{proof}
    
    \begin{claim}\label{claim:Fisher-information-is-controlled}
        The Fisher information of \(\widetilde{\mu}\) with respect to \(\gamma_n\) is bounded:
        \begin{equation}
        \label{eq:quantitative-area-fisher}
            I(\widetilde\mu\mid\gamma_n)
            :=\int_{\R^n}
            \left|\nabla\log\frac p{\gamma_n}\right|^2p\,dz
            \le C.
        \end{equation}
    \end{claim}
    
    \begin{proof}[Proof of Claim \ref{claim:Fisher-information-is-controlled}]
        For $q\in\{\psi>0\}$, set
        \[
            b(q)=(Dy_q)^{-T}
            \bigl(2\nabla\log\psi-\nabla F-\nabla\log J_y\bigr).
        \]
        For almost every $z$ with $p(z)>0$, the finite fiber $y^{-1}(z)\cap\operatorname{supp}\psi$ extends to finitely many local inverse branches $q_\alpha=q_\alpha(z)$. If
        \[
            a_\alpha(z)
            =\frac{\psi(q_\alpha(z))^2e^{-F(q_\alpha(z))}}
                {J_y(q_\alpha(z))},
            \qquad
            c=\frac{(4\pi)^{-n/2}e^{-N}}m,
        \]
        then $p=c\sum_\alpha a_\alpha$. Since $Dq_\alpha=(Dy_{q_\alpha})^{-1}$, the chain rule gives \(\nabla a_\alpha(z)=a_\alpha(z)b(q_\alpha(z)).\) Consequently,
        \[
            \nabla\log p(z)
            =\sum_{q\in y^{-1}(z)\cap\{\psi>0\}}\lambda_q(z)b(q),
            \qquad
            \lambda_q(z)
            =\frac{\psi(q)^2e^{-F(q)}/J_y(q)}
            {\displaystyle
             \sum_{q'}\psi(q')^2e^{-F(q')}/J_y(q')}.
        \]
        The weights $\lambda_q\geq 0$ and \(\sum_q\lambda_q=1\). Since $y(q)=z$ on each fiber and $\nabla\log\gamma_n(z)=-z/2$, convexity of the squared norm yields
        \[
            \left|\nabla\log\frac p{\gamma_n}(z)\right|^2
            =
            \left|
            \sum_q\lambda_q(z)
            \left(b(q)+\frac{y(q)}2\right)
            \right|^2\le
            \sum_q\lambda_q(z)\left|b(q)+\frac{y(q)}2\right|^2.
        \]
        Multiplying by $p(z)$, integrating, and using the area formula once more, we obtain
        \begin{align*}
            I(\widetilde\mu\mid\gamma_n)
            &\le
            c\int_{\R^n}
            \sum_{q\in y^{-1}(z)\cap\{\psi>0\}}
            \frac{\psi(q)^2e^{-F(q)}}{J_y(q)}
            \left|b(q)+\frac{y(q)}2\right|^2\,dz=
            c\int_M
            \psi^2e^{-F}
            \left|b+\frac y2\right|^2\,dg\\
            &=\int_M\left|b+\frac y2\right|^2d\widetilde\nu.
        \end{align*}
        
        On $\operatorname{supp}\psi$, the singular values of $Dy$ lie in $[2^{-1/2},2^{1/2}]$, because $G=Dy(Dy)^*$. Hence
        \[
            \|(Dy)^{-1}\|+\|(Dy)^{-T}\|\le C.
        \]
        Furthermore,
        \[
            \nabla\log J_y=\frac12\operatorname{tr}(G^{-1}\nabla G),
            \qquad
            |\nabla\log J_y|^2\le C\sum_i|\nabla^2y_i|^2,
        \]
        where the second estimate follows from the formula for $\nabla G_{ij}$ above and $\operatorname{tr}G\le2n$. Therefore, on $\{\psi>0\}$,
        \[
            \psi^2\left|b+\frac y2\right|^2
            \le C\left(
            |\nabla\psi|^2
            +\psi^2|\nabla F|^2
            +\psi^2\sum_i|\nabla^2y_i|^2
            +\psi^2|y|^2
            \right).
        \]
        Using $m\ge\frac12$, \eqref{eq:quantitative-area-cutoff}, and the hypotheses of the lemma, we conclude that
        \begin{align*}
            I(\widetilde\mu\mid\gamma_n)
            &\le
            Cm^{-1}\int_M\left(
            |\nabla\psi|^2
            +\psi^2|\nabla F|^2
            +\psi^2\sum_i|\nabla^2y_i|^2
            +\psi^2|y|^2
            \right)d\nu
            \le C.
        \end{align*}
        Here we also used $\int|y|^2d\nu\le(\int|y|^4d\nu)^{1/2}$.
    \end{proof}
    
    \begin{claim}\label{claim:KL-upperbound}
        We have
        \begin{equation}
        \label{eq:quantitative-area-KL}
            D_{\varphi_{\rm KL}}(\widetilde\mu\mid\gamma_n)
            \le C\eta^{1/4}.
        \end{equation}
    \end{claim}
    
    \begin{proof}[Proof of Claim \ref{claim:KL-upperbound}]
        Let $\mu=y_\#\nu$ and $\alpha=1-m$. If $\alpha=0$, then $\mu=\widetilde\mu$. Otherwise, define
        \[
            \mu_{\rm b}
            =y_\#\bigl(\alpha^{-1}(1-\psi^2)\nu\bigr),
            \qquad
            \mu=m\widetilde\mu+\alpha\mu_{\rm b}.
        \]
        The measure
        \[
            \pi
            =m(\operatorname{id},\operatorname{id})_\#\widetilde\mu
             +\alpha\,\mu_{\rm b}\otimes\widetilde\mu
        \]
        is a coupling of $\mu$ and $\widetilde\mu$. Its diagonal part has zero cost, and therefore using \(|z-z'|^2\leq 2|z|^2+2|z'|^2\)
        \begin{align*}
            W_2(\mu,\widetilde\mu)^2
            &\le
            \alpha\int_{\R^n\times\R^n}|z-z'|^2
            \,d\mu_{\rm b}(z)\,d\widetilde\mu(z')=2\alpha\int_{\R^n}|z|^2\,d\mu_{\rm b}(z)
            +
            2\alpha\int_{\R^n}|z|^2\,d\widetilde\mu(z)\\
            &=
            2\int_M(1-\psi^2)|y|^2\,d\nu
            +2\alpha\int_{\R^n}|z|^2\,d\widetilde\mu(z).
        \end{align*}
        By Cauchy--Schwarz, \(0\leq 1-\psi^2\leq 1\), \eqref{eq:quantitative-area-map}, and \eqref{eq:quantitative-area-cutoff},
        \[
            \int_M(1-\psi^2)|y|^2\,d\nu
            \le
            \left(\int_M(1-\psi^2)^2\,d\nu\right)^{1/2}
            \left(\int_M|y|^4\,d\nu\right)^{1/2}\le C\alpha^{1/2},
        \]
        and
        \[
            \alpha\int_{\R^n}|z|^2\,d\widetilde\mu(z)
            =\frac\alpha m\int_M\psi^2|y|^2\,d\nu
            \le C\alpha.
        \]
        Thus
        \[
            W_2(\mu,\widetilde\mu)\le C\eta^{1/4}.
        \]
        The triangle inequality and \eqref{eq:quantitative-area-transport} give
        \[
            W_2(\widetilde\mu,\gamma_n)
            \le C\eta^{1/4}+A\eta^{1/3}
            \le C\eta^{1/4}.
        \]
        The Gaussian HWI inequality \cite[Corollary~9.3.3]{MR3155209}, applied by approximation if necessary, yields
        \[
            D_{\varphi_{\rm KL}}(\widetilde\mu\mid\gamma_n)
            \le
            W_2(\widetilde\mu,\gamma_n)
            I(\widetilde\mu\mid\gamma_n)^{1/2}
            -\frac14W_2(\widetilde\mu,\gamma_n)^2.
        \]
        Dropping the last term and using \eqref{eq:quantitative-area-fisher} proves \eqref{eq:quantitative-area-KL}.
    \end{proof}
    
    We first record the errors introduced by the cutoff. 
    \begin{claim}\label{claim:cutoff-comparison}
        The cutoff measure satisfies
        \begin{equation}
        \label{eq:quantitative-area-moments}
            \left|\int_MF\,d\widetilde\nu-\frac n2\right|
            +\left|\int_{\R^n}|z|^2\,d\widetilde\mu-2n\right|
            \le C\eta^{1/2}.
        \end{equation}
        Further,
        \begin{equation}
        \label{eq:quantitative-area-log-errors}
            -\int_M\log\psi\,d\widetilde\nu\le C\eta,
            \qquad
            \int_M|\log J_y|\,d\widetilde\nu\le C\eta.
        \end{equation}
        Here \(\int_M\log\psi\,d\widetilde\nu := \frac1m\int_{\{\psi>0\}}\psi^2\log\psi\,d\nu.\)
    \end{claim}
    
    \begin{proof}[Proof of Claim \ref{claim:cutoff-comparison}]
        Since $F\ge-A$,
        \[
            F^2\le C(1+e^{\beta F}),
            \qquad
            \int_MF^2\,d\nu\le C.
        \]
        Moreover,
        \[
            \|\psi^2-m\|_{L^2(\nu)}^2
            =\operatorname{Var}_\nu(\psi^2)
            \le\int_M(1-\psi^2)^2\,d\nu
            \le1-m.
        \]
        Thus, for $H=F$ and $H=|y|^2$,
        \[
            \left|\int_MH\,d\widetilde\nu-\int_MH\,d\nu\right|
            =\frac1m\left|\int_M(\psi^2-m)H\,d\nu\right|
            \le C\eta^{1/2}\left(\int_MH^2\,d\nu\right)^{1/2}.
        \]
        The estimates in \eqref{eq:quantitative-area-moments} then follow from our hypothesis \eqref{eq:quantitative-area-potential} and \eqref{eq:quantitative-area-map} and 
        \[
            \int_{\R^n}|z|^2\,d\widetilde\mu(z)
            =
            \int_M|y|^2\,d\widetilde\nu.
        \]
        
        Finally, with the convention $0\log0=0$,
        \[
            -\int_M\log\psi\,d\widetilde\nu
            =-\frac1m\int_M\psi^2\log\psi\,d\nu
            \le\frac C m\int_M(1-\psi^2)\,d\nu
            \le C\eta,
        \]
        where we used $r^2|\log r|\le C(1-r^2)$ for $0\le r\le1$ and \(m\geq 1/2\). Furthermore, on $\{\psi>0\}$,
        \[
            |\log J_y|=\frac12|\log\det G|\le C|G-I|,
        \]
        Therefore, using our hypothesis \eqref{eq:quantitative-area-map},
        \[
            \int_M|\log J_y|\,d\widetilde\nu
            =
            \frac1m\int_M\psi^2|\log J_y|\,d\nu
            \le
            C\int_M|G-I|\,d\nu
            \le C\eta,
        \]
        which gives the second estimate in \eqref{eq:quantitative-area-log-errors}.
    \end{proof}
    
    \begin{claim}\label{claim:KL-lowerbound}
        We have
        \begin{equation}
        \label{eq:quantitative-area-entropy-lower}
            D_{\varphi_{\rm KL}}(\widetilde\mu\mid\gamma_n)
            \ge-N-C\eta^{1/2}.
        \end{equation}
    \end{claim}
    
    \begin{proof}[Proof of Claim \ref{claim:KL-lowerbound}]
        For almost every $z$, set
        \[
            a_q=\frac{\psi(q)^2e^{-F(q)}}{J_y(q)},
            \qquad
            S(z)=\sum_{q\in y^{-1}(z)\cap\{\psi>0\}}a_q.
        \]
        By \eqref{eq:quantitative-area-density},
        \[
            p(z)=\frac{(4\pi)^{-n/2}e^{-N}}mS(z),
        \]
        and hence, on $\{p>0\}=\{S>0\}$,
        \[
            \log\frac{p(z)}{\gamma_n(z)}
            =-N-\log m+\log S(z)+\frac{|z|^2}{4}.
        \]
        Write $\lambda_q=a_q/S$. Then \(\lambda_q\ge0\), \(\sum_q\lambda_q=1\) so that
        \[
            \log S-\sum_q\lambda_q\log a_q
            =
            -\sum_q\lambda_q\log\lambda_q
            \ge0.
        \]
        Therefore
        \begin{align*}
            D_{\varphi_{\rm KL}}(\widetilde\mu\mid\gamma_n)&=
            -N-\log m
            +\int_{\R^n}p\log S\,dz
            +\frac14\int_{\R^n}|z|^2\,d\widetilde\mu(z)\\
            &\ge
            -N-\log m
            +\int_{\R^n}p(z)\sum_q\lambda_q(z)\log a_q\,dz
            +\frac14\int_{\R^n}|z|^2\,d\widetilde\mu(z).
        \end{align*}
        If $c=(4\pi)^{-n/2}e^{-N}/m$, then $p(z)\lambda_q(z)=ca_q$. The area formula, applied first to bounded truncations if necessary, yields
        \begin{align*}
            \int_{\R^n}
            p(z)\sum_q\lambda_q(z)\log a_q\,dz
            &=
            c\int_{\R^n}
            \sum_{q\in y^{-1}(z)\cap\{\psi>0\}}
            \frac{\psi(q)^2e^{-F(q)}}{J_y(q)}
            \log\frac{\psi(q)^2e^{-F(q)}}{J_y(q)}
            \,dz\\
            &=
            c\int_M
            \psi^2e^{-F}
            \log\frac{\psi^2e^{-F}}{J_y}\,dg=
            2\int_M\log\psi\,d\widetilde\nu
            -\int_MF\,d\widetilde\nu
            -\int_M\log J_y\,d\widetilde\nu.
        \end{align*}
        Consequently, 
        \begin{align}
        \label{eq:quantitative-area-log-sum}
            D_{\varphi_{\rm KL}}(\widetilde\mu\mid\gamma_n)
            \ge{}&
            -N-\log m
            +2\int_M\log\psi\,d\widetilde\nu
            -\int_MF\,d\widetilde\nu-\int_M\log J_y\,d\widetilde\nu
            +\frac14\int_{\R^n}|z|^2\,d\widetilde\mu(z).
        \end{align}
        Since $m\le1$, the term $-\log m$ is nonnegative. Combining \eqref{eq:quantitative-area-moments}, \eqref{eq:quantitative-area-log-errors}, and \eqref{eq:quantitative-area-log-sum} proves \eqref{eq:quantitative-area-entropy-lower}.
    \end{proof}
    
    Finally, \eqref{eq:quantitative-area-KL} and \eqref{eq:quantitative-area-entropy-lower} imply
    \[
        N\ge-C\eta^{1/4}-C\eta^{1/2}\ge-C\eta^{1/4},
    \]
    which proves \eqref{eq:quantitative-area-conclusion}.
\end{proof}

\begin{proof}[Proof of Theorem~\ref{thm:top-fisher-epsilon-regularity}]
    By parabolic rescaling and time translation, assume that \(\tau_0=1\) and \(t=0\). We may assume \(\delta>0\); the case \(\delta=0\) follows by approximation. Constants denoted by \(C_n\) may change from line to line.
    
    Apply Theorem~\ref{thm:sec6-fisher-rank-produces-strong-splitting-map} with \(k=n\) and \(\theta=\frac12\). There are \(\sigma\in[\frac12,1]\) and a canonical heat-score map
    \[
        v=(v_1,\ldots,v_n):M\times[-\sigma,0]\longrightarrow\R^n
    \]
    such that, writing
    \[
        G_{ij}(\ell)=
        \langle\nabla v_i,\nabla v_j\rangle_{g_\ell},
        \qquad
        \eta=C_n\sqrt\delta,
    \]
    we have
    \begin{equation}
    \label{eq:top-fisher-map-golfed}
        \sigma^{-1}\int_{-\sigma}^0\int_M|G-I|\,d\nu_\ell\,d\ell
        +
        \int_{-\sigma}^0\int_M\sum_i|\nabla^2v_i|^2\,d\nu_\ell\,d\ell
        \le\eta,
    \end{equation}
    and
    \begin{equation}
    \label{eq:top-fisher-initial-L2-golfed}
        \int_Mv_i(\cdot,-\sigma)^2\,d\nu_{-\sigma}\le2\sigma.
    \end{equation}
    Each \(v_i\) solves the heat equation and is centered with respect to \(\nu_\ell\). Since \(\nu_0=\delta_x\), this gives \(v(x,0)=0\).
    
    \begin{claim}\label{claim:splitting-map-is-controlled}
        There is \(a\in[\sigma/4,\sigma/3]\) such that, setting \(\bar g=a^{-1}g_{-a},\) \(y=a^{-1/2}v(\cdot,-a),\) \(d\nu=d\nu_{-a},\) and \((G_y)_{ij} = \langle\bar\nabla y_i,\bar\nabla y_j\rangle_{\bar g},\) we have
        \begin{subequations}
        \begin{align}
        \label{eq:top-fisher-static-errors-golfed}
            &\int_M|G_y-I|\,d\nu
            +
            \int_M\sum_i|\bar\nabla^2y_i|^2\,d\nu
            +\left|\int_M|y|^2\,d\nu-2n\right| \le C_n\eta,\\
            \label{eq:top-fisher-fourth-moment-golfed}
            &\int_M|y|^4\,d\nu\le C_n.
        \end{align}
        \end{subequations}
    \end{claim}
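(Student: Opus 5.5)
The plan is to choose the time $-a$ by a pigeonhole (averaging) argument applied to the spacetime bound \eqref{eq:top-fisher-map-golfed}. I then check that the second moment is controlled for every $a$ by an exact identity, and that the fourth moment is controlled for every $a$ by Jensen's inequality and the $L^p$ bounds on source scores.

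First, I record the scaling. With $\bar g=a^{-1}g_{-a}$ and $y=a^{-1/2}v(\cdot,-a)$,
\[
\langle\bar\nabla y_i,\bar\nabla y_j\rangle_{\bar g}=\langle\nabla v_i,\nabla v_j\rangle_{g_{-a}},
\qquad
|\bar\nabla^2y_i|^2_{\bar g}=a\,|\nabla^2v_i|^2_{g_{-a}}.
\]
So $G_y=G(-a)$, and the Hessian term picks up a factor $a\le\sigma/3\le 1$. Now consider
\[
\Phi(\ell):=\int_M|G-I|\,d\nu_\ell+\sigma\int_M\sum_i|\nabla^2v_i|^2\,d\nu_\ell .
\]
By \eqref{eq:top-fisher-map-golfed}, $\int_{-\sigma}^0\Phi\,d\ell\le 2\sigma\eta$. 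The interval $[-\sigma/3,-\sigma/4]$ has length $\sigma/12$, so some $a\in[\sigma/4,\sigma/3]$ satisfies $\Phi(-a)\le 24\eta$. For this $a$, the first two terms of \eqref{eq:top-fisher-static-errors-golfed} are therefore at most $C\eta$, using $a\le\sigma$.

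Second, for the second moment, each $v_i$ solves the heat equation and $\nu_\ell$ is the conjugate heat-kernel measure. Hence
\[
\frac{d}{d\ell}\int_M v_i^2\,d\nu_\ell=-2\int_M|\nabla v_i|^2\,d\nu_\ell .
\]
Since $v(x,0)=0$ and $\nu_0=\delta_x$, integrating from $-a$ to $0$ gives
\[
\int_M|y|^2\,d\nu=\frac{2}{a}\int_{-a}^0\int_M\operatorname{tr}G\,d\nu_\ell\,d\ell .
\]
Subtracting $2n=\frac{2}{a}\int_{-a}^0 n\,d\ell$ and using $|\operatorname{tr}(G-I)|\le\sqrt n\,|G-I|$, the error is at most $\frac{2\sqrt n}{a}\int_{-\sigma}^0\int_M|G-I|\,d\nu_\ell\,d\ell\le C_n\eta$, since $a\ge\sigma/4$ and by \eqref{eq:top-fisher-map-golfed}. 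This holds for every $a$, so in particular for the one chosen above.

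Third, for the fourth moment, write $v_i(\cdot,-a)=P_{-\sigma,-a}z_i$ with $z_i=z_{e_i}^\sigma$. By Jensen's inequality for the heat kernel, $|v_i(\cdot,-a)|^4\le P_{-\sigma,-a}(z_i^4)$. Integrating against $\nu_{-a}$ and using the reproduction formula gives
\[
\int_M v_i(\cdot,-a)^4\,d\nu_{-a}\le\int_M z_i^4\,d\nu_{-\sigma}.
\]
The $L^p$ estimate for source scores (the remark after Proposition~\ref{prop:dual-nonlinear-fisher-monotonicity}, i.e.\ \cite[Proposition 4.2]{Bamler2020A}) bounds the right-hand side by $C\sigma^2$. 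Since $|y|^4\le n\sum_i a^{-2}v_i^4$ and $a\ge\sigma/4$, this yields \eqref{eq:top-fisher-fourth-moment-golfed}.

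The only genuinely delicate point is choosing one $a$ that works for all terms at once. This is harmless here because the second- and fourth-moment bounds hold for every $a\in[\sigma/4,\sigma/3]$, so only the gradient and Hessian terms require the pigeonhole selection.
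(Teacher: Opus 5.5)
Your proof is correct. The selection of $a$ by averaging \eqref{eq:top-fisher-map-golfed} over $[\sigma/4,\sigma/3]$ is exactly the paper's step, but the two moment bounds take different routes.

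For the second moment, you control $\bigl|\int_M|y|^2\,d\nu-2n\bigr|$ directly by the spacetime Gram error in \eqref{eq:top-fisher-map-golfed}, using only $a\ge\sigma/4$. The paper instead first propagates the good-time bound forward with the matrix Bochner formula $(\partial_\ell-\Delta)G_{ij}=-2\langle\nabla^2v_i,\nabla^2v_j\rangle$. This gives the sup-in-time estimate \eqref{eq:top-fisher-uniform-gram-golfed} on $[-a,0]$, which the paper then integrates. Your route is shorter for this claim. The paper's by-product, however, is what it needs next in Claim~\ref{claim:w2-is-controlled}. There $\int_M|G(\ell_\theta)-I|\,d\nu_{\ell_\theta}$ must be bounded pointwise in $\theta$ against the weight $(1-\theta)^{-1/2}$, and an $L^1$-in-time bound alone does not give this.

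For the fourth moment, you use Jensen for the Markov operator $P_{-\sigma,-a}$ and the reproduction formula to reduce to $\int_M z_i^4\,d\nu_{-\sigma}$. You then invoke the $L^p$ source-score bound of \cite[Proposition~4.2]{Bamler2020A}, which the paper records after Proposition~\ref{prop:dual-nonlinear-fisher-monotonicity}. The paper instead applies hypercontractivity \cite[Theorem~12.1]{Bamler2020A} from $L^2(\nu_{-\sigma})$ to $L^4(\nu_{-a})$, which is where $\sigma/a\ge 3$ enters, and it needs only the $L^2$ bound \eqref{eq:top-fisher-initial-L2-golfed}.

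The trade-off is this. The paper's argument works for any heat-kernel strong splitting map satisfying condition (3) of Definition~\ref{def:sec6-heat-kernel-strong-splitting-map}. Yours exploits that $v(\cdot,-\sigma)$ is a source score, a stronger sub-Gaussian input, and in exchange avoids hypercontractivity and the time gap at this step. The paper still uses $3a\le\sigma$ later, in Claim~\ref{claim:potential-is-controlled}.
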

    
    \begin{proof}[Proof of Claim \ref{claim:splitting-map-is-controlled}]
        Averaging \eqref{eq:top-fisher-map-golfed} over \(a\in[\sigma/4,\sigma/3]\), we may choose \(a\) so that
        \begin{equation}
        \label{eq:top-fisher-good-time-golfed}
            \int_M|G(-a)-I|\,d\nu_{-a}
            +
            \sigma\int_M\sum_i|\nabla^2v_i(-a)|^2\,d\nu_{-a}
            \le C_n\eta.
        \end{equation}
        Under the above rescaling,
        \[
            G_y=G(-a),
            \qquad
            |\bar\nabla^2y_i|_{\bar g}^2
            =
            a|\nabla^2v_i|_{g_{-a}}^2,
        \]
        so the estimate for the first two integrals in \eqref{eq:top-fisher-static-errors-golfed} follows from \eqref{eq:top-fisher-good-time-golfed}.
        
        The matrix Bochner formula gives
        \[
            (\partial_\ell-\Delta)G_{ij}
            =
            -2\langle\nabla^2v_i,\nabla^2v_j\rangle.
        \]
        Applying the chain rule to smooth convex approximations of \(|G-I|\) yields
        \[
            (\partial_\ell-\Delta)|G-I|
            \le2\sum_i|\nabla^2v_i|^2.
        \]
        Since conjugacy implies
        \[
            \frac d{d\ell}\int_Mu\,d\nu_\ell
            =
            \int_M(\partial_\ell-\Delta)u\,d\nu_\ell,
        \]
        we obtain, for \(-a\le\ell\le0\),
        \[
            \int_M|G(\ell)-I|\,d\nu_\ell
            \le
            \int_M|G(-a)-I|\,d\nu_{-a}
            +
            2\int_{-a}^{\ell}\int_M
            \sum_i|\nabla^2v_i|^2\,d\nu_s\,ds.
        \]
        Thus
        \begin{equation}
        \label{eq:top-fisher-uniform-gram-golfed}
            \sup_{-a\le\ell\le0}
            \int_M|G(\ell)-I|\,d\nu_\ell
            \le C_n\eta.
        \end{equation}
        Finally,
        \[
            \frac d{d\ell}\int_Mv_iv_j\,d\nu_\ell
            =
            -2\int_MG_{ij}(\ell)\,d\nu_\ell.
        \]
        The terminal integral vanishes because \(v(x,0)=0\); hence
        \[
            \int_My_iy_j\,d\nu
            =
            \frac2a\int_{-a}^0\int_MG_{ij}(\ell)\,d\nu_\ell\,d\ell.
        \]
        Taking the trace and using \eqref{eq:top-fisher-uniform-gram-golfed} proves the estimate for the third integral in \eqref{eq:top-fisher-static-errors-golfed}.
        
        Since \(\sigma/a\ge3=(4-1)/(2-1)\), hypercontractivity \cite[Theorem~12.1]{Bamler2020A}, with \(q=2\) and \(p=4\), and \eqref{eq:top-fisher-initial-L2-golfed} give
        \[
            \left(\int_M|v_i(\cdot,-a)|^4\,d\nu_{-a}\right)^{1/4}
            \le \left(\int_Mv_i(\cdot,-\sigma)^2\,d\nu_{-\sigma}\right)^{1/2}\le(2\sigma)^{1/2}.
        \]
        As \(\sigma/a\le4\),
        \[
            \int_M|y_i|^4\,d\nu
            \le4\left(\frac{\sigma}{a}\right)^2\le C.
        \]
        Summing over \(i\) proves \eqref{eq:top-fisher-fourth-moment-golfed}.
    \end{proof}
    
    \begin{claim}\label{claim:w2-is-controlled}
        For \(\mu=y_\#\nu\),
        \begin{equation}
        \label{eq:top-fisher-W2-golfed}
            W_2(\mu,\gamma_n)\le C_n\eta^{1/3}.
        \end{equation}
    \end{claim}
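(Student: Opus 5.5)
The plan is to compare $\mu=y_\#\nu$ with $\gamma_n$ by a parabolic interpolation: test functions are transported along the Euclidean heat flow in the target $\R^n$, and the canonical heat-score map $v$ is run forward from time $-a$ to $0$. In the Euclidean model $v$ is linear and the law of $v(\cdot,\ell)$ under $\nu_\ell$ is the Gaussian of covariance $-2\ell\, I$. The only source of discrepancy should be the Gram defect $G-I$, which Claim~\ref{claim:splitting-map-is-controlled} and \eqref{eq:top-fisher-uniform-gram-golfed} control in $L^1(\nu_\ell)$ uniformly in $\ell\in[-a,0]$.

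\textbf{Step 1 (smooth test functions).} Fix $\phi\in C^\infty_b(\R^n)$ with bounded first and second derivatives. Let $\Gamma(z,\ell)$, for $\ell\in[-a,0]$, solve the backward Euclidean heat equation $\partial_\ell\Gamma=-\Delta_z\Gamma$ with $\Gamma(\cdot,-a)=\phi$. Then $\Gamma(z,\ell)=(\phi*\gamma^{(2(a+\ell))})(z)$ runs the other way, so that more precisely $\Gamma(\cdot,0)$ evaluated at $0$ equals $\int\phi\,d\gamma_{2a}$. One must take care to choose the direction correctly: $\Gamma(\cdot,\ell)=\phi*\gamma_{-2\ell}$-type forward-in-$a$ convolution, arranged so that $\Gamma(0,0)=\int\phi(a^{1/2}\,\cdot)\,d\gamma_n$ and $\partial_\ell\Gamma+\Delta_z\Gamma=0$.

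Put $E(\ell)=\int_M\Gamma(v(\cdot,\ell),\ell)\,d\nu_\ell$. Using the conjugacy identity $\frac{d}{d\ell}\int u\,d\nu_\ell=\int(\partial_\ell-\Delta)u\,d\nu_\ell$ and the fact that each $v_i$ solves the heat equation, we get
\[
\frac{d}{d\ell}E(\ell)=\int_M\bigl(\partial_\ell\Gamma-\partial_{jk}\Gamma\,G_{jk}\bigr)\,d\nu_\ell=\int_M\partial_{jk}\Gamma\,(\delta_{jk}-G_{jk})\,d\nu_\ell .
\]
Here $\|D^2\Gamma\|_\infty\le\|D^2\phi\|_\infty$. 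Since $\nu_0=\delta_x$ and $v(x,0)=0$, the endpoint is $E(0)=\Gamma(0,0)$, which is the Gaussian average. Integrating over $[-a,0]$ and using \eqref{eq:top-fisher-uniform-gram-golfed} gives
\[
\Bigl|\int\phi\,d\mu-\int\phi\,d\gamma_n\Bigr|\le C_n\|D^2\phi\|_\infty\,\eta
\]
after the rescaling $y=a^{-1/2}v(\cdot,-a)$.

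\textbf{Step 2 (from smooth tests to $W_1$).} For a $1$-Lipschitz $f$, mollify by Gaussian convolution at scale $\varepsilon$ to get $f_\varepsilon$. The mollification costs $C\varepsilon$ in sup norm and has $\|D^2f_\varepsilon\|_\infty\le C/\varepsilon$. Truncating $f$ at radius $R$ and paying for the tail with the fourth moment \eqref{eq:top-fisher-fourth-moment-golfed} and Gaussian moments costs $CR^{-3}$. Optimizing $\varepsilon\sim\eta^{1/2}$ gives $W_1(\mu,\gamma_n)\le C_n\eta^{1/2}$.

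\textbf{Step 3 (from $W_1$ to $W_2$).} Use the standard interpolation with fourth moments. Take an optimal $W_1$ coupling and split the cost at $|z-z'|\le R$ versus the rest. The near part is at most $R\,W_1$, and the tail is bounded via Cauchy--Schwarz with the fourth moments by $CR^{-2}$. This gives $W_2^2\le R\,W_1+CR^{-2}$. Choosing $R=W_1^{-1/3}$ yields $W_2\le C W_1^{1/3}\le C\eta^{1/6}$. That is weaker than the claimed $\eta^{1/3}$, so this step is where the real work lies.

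\textbf{Main obstacle.} Matching the exponent $\eta^{1/3}$ is the hardest step. To get it, I would replace Step 3 by the Stein-kernel bound $W_2(\mu,\gamma_n)\le\|\tau_\mu-2I\|_{L^1\text{ or }L^2(\mu)}$, taking the quantity from Step 1 as an effective Stein discrepancy. An alternative is to run Step 1 with test functions having only bounded Hessian and feed it into a smoothed-coupling argument. Failing that, I would settle for any power $\eta^{c}$ and adjust \eqref{eq:quantitative-area-transport} accordingly, since Lemma~\ref{lem:quantitative-area-estimate} only needs some power of $\eta$ that beats $\eta^{1/4}$ after the HWI step.
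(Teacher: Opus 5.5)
Your argument only proves $W_2(\mu,\gamma_n)\le C_n\eta^{1/6}$, so the claim is not established. You have also located the loss in the wrong place. Step 3 is not the lossy step, and no Stein kernel is needed: splitting a $W_1$-optimal coupling and using the fourth moments \eqref{eq:top-fisher-fourth-moment-golfed} is exactly the paper's final step. The paper writes it as the H\"older inequality $\int|z-z'|^2\,d\pi\le(\int|z-z'|\,d\pi)^{2/3}(\int|z-z'|^4\,d\pi)^{1/3}$, which gives $W_2\le C_nW_1^{1/3}$. The exponent is lost in Step 2. There you bound $\|D^2\Gamma(\cdot,\ell)\|_\infty$ by $\|D^2\phi\|_\infty$ and then mollify a Lipschitz test function separately, balancing $\varepsilon$ against $\eta/\varepsilon$. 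That caps you at $W_1(\mu,\gamma_n)\le C_n\eta^{1/2}$.

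The missing idea is that the interpolating function is already a heat-flow mollification of $\phi$, so no separate mollification is needed.
\begin{itemize}
\item Take $\Gamma(\cdot,\ell)=\phi*\gamma_{2(a+\ell)}$, the Gaussian of covariance $2(a+\ell)I$. This means $\partial_\ell\Gamma=+\Delta_z\Gamma$ with $\Gamma(\cdot,-a)=\phi$. That is the sign your displayed formula for $E'(\ell)$ actually uses. The equation $\partial_\ell\Gamma=-\Delta_z\Gamma$ you state, and your $\phi*\gamma_{-2\ell}$ variant, run the heat flow in the wrong direction and give the wrong endpoints.
\item For $\phi$ merely Lipschitz, Gaussian integration by parts gives $\|D^2\Gamma(\cdot,\ell)\|_\infty\le C_n(a+\ell)^{-1/2}\|\nabla\phi\|_\infty$. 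This singularity is integrable on $[-a,0]$.
\item Combine this with the uniform Gram bound \eqref{eq:top-fisher-uniform-gram-golfed} and the rescaling $y=a^{-1/2}v(\cdot,-a)$. A $1$-Lipschitz $f$ in $y$ corresponds to $\phi=f(a^{-1/2}\,\cdot)$ with $\|\nabla\phi\|_\infty\le a^{-1/2}$, and $\int_{-a}^0(a+\ell)^{-1/2}\,d\ell=2a^{1/2}$. Hence $|\int f\,d\mu-\int f\,d\gamma_n|\le C_n\eta$.
\item Kantorovich--Rubinstein duality gives $W_1(\mu,\gamma_n)\le C_n\eta$, and your Step 3 then yields $W_2\le C_n\eta^{1/3}$.
\end{itemize}
This is the paper's proof. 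It is written there in the variable $\theta=-\ell/a$, with $P^{\R^n}_{1-\theta}\phi$ and the bound $\|D^2P^{\R^n}_r\phi\|_\infty\le C_nr^{-1/2}\|\nabla\phi\|_\infty$.

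Your fallback of settling for a weaker power is also not harmless. In Lemma~\ref{lem:quantitative-area-estimate} the transport hypothesis enters through $W_2(\widetilde\mu,\gamma_n)\le C\eta^{1/4}+A\eta^{1/3}$. Any exponent below $1/4$, such as your $1/6$, would weaken the conclusion $N\ge-C\eta^{1/4}$. With $1/6$ the conclusion becomes $N\ge-C\eta^{1/6}$, which degrades the $\delta^{1/8}$ in Theorem~\ref{thm:top-fisher-epsilon-regularity} to $\delta^{1/12}$.
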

    
    \begin{proof}[Proof of Claim \ref{claim:w2-is-controlled}]
        Let \(P_r^{\R^n}\) be the Euclidean heat semigroup normalized by
        \[
            P_r^{\R^n}\phi(z)
            =
            \int_{\R^n}\phi(z+\sqrt r\,\xi)\,d\gamma_n(\xi), \qquad \partial_r P_r^{\R^n}\phi=\Delta_{\R^n} P_r^{\R^n}\phi.
        \]
        For a smooth \(1\)-Lipschitz function \(\phi\), put
        \[
            \ell_\theta=-a\theta,
            \qquad
            w_\theta=a^{-1/2}v(\cdot,\ell_\theta), \qquad
            \mathcal A(\theta)
            =
            \int_M P_{1-\theta}^{\R^n}\phi(w_\theta)\,d\nu_{\ell_\theta}.
        \]
        Conjugacy gives
        \[
            \frac d{d\theta}\int_MU_\theta\,d\nu_{\ell_\theta}
            =
            \int_M(\partial_\theta U_\theta
                +a\Delta_{g_{\ell_\theta}}U_\theta)\,d\nu_{\ell_\theta}.
        \]
        Using \(\partial_\ell v=\Delta v\), the first-order terms cancel and
        \begin{equation}
        \label{eq:top-fisher-interpolation-derivative}
            \mathcal A'(\theta)
            =
            \int_M
            \left\langle
            G(\ell_\theta)-I,
            D^2P_{1-\theta}^{\R^n}\phi(w_\theta)
            \right\rangle_{\rm HS}
            d\nu_{\ell_\theta},
        \end{equation}
        where \( \langle A,B\rangle_{\rm HS} = \sum_{i,j}A_{ij}B_{ij}.\) Gaussian integration by parts gives
        \[
            \|D^2P_r^{\R^n}\phi\|_\infty
            \le C_nr^{-1/2}\|\nabla\phi\|_\infty,
        \]
        and therefore, by \eqref{eq:top-fisher-uniform-gram-golfed},
        \[
            |\mathcal A'(\theta)|
            \le C_n\eta(1-\theta)^{-1/2}.
        \]
        Moreover,
        \[
            \mathcal A(0)=\int_{\R^n}\phi\,d\gamma_n,
            \qquad
            \mathcal A(1)=\int_{\R^n}\phi\,d\mu.
        \]
        Integration and Kantorovich--Rubinstein duality yield
        \[
            W_1(\mu,\gamma_n)\le C_n\eta.
        \]
        
        Let \(\pi\) be a \(W_1\)-optimal coupling of \(\mu\) and \(\gamma_n\). Hölder's inequality gives
        \[
            \int|z-z'|^2\,d\pi
            \le
            \left(\int|z-z'|\,d\pi\right)^{2/3}
            \left(\int|z-z'|^4\,d\pi\right)^{1/3}.
        \]
        Using \(|z-z'|^4\leq C|z|^4+C|z'|^4\), the last factor is bounded by \eqref{eq:top-fisher-fourth-moment-golfed} and the Gaussian fourth moment. Hence
        \[
            W_2(\mu,\gamma_n)^2
            \le C_nW_1(\mu,\gamma_n)^{2/3},
        \]
        which proves \eqref{eq:top-fisher-W2-golfed}.
    \end{proof}
    
    \begin{claim}\label{claim:potential-is-controlled}
        Let \(f_a\) be the heat-kernel potential at time \(-a\), and set \(N=\Nash_{\mathbf x}(a),\) \(F=f_a-N.\) Then \(d\nu=(4\pi)^{-n/2}e^{-N-F}\,d\bar g\) is a probability measure and for some fixed \(\beta\in(0,\frac12)\),
        \begin{equation}
        \label{eq:top-fisher-potential-golfed}
            F\ge-C_n \, ,
            \qquad
            \int_M
            \bigl(e^{\beta F}+|\bar\nabla F|^2\bigr)\,d\nu
            \le C_n \, , 
            \qquad 
            \int_MF\,d\nu=\frac n2 \, .
        \end{equation}
    \end{claim}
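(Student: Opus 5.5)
The identities in the claim come from unwinding the rescaling. The bounds come from Proposition~\ref{prop:zhang-moments} applied at scale \(\tau=a\), together with the weighted energy estimate \eqref{eq:weighted-energy} established in its proof.

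\textbf{Normalization.} Write \(K(x,0;\cdot,-a)=(4\pi a)^{-n/2}e^{-f_a}\). Since \(\bar g=a^{-1}g_{-a}\), we have \(dg_{-a}=a^{n/2}\,d\bar g\). Hence
\[
d\nu=(4\pi)^{-n/2}e^{-f_a}\,d\bar g=(4\pi)^{-n/2}e^{-N-F}\,d\bar g,
\]
which is a probability measure because the conjugate heat kernel has unit mass. The identity \(\int_M F\,d\nu=\int_M f_a\,d\nu_{-a}-N=\frac n2\) is exactly the definition \eqref{eq:pointed-nash-def} of \(N=\Nash_{\mathbf x}(a)\).

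\textbf{Lower bound and exponential integrability.} We apply Proposition~\ref{prop:zhang-moments} with \(\tau=a\). The one hypothesis to check is \([-3a,0]\subset I\). This holds because \(a\le\sigma/3\le 1/3\) and the normalized flow is defined on \([-1,0]\). The proposition then gives \(F\ge -C(n)\) on \(M\). Fixing, say, \(\beta=\frac14\), it also gives
\[
\int_M e^{\beta F}\,d\nu\le\exp\Bigl\{\tfrac{n}{8}+\tfrac{13n}{32}\log 2\Bigr\}\le C_n .
\]

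\textbf{Gradient bound.} In the proof of Proposition~\ref{prop:zhang-moments}, take \(\beta=0\) in \eqref{eq:weighted-energy}, where \(Z(0)=1\). This gives
\[
a\int_M|\nabla f_a|^2_{g_{-a}}\,d\nu\le\frac{13n}{4}.
\]
This estimate rests on the Kuang--Zhang Harnack bound \eqref{eq:harnack} and the scalar curvature lower bound \eqref{eq:R-lower}, both valid on \([-3a,0]\). Since \(\bar\nabla F=\nabla f_a\) and \(|\bar\nabla F|^2_{\bar g}=a|\nabla f_a|^2_{g_{-a}}\), it follows that \(\int_M|\bar\nabla F|^2_{\bar g}\,d\nu\le C_n\).

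The only point requiring care is that the rescaled quantities match the scale-invariant form of Proposition~\ref{prop:zhang-moments}, and that \(3a\) fits inside the normalized interval. Both were checked above, so I expect no real obstacle.
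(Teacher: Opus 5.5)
Your proof is correct. The normalization, the identity \(\int_M F\,d\nu=\frac n2\), and the bounds \(F\ge -C_n\) and \(\int_M e^{\beta F}\,d\nu\le C_n\) are obtained exactly as in the paper, by applying Proposition~\ref{prop:zhang-moments} at scale \(a\) using \(3a\le\sigma\le 1\).

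The difference lies in the gradient bound. The paper does not reuse the interior of that proof. It instead invokes Perelman's entropy inequality \(\mathcal W_{\mathbf x}(a)\le\Nash_{\mathbf x}(a)\le 0\) together with the identity \(\mathcal W_{\mathbf x}(a)=a\int_M(|\nabla f_a|^2+R)\,d\nu+\Nash_{\mathbf x}(a)-\frac n2\). This yields \(a\int_M(|\nabla f_a|^2+R)\,d\nu\le\frac n2\), and the scalar curvature term is then absorbed using \(R(\cdot,-a)\ge-\frac{n}{2(1-a)}\) from the maximum principle started at time \(-1\).

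You instead take \(\beta=0\) in \eqref{eq:weighted-energy}. In effect, you integrate the Kuang--Zhang bound \eqref{eq:harnack} against \(\nu\) using \(\int_M\Delta f_a\,d\nu=\int_M|\nabla f_a|^2\,d\nu\). This is legitimate, since that estimate is established for every \(0\le\beta<\frac12\) at any scale with \([t-3\tau,t]\subset I\).

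Each route has its own merit. Yours is more economical: all three bounds come from the single Kuang--Zhang plus scalar-curvature package that is already needed for the exponential moment. The cost is that it rests on a pointwise differential Harnack inequality, and on an intermediate estimate from inside a proof rather than a stated result. The paper's route uses only the integrated monotonicity \(\mathcal W\le\mathcal N\), which is a softer input. It also gives the slightly better constant \(\frac n2+\frac{na}{2(1-a)}\) in place of your \(\frac{13n}{4}\), though either suffices for the claim.
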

    
    \begin{proof}[Proof of Claim \ref{claim:potential-is-controlled}]
        The density formula follows from
        \[
            d\nu_{-a}
            =
            (4\pi a)^{-n/2}e^{-f_a}\,dg_{-a},
            \qquad
            d\bar g=a^{-n/2}dg_{-a},
        \]
        while the definition of the pointed Nash entropy gives
        \[
            \int_MF\,d\nu
            =
            \int_Mf_a\,d\nu-N
            =
            \frac n2.
        \]
        Since \(3a\le\sigma\), Proposition~\ref{prop:zhang-moments} yields
        \[
            F\ge-C_n,
            \qquad
            \int_Me^{\beta F}\,d\nu\le C_n.
        \]
        
        Perelman's inequalities \(\mathcal W_{\mathbf x}(a)\le\Nash_{\mathbf x}(a)\le0\) and  the identity
        \[
            \mathcal W_{\mathbf x}(a)
            =
            a\int_M\bigl(|\nabla f_a|^2+R\bigr)\,d\nu
            +
            \Nash_{\mathbf x}(a)-\frac n2
        \]
        imply
        \[
            a\int_M\bigl(|\nabla f_a|^2+R\bigr)\,d\nu
            \le\frac n2.
        \]
        The scalar maximum principle, applied from time \(-1\), gives \(R(\cdot,-a)\ge-\frac{n}{2(1-a)}.\) Since \(a\le1/3\),
        \[
            \int_M|\bar\nabla F|^2\,d\nu
            =
            a\int_M|\nabla f_a|^2\,d\nu
            \le
            \frac n2+\frac{na}{2(1-a)}
            \le C_n.
        \]
    \end{proof}
    
    Claim \ref{claim:splitting-map-is-controlled}, Claim \ref{claim:w2-is-controlled} and Claim \ref{claim:potential-is-controlled} verify the hypotheses of Lemma~\ref{lem:quantitative-area-estimate}, with \(C_n\eta\) in place of \(\eta\). Choosing \(\delta_n\) sufficiently small, the lemma gives
    \[
        \Nash_{\mathbf x}(a)
        =
        N
        \ge
        -C_n\eta^{1/4}
        \ge
        -C_n\delta^{1/8}.
    \]
    Since \(a\ge\sigma/4\ge1/8\) and the pointed Nash entropy is nonincreasing in the backward scale,
    \[
        \Nash_{\mathbf x}(1/8)
        \ge
        \Nash_{\mathbf x}(a)
        \ge
        -C_n\delta^{1/8}.
    \]
    Together with \(\Nash_{\mathbf x}(1/8)\le0\), this proves the theorem after restoring the original scale and taking \(c_n=1/8\).
\end{proof}

\subsection{Proof of small Nash implies small average top Fisher deficit}\label{subsec:nash-gives-good-fisher-scale}

\begin{proof}[Proof of Proposition \ref{prop:nash-gives-good-fisher-scale}]
For fixed \(s\), set
\[
    \mathcal N_s^*(y,r)
    :=
    \Nash_{(y,r)}(r-s),
    \qquad r>s.
\]
Then \(\mathcal N_s^*(\cdot,r)\to0\) as \(r\downarrow s\), and the trace identity \eqref{eq:trace-deficit-Nash} gives
\[
    (\partial_r-\Delta_{g_r})\mathcal N_s^*
    =
    -\frac{1}{2(r-s)}
    \delta_n^+\bigl((\,\cdot\,,r);r-s\bigr).
\]
Applying Duhamel's formula for the forward heat equation yields
\[
    \mathcal N_s^*(x,t)
    =
    -\frac12
    \int_s^t\frac1{r-s}
    \int_M
    \delta_n^+\bigl((y,r);r-s\bigr)
    K(x,t;y,r)\,dg_r(y)\,dr.
\]
Since \(K(x,t;\cdot,r)\,dg_r=d\nu_{x,t;r}\), this is \eqref{eq:nash-integrated-fisher-deficit}. 
\end{proof}

\subsection{Proof of the quantitative pointwise converse}
\label{subsec:quantitative-nash-to-fisher}

We first record the local estimate used in the proof.

\begin{lemma}\label{lem:interior-full-fisher-regularity}
There are dimensional constants \(\varepsilon_*>0,\) \(c_*\in(0,1),\) \(L<\infty,\) \(\Lambda<\infty,\) \(m_0>0\) with the following property. Suppose that \(\Nash_{\mathbf x}(\tau_0)\ge-\varepsilon_*\). Set \(\sigma=c_*\tau_0,\) \(s=t-\sigma,\) and, for \(s<r\le t\), define
\[
    d_s(y,r)
    :=
    \dfplus{n}\bigl((y,r);r-s\bigr)
    =
    \tr_{g_r}D^F_{r-s}(y,r;s).
\]
Then
\begin{equation}
\label{eq:interior-full-fisher-derivatives}
    \sigma|\partial_r d_s|
    +
    \sigma|\nabla^2d_s|
    \le L \qquad \text{on} \qquad \bigcup_{r\in[t-\sigma/4,t]}
    B_{g_r}(x,2\sqrt\sigma)\times\{r\}.
\end{equation}
Moreover, for every \(0<u\le\sigma/4\),
\begin{equation}
\label{eq:short-heat-kernel-concentration}
    \nu_{x,t;t-u}
    \bigl(B_{g_{t-u}}(x,\Lambda\sqrt u)\bigr)
    \ge m_0.
\end{equation}
\end{lemma}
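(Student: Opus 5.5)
The plan is to reduce both estimates to uniform local geometry near \(\mathbf x\) at scale \(\sqrt{\tau_0}\), supplied by Bamler's \(\varepsilon\)-regularity. First, I would use the Nash entropy hypothesis. By Bamler's \(\varepsilon\)-regularity theorem \cite[Theorem~10.2]{Bamler2020A}, or the entropy-pinching version in \cite{Bamler2020C}, \(\Nash_{\mathbf x}(\tau_0)\ge-\varepsilon_*\) gives \(r_{\Rm}(\mathbf x)\ge c\sqrt{\tau_0}\) once \(\varepsilon_*\) is small. Monotonicity of \(\Nash\) in the base point (Bamler's gradient and time-derivative bounds on \(\Nash^*\)) propagates the almost-zero entropy to all \((y,r)\) with \(r\in[t-\tau_0/2,t]\) and \(d_{g_r}(y,x)\le A\sqrt{\tau_0}\). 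Repeating the \(\varepsilon\)-regularity there gives \(|\Rm|\le C\tau_0^{-1}\) on a parabolic neighborhood \(P(x,t,A'\sqrt{\tau_0},-\tau_0/2)\). Shi-type local derivative estimates then bound all \(|\nabla^m\Rm|\) there. Taking \(c_*\) small makes \(\sigma=c_*\tau_0\) tiny compared with the region of bounded geometry.

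Second, I would prove the derivative bounds for \(d_s\). By the trace identity \eqref{eq:trace-deficit-Nash}, \(d_s=-2(r-s)\,\Box\mathcal N^*_s\), and \(d_s=n-2(r-s)\int|\nabla\log K|^2\,d\nu\). I would scale so that \(\sigma=1\). The heat kernels \(K(y,r;\cdot,s)\) with \((y,r)\) near \((x,t)\) and \(r-s\in[3/4,1]\) have their \(\nu\)-mass concentrated, up to Gaussian tails, in the bounded-curvature region. Local parabolic regularity for \(K\) in the source variables, together with Bamler's \(L^p\) bounds on \(\nabla\log K\) and \(\nabla^2\log K\) (\cite[Proposition~4.2]{Bamler2020A} and its higher analogues), then bounds \(\nabla^2_y\) and \(\partial_r\) of \(\int|\nabla\log K|^2\,d\nu\) by dimensional constants. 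A cleaner route uses the matrix square identity \eqref{eq:deficit-square-refines-Nash} and \eqref{eq:BoxE-raw-Nash-section}. Since \(0\le d_s\le n\) and \(\Box d_s\) is controlled by \(\int|\mathsf H^{(x)}|^2\,d\nu\), interior parabolic Schauder estimates on a unit cylinder bound \(\nabla^2 d_s\) and \(\partial_r d_s\). This needs bounded geometry and bounds on derivatives of the right-hand side. Those bounds come from the same heat-kernel derivative estimates: the right-hand side is again an integral of source derivatives of \(\log K\) against \(d\nu\).

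Third, I would prove the concentration bound \eqref{eq:short-heat-kernel-concentration}. By Bamler's variance bound \(\operatorname{Var}(\nu_{x,t;t-u})\le H_n u\) and Chebyshev, some \(H_n\)-center \(z\) of \(\nu_{x,t;t-u}\) satisfies \(\nu(B(z,\sqrt{2H_nu}))\ge1/2\). Bounded curvature on the parabolic neighborhood, with \(u\le\sigma/4\ll r_{\Rm}^2\), gives \(d_{t-u}(x,z)\le C\sqrt u\), since \(x\) itself is a center up to controlled error on a smooth region. Taking \(\Lambda=C+\sqrt{2H_n}\) and \(m_0=1/2\) gives the claim.

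The main obstacle is the second step. The difficulty is uniform control of the higher source-derivatives of \(\log K\) integrated against \(\nu\), given that \(\nu\) spreads over the whole manifold while curvature is controlled only locally. I would first try to get the needed \(L^2\) bounds on \(\nabla^2\log K\) and \(\nabla^3\log K\) from the Bochner identities and the tensor maximum principle. These are global and do not need curvature bounds; only the final Schauder step needs the local bounded geometry.
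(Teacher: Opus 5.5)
Your first and third steps agree in substance with the paper. The paper cites Bamler's Theorem~10.3 for \(|\Rm|\le\eta\) and \(\Nash\ge-\eta\) on a large parabolic neighborhood, and Bamler's Proposition~9.5 plus Markov's inequality for the concentration bound. One correction in step one: the Lipschitz bound \(|\nabla\mathcal N^*_s|\le\sqrt{n/(2(t-s))}\) does not by itself propagate \emph{smallness} of the entropy over distances \(A\sqrt{\tau_0}\), since it loses an amount of order \(A\). Either cite Theorem~10.3, or note that for \(c_*\) small the curvature bound at \(\mathbf x\) already covers the \(\sqrt\sigma\)-cylinders and only an \(O(1)\) entropy bound is needed nearby.

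The genuine gap is step two. It is the entire content of \eqref{eq:interior-full-fisher-derivatives}, and you leave it open. Differentiating \(I(y,r)=\int_M|\nabla_y\log K(y,r;z,s)|^2\,d\nu(z)\) requires bounds on \(\int_M|\nabla_y^j\log K|^p\,d\nu\) for \(j\le3\), uniform in the source \((y,r)\) near \(\mathbf x\). Bamler's Proposition~4.2 is first order only, and there are no curvature-free higher analogues.

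Your fallback (global \(L^2\) bounds on \(\nabla^2\log K\) and \(\nabla^3\log K\) from Bochner identities and the tensor maximum principle, without curvature assumptions) does not work, for two reasons:
\begin{itemize}
\item Under Ricci flow, \(\Box\) does not commute with the source Hessian. The commutator is \(\Rm*\nabla^2\) at the source point, so such identities always carry the curvature at \((y,r)\).
\item The matrix square identity controls \(\int|\mathsf H^{(x)}|^2\,d\nu\) only after integrating in spacetime, not pointwise in the source.
\end{itemize}
The Schauder variant needs H\"older bounds on \(4\tau\int|\mathsf H^{(x)}|^2\,d\nu-\tau^{-1}d_s\), i.e.\ the same \(\nabla_y^3\log K\) bounds.

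Your diagnosis of the obstacle is also slightly off. The problem is not that \(\nu\) spreads into regions of uncontrolled curvature. The missing idea is to fix the target \(z\) and regard \(u_z(w,q)=K(w,q;z,s)\) as a positive forward heat solution in the \emph{source} variables, on a unit cylinder around \((y,r)\). There curvature is bounded regardless of where \(z\) lies.

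Local derivative estimates (Huang) then give \(|\nabla^ju_z|(y,r)\le C\sup_{Q_1}u_z\), uniformly in \(z\). The real difficulty is the Harnack ratio \(\sup_{Q_1}u_z/u_z(y,r)\), which is unbounded as \(K(z)\to0\). Gaussian smallness of \(\nu\) does not help, because the integrand is large exactly there.

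The paper bounds this ratio by \(CA_0^\gamma K(z)^{-\gamma}\) for any small \(\gamma>0\), using three ingredients:
\begin{itemize}
\item Bamler's heat-kernel upper bound;
\item Bamler's source gradient estimate \(|\nabla\log u_z|\lesssim(\log(A_0/u_z))^{1/2}\), which is where the nearby entropy lower bounds enter;
\item the local time Harnack inequality of Bailesteanu--Cao--Pulemotov.
\end{itemize}
This gives \(|\nabla_y^j\log K|\le Ce^{j\gamma F(z)}\). The exponential integrability of the renormalized potential \(F\) (Proposition~\ref{prop:zhang-moments}) converts this into the required \(L^p(d\nu)\) bounds. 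Only then does differentiating \(I\) under the integral sign close the argument.
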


\begin{proof}
We may restrict to the connected component containing \(x\). By parabolic
rescaling and time translation, assume first that \(\tau_0=1,\) \(t=0.\) Fix a sufficiently small dimensional number \(\eta\in(0,1)\). After decreasing
\(\varepsilon_*\), Bamler's quantitative pointed-Nash regularity theorem
\cite[Theorem~10.3]{Bamler2020A} gives
\begin{equation}
\label{eq:large-conventional-regular-neighborhood}
    |\Rm|\le\eta,
    \qquad
    \Nash_{(w,q)}(q+1)\ge-\eta \qquad \text{on} \qquad B_{g_0}(x,\eta^{-1})\times[-(1-\eta),0].
\end{equation}
Choose \(\sigma=c_*=c_*(n)>0\) so small that
\begin{equation}
\label{eq:c-star-choice}
    c_*\le\frac14,
    \qquad
    \frac\sigma2\le1-\eta,
    \qquad
    8e^{C_n\eta\sigma}\sqrt\sigma<\eta^{-1}.
\end{equation}
On the fixed neighborhood in
\eqref{eq:large-conventional-regular-neighborhood}, the Ricci flow equation
implies \(e^{-C_n\eta|q|}g_0 \le g_q \le e^{C_n\eta|q|}g_0.\) A first-exit argument therefore shows that
\begin{equation}
\label{eq:moving-cylinder-contained}
    B_{g_q}(x,8\sqrt\sigma)
    \subset B_{g_0}(x,\eta^{-1})
    \qquad
    \left(-\frac\sigma2\le q\le0\right).
\end{equation}
Indeed, if a \(g_q\)-minimizing geodesic from \(x\) first met
\(\partial B_{g_0}(x,\eta^{-1})\), then the portion preceding the first exit
would have \(g_q\)-length at least
\(e^{-C_n\eta\sigma}\eta^{-1}>8\sqrt\sigma\), contradicting the choice of
its endpoint. Hence, on the cylinder in \eqref{eq:moving-cylinder-contained}, we have \(|\Rm|\le\eta\) and \(\Nash_{(w,q)}(q+\sigma) \ge \Nash_{(w,q)}(q+1) \ge-\eta,\) where we used the monotonicity of pointed Nash entropy in the
backward scale.

Rescale once more so that \(\sigma=1\) and \(s=-1\), and suppress the
rescaling notation. The flow is then defined on \([-c_*^{-1},0]\), and
\begin{equation}
\label{eq:rescaled-regular-cylinder}
    |\Rm|\le1,
    \qquad
    \Nash_{(w,q)}(q+1)\ge-\eta \qquad \text{on} \qquad \bigcup_{-1/2\le q\le0}B_{g_q}(x,8)\times\{q\}.
\end{equation}
The scalar maximum principle, applied from time \(-c_*^{-1}\), also gives
\begin{equation}
\label{eq:global-scalar-lower-for-source-estimates}
    R(\cdot,q)
    \ge
    -\frac{n}{2(q+c_*^{-1})}
    \ge-C_n
    \qquad
    (-1\le q\le0).
\end{equation}

Fix
\begin{align}\label{eq:y-under-consideration}
    y\in B_{g_r}(x,2),
    \qquad
    -\frac14\le r\le0,
    \qquad
    \rho:=r+1\in\left[\frac34,1\right],
\end{align}
and write
\[
    K(z):=K(y,r;z,-1),
    \qquad
    d\nu(z):=K(z)\,dg_{-1}(z).
\]
All derivatives of \(K\) below are taken in the source variable \(y\); the
variable \(z\) is only the target variable of integration.

\begin{claim}\label{claim:lp-of-higher-derivative-of-logk}
For every \(1\le p<\infty\) and \(j\in\{1,2,3\}\),
\begin{equation}
\label{eq:weighted-source-score-estimates}
    \int_M
    \left|\nabla_y^j\log K(y,r;z,-1)\right|^p
    d\nu(z)
    \le C(n,p,j).
\end{equation}
\end{claim}

\begin{proof}[Proof of Claim \ref{claim:lp-of-higher-derivative-of-logk}]
Choose a dimensional number \(\vartheta\in(0,1/8]\) sufficiently small. By
\eqref{eq:rescaled-regular-cylinder}, metric distortion, and the same first-exit
argument as above, the cylinders
\[
    Q_2(y,r)
    :=
    \bigcup_{r-2\vartheta\le q\le r}
    B_{g_q}(y,2)\times\{q\},
    \qquad
    Q_1(y,r)
    :=
    \bigcup_{r-\vartheta\le q\le r}
    B_{g_q}(y,1)\times\{q\}
\]
are contained in the region in \eqref{eq:rescaled-regular-cylinder}, uniformly
for all \((y,r)\) as in \eqref{eq:y-under-consideration}; in particular, the closure of
\(Q_1(y,r)\) is contained in \(Q_2(y,r)\). Indeed, after
decreasing \(\vartheta\), the same first-exit argument gives
\(d_{g_q}(x,y)\le3\) whenever \(|q-r|\le2\vartheta\); hence
\(B_{g_q}(y,2)\subset B_{g_q}(x,5)\).

For a fixed \(z\in M\), set
\[
    u_z(w,q):=K(w,q;z,-1).
\]
This is a positive solution of the forward heat equation in the source
variables. On \(Q_2(y,r)\), the elapsed time \(q+1\) is bounded above and
below by positive dimensional constants. Thus
\eqref{eq:rescaled-regular-cylinder},
\eqref{eq:global-scalar-lower-for-source-estimates}, and Bamler's heat-kernel
upper bound \cite[Theorem~7.1]{Bamler2020A} give a dimensional constant
\(A_0\ge e\), independent of \(z\), such that
\begin{equation}
\label{eq:source-heat-upper}
    0<u_z\le A_0
    \qquad\text{on }Q_2(y,r).
\end{equation}
For every \((w,q)\in Q_1(y,r)\), Bamler's source-gradient estimate
\cite[Theorem~7.5]{Bamler2020A}, applied with
\((x,t;y,s)=(w,q;z,-1)\), gives
\[
    |\nabla_w\log u_z|
    \le
    \frac{C_n}{\sqrt{q+1}}
    \left[
        \log\left(
            \frac{C_{0,n}e^{-\Nash_{(w,q)}(q+1)}}
            {(q+1)^{n/2}u_z}
        \right)
    \right]^{1/2}.
\]
Because \(q+1\in[1/2,1]\) and
\(\Nash_{(w,q)}(q+1)\ge-\eta\), we may increase \(A_0\), still
independently of \(z\), so that
\[
    \frac{C_{0,n}e^{-\Nash_{(w,q)}(q+1)}}{(q+1)^{n/2}}
    \le A_0
\]
throughout \(Q_1(y,r)\). Therefore
\[
    |\nabla_w\log u_z|
    \le
    C_n\left(\log\frac{A_0}{u_z}\right)^{1/2}.
\]
Together with \eqref{eq:source-heat-upper}, the chain rule yields
\begin{equation}
\label{eq:sqrt-log-lipschitz}
    \left|
       \nabla_w\sqrt{1+\log\frac{A_0}{u_z}}
    \right|
    \le C_n
    \qquad\text{on }Q_1(y,r).
\end{equation}

The local differential Harnack estimate of Baileseanu, Cao, and Pulemotov
\cite[Theorem~2.7]{MR2601627}, applied on \(Q_2(y,r)\)
after translating its lower time face to time zero, gives at \((y,q)\)
\[
    |\nabla\log u_z|^2-2\partial_q\log u_z\le C_n
    \qquad(r-\vartheta\le q\le r).
\]
Here the two-sided Ricci bound is dimensional and the elapsed time from the
lower face of \(Q_2(y,r)\) is at least \(\vartheta\). In particular,
\begin{equation}
\label{eq:local-time-harnack}
    \partial_q\log u_z(y,q)\ge-C_n
    \qquad(r-\vartheta\le q\le r).
\end{equation}

Fix \(\gamma\in(0,1)\). For
\(w\in B_{g_q}(y,1)\), integration of
\eqref{eq:sqrt-log-lipschitz} along a minimizing \(g_q\)-geodesic gives
\[
    \sqrt{1+\log\frac{A_0}{u_z(w,q)}}
    \ge
    \left(
        \sqrt{1+\log\frac{A_0}{u_z(y,q)}}-C_n
    \right)_+.
\]
The elementary inequality
\[
    \bigl(\sqrt{1+h}-C_n\bigr)_+^2-1
    \ge(1-\gamma)h-C_{n,\gamma}
    \qquad(h\ge0)
\]
therefore implies
\[
    \log\frac{A_0}{u_z(w,q)}
    \ge
    (1-\gamma)\log\frac{A_0}{u_z(y,q)}-C_{n,\gamma}.
\]
Integrating \eqref{eq:local-time-harnack} from \(q\) to \(r\) gives
\[
    \log\frac{A_0}{u_z(y,q)}
    \ge
    \log\frac{A_0}{u_z(y,r)}-C_n.
\]
Consequently,
\begin{equation}
\label{eq:source-sup-interpolation}
    \sup_{Q_1(y,r)}u_z
    \le
    C_{n,\gamma}A_0^\gamma u_z(y,r)^{1-\gamma}.
\end{equation}

Apply the local derivative estimates for heat solutions of Huang
\cite[Theorems~1.1 and~2.1]{MR4466164} to \(u_z\) on
\(Q_1(y,r)\), after translating \(r-\vartheta\) to time zero. Since
\(|\Rm|\le1\), \(\vartheta\) is fixed, and \((y,r)\) lies a fixed positive
parabolic distance from the lower face, these estimates give
\begin{equation}
\label{eq:source-derivatives-of-K}
    |\nabla_w^j u_z|(y,r)
    \le
    C_{n,j}\sup_{Q_1(y,r)}u_z,
    \qquad j=1,2,3.
\end{equation}
This estimate is local in the source variables, but it holds for every fixed
parameter \(z\), with constants independent of \(z\). From
\eqref{eq:source-sup-interpolation}--\eqref{eq:source-derivatives-of-K},
\[
    \frac{|\nabla_w^i u_z|(y,r)}{u_z(y,r)}
    \le
    C_{n,i,\gamma}
    \left(\frac{A_0}{K(z)}\right)^\gamma,
    \qquad i=1,2,3.
\]
Every term in \(\nabla^j\log u_z\) is a product of at most \(j\) such
factors. Since \(A_0/K(z)\ge1\), the chain rule therefore implies
\begin{equation}
\label{eq:log-source-pointwise}
    \left|\nabla_y^j\log K(y,r;z,-1)\right|
    \le
    C_{n,j,\gamma}
    \left(\frac{A_0}{K(z)}\right)^{j\gamma},
    \qquad
    j=1,2,3.
\end{equation}

Write
\[
    K(z)=(4\pi\rho)^{-n/2}e^{-f(z)},
    \qquad
    F(z):=f(z)-\Nash_{(y,r)}(\rho).
\]
Since \(\rho\in[3/4,1]\) and \(\Nash_{(y,r)}(\rho)\le0\),
\begin{equation}
\label{eq:A0-over-K-by-F}
    \frac{A_0}{K(z)}
    =
    A_0(4\pi\rho)^{n/2}
    e^{F(z)+\Nash_{(y,r)}(\rho)}
    \le C_ne^{F(z)}.
\end{equation}
Moreover,
\([r-3\rho,r]\subset[-3,0]\subset[-c_*^{-1},0]\). Hence
Proposition~\ref{prop:zhang-moments}, applied at \((y,r)\), gives, for any
fixed \(\beta\in(0,1/2)\),
\[
    \int_Me^{\beta F}\,d\nu\le C(n,\beta).
\]
Given \(p<\infty\), choose \(\gamma>0\) so that \(3p\gamma<\beta\). For
\(j\le3\), H\"older's inequality and the fact that \(d\nu\) is a probability
measure give
\[
    \int_Me^{jp\gamma F}\,d\nu
    \le
    \left(\int_Me^{\beta F}\,d\nu\right)^{jp\gamma/\beta}.
\]
Combining this with \eqref{eq:log-source-pointwise} and
\eqref{eq:A0-over-K-by-F} proves \eqref{eq:weighted-source-score-estimates}.
\end{proof}

Set
\[
    I(y,r)
    :=
    \int_M
    \left|\nabla_y\log K(y,r;z,-1)\right|_{g_r}^{2}
    d\nu(z).
\]

\begin{claim}\label{claim:fisher-trace-derivatives}
At every \((y,r)\) as in \eqref{eq:y-under-consideration},
\begin{equation}
\label{eq:fisher-trace-derivative-bounds}
    |\nabla_y^2I|+|\partial_rI|\le C_n.
\end{equation}
\end{claim}

\begin{proof}[Proof of Claim \ref{claim:fisher-trace-derivatives}]
Since \(M\) is closed and \(r+1\ge3/4\), the heat kernel and all of its
source derivatives are smooth on the region under consideration, so we may
differentiate under the integral.

A first source
derivative gives
\[
    \nabla_c I
    =
    \int_M
    \left[
        2\left\langle
            \nabla_y^2\log K(\partial_c,\cdot),
            \nabla_y\log K
        \right\rangle
        +
        |\nabla_y\log K|^2\nabla_c\log K
    \right]d\nu,
\]
where the last term comes from
\(\nabla_c d\nu=(\nabla_c\log K)d\nu\). Differentiating once more yields
\begin{align*}
    |\nabla_y^2 I|
    \le C_n\int_M
    \bigl(
        |\nabla_y^3\log K|\,|\nabla_y\log K|
        +|\nabla_y^2\log K|^2 +|\nabla_y^2\log K|\,|\nabla_y\log K|^2
        +|\nabla_y\log K|^4
    \bigr)\,d\nu.
\end{align*}
Thus \eqref{eq:weighted-source-score-estimates} and H\"older's inequality
give \(|\nabla_y^2I|\le C_n\).

For the time derivative, the forward heat equation in the source variables gives
\[
    \partial_r\log K
    =
    \Delta_{g_r,y}\log K
    +
    |\nabla_y\log K|^2.
\]
With the convention
\(\nabla^3h=\nabla(\nabla^2h)\):
\begin{align}\label{eq:time-derivatives-of-logK}
\begin{split}
    |\partial_r\log K|
    &\le
    C_n\bigl(|\nabla_y^2\log K|+|\nabla_y\log K|^2\bigr), \\
    |\nabla_y\partial_r\log K|
    &\le
    C_n\bigl(
        |\nabla_y^3\log K|
        +|\nabla_y^2\log K|\,|\nabla_y\log K|
    \bigr).
\end{split}
\end{align}
Since \(\partial_rg_r^{ij}=2\Ric_{g_r}^{ij}\) and
\(\partial_rd\nu=(\partial_r\log K)d\nu\), differentiation under the integral gives
\begin{align*}
\label{eq:I-time-derivative}
    \partial_rI
    =
    \int_M\bigl(
        2\Ric_{g_r}(y)(\nabla_y\log K,\nabla_y\log K)+2\langle\nabla_y\log K,
                    \nabla_y\partial_r\log K\rangle
        +|\nabla_y\log K|^2\partial_r\log K
    \bigr)\,d\nu.
\end{align*}
The Ricci tensor in this formula is evaluated at the source point \((y,r)\) and is independent of
the target variable \(z\).  In particular,
\[
    \left|
        \int_M
        \Ric_{g_r}(y)(\nabla_y\log K,\nabla_y\log K)
        \,d\nu
    \right|
    \le
    |\Ric_{g_r}(y)|I(y,r).
\] 
The local curvature bound in
\eqref{eq:rescaled-regular-cylinder},
\eqref{eq:weighted-source-score-estimates},
\eqref{eq:time-derivatives-of-logK}, and H\"older's inequality therefore
give \(|\partial_rI|\le C_n\). Here
\(I\le C_n\) follows already from
\eqref{eq:weighted-source-score-estimates} with \(j=1\) and \(p=2\).
This proves the claim.
\end{proof}

By the definition of the full Fisher deficit,
\begin{equation}
\label{eq:deficit-via-I}
    d_s(y,r)=n-2\rho I(y,r).
\end{equation}
Consequently,
\[
    \nabla_y^2d_s=-2\rho\nabla_y^2I,
    \qquad
    \partial_rd_s=-2I-2\rho\partial_rI.
\]
Claim~\ref{claim:lp-of-higher-derivative-of-logk} with \(p=2\), together with
Claim~\ref{claim:fisher-trace-derivatives}, therefore gives
\begin{align*}
    |\partial_rd_s|+|\nabla^2d_s|\le C_n \qquad \text{on} \qquad \bigcup_{-1/4\le r\le0}B_{g_r}(x,2)\times\{r\}.
\end{align*}
Under the parabolic rescaling used to normalize \(\sigma=1\),
\[
    |\partial_{r'}d_s|=\sigma|\partial_r d_s|,
    \qquad
    |\nabla_{g'}^2d_s|_{g'}=\sigma|\nabla_g^2d_s|_g.
\]
Thus undoing this normalization proves
\eqref{eq:interior-full-fisher-derivatives} in the normalization
\(\tau_0=1\).

It remains to prove \eqref{eq:short-heat-kernel-concentration}. Return to the
normalization \(\tau_0=1\), \(t=0\), and \(\sigma=c_*\). For
\(0<u\le\sigma/4\), the choice \eqref{eq:c-star-choice} gives
\[
    B_{g_0}(x,2\sqrt u)\times[-u,0]
    \subset
    B_{g_0}(x,\eta^{-1})\times[-(1-\eta),0].
\]
Hence
\[
    |\Ric|\le C_nu^{-1}
    \qquad\text{on }
    B_{g_0}(x,2\sqrt u)\times[-u,0].
\]
Bamler's local concentration estimate
\cite[Proposition~9.5]{Bamler2020A}, applied at scale \(\sqrt u\) and \(\alpha=2\), gives
\[
    W_1^{g_{-u}}\bigl(\nu_{x,0;-u},\delta_x\bigr)
    \le C_n\sqrt u.
\]
The unique coupling of \(\nu_{x,0;-u}\) with \(\delta_x\) is supported on
\(M\times\{x\}\), and therefore
\[
    W_1^{g_{-u}}\bigl(\nu_{x,0;-u},\delta_x\bigr)
    =
    \int_Md_{g_{-u}}(x,y)\,d\nu_{x,0;-u}(y).
\]
Markov's inequality now gives
\[
    \nu_{x,0;-u}
    \bigl(M\setminus B_{g_{-u}}(x,\Lambda\sqrt u)\bigr)
    \le
    \frac{C_n}{\Lambda}.
\]
Taking \(\Lambda=2C_n\) and \(m_0=1/2\), and then restoring the original
parabolic scale, proves \eqref{eq:short-heat-kernel-concentration} and completes
the proof.
\end{proof}

We are now ready to prove
Theorem~\ref{thm:quantitative-nash-to-fisher}.
The rough idea is as follows. If
\(\dfplus{n}(\mathbf x;c_n\tau_0)=A\), then the derivative bounds in
Lemma~\ref{lem:interior-full-fisher-regularity} show that the deficit remains
comparable to \(A\) on a neighborhood of \(x\) for backward times of order
\(A\tau_0\). The concentration estimate
\eqref{eq:short-heat-kernel-concentration} places a fixed amount of
conjugate heat-kernel mass in this neighborhood. The identity
\eqref{eq:nash-integrated-fisher-deficit} then gives
\(A^2\lesssim-\Nash_{\mathbf x}(\tau_0)\).

\begin{proof}[Proof of Theorem~\ref{thm:quantitative-nash-to-fisher}]
After restricting to the connected component containing \(x\), parabolically
rescale and translate time so that \(\tau_0=1,\) \(t=0.\) Set \(c_n:=c_*\) and \(\varepsilon_n:=\varepsilon_*\). Let
\[
    \sigma:=c_n,
    \qquad
    s:=-\sigma,
    \qquad
    d(y,r):=\dfplus{n}\bigl((y,r);r-s\bigr),
\]
and put \(A:=d(x,0)=\dfplus{n}(\mathbf x;\sigma).\) If \(A=0\), there is nothing to prove, so assume \(A>0\). Since \(0\le D^F\le g\),
\begin{equation}
\label{eq:d-between-zero-n}
    0\le d\le n.
\end{equation}

We first convert the Hessian estimate into a square-root gradient estimate.
Enlarge \(L\), if necessary, so that \(L\ge16n\). 

\begin{claim}
\begin{equation}
\label{eq:sqrt-deficit-lipschitz}
    |\nabla\sqrt d|
    \le
    \sqrt{\frac{L}{2\sigma}}
\end{equation}
in the local Lipschitz sense on \(\bigcup_{-\sigma/4\le r\le0} B_{g_r}(x,\sqrt\sigma)\times\{r\}.\)
\end{claim}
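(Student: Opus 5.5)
This is the classical square-root estimate for a nonnegative function with a bounded Hessian (the Glaeser-type inequality $|\nabla h|^2\le 2h\,\sup|\nabla^2 h|$), applied in each time slice to $d(\cdot,r)$. Fix $r\in[-\sigma/4,0]$ and a point $y\in B_{g_r}(x,\sqrt\sigma)$. By \eqref{eq:interior-full-fisher-derivatives} of Lemma~\ref{lem:interior-full-fisher-regularity}, on the larger ball $B_{g_r}(x,2\sqrt\sigma)$ we have $|\nabla^2 d|\le L/\sigma$. By \eqref{eq:d-between-zero-n}, $d\ge0$ there.

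First, assume $d(y,r)>0$. Let $e:=-\nabla d(y,r)/|\nabla d(y,r)|$ (if the gradient vanishes there is nothing to prove). Let $\gamma(a)=\exp_y^{g_r}(ae)$ be the unit-speed $g_r$-geodesic. For $0\le a\le\sqrt\sigma$ it stays in $B_{g_r}(x,2\sqrt\sigma)$, by the triangle inequality. Along $\gamma$, Taylor's theorem with the Hessian bound gives
\[
0\le d(\gamma(a),r)\le d(y,r)-a|\nabla d(y,r)|+\frac{L}{2\sigma}a^2 .
\]

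Next, choose the step length. The natural choice is $a_*=\sigma|\nabla d|/L$, which minimizes the right-hand side. This choice is admissible provided $a_*\le\sqrt\sigma$. If it is, we get $|\nabla d|^2\le 2(L/\sigma)d$. This is exactly $|\nabla\sqrt d|=|\nabla d|/(2\sqrt d)\le\sqrt{L/(2\sigma)}$.

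The main obstacle is the case $a_*>\sqrt\sigma$, where the optimal step leaves the ball on which the Hessian bound holds. Here the normalization $L\ge16n$ and the bound $d\le n$ are used. Take $a=\sqrt\sigma$ instead. Then $\sqrt\sigma\,|\nabla d|\le d+L/2$, and since $|\nabla d|>L/\sqrt\sigma$ in this case, we get $d\ge\sqrt\sigma|\nabla d|-L/2\ge \sqrt\sigma|\nabla d|/2$. Hence
\[
|\nabla d|^2\le \frac{2d|\nabla d|}{\sqrt\sigma}\le \frac{2d}{\sqrt\sigma}\cdot\frac{2(d+L/2)}{\sqrt\sigma}\le \frac{4d(n+L/2)}{\sigma}.
\]
This is comparable to $2Ld/\sigma$ but not sharp. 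So I would first simply rule this case out: $a_*>\sqrt\sigma$ forces $d\ge L/2\ge 8n>n$, contradicting \eqref{eq:d-between-zero-n}. This contradiction is presumably the purpose of enlarging $L$ to $L\ge16n$.

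Finally, at points where $d(y,r)=0$, the function $\sqrt d$ may fail to be differentiable. The estimate is therefore interpreted in the local Lipschitz sense. Applying the inequality at nearby points with $d>0$ and integrating along $g_r$-geodesics gives $|\sqrt{d(y_1,r)}-\sqrt{d(y_2,r)}|\le\sqrt{L/(2\sigma)}\,d_{g_r}(y_1,y_2)$ for nearby $y_1,y_2$. Zeros are handled by continuity, since $\sqrt{\max(d,\epsilon)}$ is Lipschitz with the same constant and we can let $\epsilon\to0$.
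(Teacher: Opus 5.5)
Your proof is correct and is essentially the paper's argument: Taylor expansion along the unit-speed $g_r$-geodesic in the direction $-\nabla d/|\nabla d|$, the optimal step $\sigma|\nabla d|/L$, and ruling out the capped case by combining $d\le n$ with $L\ge 16n$. The paper caps the step at $\frac12\sqrt\sigma$ rather than $\sqrt\sigma$ and handles zeros of $d$ via $\sqrt{d+\delta}$ rather than $\sqrt{\max(d,\epsilon)}$; also, your intermediate non-sharp bound is unnecessary, since $d>L/2>n$ already gives the contradiction.
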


\begin{proof}
It suffices to prove
\begin{equation}
\label{eq:sqrt-gradient-deficit}
    |\nabla d|^2
    \le
    \frac{2L}{\sigma}d.
\end{equation}
Indeed, for every \(\delta>0\),
\[
    |\nabla\sqrt{d+\delta}|
    \le
    \sqrt{\frac{L}{2\sigma}}.
\]
Letting \(\delta\downarrow0\), we obtain the claim.

Fix \((p,r)\in  \bigcup_{-\sigma/4\le r\le0} B_{g_r}(x,\sqrt\sigma)\times\{r\}.\) If \(\nabla d(p,r)\ne0\), let \(\gamma\) be the
unit-speed \(g_r\)-geodesic starting at \(p\) in the direction
\(-\nabla d/|\nabla d|\), and set
\[
    \ell
    :=
    \min\left\{
       \frac{\sigma|\nabla d(p,r)|}{L},
       \frac12\sqrt\sigma
    \right\}.
\]
The segment \(\gamma([0,\ell])\) lies in
\(B_{g_r}(x,2\sqrt\sigma)\). Taylor's formula and
\eqref{eq:interior-full-fisher-derivatives} therefore give
\[
    0
    \le
    d(\gamma(\ell),r)
    \le
    d(p,r)-\ell|\nabla d(p,r)|
       +\frac{L}{2\sigma}\ell^2.
\]
If \(\ell=\frac12\sqrt\sigma\), then
\(|\nabla d(p,r)|\ge L/(2\sqrt\sigma)\), so
\eqref{eq:d-between-zero-n} would imply
\[
    0
    \le
    n-\frac L4+\frac L8
    =
    n-\frac L8<0,
\]
a contradiction. Hence \(\ell=\sigma|\nabla d|/L\), and substitution proves
\eqref{eq:sqrt-gradient-deficit}. The case \(\nabla d=0\) is immediate. 
\end{proof}

Choose a dimensional constant \(b>0\) such that
\begin{equation}
\label{eq:b-choice}
    b\sqrt n\le\frac12,
    \qquad
    b\sqrt{\frac L2}\le\frac{\sqrt3-1}{2},
\end{equation}
and then choose \(a>0\) so that
\begin{equation}
\label{eq:a-choice}
    a
    \le
    \min\left\{
       \frac1{4n},
       \frac1{4L},
       \frac{b^2}{\Lambda^2}
    \right\}.
\end{equation}
Set \(u_*:=aA\sigma.\) 

\begin{claim}
\begin{equation}
\label{eq:averaged-deficit-lower}
    \int_Md(y,-u)\,d\nu_{x,0;-u}(y)
    \ge
    \frac{m_0}{4}A,
    \qquad
    0<u\le u_*.
\end{equation}    
\end{claim}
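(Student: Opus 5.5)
The plan is to show that $d$ stays at least about $A/4$ on a small parabolic neighborhood of $(x,0)$, and then to use the concentration estimate \eqref{eq:short-heat-kernel-concentration} to put a fixed fraction $m_0$ of the measure $\nu_{x,0;-u}$ inside that neighborhood. Since $d\ge0$ everywhere by \eqref{eq:d-between-zero-n}, we obtain
\[
\int_M d(y,-u)\,d\nu_{x,0;-u}(y)\ \ge\ m_0\cdot \tfrac14 A .
\]
So it suffices to prove $d(y,-u)\ge A/4$ for every $0<u\le u_*$ and every $y\in B_{g_{-u}}(x,\Lambda\sqrt u)$. Note that $u_*=aA\sigma\le an\sigma\le\sigma/4$, so Lemma~\ref{lem:interior-full-fisher-regularity} applies at these times.

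\emph{Step 1 (time).} The time-derivative bound $\sigma|\partial_r d|\le L$ from \eqref{eq:interior-full-fisher-derivatives} holds along $\{x\}\times[-\sigma/4,0]$. It gives
\[
d(x,-u)\ \ge\ A-\frac{L u}{\sigma}\ \ge\ A-LaA\ \ge\ \tfrac34 A,
\]
because $a\le 1/(4L)$. Hence $\sqrt{d(x,-u)}\ge \tfrac{\sqrt3}{2}\sqrt A$.

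\emph{Step 2 (space).} Fix $y$ with $d_{g_{-u}}(x,y)<\Lambda\sqrt u\le\Lambda\sqrt{aA\sigma}$. Since $a\le b^2/\Lambda^2$, this distance is at most $b\sqrt{A\sigma}\le b\sqrt{n}\sqrt\sigma\le\sqrt\sigma/2$, using $b\sqrt n\le 1/2$ and $A\le n$. So $y$ lies in $B_{g_{-u}}(x,\sqrt\sigma)$ and a minimizing $g_{-u}$-geodesic from $x$ to $y$ stays in the region where the Lipschitz bound \eqref{eq:sqrt-deficit-lipschitz} holds. Integrating along it gives
\[
\sqrt{d(y,-u)}\ \ge\ \tfrac{\sqrt3}{2}\sqrt A-\sqrt{\tfrac{L}{2\sigma}}\, b\sqrt{A\sigma}
\ \ge\ \sqrt A\Bigl(\tfrac{\sqrt3}{2}-\tfrac{\sqrt3-1}{2}\Bigr)=\tfrac12\sqrt A ,
\]
where the last step uses $b\sqrt{L/2}\le(\sqrt3-1)/2$ from \eqref{eq:b-choice}. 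Therefore $d(y,-u)\ge A/4$.

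\emph{Step 3 (mass).} Apply \eqref{eq:short-heat-kernel-concentration} with $t=0$, which is valid since $u\le\sigma/4$. It gives $\nu_{x,0;-u}(B_{g_{-u}}(x,\Lambda\sqrt u))\ge m_0$. Discarding the rest of $M$ (where $d\ge0$) yields \eqref{eq:averaged-deficit-lower}.

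The main obstacle is domain bookkeeping, not the estimates themselves. One must check that the geodesic in Step 2 stays inside $B_{g_{-u}}(x,\sqrt\sigma)$. One must also check that Step 1 uses the derivative bound only on the region of \eqref{eq:interior-full-fisher-derivatives}: the segment $\{x\}\times[-u,0]$ lies in it because $x$ is the center of every ball there. The choice of constants \eqref{eq:b-choice} and \eqref{eq:a-choice} is designed precisely to make these inclusions and the arithmetic above close.
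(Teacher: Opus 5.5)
Your proposal is correct and follows the paper's proof essentially step for step: time persistence $d(x,-u)\ge \frac34 A$ from the $\partial_r d$ bound, spatial persistence $d\ge\frac14 A$ from the $\sqrt d$-Lipschitz estimate along a minimizing geodesic of length at most $b\sqrt{A\sigma}\le\frac12\sqrt\sigma$, and then the concentration bound \eqref{eq:short-heat-kernel-concentration} via $\Lambda\sqrt u\le b\sqrt{A\sigma}$. The only cosmetic difference is that the paper first proves $d\ge\frac14 A$ on the larger ball $B_{g_{-u}}(x,b\sqrt{A\sigma})$ and then shrinks to $B_{g_{-u}}(x,\Lambda\sqrt u)$, whereas you work on the smaller ball from the start.
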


\begin{proof}
By \eqref{eq:d-between-zero-n} and \eqref{eq:a-choice},
\(u_*\le\sigma/4\). For \(0\le u\le u_*\), the time-derivative estimate in
Lemma~\ref{lem:interior-full-fisher-regularity} gives
\begin{equation}
\label{eq:time-persistence}
    d(x,-u)
    \ge
    A-L\frac u\sigma
    \ge
    \frac34A.
\end{equation}
If \(d_{g_{-u}}(x,y)\le b\sqrt{A\sigma},\) then \eqref{eq:d-between-zero-n} and \eqref{eq:b-choice} imply
\(b\sqrt{A\sigma}\le\frac12\sqrt\sigma\). A minimizing geodesic from
\(x\) to \(y\) therefore stays in the region where
\eqref{eq:sqrt-deficit-lipschitz} holds. Combining
\eqref{eq:sqrt-deficit-lipschitz}, \eqref{eq:time-persistence}, and
\eqref{eq:b-choice}, we obtain
\[
    \sqrt{d(y,-u)}
    \ge
    \sqrt{d(x,-u)}
       -\sqrt{\frac{L}{2\sigma}}\,
        d_{g_{-u}}(x,y)
    \ge
    \left(\frac{\sqrt3}{2}-b\sqrt{\frac L2}\right)\sqrt A
    \ge
    \frac12\sqrt A.
\]
Thus
\begin{equation}
\label{eq:deficit-persistence-ball}
    d(y,-u)\ge\frac14A
\end{equation}
on \(B_{g_{-u}}(x,b\sqrt{A\sigma})\) for every
\(0<u\le u_*\).

Since \(u\le aA\sigma\) and \(a\le b^2/\Lambda^2\), we obtain \(\Lambda\sqrt u \le b\sqrt{A\sigma}.\) Equations \eqref{eq:short-heat-kernel-concentration} and
\eqref{eq:deficit-persistence-ball} therefore imply \eqref{eq:averaged-deficit-lower}.
\end{proof}

Applying the exact Nash--Fisher identity
\eqref{eq:nash-integrated-fisher-deficit} with initial time \(s=-\sigma\),
and then substituting \(r=-u\), gives
\[
    -\Nash_{\mathbf x}(\sigma)
    =
    \frac12
    \int_0^\sigma
    \frac1{\sigma-u}
    \int_Md(y,-u)\,d\nu_{x,0;-u}(y)\,du.
\]
Using \eqref{eq:averaged-deficit-lower} on \((0,u_*]\) and
\((\sigma-u)^{-1}\ge\sigma^{-1}\), we find
\[
    -\Nash_{\mathbf x}(\sigma)
    \ge
    \frac12\int_0^{u_*}
    \frac1\sigma\frac{m_0A}{4}\,du
    =
    \frac{am_0}{8}A^2.
\]
Since \(\sigma<1\) and the pointed Nash entropy is nonincreasing in the backward scale, \(-\Nash_{\mathbf x}(\sigma) \le -\Nash_{\mathbf x}(1).\) Consequently,
\[
    A
    \le
    \left(\frac8{am_0}\right)^{1/2}
    \bigl(-\Nash_{\mathbf x}(1)\bigr)^{1/2}.
\]
Thus we may take \(C_n=(8/(am_0))^{1/2}\). This proves
\eqref{eq:quantitative-nash-to-fisher} in the normalized scale. Parabolic rescaling proves the theorem in general.
\end{proof}

\subsection{Proof of codimension-one Fisher \texorpdfstring{\(\varepsilon\)}{epsilon}-regularity}
\label{subsec:FullRankFishEpsReg}

\begin{proof}[Proof of Theorem~\ref{thm:sec6-fisher-epsilon-regularity-n-minus-one}]
    Let \(\varepsilon_{\rm B}=\varepsilon_{\rm B}(n,Y)\in(0,1]\) be the constant in Bamler's \(\varepsilon\)-regularity theorem \cite[Proposition~16.1]{Bamler2020C}: if \(\Nash_{\mathbf x}(\rho^2)\ge -Y\) and \(\mathbf x\) is strongly \((n-1,\varepsilon_{\rm B},\rho)\)-split in the sense of \cite{Bamler2020C}, then
    \[
        r_{\Rm}(\mathbf x)\ge \varepsilon_{\rm B}\rho.
    \]

    Let \(C_*:=C(1/2)\) be the constant in Theorem~\ref{thm:sec6-fisher-rank-produces-strong-splitting-map}. Choose \(\delta=\delta(n,Y)>0\) so small that
    \[
        C_*\sqrt{\delta}\le \varepsilon_{\rm B}^2.
    \]
    Applying Theorem~\ref{thm:sec6-fisher-rank-produces-strong-splitting-map} with \(k=n-1\) and \(\theta=\frac12\), the assumption \(\dfplus{n-1}(\mathbf x;\tau)\le\delta\) gives a scale \(\sigma\in[\frac12\tau,\tau]\) at which the canonical heat-score map is a heat-kernel strong \((n-1,\varepsilon_{\rm B}^2,\sigma)\)-splitting map at \(\mathbf x\).
    
    Set \(\rho:=\sqrt{\varepsilon_{\rm B}\sigma}.\) Then \([t-\varepsilon_{\rm B}^{-1}\rho^2, t-\varepsilon_{\rm B}\rho^2] = [t-\sigma,t-\varepsilon_{\rm B}^2\sigma] \subset [t-\sigma,t].\) Restrict the canonical heat-score map to this subinterval. The heat equation and centering are unchanged. The averaged gradient error is bounded by
    \begin{align*}
        \rho^{-2}
        \int_{t-\sigma}^{t-\varepsilon_{\rm B}^2\sigma}
        \int_M
        \sum_{i,j=1}^{n-1}
        \left|
        \langle\nabla v_i,\nabla v_j\rangle-\delta_{ij}
        \right|
        d\nu_\ell\,d\ell
        &\le
        (\varepsilon_{\rm B}\sigma)^{-1}
        \int_{t-\sigma}^{t}
        \int_M
        \sum_{i,j=1}^{n-1}
        \left|
        \langle\nabla v_i,\nabla v_j\rangle-\delta_{ij}
        \right|
        d\nu_\ell\,d\ell                                      \\
        &\le
        (\varepsilon_{\rm B}\sigma)^{-1}\sigma\varepsilon_{\rm B}^2=
        \varepsilon_{\rm B}.
    \end{align*}

    The remaining splitting-map bounds follow from the \((n-1,\varepsilon_{\rm B}^2,\sigma)\)-map, since \(\varepsilon_{\rm B}\le1\). Thus the restricted map is a strong \((n-1,\varepsilon_{\rm B},\rho)\) splitting map in the sense of \cite[Definition~5.7]{Bamler2020C}.
    
    Finally, \(\rho^2=\varepsilon_{\rm B}\sigma\le\tau. \) By monotonicity of the pointed Nash entropy in the backward time scale,
    \[
        \Nash_{\mathbf x}(\rho^2)
        \ge
        \Nash_{\mathbf x}(\tau)
        \ge
        -Y. 
    \]
    Bamler's \(\varepsilon\)-regularity theorem therefore gives
    \[
        r_{\Rm}(\mathbf x)
        \ge
        \varepsilon_{\rm B}\rho
        =
        \varepsilon_{\rm B}^{3/2}\sqrt{\sigma}
        \ge
        \frac{1}{\sqrt2}\varepsilon_{\rm B}^{3/2}\sqrt{\tau}.
    \]
    Hence the theorem holds with \(\kappa:=\frac{1}{\sqrt2}\varepsilon_{\rm B}^{3/2}.\)
\end{proof}

\bibliographystyle{amsalpha} 
\bibliography{references}

@article{MR196777,
  author  = {Ali, S. M. and Silvey, S. D.},
  title   = {A general class of coefficients of divergence of one distribution from another},
  journal = {J. Roy. Statist. Soc. Ser. B},
  volume  = {28},
  year    = {1966},
  pages   = {131--142},
  mrnumber = {196777},
  url     = {http://links.jstor.org/sici?sici=0035-9246(1966)28:1<131:AGCOCO>2.0.CO;2-U}
}

@book{MR1800071,
  author    = {Amari, Shun-ichi and Nagaoka, Hiroshi},
  title     = {Methods of information geometry},
  series    = {Translations of Mathematical Monographs},
  volume    = {191},
  note      = {Translated from the 1993 Japanese original by Daishi Harada},
  publisher = {American Mathematical Society, Providence, RI; Oxford University Press, Oxford},
  year      = {2000},
  pages     = {x+206},
  isbn      = {0-8218-0531-2},
  mrnumber  = {1800071},
  doi       = {10.1090/mmono/191},
  url       = {https://doi.org/10.1090/mmono/191}
}

@incollection{MR2790368,
  author    = {Arnaudon, Marc and Coulibaly, Kol\'eh\`e Abdoulaye and Thalmaier, Anton},
  title     = {Horizontal diffusion in {$C^1$} path space},
  booktitle = {S\'eminaire de Probabilit\'es {XLIII}},
  series    = {Lecture Notes in Math.},
  volume    = {2006},
  pages     = {73--94},
  publisher = {Springer, Berlin},
  year      = {2011},
  mrnumber  = {2790368},
  doi       = {10.1007/978-3-642-15217-7_2},
  url       = {https://doi.org/10.1007/978-3-642-15217-7_2}
}

@article {MR2601627,
    AUTHOR = {Bailesteanu, Mihai and Cao, Xiaodong and Pulemotov, Artem},
     TITLE = {Gradient estimates for the heat equation under the {R}icci
              flow},
   JOURNAL = {J. Funct. Anal.},
  FJOURNAL = {Journal of Functional Analysis},
    VOLUME = {258},
      YEAR = {2010},
    NUMBER = {10},
     PAGES = {3517--3542},
      ISSN = {0022-1236,1096-0783},
   MRCLASS = {53C44 (35K05 35R01 58J35)},
  MRNUMBER = {2601627},
MRREVIEWER = {Qihua\ Ruan},
       DOI = {10.1016/j.jfa.2009.12.003},
       URL = {https://doi.org/10.1016/j.jfa.2009.12.003},
}

@book{MR3155209,
  author    = {Bakry, Dominique and Gentil, Ivan and Ledoux, Michel},
  title     = {Analysis and geometry of {M}arkov diffusion operators},
  series    = {Grundlehren der mathematischen Wissenschaften},
  volume    = {348},
  publisher = {Springer, Cham},
  year      = {2014},
  pages     = {xx+552},
  isbn      = {978-3-319-00226-2; 978-3-319-00227-9},
  mrnumber  = {3155209},
  doi       = {10.1007/978-3-319-00227-9},
  url       = {https://doi.org/10.1007/978-3-319-00227-9}
}

@unpublished{Bamler2020A,
  author = {Bamler, Richard H.},
  title  = {Entropy and heat kernel bounds on a {R}icci flow background},
  note   = {arXiv:2008.07093},
  year   = {2020},
  url    = {https://arxiv.org/abs/2008.07093}
}

@article {MR4623543,
    AUTHOR = {Bamler, Richard H.},
     TITLE = {Compactness theory of the space of super {R}icci flows},
   JOURNAL = {Invent. Math.},
  FJOURNAL = {Inventiones Mathematicae},
    VOLUME = {233},
      YEAR = {2023},
    NUMBER = {3},
     PAGES = {1121--1277},
      ISSN = {0020-9910,1432-1297},
   MRCLASS = {53E20},
  MRNUMBER = {4623543},
MRREVIEWER = {Shu-Yu\ Hsu},
       DOI = {10.1007/s00222-023-01196-3},
       URL = {https://doi.org/10.1007/s00222-023-01196-3},
}

@unpublished{Bamler2020C,
  author = {Bamler, Richard H.},
  title  = {Structure theory of non-collapsed limits of {R}icci flows},
  note   = {arXiv:2009.03243},
  year   = {2020},
  url    = {https://arxiv.org/abs/2009.03243}
}

@article{Brendle2014TwoPoint,
  author  = {Brendle, Simon},
  title   = {Two-point functions and their applications in geometry},
  journal = {Bulletin of the American Mathematical Society},
  volume  = {51},
  number  = {4},
  year    = {2014},
  pages   = {581--596},
  doi     = {10.1090/S0273-0979-2014-01461-2}
}

@article{MR2081075,
  author   = {Chafa\"i, Djalil},
  title    = {Entropies, convexity, and functional inequalities: on {$\Phi$}-entropies and {$\Phi$}-{S}obolev inequalities},
  journal  = {J. Math. Kyoto Univ.},
  volume   = {44},
  number   = {2},
  year     = {2004},
  pages    = {325--363},
  mrnumber = {2081075},
  doi      = {10.1215/kjm/1250283556},
  url      = {https://doi.org/10.1215/kjm/1250283556}
}

@book{MSM163,
  author    = {Chow, Bennett and Chu, Sun-Chin and Glickenstein, David and Guenther, Christine and Isenberg, James and Ivey, Tom and Knopf, Dan and Lu, Peng and Luo, Feng and Ni, Lei},
  title     = {The {R}icci flow: techniques and applications. {P}art {III}. Geometric-analytic aspects},
  series    = {Mathematical Surveys and Monographs},
  volume    = {163},
  publisher = {American Mathematical Society, Providence, RI},
  year      = {2010},
  pages     = {xx+517},
  isbn      = {978-0-8218-4661-2},
  mrnumber  = {2604955},
  doi       = {10.1090/surv/163},
  url       = {https://doi.org/10.1090/surv/163}
}

@article {MR3458179,
    AUTHOR = {Cibotaru, Daniel and de Lira, Jorge},
     TITLE = {A note on the area and coarea formulas for general volume
              densities and some applications},
   JOURNAL = {Manuscripta Math.},
  FJOURNAL = {Manuscripta Mathematica},
    VOLUME = {149},
      YEAR = {2016},
    NUMBER = {3-4},
     PAGES = {471--506},
      ISSN = {0025-2611,1432-1785},
   MRCLASS = {58C35 (46T12 49Q15)},
  MRNUMBER = {3458179},
MRREVIEWER = {Riikka\ Korte},
       DOI = {10.1007/s00229-015-0779-x},
       URL = {https://doi.org/10.1007/s00229-015-0779-x},
}

@article{MR219345,
  author   = {Csisz\'ar, I.},
  title    = {Information-type measures of difference of probability distributions and indirect observations},
  journal  = {Studia Sci. Math. Hungar.},
  volume   = {2},
  year     = {1967},
  pages    = {299--318},
  mrnumber = {219345}
}

@book {MR257325,
    AUTHOR = {Federer, Herbert},
     TITLE = {Geometric measure theory},
    SERIES = {Die Grundlehren der mathematischen Wissenschaften},
    VOLUME = {Band 153},
 PUBLISHER = {Springer-Verlag New York, Inc., New York},
      YEAR = {1969},
     PAGES = {xiv+676},
   MRCLASS = {28.80 (26.00)},
  MRNUMBER = {257325},
MRREVIEWER = {J.\ E.\ Brothers},
}

@article{Fisher1925,
  author  = {Fisher, R. A.},
  title   = {Theory of statistical estimation},
  journal = {Mathematical Proceedings of the Cambridge Philosophical Society},
  volume  = {22},
  number  = {5},
  year    = {1925},
  pages   = {700--725},
  doi     = {10.1017/S0305004100009580}
}

@article{Hamilton1982,
  author   = {Hamilton, Richard S.},
  title    = {Three-manifolds with positive {R}icci curvature},
  journal  = {J. Differential Geom.},
  volume   = {17},
  number   = {2},
  year     = {1982},
  pages    = {255--306},
  mrnumber = {664497},
  url      = {http://projecteuclid.org/euclid.jdg/1214436922}
}

@article{Hamilton1986,
  author   = {Hamilton, Richard S.},
  title    = {Four-manifolds with positive curvature operator},
  journal  = {J. Differential Geom.},
  volume   = {24},
  number   = {2},
  year     = {1986},
  pages    = {153--179},
  mrnumber = {862046},
  url      = {http://projecteuclid.org/euclid.jdg/1214440433}
}

@article{HeinNaber2014CPAM,
  author   = {Hein, Hans-Joachim and Naber, Aaron},
  title    = {New logarithmic {S}obolev inequalities and an {$\epsilon$}-regularity theorem for the {R}icci flow},
  journal  = {Comm. Pure Appl. Math.},
  volume   = {67},
  number   = {9},
  year     = {2014},
  pages    = {1543--1561},
  mrnumber = {3245102},
  doi      = {10.1002/cpa.21474},
  url      = {https://doi.org/10.1002/cpa.21474}
}

@article {MR4466164,
    AUTHOR = {Huang, Hong},
     TITLE = {Local derivative estimates for the heat equation coupled to
              the {R}icci flow},
   JOURNAL = {Commun. Contemp. Math.},
  FJOURNAL = {Communications in Contemporary Mathematics},
    VOLUME = {24},
      YEAR = {2022},
    NUMBER = {6},
     PAGES = {Paper No. 2150043, 30},
      ISSN = {0219-1997,1793-6683},
   MRCLASS = {53E20 (58J35)},
  MRNUMBER = {4466164},
MRREVIEWER = {Shu-Yu\ Hsu},
       DOI = {10.1142/S0219199721500437},
       URL = {https://doi.org/10.1142/S0219199721500437},
}

@article{koirala2026sharp,
  title={{Sharp Gaussian Isoperimetry along a Ricci Flow}},
  author={Koirala, Robert},
  journal={arXiv preprint arXiv:2605.21193},
  year={2026},
  pages={1-25}
}

@article {MR2433960,
    AUTHOR = {Kuang, Shilong and Zhang, Qi S.},
     TITLE = {A gradient estimate for all positive solutions of the
              conjugate heat equation under {R}icci flow},
   JOURNAL = {J. Funct. Anal.},
  FJOURNAL = {Journal of Functional Analysis},
    VOLUME = {255},
      YEAR = {2008},
    NUMBER = {4},
     PAGES = {1008--1023},
      ISSN = {0022-1236,1096-0783},
   MRCLASS = {53C44 (35B45 35B65 35K55)},
  MRNUMBER = {2433960},
MRREVIEWER = {Christine\ Guenther},
       DOI = {10.1016/j.jfa.2008.05.014},
       URL = {https://doi.org/10.1016/j.jfa.2008.05.014},
}

@article{MR2666905,
  author   = {McCann, Robert J. and Topping, Peter M.},
  title    = {Ricci flow, entropy and optimal transportation},
  journal  = {Amer. J. Math.},
  volume   = {132},
  number   = {3},
  year     = {2010},
  pages    = {711--730},
  mrnumber = {2666905},
  doi      = {10.1353/ajm.0.0110},
  url      = {https://doi.org/10.1353/ajm.0.0110}
}

@article{MR167200,
  author   = {Morimoto, Tetsuzo},
  title    = {Markov processes and the {$H$}-theorem},
  journal  = {J. Phys. Soc. Japan},
  volume   = {18},
  year     = {1963},
  pages    = {328--331},
  mrnumber = {167200},
  doi      = {10.1143/JPSJ.18.328},
  url      = {https://doi.org/10.1143/JPSJ.18.328}
}

@article{MR1760620,
  author   = {Otto, F. and Villani, C.},
  title    = {Generalization of an inequality by {T}alagrand and links with the logarithmic {S}obolev inequality},
  journal  = {J. Funct. Anal.},
  volume   = {173},
  number   = {2},
  year     = {2000},
  pages    = {361--400},
  mrnumber = {1760620},
  doi      = {10.1006/jfan.1999.3557},
  url      = {https://doi.org/10.1006/jfan.1999.3557}
}

@unpublished{Perelman1,
  author = {Perelman, Grisha},
  title  = {The entropy formula for the {R}icci flow and its geometric applications},
  note   = {arXiv:math.DG/0211159},
  year   = {2002},
  url    = {https://arxiv.org/abs/math/0211159}
}

@article {MR15748,
    AUTHOR = {Radhakrishna Rao, C.},
     TITLE = {Information and the accuracy attainable in the estimation of
              statistical parameters},
   JOURNAL = {Bull. Calcutta Math. Soc.},
  FJOURNAL = {Bulletin of the Calcutta Mathematical Society},
    VOLUME = {37},
      YEAR = {1945},
     PAGES = {81--91},
      ISSN = {0008-0659},
   MRCLASS = {62.0X},
  MRNUMBER = {15748},
MRREVIEWER = {J.\ W.\ Tukey},
}

@incollection{MR132570,
  author    = {R{\'e}nyi, Alfr\'ed},
  title     = {On measures of entropy and information},
  booktitle = {Proc. 4th Berkeley Sympos. Math. Statist. and Prob., Vol. I},
  pages     = {547--561},
  publisher = {Univ. California Press, Berkeley-Los Angeles, Calif.},
  year      = {1961},
  mrnumber  = {132570}
}

@article {MR1392331,
    AUTHOR = {Talagrand, M.},
     TITLE = {Transportation cost for {G}aussian and other product measures},
   JOURNAL = {Geom. Funct. Anal.},
  FJOURNAL = {Geometric and Functional Analysis},
    VOLUME = {6},
      YEAR = {1996},
    NUMBER = {3},
     PAGES = {587--600},
      ISSN = {1016-443X,1420-8970},
   MRCLASS = {60E99},
  MRNUMBER = {1392331},
MRREVIEWER = {Juan\ A.\ Cuesta-Albertos},
       DOI = {10.1007/BF02249265},
       URL = {https://doi.org/10.1007/BF02249265},
}

\end{document}